\documentclass[a4paper]{amsart}

\usepackage[utf8]{inputenc}
\usepackage[english]{babel}
\usepackage{tikz-cd}
\usetikzlibrary{positioning,decorations.text,quotes,calc}
\usetikzlibrary{arrows,babel}
\usepackage{amsmath,amsfonts,amssymb}
\usepackage{mathtools}
\usepackage{float}
\usepackage{amsthm}
\usepackage{stackrel}
\usepackage{pgfplots}
\usepackage{graphicx}
\usepackage{bbm}
\usepackage{amssymb}
\usepackage{geometry,enumerate}
\usepackage{graphicx}
\usepackage{psfrag}
\usepackage{amscd}
\usepackage{comment}
\usepackage[all]{xy}
\usepackage{url}
\usepackage{leftindex}
\usepackage{mathrsfs}

\usepackage{fullpage}

\usepackage[dvipsnames]{xcolor}

\usepackage[backref=page]{hyperref}

\hypersetup{
 colorlinks,
 citecolor=Green,
 linkcolor=blue,
 urlcolor=Blue}

\usepackage{booktabs}
\usepackage{color}
\usepackage{todonotes}

\usepackage{braket}

\usepackage{stmaryrd} 
\usepackage{mathabx}
\usepackage{mathbbol}
\usepackage{xspace}
\usepackage{enumerate}
\usepackage{caption}
\usepackage{subcaption}

 \usepackage{cleveref}
\crefformat{section}{#2#1#3} 
\crefformat{subsection}{#2#1#3}

\newcommand{\ZZ}{\mathbb{Z}}
\newcommand{\QQ}{\mathbb{Q}}
\newcommand{\NN}{\mathbb{N}}
\newcommand{\RR}{\mathbb{R}}

\newcommand{\PP}{\mathbb{P}}
\newcommand{\CC}{\mathbb{C}}

\renewcommand{\AA}{\mathbb{A}}

\newcommand{\A}{\mathcal{A}}
\newcommand{\C}{\mathcal{C}}
\renewcommand{\O}{\mathcal{O}}

\newcommand{\I}{\mathcal{I}}

\newcommand{\D}{\mathcal{D}}
\newcommand{\E}{\mathcal{E}}
\newcommand{\X}{\mathcal{X}}

\newcommand{\J}{\mathcal{J}}
\newcommand{\U}{\mathcal{U}}
\newcommand{\V}{\mathcal{V}}
\newcommand{\M}{\mathcal{M}}
\renewcommand{\S}{\mathcal{S}}

\renewcommand{\P}{\mathcal{P}}

\newcommand{\un}{\underline}
\newcommand{\ov}{\overline}
\newcommand{\wh}{\widehat}
\newcommand{\wt}{\widetilde}

\newcommand{\m}{\mathfrak{m}}

\newcommand{\s}{\mathfrak{s}}

\newcommand{\Mbargn}{\overline{\mathcal{M}}_{g,n}}
\newcommand{\Cbargn}{\overline{\mathcal{C}}_{g,n}}
\newcommand{\Mbargnbis}{\overline{\mathcal{M}}_{g,n+1}}
\newcommand{\Cbargnbis}{\overline{\mathcal{C}}_{g,n+1}}
\newcommand{\Mgn}{{\mathcal{M}}_{g,n}}
\newcommand{\Cgn}{{\mathcal{C}}_{g,n}}
\newcommand{\Bgn}{\mathbb{B}_{g,n}}
\newcommand{\Bgnbis}{\mathbb{B}_{g,n+1}}

\newcommand{\Mbarg}{\overline{\mathcal{M}}_{g}}
\newcommand{\Cbarg}{\overline{\mathcal{C}}_{g}}
\newcommand{\Mg}{{\mathcal{M}}_{g}}
\newcommand{\Cg}{{\mathcal{C}}_{g}}
\newcommand{\Dg}{\mathbb{D}_{g}}

\newcommand{\Sigmas}{\operatorname{^{s}\Sigma}}
\newcommand{\Sigmans}{\operatorname{^{ns}\Sigma}}

\newcommand{\Ds}{\operatorname{^{s}\D}}
\newcommand{\Dns}{\operatorname{^{ns}\D}}

\newcommand{\Deg}{\operatorname{Deg}}
\newcommand{\Degs}{\operatorname{^{s}\operatorname{Deg}}}
\newcommand{\Degns}{\operatorname{^{ns}\operatorname{Deg}}}

\newcommand{\Dgn}{\mathbb{D}_{g,n}}
\newcommand{\Dgns}{{^{s}\mathbb{D}_{g,n}}}
\newcommand{\Dgnns}{{^{ns}\mathbb{D}_{g,n}}}

\newcommand{\DD}{\mathbb{D}}

\newcommand{\DDns}{{^{ns}\mathbb{D}}}

\newcommand{\Dguno}{\mathbb{D}_{g,1}}
\newcommand{\Dgunos}{{^{s}\mathbb{D}_{g,1}}}
\newcommand{\Dgunons}{{^{ns}\mathbb{D}_{g,1}}}
\newcommand{\Dgunou}{{^{ns}\mathbb{D}^e_{g,1}}}
\newcommand{\Dgunom}{{^{ns}\mathbb{D}^f_{g,1}}}
\newcommand{\Dgpr}{\mathbb{D}_{g,1}^{pr}}
\newcommand{\Dgbig}{\mathbb{D}_{g,1}^{\geq g}}

\newcommand{\Dgnbis}{\mathbb{D}_{g,n+1}}
\newcommand{\Dgnwh}{\wh{\mathbb{D}}_{g,n}}

\newcommand{\wPR}{\wt{\operatorname{PR}}}
\newcommand{\PR}{\operatorname{PR}}

\newcommand{\Dyn}{\operatorname{Dyn}}

\newcommand{\fX}{\mathfrak{X}}

\newcommand{\supp}{\operatorname{supp}}
\newcommand{\sm}{\operatorname{sm}}

\newcommand{\id}{\operatorname{id}}

\newcommand{\Def}{\operatorname{Def}}
\newcommand{\taut}{\operatorname{taut}}

\newcommand{\VStab}{\operatorname{VStab}}

\newcommand{\Pol}{\operatorname{Pol}}

\newcommand{\BCon}{\operatorname{BCon}}

\newcommand{\Pic}{\operatorname{Pic}}
\renewcommand{\Im}{\operatorname{Im}}

\newcommand{\Aut}{\operatorname{Aut}}
\newcommand{\Spec}{\operatorname{Spec}}

\newcommand{\type}{\operatorname{type}}
\newcommand{\PicRel}{\operatorname{PicRel}}
\newcommand{\cHom}{{\mathcal Hom}}
\newcommand{\Tot}{\operatorname{Tot}}

\newcommand{\NF}{\operatorname{NF}}
\newcommand{\TF}{\operatorname{TF}}
\newcommand{\TFo}{^{o}\operatorname{TF}}
\newcommand{\TFp}{^{+}\operatorname{TF}}
\newcommand{\TFm}{^{-}\operatorname{TF}}

\newcommand{\Gm}{\operatorname{\mathbb{G}_m}}
\newcommand{\st}{\operatorname{st}}
\newcommand{\St}{\operatorname{St}}

\newcommand{\Z}{\mathbb{Z}}
\newcommand{\R}{\mathbb{R}}

\pgfplotsset{compat=1.17}

\definecolor{LivGreen}{cmyk}{68 0 100 0}

\newtheorem{theorem}{Theorem}[section]
\newtheorem{corollary}[theorem]{Corollary}
\newtheorem{lemma}[theorem]{Lemma}
\newtheorem{fact}[theorem]{Fact}
\newtheorem{proposition}[theorem]{Proposition}
\newtheorem{proposition-definition}[theorem]{Proposition-Definition}
\newtheorem{lemma-definition}[theorem]{Lemma-Definition}
\newtheorem{question}[theorem]{Question}

\newtheorem{theoremalpha}{Theorem}

\newtheorem{corollaryalpha}[theoremalpha]{Corollary}

\newtheorem*{corollary*}{Corollary}

\theoremstyle{definition}
\newtheorem{definition}[theorem]{Definition}
\newtheorem{example}[theorem]{Example}
\newtheorem{remark}[theorem]{Remark}

\numberwithin{equation}{section}

\newenvironment{sis}{\left\{\begin{aligned}}{\end{aligned}\right.}

\begin{document}

\title{A complete classification of modular compactifications of the universal Jacobian}

%\author{Marco Fava, Nicola Pagani, Filippo Viviani}
%\\ Dipartimento di Matematica, Universit\`a di Roma ``Tor Vergata'', Via della Ricerca Scientifica 1, 00133 Roma, Italy\\ viviani@mat.uniroma2.it}

%\author{}
%\date{}
\author{Marco Fava}
\address{Marco Fava, Mathematics Institute, University of Warwick, Coventry, CV4 7AL, United Kingdom}
\email{marco.fava@warwick.ac.uk}

\author{Nicola Pagani}
\address{Nicola Pagani, Department of Mathematical Sciences, University of Liverpool, Liverpool, L69 7ZL, United Kingdom}
\email{pagani@liv.ac.uk}
\urladdr{http://pcwww.liv.ac.uk/~pagani/}

\author{Filippo Viviani}
\address{Filippo Viviani, Dipartimento di Matematica, Universit\`a di Roma ``Tor Vergata'', Via della Ricerca Scientifica 1, 00133 Roma, Italy}
\email{viviani@mat.uniroma2.it}

\keywords{Compactified Jacobians, nodal curves.}

\subjclass[msc2000]{{14H10}, {14H40}, {14D22}.}

%		\begin{abstract}
%We study compactified universal Jacobians (stacks or spaces) over the moduli stack $\Mbargn$ of stable $n$-pointed curves of genus $g$. Firstly, we give a complete combinatorial classification of all compactified universal Jacobians, fine and not. Then we define a subclass of compactified universal Jacobian stacks, called \emph{classical}, which, in the fine case, coincides with the compactified universal Jacobians studied by Kass-Pagani \cite{Kass_2019} and by Melo \cite{melo2019}, and we prove that their relative good moduli spaces are locally projective over $\Mbargn$. Next, we investigate when two compactified universal Jacobians (the stacks or their good moduli spaces) are isomorphic over $\Mbargn$ and we describe a resolution of the universal family of a compactified universal Jacobian over $\Mbargn$ in terms of a compactified universal Jacobian over $\Mbargnbis$. Finally, we show that for $n=0$ there is only one compactified universal Jacobian space for every degree, namely the one constructed by Caporaso \cite{Cap94}, while for $n\geq 1$ we study the poset of compactified universal Jacobian stacks, determining in particular the maximal and submaximal elements. 
	%\end{abstract} %%%%%%%%%

\begin{abstract}
This is the third paper in a series, following \cite{FPV1} and \cite{FPV2}.

We classify all modular compactifications of the universal Jacobian over $\Mbargn$, both as stacks and as their relative good moduli spaces. Our main result gives a combinatorial parametrization of compactified universal Jacobian stacks by \emph{$V$-functions} on a  stability domain $\mathbb{D}_{g,n}$ of half-vine types (two-components topological types with a chosen side); under this correspondence, fine compactifications are exactly the general $V$-functions. 

We single out the \emph{classical} compactified universal Jacobians, namely those induced by numerical polarizations (relative $\mathbb{R}$-line bundles on the universal curve $\Cbargn/\Mbargn$), recovering the constructions of Kass--Pagani \cite{Kass_2019} and Melo \cite{melo2019} in the fine case, and we prove that their good moduli spaces are locally projective over $\Mbargn$. 

We  determine when two compactified universal Jacobians are isomorphic over $\Mbargn$ and describe a resolution of the universal family via a compactified Jacobian over $\Mbargnbis$. 

Finally, we analyse the poset $\Sigma_{g,n}$ of compactified universal Jacobians, an extension of the poset of regions of the hyperplane arrangement of classical stability conditions $\mathcal{A}_{g,n}$ studied in \cite{Kass_2019}.  We prove that for $n=0$ all compactified universal Jacobians are those constructed by Caporaso \cite{Cap94}. We then give an explicit description of the submaximal elements of $\Sigma_{g,n}$ for all $n$, generalizing the stability walls in the classical stability space $\mathcal{A}_{g,n}$ from  \cite{Kass_2019}. 
\end{abstract}

\maketitle
	\bigskip
	
	\tableofcontents

\section{Introduction}    

For any pair of integers $(g,n)\in \NN^2$ such that $2g-2+n>0$, the \emph{universal Jacobian stack} of type $(g,n)$ is the stack $\J_{g,n}$ parametrizing pairs $(C,p_1, \ldots, p_n,L)$, where $(C,p_1, \ldots, p_n)$ is an element of the stack $\M_{g,n}$ of $n$-pointed smooth projective connected curves of genus $g$, and $L$ is a line bundle on $C$. The stack $\J_{g,n}$ admits a decomposition into a disjoint union of irreducible stacks $\coprod_{\chi \in \Z}\J_{g,n}^\chi$, where $\J_{g,n}^\chi$ parametrizes line bundles of Euler characteristic $\chi$. We will denote by $J_{g,n}:=\J_{g,n}\fatslash \Gm$, that is, the rigidification of $\J_{g,n}$ by the group $\Gm$ of scalar automorphisms (and similarly $J_{g,n}^\chi:=\J_{g,n}^\chi\fatslash \Gm$) and call $J_{g,n}$ the \emph{universal Jacobian space}. 

%The relative good moduli space of $\J_{g,n}\to \M_{g,n}$ is the universal Jacobian space $J_{g,n}=\coprod_{\chi\in \Z}J_{g,n}^\chi$ and the morphism $\J_{g,n}\to J_{g,n}$ is the $\Gm$-rigidification. 

\vspace{0.2cm}

The aim of this paper is to study all modular compactifications of $\J_{g,n}^\chi$ over the moduli stack $\Mbargn$ of stable $n$-pointed nodal curves of genus $g$, by which we mean the following. 

   A \textbf{compactified universal Jacobian stack} of characteristic $\chi\in \Z$ over $\Mbargn$ is an open substack $\ov \J_{g,n}^{\chi}$ of the stack $\TF^{\chi}_{g,n}$ of relative rank-$1$ torsion-free sheaves of characteristic $\chi$ on $\Cbargn/\Mbargn$, admitting a relative proper good moduli space $\ov J_{g,n}^\chi\rightarrow \Mbargn$, called a \textbf{compactified universal Jacobian space}.  A compactified universal Jacobian stack $\ov \J_{g,n}^\chi$ (resp. space $\ov J_{g,n}^\chi$) is called \emph{fine} if $\ov J_{g,n}^{\chi}=\ov \J_{g,n}^\chi\fatslash \Gm$.
   This terminology  is justified, a posteriori, by the fact that if $\ov \J_{g,n}^{\chi}$ is fine then the universal sheaf $\I_{g,n}$ on the universal family $\ov \J_{g,n}^\chi\times_{\Mbargn} \Cbargn\to \ov \J_{g,n}^\chi$ descends (non uniquely) to a tautological sheaf $\I_{g,n}^{\taut}$ on $\ov J_{g,n}^\chi\times_{\Mbargn} \Cbargn\to \ov J_{g,n}^\chi$. This follows from the fact that the $\Gm$-gerbe $\ov \J_{g,n}^\chi \to \ov J_{g,n}^{\chi}=\ov \J_{g,n}^\chi\fatslash \Gm$ is trivial, which is obvious if $n\geq 1$ (because the universal family has sections), and which we will  show in Corollary \ref{C:fineMg} for $n=0$.

   The set of compactified universal Jacobian stacks over $\Mbargn$ is a poset under the natural inclusion relation. Note that any such compactified universal Jacobian stack $\ov \J_{g,n}^\chi$ (resp. space $\ov J_{g,n}^\chi$) is a "compactification" of the universal Jacobian stack (resp. space) since the restriction of $\ov \J_{g,n}^\chi$ (resp. of $\ov J_{g,n}^\chi$) to $\M_{g,n}$ is $\mathcal{J}_{g,n}^\chi$ (resp. $J_{g,n}^\chi$).

\subsection*{History and applications}

The search for compactified universal Jacobians (stacks and spaces) has been carried out by many authors using different approaches over the last thirty years. The first such compactification is due to Caporaso \cite{Cap94}, who constructed a compactified universal Jacobian space over $\ov\M_g$ using GIT of suitable Hilbert schemes of curves (see also \cite{pandharipande1995compactification} for another construction of this space using slope semistability). The corresponding compactified universal Jacobian stack over $\ov\M_g$ has been later studied by Caporaso \cite{Cap08} and Melo \cite{melo2009}, and it has been extended to $\Mbargn$ by Melo \cite{melo2011}. Later on, plenty of fine compactified universal Jacobians have been constructed over $\Mbargn$, independently, by Kass-Pagani \cite{Kass_2019} (building upon the work of Oda-Seshadri \cite{Oda1979CompactificationsOT} and Simpson \cite{simpson}) and by Melo \cite{melo2019} (building upon work of Esteves \cite{esteves}). Indeed, the two constructions produce the same set of fine compactified universal Jacobians  (see \cite[Rmk. 4.6]{Kass_2019} and \cite[Prop.~4.17]{melo2019}), that we call \emph{classical} fine compactified universal Jacobians.  Finally, Fava \cite{fava2024} (building upon the work of Pagani-Tommasi \cite{pagani2023stability} and Viviani \cite{viviani2023new}) has recently classified all \emph{fine} compactified universal Jacobians  over $\Mbargn$, discovering, in particular, the existence of non-classical fine compactified universal Jacobians for $g,n>0$ (outside of a finite, small range of pairs $(g,n)$).

\vspace{0.2cm}

Compactified universal Jacobians have found several applications, among which we mention: a modular extension of the Torelli map (see \cite{alexeev}) and a Torelli theorem for stable curves (see \cite{caporasoviviani}); the study of the birational geometry (e.g. Kodaira dimension and Iitaka fibration) of $J_g$ (see \cite{BFV} and \cite{CMKVbirational}); the study of the tropicalization of $\J_{g,n}$ (see \cite{AP20}, \cite{MMUV}, \cite{AAPT}), its relation to the logarithmic universal Jacobian (see \cite{MW}, \cite{MMUVW}); its relation with the double ramification cycles (see \cite{dudin}, \cite{KPH}) and with the logarithmic double ramification cycles (see \cite{Mol23}, \cite{HMPPS}, \cite{MolchoFourier}), and the calculation and wall-crossing phenomena for universal Brill-Noether classes (see \cite{kp2}, \cite{PRvZ}, and \cite{APag}). Recently, the cohomology of compactified universal Jacobians has also been the subject of investigation (see \cite{Yin}, \cite{wood}, \cite{Maulikcohom}, \cite{PPSW}).

\subsection*{Our results}

The main goal of this paper is to classify \emph{all} (fine and non-fine) compactified universal Jacobians, building upon our previous works \cite{FPV1} and \cite{FPV2}.  The first observation is that, in the universal case, the stability inequalities can be defined on the components of curves with two smooth irreducible components, and they depend only on their topological types plus a choice of one of the two components: we call this a 'half-vine type'. However, the stability inequalities cannot be set independently on all half-vine types: they have to abide two types of relations, which come from the fact that the Euler characteristic of the sheaves we are considering is fixed. The first type is a compatibility between the two halves of the same vine curve. The second is a compatibility that occurs when three half-vine type have a common degeneration that is as general as possible --that is-- a curve with three smooth irreducible components, which we call 'a triangle'.   %In order to state our classification result, we  introduce some definitions.  

 The \textbf{stability domain of type $(g,n)$} is the set $\Dgn$ that parametrizes \emph{half vine graphs} of type $(g,n)$: for any stable vine graph of type $(g,n)$, i.e. a stable graph of genus $g$ and $n$ legs with two vertices and no loops,  together with the choice of one of its two vertices, we associate the element $(e;h,A)\in \Dgn$ where $e$ is the number of edges, $h$ is the genus of the chosen vertex and $A\subseteq [n]$ is the set of legs rooted at the chosen vertex. 
 
 The set $\Dgn$ comes with two natural structures:
\begin{itemize}
\item the complement $(e;h,A)^c$ of an element $(e;h,A)\in \Dgn$ is defined to be the complementary half vine graph.
\item a triangle in $\Dgn$ is a multiset (i.e. repetitions are allowed) $\Delta=[(e_1;h_1,A_1),(e_2;h_2,A_2), (e_3;h_3,A_3)]$ of $3$ elements of $\Dgn$ such that there exists a stable graph of type $(g,n)$ with three vertices $\{v_1,v_2,v_3\}$ and having no loops and at least one edge in between any pair of vertices, and such that each vertex $v_i$ has genus $h_i$, it is joined by $e_i$ edges to the other two vertices, and the legs rooted at $v_i$ are marked by the subset $A_i$.
\end{itemize}
See \eqref{E:Dgn}, and the discussion following it, for a more explicit description of $\Dgn$ together with its two natural structures outlined above.

   For any $\chi \in \Z$, denote by $\Sigma^\chi_{g,n}$ the set of all  \textbf{vine (or V-)functions}  of type $(g,n)$ and characteristic $\chi$, i.e. functions
    \begin{align*}
        \sigma:\Dgn&\to \ZZ\\
        (e;h,A)&\mapsto \sigma(e;h,A)
    \end{align*}
    satisfying the following properties:
\begin{enumerate}
\item  for any $(e;h,A)\in \Dgn$, we have 
\begin{equation*}
\sigma(e;h,A)+\sigma((e;h,A)^c)-\chi
\in \{0,1\}.
\end{equation*}
An element $(e;h,A)\in \Dgn$ is said to be \emph{$\sigma$-degenerate} if $\sigma(e;h,A)+\sigma((e;h,A)^c)=\chi$,
and \emph{$\sigma$-nondegenerate} otherwise.

\item   for each triangle $\Delta=[(e_1;h_1,A_1), (e_2;h_2,A_2), (e_3;h_3,A_3)]$ of $\Dgn$, we have that:
\begin{enumerate}
 \item if two among the elements $\Delta$ are $\sigma$-degenerate, then so is  the third. 
            \item the following holds
            \begin{equation*}
            \sum_{i=1}^{3}\sigma(e_i; h_i,A_i)-\chi
            \in \begin{cases}
                \{1,2\} \quad \textup{ if $(e_i;h_i,A_i)$ is $\sigma$-nondegenerate for all $i$};\\
                \{1\} \quad \textup{ if there exists a unique $i$ such that } 
                 \textup{ $(e_i;h_i,A_i)$ is $\sigma$-degenerate};\\
                \{0\} \quad \textup{ if $(e_i;h_i,A_i)$ is $\sigma$-degenerate for all $i$}.
            \end{cases}
        \end{equation*}
\end{enumerate}
\end{enumerate}
We  write $\chi=|\sigma|$ and we set  $\Sigma_{g,n}:=\coprod_{\chi \in \ZZ}\Sigma_{g,n}^\chi$.  

 The \emph{degeneracy subset} of $\sigma$ is the collection
\begin{equation*}
\D(\sigma):=\{(e;h,A)\in \Dgn: (e;h,A) \text{ is $\sigma$-degenerate}\}.
\end{equation*}
(This is stable under taking complements, and it encodes the information of which half-vine types admit strictly semistable sheaves). We say that $\sigma$ is \emph{general}  if $\D(\sigma)=\emptyset$.

The space of V-functions $\Sigma_{g,n}$ of type $(g,n)$ is a poset under the following order relation (see Definition~\ref{D:Sigma-pos})
  $$
  \sigma_1\geq \sigma_2 \Longleftrightarrow 
  \begin{sis}
  &|\sigma_1|=|\sigma_2|,\\
  & \sigma_1(e;h,A)\geq \sigma_2(e;h,A) \text{ for any } (e;h,A)\in \Dgn.\\
  \end{sis}
  $$

In order to state our main classification result, we first observe that the set $\Dgn$ describes the combinatorial type of the biconnected subcurves (i.e. subcurves that are  connected and whose complement is also connected) of \emph{all} stable curve of type $(g,n)$. Indeed, for any $(X,p_i)\in \Mbargn$ there is a function
\begin{equation*}
    \begin{aligned}
      \type=\type_X: \BCon(X) & \longrightarrow \Dgn\\
      Y & \mapsto \type(Y):=(|Y\cap Y^c|; g(Y),\{i\in [n]\: : p_i\in Y\}),
    \end{aligned}
\end{equation*}
where $\BCon(X)$ denotes the set of biconnected subcurves of $X$. 

\begin{theoremalpha}\label{T:thmA}(see Theorem \ref{T:cJUniv} and Proposition \ref{P:Stabgn})
    There is an anti-isomorphism of posets  
$$
\begin{aligned}
\Sigma_{g,n} & \xrightarrow{\cong} \left\{\text{Compactified universal Jacobian stacks over $\Mbargn$}\right\},\\
\sigma &\mapsto \ov \J_{g,n}(\sigma):=\left\{
\begin{aligned} 
& I \in  \TF_{g,n}^{|\sigma|}: \: \chi((I_{|X})_Y)\geq \sigma(\type_X(Y)) \\ 
& \text{ for any } X\in \Mbargn \text{ and any } Y\in \BCon(X) 
\end{aligned} 
\right\},
\end{aligned}
$$
where $(I_{|X})_Y$ denotes the torsion-free quotient of the restriction of the sheaf $I_{|X}$ on $Y$. 

Moreover, $\sigma$ is general if and only if $\ov \J_{g,n}(\sigma)$ is fine. 
\end{theoremalpha}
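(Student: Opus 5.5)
The plan is to reduce the universal statement to the fibrewise classification of compactified Jacobians of a single nodal curve from \cite{FPV1} and \cite{FPV2}. There, compactified Jacobian stacks of a fixed curve $X$ (open in $\TF_X$, with proper good moduli space) correspond to \emph{V-stability conditions} on $X$. These are functions $\s$ on $\BCon(X)$ satisfying the complementarity and triangle axioms for triples of biconnected subcurves which pairwise meet and cover $X$. The correspondence sends $\s$ to the locus of sheaves with $\chi(I_Y)\geq \s(Y)$, and genericity of $\s$ corresponds to fineness. We will also use the companion description over a base: an open substack of $\TF^\chi_{g,n}$ admits a relative proper good moduli space over $\Mbargn$ if and only if it is a family of V-stability conditions $\{\s_X\}_{X}$ that is compatible with specializations. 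By compatibility we mean that whenever $X$ specializes to $X'$, with $\BCon(X)\hookrightarrow \BCon(X')$ via the induced contraction of dual graphs, we have $\s_X(Y)=\s_{X'}(Y')$ for the corresponding subcurves.

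\textbf{Step 1 (V-functions give compactifications).} Given $\sigma\in\Sigma_{g,n}^\chi$, set $\s_X:=\sigma\circ\type_X$. The complementarity axiom for $\s_X$ is axiom (1) for $\sigma$, since $\type_X(Y^c)=\type_X(Y)^c$. A triangle of biconnected subcurves $Y_1,Y_2,Y_3$ of $X$ contracts to a three-vertex stable graph with no loops and pairwise edges, so $[\type(Y_i)]$ is a triangle in $\Dgn$; hence axiom (2) gives the triangle axioms for $\s_X$. Compatibility with specialization is automatic, because $\type$ is preserved under the contraction maps. This shows that $\ov\J_{g,n}(\sigma)$ is a compactified universal Jacobian stack. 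Genericity of all $\s_X$ is equivalent to $\D(\sigma)=\emptyset$, which gives the fineness claim.

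\textbf{Step 2 (every compactification comes from a V-function).} Conversely, let $\ov\J$ be a compactified universal Jacobian stack, with associated family $\{\s_X\}$. We must show that $\s_X(Y)$ depends only on $\type_X(Y)$. For $(e;h,A)\in\Dgn$, any $(X,Y)$ of that type specializes from the vine curve obtained by contracting all edges inside $Y$ and inside $Y^c$. By compatibility, $\s_X(Y)$ equals the value on that vine curve. Vine curves of a fixed half-vine type form an irreducible locus, namely a boundary stratum that is a quotient of a product of $\M_{h,|A|+e}$ and $\M_{g-h-e+1,\dots}$. We then need the value to be locally constant along this locus. This holds because it is determined by openness of $\ov\J$ in a family over the stratum: the condition $\chi(I_Y)\geq s$ is constructible and constant along equisingular deformations, since the stratum's universal curve keeps the decomposition $Y\cup Y^c$. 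Hence we may define $\sigma(e;h,A)$. Axioms (1) and (2) for $\sigma$ follow by realizing every triangle of $\Dgn$ on an actual three-component curve, which is exactly the defining condition of a triangle.

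\textbf{Step 3 (order and the main obstacle).} The map is order-reversing and injective: $\sigma_1\geq\sigma_2$ means the inequalities for $\sigma_1$ are stronger, so $\ov\J(\sigma_1)\subseteq\ov\J(\sigma_2)$. Conversely, inclusion forces $\sigma_1\geq\sigma_2$ by testing on vine curves, using that on a vine curve $X$ the sheaves with prescribed $\chi(I_Y)$ realize each threshold. The main obstacle I expect is the monodromy/irreducibility point in Step 2: when the two vertices have equal genus and complementary markings, there is an automorphism of the vine graph swapping them. Even then the half-vine data distinguish $Y$ from $Y^c$, since $(e;h,A)=(e;h,A)^c$ only when $A=A^c=\emptyset$. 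In the case $n=0$ with $h=g-h-e+1$, one must check, as in \cite{FPV2}, that monodromy swapping the two components is compatible with the axiom. Axiom (1) then forces $\sigma(e;h,\emptyset)$ to be determined by $\chi$ (namely $\lceil\chi/2\rceil$), so no ambiguity arises.
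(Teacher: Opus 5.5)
There is a genuine gap, and it sits in the input you take as given rather than in your combinatorics. The results of \cite{FPV1} and \cite{FPV2} do not say that every compactified Jacobian stack of a fixed nodal curve $X$ is of the form $\ov\J_X(\s)$. What is proved (Theorem~\ref{T:VcJ-nod}, via \cite[Thm.~B]{FPV2}) is that this holds exactly for the smoothable, equivalently universally smoothable, ones. So your ``companion description over a base'' is not a general fact you can quote. Its ``if'' direction is \cite[Thm.~A]{FPV1} (Fact~\ref{F:VcJ}), and that is all your Step~1 needs. Its ``only if'' direction, however, says that an arbitrary compactified universal Jacobian $\ov\J^\chi_{g,n}$ comes with a specialization-compatible family $\{\s_X\}$ of V-stabilities. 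That is precisely the surjectivity half of the statement, and your Step~2 begins by assuming it.

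The missing idea is the one place where the particular family $\Cbargn\to\Mbargn$ matters: it is versal at every point. The argument then runs as follows.
\begin{enumerate}
\item For every geometric point $X$, pull $\ov\J^\chi_{g,n}$ back to the effective semiuniversal deformation of $X$. This shows that the fibre $(\ov\J^\chi_{g,n})_X$ is universally smoothable.
\item By Theorem~\ref{T:VcJ-nod}, that fibre therefore equals $\ov\J_X(\s^X)$ for a unique V-stability $\s^X$.
\item \cite[Thm.~8.8]{FPV1} then shows that $\{\s^X\}$ is compatible with \'etale specializations and recovers $\ov\J^\chi_{g,n}$. This is Theorem~\ref{T:cJ-nodal} under Hypothesis~(A), as used in Theorem~\ref{T:cJUniv}.
\end{enumerate}

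Once this is inserted, the rest of your argument goes through and matches the paper's Proposition~\ref{P:Stabgn}: transport along $\type_X$, recovery of the value from vine curves by smoothing the nodes outside $Y\cap Y^c$, and the triangle axiom checked on triangular curves. Your line-bundle test on vine curves is a valid direct substitute for \cite[Lemma~8.10]{FPV1} in the order comparison.

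Two small corrections:
\begin{itemize}
\item Constancy along a vine stratum follows directly from specialization-compatibility and irreducibility of the stratum; no constructibility argument is needed.
\item In the self-complementary case, axiom~(1) on a single curve still allows $\s_X(C_1)\neq\s_X(C_2)$. What forces equality is the monodromy, i.e.\ the $\Aut(G)$-invariance in Remark~\ref{R:VStab-Uni}. Only after that does axiom~(1) pin the common value to $\lceil\chi/2\rceil$.
\end{itemize}
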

The bijection between the set of \emph{fine} compactified Jacobians and the set of \emph{general} universal V-stability conditions was shown by Fava in \cite[Thm. A]{fava2024}, building upon the results of \cite{pagani2023stability} and \cite{viviani2023new}. The compactified universal Jacobian space  associated to $\ov \J_{g,n}(\sigma)$ will be denoted by $\ov J_{g,n}(\sigma)$.

The proof of Theorem \ref{T:thmA} goes as follows. Given any compactified universal Jacobian $\ov \J_{g,n}^\chi$, its fibers over $\Mbargn$ are universally smoothable compactified Jacobians and hence they are V-compactified Jacobians by the results of \cite{FPV1} and \cite{FPV2} (see Theorem \ref{T:VcJ-nod}). This implies that $\ov \J_{g,n}^\chi$ is the V-compactified Jacobian for a unique relative V-stability condition on $\Cbargn/\Mbargn$ (see Theorem \ref{T:cJUniv}). Finally, we conclude by identifying the set of relative V-stability conditions on $\Cbargn/\Mbargn$ with the set of  V-functions of type $(g,n)$, see Proposition \ref{P:Stabgn}.

%We will denote by $\ov J_{g,n}(\sigma)$ the relative good moduli space of $\ov \J_{g,n}(\sigma)\to \Mbargn$, which is a proper algebraic space over $\Mbargn$. 

\vspace{0.1cm}

As a corollary of the above classification result, we can prove that any two fine compactified universal Jacobian spaces have the same coohomology groups and the same derived category.

\begin{corollaryalpha}\label{C:stesso-HD}(see the end of Section \ref{Sec:cuJ}) 
    Let $\ov J_{g,n}(\sigma_i)$ (for $i=1,2$) be two fine compactified universal Jacobian spaces over $\Mbargn$.  Then we have that:
    \begin{enumerate}
        \item \label{C:isoH} There is an isomorphism 
        $$
        H^*(\ov J_{g,n}(\sigma_1)_{\CC}, \CC)\cong H^*(\ov J_{g,n}(\sigma_2)_{\CC}, \CC)
        $$
        as graded modules over the cohomology algebra $H^*((\Mbargn)_{\CC}, \CC)$.
        \item \label{C:isoD} There is a $\D^b(\Mbargn)$-linear  isomorphism of derived categories
        $$
        \D^b(\ov J_{g,n}(\sigma_1))\cong \D^b(\ov J_{g,n}(\sigma_2)).
        $$ 
        \end{enumerate}
\end{corollaryalpha}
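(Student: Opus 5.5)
Both statements are instances of the principle that two smooth (Deligne–Mumford, or algebraic space) varieties related by a suitable birational correspondence over a common base share invariants: for (1) the Batyrev–Kontsevich principle that $K$-equivalent smooth proper varieties have equal Betti/Hodge numbers; for (2) the Bondal–Orlov/Kawamata conjecture, proved in special situations such as standard flops or via Bridgeland's Fourier–Mukai construction. I would avoid general conjectures and use the universal sheaf. The strategy is to realize both fine compactifications as fine moduli spaces of the same kind of objects on the same family, and then construct an explicit Fourier–Mukai transform relative to $\Mbargn$.

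\textbf{Step 1 (smoothness and common open part).} By Theorem~\ref{T:thmA}, $\ov\J_{g,n}(\sigma_i)$ are open substacks of $\TF_{g,n}^\chi$, which is smooth over $\Mbargn$ in the sense that $\TF$ of a nodal family over a smooth base is smooth (locally the deformation theory of rank-1 torsion-free sheaves at a node is unobstructed). Since $\sigma_i$ are general, $\ov J_{g,n}(\sigma_i)=\ov\J_{g,n}(\sigma_i)\fatslash\Gm$ are smooth proper DM stacks/algebraic spaces over $\Mbargn$, both containing $J_{g,n}^\chi$ as a dense open. I would then reduce to adjacent chambers. The poset $\Sigma^\chi_{g,n}$ of V-functions should be connected through wall-crossings: two general $\sigma_1,\sigma_2$ can be joined by a chain of general V-functions in which consecutive ones differ by changing $\sigma$ on a single pair $\{(e;h,A),(e;h,A)^c\}$ through a nongeneral $\tau$ with $\D(\tau)=\{(e;h,A),(e;h,A)^c\}$ and $\tau\le \sigma,\sigma'$. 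This chain is obtained by moving one half-vine type at a time, checking the triangle axioms each time.

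\textbf{Step 2 (local wall crossing).} For such $\sigma\geq\tau\leq\sigma'$, both $\ov\J(\sigma)$ and $\ov\J(\sigma')$ are open in $\ov\J(\tau)$, and their complements are the loci of sheaves that are $\tau$-semistable but not stable, destabilized on curves of vine type $(e;h,A)$. The good moduli maps $\ov J(\sigma)\to\ov J(\tau)\leftarrow\ov J(\sigma')$ are then isomorphisms away from the strictly semistable locus. Over that locus, the fibres are projective spaces of the form $\PP(\mathrm{Ext}^1(I_Y,I_{Y^c}))$ and $\PP(\mathrm{Ext}^1(I_{Y^c},I_Y))$, of dimensions $e-1$ on each side. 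The picture is therefore that of a standard Atiyah/Mukai-type flop: symmetric, with normal bundles $\O(-1)^{\oplus e}$ fibrewise. This is a Thaddeus-style VGIT wall-crossing of a smooth stack at a $\Gm$-fixed locus with weights $\pm1$ of equal multiplicity $e$.

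\textbf{Step 3 (conclusion).} For (2), I would apply the window/magic-window theorem of Halpern-Leistner (or Ballard–Favero–Katzarkov) for GIT wall-crossings of smooth stacks with equal weights. Since the $\Gm$-weights on the normal bundle sum to zero, one gets a $\D^b(\Mbargn)$-linear equivalence between $\D^b(\ov\J(\sigma))$ and $\D^b(\ov\J(\sigma'))$. Restricting to weight-$1$ (or weight-$\chi$) subcategories, which correspond to the rigidifications as the gerbes are trivial, gives the equivalence for $\ov J$. Composing along the chain proves (2). For (1), I would either take Hochschild/periodic cyclic homology of the linear equivalence (via Orlov's theorem, which preserves $H^*$ as a module up to grading issues), or argue directly: the two sides are $K$-equivalent through the common blowup along the flopped loci. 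Better still, their classes in the Grothendieck ring of varieties relative to $\Mbargn$ coincide, since both are stratified compatibly with strata $\PP^{e-1}$-bundles over the same base. By purity this gives equality of Hodge/Betti numbers fibrewise and compatibility over $H^*(\Mbargn)$, via the decomposition theorem applied to the proper maps to $\Mbargn$.

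The main obstacle I expect is Step 2: verifying that the local structure at the wall is exactly a symmetric $\Gm$-weight situation with equal multiplicities $e$, globally over $\Mbargn$, including the self-intersecting strata where several vine curves degenerate simultaneously. A secondary difficulty is the chain-connectivity of general V-functions in Step 1.
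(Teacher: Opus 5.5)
\textbf{The gap is your Step~1.} The chain of simple wall-crossings you describe does not exist in general; it fails whenever $g\geq 2$ and $n\geq 1$.

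Suppose $\sigma\geq\tau\leq\sigma'$ with $\sigma,\sigma'$ general and $\D(\tau)=\{y,y^c\}$. By Remark~\ref{R:Sigma-pos}, both $\sigma$ and $\sigma'$ agree with $\tau$ outside $\{y,y^c\}$. So along a chain of such crossings, the value at a fixed $x\in\Dgn$ can change only at a step where $\{x,x^c\}$ is itself the degeneracy subset of some V-function.

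Now take $x=(3;0,\emptyset)$, or any unmixed $(e;h,\emptyset)$ with $e\geq 2$ and $2h-2+e\leq g-1$. Such a pair cannot be realizable. A realizable pair would be submaximal, since $\emptyset$ is the maximum for $n\geq1$. By Corollary~\ref{C:walls-Vfun} (resting on Propositions~\ref{P:deg-piccoli} and~\ref{P:reduceto1}), an unmixed pair $\{(e;h,\emptyset),(e;h,\emptyset)^c\}$ with $e\geq 2$ is submaximal only if $2h-2+e\geq g$.

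Hence $\sigma(3;0,\emptyset)$ is constant along your chain. On the other hand, the general classical V-functions $\sigma_L$, with $L=\beta\omega_\pi+\sum_i\alpha_i\Sigma_i\in\PicRel^\chi_{g,n}(\RR)$, take the value $\lceil\beta\rceil$ there, and this varies with $\beta$. So these $\sigma_L$ cannot all be joined by your chain.

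These values can change only across the non-simple submaximal walls $W_\delta$, $1\leq\delta\leq g-1$, and there your Step~2 model fails. Triangles with all three types in $W_\delta$ exist, so polystable sheaves with three summands and stabilizer $\Gm^3$ occur. The crossing is then not a single $\Gm$-VGIT with weights $\pm1$ of multiplicity $e$.

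Even if you allowed such walls, it is unknown whether any two maximal elements of $\Sigma^\chi_{g,n}$ are joined through submaximal ones. The paper leaves this open for $n\geq 2$ and proves it only for $n=1$ (Theorem~\ref{T:conn-h1}). Since non-classical fine compactifications exist, the arrangement $\A_{g,n}$ does not supply the path either. Both of your parts are obtained by composing along this chain, so neither is established.

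\textbf{How the paper avoids wall-crossing.} For (1), it uses that $\ov J_{g,n}(\sigma_i)$ is regular and that $f_i\colon\ov J_{g,n}(\sigma_i)\to\Mbargn$ has full support. This follows from the argument of \cite[Thm.~5.12]{Migliorini_2021}, whose only use of classicality is the motivic class of the fibres; that class is the same for every fine V-compactified Jacobian of a nodal curve by \cite[Cor.~2.6]{viviani2023new}. By the decomposition theorem, each $Rf_{i*}\CC$ is then determined by its restriction over $\Mgn$, where both equal $J^\chi_{g,n}$. Hence $Rf_{1*}\CC\cong Rf_{2*}\CC$, and this gives the isomorphism of $H^*(\Mbargn)$-modules. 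Equality of Betti numbers, or of classes in a Grothendieck ring, would not by itself give the module statement.

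For (2), the paper starts from the Poincar\'e line bundle on the big open subset of $\ov J_{g,n}(\sigma_1)\times_{\Mbargn}\ov J_{g,n}(\sigma_2)$ where, at each node, one of the two sheaves is free. It pushes this forward to the Arinkin sheaf $\ov\P$, which is maximal Cohen--Macaulay and flat over both factors. It then checks \'etale locally that the convolution of $\ov\P^\vee$ with $\ov\P$ is $\O_\Delta[-g]$, adapting the proof of \cite[Thm.~6.2]{MRV2}.

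This is exactly the relative Fourier--Mukai transform you announced in your opening paragraph and then did not pursue. It relates any two fine compactifications directly, so no connectivity of $\Sigma^\chi_{g,n}$ is needed.
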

Part \eqref{C:isoH} follows for classical fine compactified Jacobian spaces (assuming $n\geq 1$) by \cite[Thm.~0.6]{Maulikcohom} and it was also proved (purely combinatorially) in \cite[Thm. 1.2]{PPSW}\footnote{The proof of loc. cit. is given for any two universal PT-assignments \cite{pagani2023stability}; however, universal PT-assignments over $\Mbargn$ and general V-functions of type $(g,n)$ coincide by Proposition \ref{P:Stabgn} and \cite[Ex. 4.25]{FPV1}.}; Part \eqref{C:isoD} was proved for classical fine compactified Jacobian spaces (assuming $n\geq 1$) in \cite[Thm. 1.2(d)]{MolchoFourier}.

\vspace{0.2cm}

We then restrict to a subclass of compactified universal Jacobians coming from $\R$-line bundles on the universal family $\Cbargn/\Mbargn$, which we call \emph{classical} compactified universal Jacobians. We follow \cite{Kass_2019}, where the authors identified the real relative Picard group of $\Cbargn\to\Mbargn$ as a vector space of stability conditions for compactified universal Jacobians, and endowed it with a wall-and-chamber decomposition given by stability hyperplanes and stability polytopes.

Denote by  $\PicRel_{g,n}^{\ZZ}(\RR)$ the $\R$-vector space consisting of relative $\R$-line bundles on $\pi:\Cbargn\to \Mbargn$ whose $\pi$-relative degree $\deg_{\pi}$ is an integer. It is well-known (see Section \ref{Sec:classcJ} and the references therein) that the relative Picard group of $\Cbargn/\Mbargn$ is the abelian group generated by:
\begin{itemize}
    \item the relative dualizing line bundle $\omega_\pi$; 
    \item the image $\Sigma_i$ of the $i$-th section of $\Cbargn/\Mbargn$ (for all $1\leq i \leq n$);
    \item the boundary line bundles $\{\O(C_{(h,A)})\}_{(h,A)\in \Bgn}$ on $\Cbargn$, where  \begin{equation*}
        \Bgn:=\{(h,A)\: :  0\leq h\leq g, A\subseteq [n], 2h-2+|A|>0, 2g-2h+|A^c|>0\}, 
        \end{equation*}
    and $C_{(h,A)}$ is the divisor of $\Cbargn$ whose generic point is a curve made of two smooth irreducible components $C_1$ and $C_2$ of genera, respectively, $h$ and $g-h$, meeting at a node, and containing the marked points $p_i$ such that, respectively, $i\in A$ or $i\in A^c$, and in such a way that if $(h,n)\neq (\frac{g}{2},0)$ then the tautological point lies on $C_1$;
\end{itemize}
subject to the following relations:
\begin{itemize}
    \item $\O(C_{(h,A)})+\O(C_{(h,A)^c})=0$ where $(h,A)^c:=(g-h,A^c)$; 
    \item if $g=1$ then $\omega_\pi=0$;
    \item if $g=0$ then $\Sigma_1=\ldots=\Sigma_n$ and $\omega_\pi=-2\Sigma_1$.
\end{itemize}

From the above description of $\PicRel_{g,n}(\ZZ)$, we deduce the equality  
\begin{equation*}
    \PicRel_{g,n}^\ZZ(\RR)=\left\{L
    =\beta\omega_{\pi}+\sum_{i=1}^n \alpha_i\Sigma_i+\sum_{(h,A)\in \Bgn}\gamma_{(h,A)}\O(C_{(h,A)})\: : \deg_{\pi}(L)=(2g-2)\beta+\sum_{i=1}^n \alpha_i\in \ZZ\right\}.
\end{equation*}

By combining Lemma \ref{L:arr-Uni} with Proposition \ref{P:Stabgn}, we obtain a map
 \begin{equation}\label{E:map-sigma2}
 \begin{aligned}
 \sigma_-:  \PicRel_{g,n}^{\ZZ}(\RR)  & \rightarrow \Sigma_{g,n}\\
   L & \mapsto \sigma_L(e;h,A):=
   \begin{cases}
\lceil \beta(2h-2+1)+\sum_{i\in A} \alpha_i- \gamma_{(h,A)}+\gamma_{(h,A)^c} \rceil & \text{ if } e=1,\\
\lceil \beta(2h-2+e)+\sum_{i\in A} \alpha_i \rceil& \text{ if } e\geq 2,
\end{cases}
   \end{aligned}
 \end{equation}
 such that $|\sigma_L|=\deg_{\pi}(L)$ and whose fibers are the regions of $\PicRel_{g,n}^{\ZZ}(\RR)$ with respect to the following arrangement of hyperplanes  \begin{equation*}
\begin{aligned} 
\A_{g,n}:=& \bigcup_{\substack{(1;h,A)\in \Dgn \\ k\in \ZZ}}\Bigg\{(2h-2+1)\omega_{\pi}^{\vee}+\sum_{i\in A}\Sigma_i^{\vee}+\O(C_{(h,A)})^{\vee}=k\Bigg\}\\
& \bigcup_{\substack{(e;h,A)\in \Dgn \text{ with } e\geq 2\\ m\in \ZZ}}\Bigg\{(2h-2+e)\omega_{\pi}^\vee+\sum_{i\in A}\Sigma_i^\vee=m\Bigg\},
\end{aligned} 
\end{equation*} 
where $(-)^\vee\in \PicRel_{g,n}^\Z(\R)^\vee$ denotes the functional dual to a certain element. 
We will denote by $[L]$ the region of $\PicRel_{g,n}^{\ZZ}(\RR)$ containing $L$ and we set $\sigma_{[L]}:=\sigma_L$. Observe that  $\sigma_{[L]}$ is general if and only if $[L]$ is a chamber, i.e. a maximal dimensional region.

%The aim of this section is to describe the classical compactified universal Jacobians over $\Mbargn$, extending the work of \cite{Kass_2019} from the fine case to the general case. 

We can now state the classification result of classical compactified Jacobians over $\Mbargn$.

\begin{theoremalpha}\label{T:thmB}(see Theorem \ref{T:cla-cJUni}) 
\noindent 
\begin{enumerate}
\item \label{T:thmB1}  We have an order-reversing injection of posets  
  $$
  \begin{aligned}
   \left\{
   \begin{aligned}
   & \text{Regions of } \PicRel_{g,n}^{\ZZ}(\RR)\\
   &\text{ with respect to } \A_{g,n}
   \end{aligned}\right\}  & \hookrightarrow
   \left\{\text{Compactified universal Jacobian stacks over } \Mbargn \right\}\\
   [L] &\mapsto \ov \J_{g,n}([L]):=
   \left\{
\begin{aligned} 
& I \in  \TF_{g,n}^{\deg_{\pi}(L)}: \: \chi((I_{|X})_Y)\geq \deg_Y(L_{|X}) \\ 
& \text{ for any } X\in \Mbargn \text{ and any } Y\in \BCon(X)
\end{aligned} 
\right\}.
  \end{aligned}
  $$
 \item \label{T:thmB2}  For any $L\in \PicRel_{g,n}^{\ZZ}(\RR)$, the compactified universal Jacobian space $\ov J_{g,n}([L])$, associated to the stack $\ov \J_{g,n}([L])$, is  locally projective over $\Mbargn$. 
  \end{enumerate}  
\end{theoremalpha}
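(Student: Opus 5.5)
Part \eqref{T:thmB1} reduces to Theorem \ref{T:thmA} together with the map $\sigma_-$ of \eqref{E:map-sigma2}. First I check that $\ov\J_{g,n}([L])=\ov\J_{g,n}(\sigma_L)$. For a stable curve $X$ and $Y\in \BCon(X)$ with $\type_X(Y)=(e;h,A)$, the degree $\deg_Y(L_{|X})$ is a linear function of the coefficients of $L$. Concretely, $\deg_Y\omega_\pi=2g(Y)-2+e$ and $\deg_Y\Sigma_i=\delta_{i\in A}$. The boundary classes $\O(C_{(h',A')})$ restrict to zero on fibres unless there is a separating node with the right type. This happens only when $e=1$, and then the contribution is $-\gamma_{(h,A)}+\gamma_{(h,A)^c}$. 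This is the content of Lemma \ref{L:arr-Uni}.

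Since $\chi((I_{|X})_Y)$ is an integer, the inequality $\chi\ge \deg_Y(L_{|X})$ is equivalent to $\chi\ge\lceil\deg_Y(L_{|X})\rceil=\sigma_L(\type_X(Y))$. Hence the two stacks coincide, and Theorem \ref{T:thmA} shows that $\ov\J_{g,n}([L])$ is a compactified universal Jacobian stack. The map only depends on the region, because regions are exactly the fibres of $\sigma_-$.

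Injectivity then follows from injectivity of $\sigma\mapsto\ov\J_{g,n}(\sigma)$ in Theorem \ref{T:thmA} together with the fact that the fibres of $\sigma_-$ are precisely the regions of $\A_{g,n}$. For the order, I order regions by closure: $[L']\le[L]$ if $[L]\subseteq\overline{[L']}$, so that the face lies below the chamber. Passing from a region to a face in its closure turns some non-integral values $\deg_Y$ into integers, so $\lceil\cdot\rceil$ can only drop. Hence $\sigma_{L'}\ge\sigma_L$ and $\ov\J([L'])\subseteq\ov\J([L])$, which together with the anti-isomorphism of Theorem \ref{T:thmA} gives order reversal. I still need to check that this matches the convention for the poset of regions fixed in Section \ref{Sec:classcJ}.

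For part \eqref{T:thmB2} I would follow the Simpson / Oda–Seshadri GIT approach used in \cite{Kass_2019}, extended to the non-fine case.

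1. Work Zariski (or étale) locally on $\Mbargn$ over a base $S$ where $\Cbargn$ admits a relatively ample line bundle. I perturb $L$ by a small real multiple of it, staying in the closure-compatible region, so that the stability condition becomes slope stability with respect to a polarization: a relatively ample $\mathbb{Q}$-line bundle $H$ together with a vector bundle $E$ with $\deg_Y$ proportional to $L$.
2. Show that, for this $(H,E)$, Simpson's slope semistability with respect to $E$ (Esteves' "$E$-semistability") on each fibre coincides with the condition $\chi((I_{|X})_Y)\ge\deg_Y(L_{|X})$ on biconnected subcurves. Checking only biconnected subcurves suffices, by the reduction used in \cite{FPV1}.
3. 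Simpson's construction then gives $\ov\J_{g,n}([L])_S$ as a GIT quotient stack of a Quot scheme, and its good moduli space as a projective GIT quotient over $S$. By uniqueness of good moduli spaces this is $\ov J_{g,n}([L])_S$, which is therefore projective over $S$.

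The main obstacle is step 2. Realizing an arbitrary real $L$, including those on walls (so non-fine), by a genuine vector-bundle polarization that works uniformly over $S$ is delicate. Fractional coefficients must be cleared by replacing $L$ by a positive multiple and twisting. One also has to ensure that the boundary coefficients $\gamma$, which are nonzero only across separating nodes, are matched by local line bundles. This is why I only expect local projectivity. I would first try to handle this by choosing, on each chart, $E$ as a sum of powers of $H$ twisted by the boundary divisors $C_{(h,A)}$, and then verify the numerical identity in step 2 directly.
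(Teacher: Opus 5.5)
Your Part (1) is correct and is exactly the paper's argument: since $\chi((I_{|X})_Y)\in\ZZ$, one has $\ov\J_{g,n}([L])=\ov\J_{g,n}(\sigma_L)$, and Lemma \ref{L:arr-Uni} together with Theorem \ref{T:cJUniv} gives well-definedness, injectivity and order reversal. Your convention has a slip: if $[L]\subseteq\overline{[L']}$ you want $[L]\le[L']$, i.e. faces below chambers, which is the order for which $\sigma_-$ is order-preserving.

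Part (2), however, has a genuine gap. The one substantive step is exhibiting an honest polarization whose semistability condition is exactly $\chi((I_{|X})_Y)\ge\deg_Y(L_{|X})$, and you leave it open. Your Step 1 also cannot work as stated, for two reasons.
\begin{itemize}
\item Adding $\epsilon H$ with $H$ relatively ample shifts $\deg_\pi$ by $\epsilon\deg_\pi H\notin\ZZ$, so you leave $\PicRel^{\ZZ}_{g,n}(\RR)$.
\item If $L$ lies on a wall, every integral value $\deg_Y(L_{|X})$ becomes non-integral and its ceiling jumps. You therefore land in a different region and change the stack.
\end{itemize}
Your worry about matching the boundary coefficients $\gamma_{(h,A)}$ by local line bundles is also misplaced. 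The $C_{(h,A)}$ are divisors on $\Cbargn$ itself, so every element of $\PicRel_{g,n}(\ZZ)$ is a global line bundle.

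The missing idea is short.
\begin{itemize}
\item Since $\A_{g,n}^\chi$ is a rational arrangement, every region, of any dimension, contains rational points. Moving $L$ inside its region does not change $\ov\J_{g,n}([L])$, so you may assume $L=M/r$ with $r\in\NN_{>0}$ and $M\in\PicRel_{g,n}(\ZZ)$ of relative degree $r\chi$.
\item Put $E=\O_{\Cbargn}^{\oplus(r-1)}\oplus M^{-1}$. Then $\chi(I_Y\otimes E_{|Y})=r\bigl(\chi(I_Y)-\deg_Y(L_{|X})\bigr)$.
\item Hence the Esteves polarization $\psi(E)$ equals $\un\deg(L)$. So $\ov\J_{g,n}([L])=\ov\J_{\Cbargn/\Mbargn}(E)$, and likewise for the good moduli spaces.
\item Projectivity then follows from \cite[Thm. 7.1]{FPV1}, applied over an \'etale cover of $\Mbargn$ by an algebraic space. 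This, and not the boundary coefficients, is why the statement is only local.
\end{itemize}
If you prefer Simpson's GIT, replace the perturbation by a twist. Take $N$ relatively ample and integral and $k\gg0$; then $H'=L+kN$ is a relatively ample $\QQ$-line bundle. The map $I\mapsto I\otimes N^{\otimes k}$ identifies $\ov\J_{g,n}([L])$ with the locus of Simpson $H'$-semistable sheaves of characteristic $\deg_\pi H'$. This holds because twisting adds $k\deg_Y(N_{|X})$ to both sides of $\chi(I_Y)\ge\deg_Y(L_{|X})$.
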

We call the compactified universal Jacobian stacks of the form $\ov \J_{g,n}([L])$ \textbf{classical compactified universal Jacobian stacks} of type $(g,n)$  and their associated good moduli spaces $\ov J_{g,n}([L])$ \textbf{classical compactified universal Jacobian spaces}. 

Part \eqref{T:cla-cJUni1} of the above Theorem follows from \cite[Sec. 4, 5]{Kass_2019} in the case of fine classical compactified universal Jacobians, which correspond to the chambers (i.e. the maximal dimensional regions) of $\PicRel_{g,n}^{\ZZ}(\RR)$  with respect to $\A_{g,n}$. See also \cite{melo2019} for another construction of the classical fine compactified universal Jacobians over $\Mbargn$.

It was shown in \cite[Thm. 3.9]{fava2024} that if (and only if) $n>0$, $g>0$ and $2g+n\geq 8$ there exist fine compactified Jacobians that are not classical. We improve on this by showing that in the complementary range all compactified universal Jacobians (including the non fine ones) are classical (see Section \ref{Sec:equiv-cUJ}).

\vspace{0.2cm}

We then investigate when two  among the universal compactified Jacobian stacks $\{\ov \J_{g,n}(\sigma)\}$ or their associated compactified Jacobian spaces  $\{\ov J_{g,n}(\sigma)\}$ are isomorphic over $\Mbargn$, generalizing what proved by Kass-Pagani \cite[Section~6.2]{Kass_2019} for classical fine compactified universal Jacobians. In order to answer this question, we consider the following group 
\begin{equation*}
    \wPR_{g,n}:=\PicRel_{g,n}(\ZZ)\rtimes (\ZZ/2\ZZ),
\end{equation*}
where $\PicRel_{g,n}(\ZZ)$ is the integral relative Picard group of $\Cbargn/\Mbargn$ and  $ \ZZ/2\ZZ$ acts on $\PicRel_{g,n}(\ZZ)$ by mapping a line bundle to its inverse. The group $\PicRel_{g,n}(\ZZ)$ is generated by the relative dualizing sheaf $\omega_\pi$ of the universal family $\pi:\Cbargn\to \Mbargn$, the images $\{\Sigma_i\}$ of the universal $n$ sections of $\pi$ and the boundary line bundles $\{\O(C_{(h,A)})\}$ of $\Cbargn$, subject to some explicit relations (see Section \ref{Sec:classcJ}).

The group $\wPR_{g,n}$ acts on the stack $\TF_{g,n}$ in the following way: (see Proposition \ref{P:PR-TF})
  \begin{itemize}
    \item an element $L\in \PicRel_{g,n}(\ZZ)$ acts by sending $\I\in \TF_{g,n}$ to 
    $$L\cdot \I:=\I\otimes L.$$
     \item the generator $\iota$ of $\Z/2\Z$  acts by sending $\I\in \TF_{g,n}$ to 
     $$\iota \cdot \I:=\I^*:={\mathcal Hom}(\I,\omega_{\Cbargn/\Mbargn}).$$
    \end{itemize}
 
 In Proposition \ref{P:PR-TF}, we show that the action of $\wPR_{g,n}$ permutes the compactified universal Jacobian stacks in such a way that the bijection of Theorem \ref{T:thmA} becomes $\wPR_{g,n}$-equivariant with respect to the action of $\wPR_{g,n}$  on $\Sigma_{g,n}$ given by (see Remark \ref{R:PR-Pol}):
\begin{itemize}
    \item an element $L=\beta\omega_{\pi}+\sum_{i=1}^n \alpha_i\Sigma_i+\sum \gamma_{(h,A)}\O(C_{(h,A)})\in \PicRel_{g,n}(\ZZ)$ acts by 
$$(L\cdot \sigma)(e;h,A):=\sigma(e;h,A)+
    \begin{cases}
 \beta(2h-2+1)+\sum_{i\in A} \alpha_i- \gamma_{(h,A)}+\gamma_{(h,A)^c} & \text{ if } e=1,\\
 \beta(2h-2+e)+\sum_{i\in A} \alpha_i& \text{ if } e\geq 2;
\end{cases}$$
    \item the generator $\iota$ of $\Z/2\Z$ acts by 
    $$(\iota \cdot \sigma)(e;h,A):=
     \begin{cases}
      -\sigma(e;h,A) & \text{ if } (e;h,A)\in \D(\sigma), \\
-\sigma(e;h,A)+1 & \text{ if } (e;h,A)\not \in \D(\sigma).    
     \end{cases}$$
\end{itemize}

%Moreover, we show in Proposition \ref{P:finite-orb} that there are only finitely many orbits for the action of $\wt \PR_{g,n}$ on $\Sigma_{g,n}$.

We will also need a decomposition of the poset $\Sigma_{g,n}$ of  V-functions of type $(g,n)$ into a separating and a non-separating part.  First of all, we can partition the stability domain $\Dgn$  into a separating and a non-separating domain
$$
\Dgn=\Dgns\bigsqcup \Dgnns,
$$
where 
\begin{equation*}
  \begin{aligned}
& \Dgns:=\{(1;h,A): (1; h,A)\in \Dgn\} \\
&  \Dgnns:= \{(e;h,A): (e; h,A)\in \Dgn \text{ and } e\geq 2\}.
  \end{aligned}  
\end{equation*}

Then we can define the poset $\Sigmans$ (resp. $\Sigmas$) of \emph{non-separating} (resp. \emph{separating}) V-function of type $(g,n)$ as the set of functions from $\Dgnns$ (resp. $\Dgns$) to $\Z$ satisfying the same properties as in the above definition of V-functions and the same order relation (see Definition \ref{D:Sigmagn2} for more details). We therefore get an isomorphism of posets  (see Lemma \ref{L:Sigma-s-ns})
$$
\begin{aligned}
    \Sigma_{g,n} & \xrightarrow{\cong}  {}\Sigmas_{g,n}\times \Sigmans_{g,n}\\
    \sigma&\mapsto (\sigma^s:=\sigma_{|\Dgns},\sigma^{ns}:=\sigma_{|\Dgnns}).
\end{aligned}
$$

%We will call $\sigma^s$ (resp. $\sigma^{ns}$) the separating (resp. non-separating) component of $\sigma\in \Sigma_{g,n}$.

The action of $\wPR_{g,n}$ on $\Sigma_{g,n}$ preserves the above decomposition and its restriction to $\Sigmans_{g,n}$ factors via the quotient (see Lemma \ref{L:Sigma-ac})
$$
\PR_{g,n}:=\wPR_{g,n}/\langle \O(C_{(h,A)})\rangle_{(h,A)}\cong \PicRel^{op}_{g,n}(\ZZ)\rtimes (\ZZ/2\ZZ),
$$
where $\PicRel^{op}_{g,n}(\ZZ)$ is the relative Picard group of the universal curve $\C_{g,n}/\M_{g,n}$ (see \eqref{E:PRbis} and the discussion following it).

\begin{theoremalpha}\label{T:thmC}(see Corollary \ref{C:iso-UniSt} and Theorem \ref{T:iso-UniSp})
Let $\sigma_1,\sigma_2\in \Sigma_{g,n}$.
 \begin{enumerate}
\item Then $\ov \J_{g,n}(\sigma_1)$ and $\ov \J_{g,n}(\sigma_2)$ are isomorphic over $\Mbargn$ if and only if $\sigma_1$ and $\sigma_2$ lie in the same orbit for the action of $\wPR_{g,n}$ on $\Sigma_{g,n}$.
\item The following conditions are equivalent:
     \begin{enumerate}
         \item 
         the V-functions $\sigma_1^{ns}$ and $\sigma_2^{ns}$ lie in the same orbit for the action of $\PR_{g,n}$ on $\Sigmans_{g,n}$.
         \item  the compactified universal Jacobian stacks $\ov \J_{g,n}(\sigma_1)$ and $\ov \J_{g,n}(\sigma_2)$ are isomorphic over the open locus $\Mbargn^{ns}\subset \Mbargn$ parametrizing curves with no separating nodes.
         \item  the relative good moduli spaces $\ov J_{g,n}(\sigma_1)$ and $\ov J_{g,n}(\sigma_2)$ are isomorphic over $\Mbargn$.
          \item  the relative good moduli spaces $\ov J_{g,n}(\sigma_1)$ and $\ov J_{g,n}(\sigma_2)$ are isomorphic over $\Mbargn^{ns}$.
     \end{enumerate}
 \end{enumerate}
\end{theoremalpha}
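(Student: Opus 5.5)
Both parts rest on one principle: an isomorphism over $\Mbargn$ between compactified universal Jacobians has to come from the action of $\wPR_{g,n}$. Granting this, the claims reduce to combinatorics on V-functions via Theorem \ref{T:thmA}. The "if" directions are immediate. By Proposition \ref{P:PR-TF}, every element of $\wPR_{g,n}$ gives an automorphism of $\TF_{g,n}$ over $\Mbargn$ (tensoring by $L$, or dualizing). This automorphism sends $\ov\J_{g,n}(\sigma)$ onto $\ov\J_{g,n}(\gamma\cdot\sigma)$, which proves the "if" part of (1).

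For part (2), implication (a)$\Rightarrow$(c) goes as follows. By Lemma \ref{L:Sigma-ac}, the $\langle \O(C_{(h,A)})\rangle$ part acts only on $\sigma^s$, and $\PR_{g,n}$ acts on $\sigma^{ns}$. So, up to $\wPR_{g,n}$, we may assume $\sigma_1^{ns}=\sigma_2^{ns}$. We then show that the good moduli space depends only on $\sigma^{ns}$. The idea: at a separating node the fibre Jacobian splits as a product, and changing the inequality on a separating half-vine type only replaces polystable objects by S-equivalent ones with the same good moduli point. Concretely, we build a morphism $\ov J_{g,n}(\sigma_1)\to\ov J_{g,n}(\sigma_2)$ by gluing, near each separating node, sheaves through the canonical identification $\ov J(X)\cong \ov J(Y)\times\ov J(Y^c)$. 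We check that it is bijective on geometric points and that both spaces are normal, so it is an isomorphism. This also requires that compatible choices of the separating values always exist, which follows from the product decomposition $\Sigma_{g,n}\cong\Sigmas_{g,n}\times\Sigmans_{g,n}$ of Lemma \ref{L:Sigma-s-ns}. Implications (c)$\Rightarrow$(d) and (a)$\Rightarrow$(b) are trivial or follow in the same way: over $\Mbargn^{ns}$ only $\sigma^{ns}$ matters, since no separating biconnected subcurves occur there.

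For the converses, we take an isomorphism $\phi$ over $\Mbargn$ (or over $\Mbargn^{ns}$). We first restrict $\phi$ to the generic point, or to $\M_{g,n}$, where it is an isomorphism of universal Jacobians $J^{\chi_1}_{g,n}\cong J^{\chi_2}_{g,n}$ over $\M_{g,n}$. The key input is a rigidity statement: every such automorphism of torsors is translation by a section composed possibly with $-1$. We take this from Franchetta-type results describing the relative Picard group, i.e. $\PicRel^{op}_{g,n}(\ZZ)$, which the paper presents via \eqref{E:PRbis}. Composing $\phi$ with the corresponding element of $\PR_{g,n}$, we reduce to the case where $\phi$ is the identity on $\M_{g,n}$. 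Since the spaces are separated and the locus over $\M_{g,n}$ is dense, $\phi$ is then the unique extension of the identity. Hence it preserves the stratification by multidegree and the good-moduli points over each vine curve with no separating nodes.

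We then compare stability on general vine curves of type $(e;h,A)$ with $e\geq2$. There the good moduli points record exactly which multidegrees are polystable, and these are determined by $\sigma^{ns}(e;h,A)$ together with degeneracy. So $\sigma_1^{ns}=\sigma_2^{ns}$, which gives (d)$\Rightarrow$(a) and (b)$\Rightarrow$(a).

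For part (1), the stacks see the separating data as well. In the stacky setting the boundary twists $\O(C_{(h,A)})$ are exactly the automorphisms of $\TF_{g,n}$ that are trivial over the non-separating locus. Using the same extension argument on stacks, the identity over $\Mbargn^{ns}$ extends to an isomorphism over $\Mbargn$ only after twisting by such boundary bundles. The twist is fixed by comparing $\sigma^s$ on general curves of each separating type $(1;h,A)$.

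I expect the main obstacle to be the extension step: showing that an isomorphism that is the identity over $\M_{g,n}$ extends uniquely and preserves the torsion-free sheaf data on the stack. For stacks this is not automatic, because the stacks are not separated. I would first argue on the spaces, and lift to stacks using that these are V-compactified Jacobians determined by their sets of points.
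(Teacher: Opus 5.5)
\textbf{Main gap: the converse (d)$\Rightarrow$(a), and hence (b)$\Rightarrow$(a).} The failing step is ``$\phi$ is the unique extension of the identity, hence it preserves the stratification by multidegree and the good-moduli points over each vine curve''. Uniqueness of the extension says nothing about how $\phi$ acts on boundary fibres, and the principle is false in general. Tensoring by a boundary bundle $\O(C_{(h,A)})$ is the identity over $\M_{g,n}$, already on the stacks. Yet it shifts multidegrees on curves of separating type $(1;h,A)$ and changes $\sigma^s$. Nothing in your argument uses $e\geq 2$, so it would equally ``prove'' that $\ov J_{g,n}(\sigma)$ determines $\sigma^s$, contradicting (a)$\Rightarrow$(c).

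The missing ingredient is a codimension-two argument on the semiuniversal deformation $\fX\to\Spec R_X$ of a vine curve with $e\geq 2$ nodes:
\begin{itemize}
\item The locus of reducible fibres has codimension $e\geq 2$. For $e=1$ it is a divisor, which is exactly where the boundary twists live.
\item Off that locus every sheaf is stable, so both stacks equal $\TF$ there and, by density and separatedness, the isomorphism is the identity.
\item The isomorphism preserves the stable loci. This holds because the strictly semistable locus is characterized intrinsically, via the irreducible components of the central fibre for $e\geq 3$ and via its singular locus for $e=2$.
\item On the stable loci you can lift to the stacks, since the $\Gm$-gerbe is trivial ($\fX$ has sections).
\item The two universal sheaves then agree off codimension two, hence everywhere by the $S_2$ extension result \cite[Cor.~7.2]{Kass_2019}.
\end{itemize}
So the stable loci coincide inside $\TF$, which forces $\s_1=\s_2$. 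This is Lemma~\ref{L:vine-cla}; without it the converse is not proved.

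\textbf{(a)$\Rightarrow$(c) is not yet a construction.} There is no morphism of stacks $\ov\J_{g,n}(\sigma_1)\to\ov\J_{g,n}(\sigma_2)$. The fibrewise splittings $\ov J(Y)\times\ov J(Y^c)$ are not canonically identified when the characteristics on $Y$ differ: identifying them amounts to choosing a boundary twist. Pointwise identifications also do not glue to a morphism over $\Mbargn$. The paper avoids this as follows. Choose $\ov\sigma_i\leq\sigma_i$ with the same non-separating part and all separating types degenerate. Then $\ov\sigma_1,\ov\sigma_2$ lie in one $W_{g,n}$-orbit (Lemma~\ref{L:Sigma-ac}). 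The open inclusion $\ov\J_{g,n}(\sigma_i)\subseteq\ov\J_{g,n}(\ov\sigma_i)$ induces a morphism of good moduli spaces. That morphism is an isomorphism by flatness and the fibrewise Lemma~\ref{L:iso-sepnod}.

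\textbf{Part (1).} Arguing on the spaces and lifting cannot recover $\sigma^s$, which the spaces do not see, and you never show $\D(\sigma_1^s)=\D(\sigma_2^s)$. Once (2) is available, (1) is short:
\begin{itemize}
\item A stack isomorphism induces one of good moduli spaces, so after acting by a lift of an element of $\PR_{g,n}$ you may assume $\sigma_1^{ns}=\sigma_2^{ns}$.
\item A stack isomorphism over $\Mbargn$ preserves whether the fibre over the generic point of each vine stratum is fine. Hence $\D(\sigma_1)=\D(\sigma_2)$, and $\D$ is $\wPR_{g,n}$-invariant.
\item Lemma~\ref{L:Sigma-ac}(1) then supplies the boundary twist matching $\sigma^s$.
\end{itemize}
The paper instead obtains (1) from the stronger birational criterion, Theorem~\ref{T:bir-UniSt}.
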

Part (1) of the above Theorem is proved by Kass-Pagani in \cite[Sec. 6.2]{Kass_2019} for classical fine compactified universal Jacobian stacks, under the weaker assumption that there exists a birational morphism over $\Mbargn$ between them. Along the same lines, we can also characterize the pairs of compactified universal Jacobian stacks (not necessarily fine) such that there exists a birational morphism over $\Mbargn$ from one to the other (see Theorem \ref{T:bir-UniSt}). 

Since we show in Proposition \ref{P:finite-orb} that there are finitely many orbits for the action of $\wt \PR_{g,n}$ on $\Sigma_{g,n}$, the above Theorem implies that there are finitely many isomorphism classes of compactified universal Jacobian stacks (and spaces) over $\Mbargn$.

\vspace{0.2cm}

Our next result is the description of the resolution of singularities of the universal family over a compactified Jacobian stack over  $\Mbargn$ in terms of a compactified Jacobian stack over $\ov\M_{g,n+1}$. 

Recall that there is a canonical isomorphism between the universal family $\pi:\Cbargn\to \Mbargn$ (endowed with the $n$ sections $\sigma_i$ for $1\leq i \leq n$) and $\Mbargnbis$:
\begin{equation}\label{E:univ-Mg2}
\begin{tikzcd}
  \Mbargnbis \arrow["\Phi", rd]  \arrow[rr, "\Upsilon", "\cong"']& & \Cbargn \arrow[ld, "\pi"']   \\
     & \Mbargn  \arrow[ur, bend right=30, "\sigma_i"']\arrow[ul, bend left=30, "\sigma_i'"]&  
\end{tikzcd}
\end{equation}
where the isomorphism $\Upsilon$ and the morphism $\Phi$ are defined on geometric points by
$$
\Upsilon(C):=(C^{\st}, \st(p_{n+1})) \quad \text{ and } \quad \Phi(C)=C^{\st},$$
with $\st_C=\st: C=(C,p_1,\ldots,p_{n+1})\to C^{\st}=(C,p_1,\ldots, p_n)^{\st}$ being the \emph{stabilization} morphism that forgets the last marked point $p_{n+1}$ and then it stabilizes the resulting $n$-pointed curve. 
%The universal curve $\Cbargn/\Mbargn$ is endowed with $n$ canonical sections defined on geometric points by  (for any $1\leq i \leq n$)
%$$
%\sigma_i(C)=(C,p_i).
%$$
%Via the isomorphism $\Upsilon^{-1}$, the section $\sigma_i$ is sent to the section $\sigma_i':=\Upsilon^{-1}\circ \sigma_i$ given on geometric points by 
%$$
%\sigma_i'(C)=B_{p_i}(C),
%$$
%where $B_{p_i}(C)$ is the bubbling of $C$ at $p_i$, i.e. the stable $n+1$-pointed curve obtained by gluing a smooth rational curve $E$ with the $n$-pointed curve $C$ at the old marked point $p_i$, and then putting the new $i$-th and the $(n+1)$-th marked point on $E$. 

The next result provides a lifting of the diagram in \eqref{E:univ-Mg2} to the stack $\TF_{g,n}^\chi$ or to any compactified universal Jacobian stack $\ov\J_{g,n}(\sigma)$. Observe that the universal family over $\TF_{g,n}^\chi$, together with its $n$ canonical sections $\wh{\sigma_i}$
\begin{equation*}
\begin{tikzcd}
  \Cbargn\times_{\Mbargn} \TF_{g,n}^\chi \arrow["\wh{\pi}", r]  & \TF_{g,n}^\chi \arrow[l, bend left=30, "\wh{\sigma_i}"]
\end{tikzcd}
\end{equation*}
is given by pulling back the universal family $\Cbargn/\Mbargn$, together with its $n$ canonical sections, along the forgetful morphism $\TF_{g,n}\to \Mbargn$. We will denote by $\I_{g,n}$ the universal sheaf on $\Cbargn\times_{\Mbargn} \TF_{g,n}^\chi$. 

%We now want to lift the morphisms $\Phi$ and $\Upsilon$, together with the sections $\sigma_i'$, over $\TF_{g,n}^\chi$. It turns out that we cannot lift $\Phi$ and $\Upsilon$ to the entire stack $\TF_{g,n+1}^\chi$ but only to an open substack, as we now show.

\begin{theoremalpha}\label{T:thmD} (see Theorem \ref{T:univ-TF}, Proposition \ref{P:inv-Phi}, Remark \ref{R:Omegapm})
\noindent 
\begin{enumerate}
    \item \label{T:thmD1} There exists an open substack $\TFo_{g,n+1}^\chi$ of $\TF_{g,n+1}^\chi$ whose geometric points are given by 
$$
    \TFo_{g,n+1}^\chi(k):=\{(C,I)\in \TF_{g,n+1}^\chi(k)\: : \chi(I_E)\geq 0 \text{ and } \chi(I_{E^c})\geq \chi \text{ for any exceptional  } E \subset C\}
    $$
    fitting into a commutative diagram 
\begin{equation*}
\begin{tikzcd}
  \TFo_{g,n+1}^\chi \arrow["\wh{\Phi}", rd]  \arrow[rr, "\wh{\Upsilon}"]& & \Cbargn\times_{\Mbargn} \TF_{g,n}^\chi \arrow[ld, "\wh{\pi}"']   \\
     & \TF_{g,n}^\chi  \arrow[ur, bend right=30, "\wh{\sigma_i}"']\arrow[ul, bend left=30, "\wh{\sigma_i}'"]& 
\end{tikzcd}
\end{equation*}
lying over the diagram in \eqref{E:univ-Mg2}, defined on geometric points by
$$
\begin{sis}
& \wh{\Phi}(C,I)=(C^{\st},\st_*(I)),\\   
& \wh{\Upsilon}(C,I)=(C^{\st},\st(p_{n+1}),\st_*(I)),\\
& \wh{\sigma_i}'(C,I)=(B_{p_i}(C),\st^*(I)).
\end{sis}
$$
Moreover, $\wh{\Upsilon}$ is a good moduli space morphism and $\wh{\sigma_i}'$ (for $1\leq i \leq n$) are sections of $\wh{\Phi}$. 

\item \label{T:thmD2} 
The restriction of $\wh{\Upsilon}$ to the following open substacks of $\TFo_{g,n+1}^\chi$ 
$$
\begin{sis}
    & \TFp_{g,n+1}^\chi(k):=\{(C,I)\in \TF_{g,n+1}^\chi(k)\: : \chi(I_E)\geq 1 \text{ and } \chi(I_{E^c})\geq \chi \text{ for any exceptional  } E \subset C\},\\
    & \TFm_{g,n+1}^\chi(k):=\{(C,I)\in \TF_{g,n+1}^\chi(k)\: : \chi(I_E)\geq 0 \text{ and } \chi(I_{E^c})\geq \chi+1 \text{ for any exceptional  } E \subset C\},\\
\end{sis}
$$ 
induces the following isomorphisms over $\Cbargn\times_{\Mbargn} \TF_{g,n}^\chi$:
$$
\TFp_{g,n+1}^\chi\cong \PP(\I_{g,n})   \text{ and } \TFm_{g,n+1}^\chi\cong \PP(\I_{g,n}^\vee).
$$
%under which the tautological line bundle $\O(1)$ becomes isomorphic to, respectively,  
%$$
%\O_{\PP(\I_{g,n})}(1)=\wh{\sigma}_{n+1}^*(\I_{g,n+1})_{|\TFp_{g,n+1}^\chi} \text{ and } \O_{\PP(\I_{g,n}^\vee)}(1)=\wh{\sigma}_{n+1}^*(\I_{g,n+1}^\vee)_{|\TFm_{g,n+1}^\chi}
%$$
%where $\I_{g,n}$ is the universal sheaf on $\Cbargn\times_{\Mbargn} \TF_{g,n}^\chi$.
\item \label{T:thmD3} For any $\sigma\in \Sigma_{g,n}^\chi$, there exists  $ \Omega(\sigma)\in \Sigma_{g,n+1}^\chi$ such that 
 $$
 \ov \J_{g,n+1}(\Omega(\sigma))=\wh{\Phi}^{-1}(\ov\J_{g,n}(\sigma))=\wh{\Upsilon}^{-1}(\Cbargn\times_{\Mbargn} \ov\J_{g,n}(\sigma)).
 $$
Moreover, there exist two general elements $\Omega(\sigma)^+,\Omega(\sigma)^-\geq \Omega(\sigma)$ such that 
 $$
 \ov \J_{g,n+1}(\Omega(\sigma)^+)\subseteq \PP(\I_{g,n})_{|\Cbargn\times_ {\Mbargn}\ov \J_{g,n}(\sigma)} \text{ and } \ov \J_{g,n+1}(\Omega(\sigma)^-)\subseteq \PP(\I_{g,n}^\vee)_{|\Cbargn\times_ {\Mbargn}\ov \J_{g,n}(\sigma)}
 $$
 with equality if $\sigma$ is general.
\end{enumerate}    
\end{theoremalpha}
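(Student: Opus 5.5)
Part \eqref{T:thmD1} is a fibrewise statement, so the plan is to start by checking it on geometric points and then globalize using the universal curve. Take $(C,p_1,\dots,p_{n+1})$ and a rank-$1$ torsion-free sheaf $I$ on $C$. The curves contracted by $\st$ are the exceptional components $E$, namely smooth rational curves carrying $p_{n+1}$ and either two special points, or one special point and meeting the rest of $C$ in two nodes. The open conditions $\chi(I_E)\geq 0$ and $\chi(I_{E^c})\geq\chi$ say that $I$ restricted to $E$ has degree $\geq -1$ modulo the nodes where $I$ fails to be locally free. This degree bound should give two facts:
\begin{itemize}
\item $R^1\st_*I=0$;
\item $\st_*I$ is torsion-free of rank $1$ with $\chi(\st_*I)=\chi(I)$.
\end{itemize}
With these in hand, cohomology and base change applied to the relative contraction $\Upsilon^{-1}$-family over $\TF_{g,n+1}$ defines $\wh\Phi$ and $\wh\Upsilon$. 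Openness follows from semicontinuity of the $\chi$ of torsion-free quotients on the locus where the exceptional components persist. For the sections $\wh\sigma_i'$, note that $\st^*I$ on the bubble $B_{p_i}(C)$ is trivial on $E$, so it lies in $\TFo$ and pushes forward back to $I$. Hence $\wh\Phi\circ\wh\sigma_i'=\id$.

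To see that $\wh\Upsilon$ is a good moduli space morphism, I would show three things:
\begin{itemize}
\item it is cohomologically affine, for instance by exhibiting $\TFo$ étale locally as a quotient $[\Spec A/\Gm]$ over the target, where $\Gm$ is the automorphism group scaling $I$ on $E$;
\item closed points in a fibre are exactly the polystable objects, namely those with $\chi(I_E)=0$ and $I$ split at the nodes of $E$;
\item it is bijective onto the target on these points.
\end{itemize}
The fibre over $(C^{\st},q,F)$ should be a quotient stack of the (possibly non-reduced) space of extensions of $F$ at $q$. I would then use the standard criterion that a morphism with these local properties, pushing $\O$ to $\O$, is a good moduli space.

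For \eqref{T:thmD2}, the idea is to describe $\PP(\I_{g,n})$ on points over $(C^{\st},q,F)$ as quotients $F\to k_q$. If $q$ is a smooth point or $F$ is locally free at $q$, such a quotient corresponds to the kernel or an elementary modification. If $F$ is not locally free at $q$, then the bubbled curve $C$ with exceptional $E$ through $q$ carries sheaves with $\chi(I_E)=1$, that is, degree $0$ on $E$ modulo nodes. These are classified by the gluing data, which is a $\PP^1$ of quotients. Concretely, one builds $I$ from $F\to k_q$ by pulling back to the blow-up and twisting. The inverse map is $I\mapsto(\st_*I\to \st_*(I_E)\otimes k)$. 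Globally, I would construct the morphism $\TFp\to\PP(\I_{g,n})$ from the surjection $\st_*\I\to$ its evaluation at $p_{n+1}$ via $\wh\sigma_{n+1}^*$, and check it is an isomorphism fibrewise and flat. The $\TFm$ case then follows by applying the duality $\iota$, which swaps the two inequalities and exchanges $\I$ with $\I^\vee$.

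For \eqref{T:thmD3}, define $\Omega(\sigma)$ on $\Dgnbis$ by pulling $\sigma$ back along the forgetful map of half-vine types:
\begin{itemize}
\item on a type $(e;h,A)$ with $A\subseteq[n+1]$, take $\sigma(e;h,A\setminus\{n+1\})$ when the forgetful map stays stable;
\item on types destabilized by forgetting $n+1$ (exceptional types), take $0$ or $\chi$ respectively.
\end{itemize}
Then verify the V-function axioms by reducing each triangle of $\Dgnbis$ to either a triangle or a complementary pair in $\Dgn$. The equality $\ov\J_{g,n+1}(\Omega(\sigma))=\wh\Phi^{-1}(\ov\J_{g,n}(\sigma))$ is checked on biconnected subcurves using $\chi((\st_*I)_{\st(Y)})=\chi(I_Y)$ for $Y\supseteq E$ or $Y\cap E=\emptyset$. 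Finally, $\Omega(\sigma)^\pm$ are obtained by raising the exceptional values to $1$, respectively $\chi+1$, on the complement side, and by perturbing degenerate types toward the side containing $p_{n+1}$ to make them general. Equality in the general case follows from \eqref{T:thmD2}.

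The main obstacle, in my estimate, is the good moduli space claim for $\wh\Upsilon$ in \eqref{T:thmD1}, especially over points where $F$ is not locally free at $q$. There the fibre is a non-separated $\Gm$-quotient, and one has to verify $\wh\Upsilon_*\O=\O$ scheme-theoretically.
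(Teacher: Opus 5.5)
Your definition of $\Omega(\sigma)$ is correct, and so is reducing its axioms to complementary pairs and triangles of $\Dgn$. The identity you use to compare the two semistability conditions, however, is false in exactly the case that matters.

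Let $E$ be an exceptional component meeting $E^c$ in two nodes, so that $q=\st(E)$ is a node of $C^{\st}$. Take $Y'\in\BCon(C^{\st})$ with $q\in Y'\cap Y'^c$. Then $Y'$ has two biconnected lifts: $\wt Y$, which meets $E$ in one node $N$, and $\wt Y\cup E$. They satisfy $\chi(I_{\wt Y\cup E})=\chi(I_{\wt Y})+\chi(I_E)+\delta_N(I)-1$, so in general $\chi((\st_*I)_{Y'})$ cannot equal both.

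For instance, take $I\in\TFo^\chi_{g,n+1}$ free at both nodes of $E$ with $I_E\cong\O_E(1)$. Then $(\st_*I)_{Y'}=\st_*(I_{\wt Y})$, whereas $\chi(I_{\wt Y\cup E})=\chi(I_{\wt Y})+1$, even though $\wt Y\cup E\supseteq E$.

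What you actually need is $\chi((\st_*I)_{Y'})=\min\{\chi(I_{\wt Y}),\chi(I_{\wt Y\cup E})\}$. Only this makes ``$\chi(I_Z)\geq\Omega(\sigma)(\type_C(Z))$ for every lift $Z$ of $Y'$'' equivalent to ``$\chi((\st_*I)_{Y'})\geq\sigma(\type_{C^{\st}}(Y'))$''. The paper proves it by splitting into two cases:
\begin{itemize}
\item If $\chi(I_E)+\delta_N(I)\geq 1$, then $\st_*I\to\st_*(I_{\wt Y})$ is surjective because $R^1\st_*(\leftindex_{(\wt Y)^c}I)=0$.
\item If $\chi(I_E)+\delta_N(I)\leq 1$, then $\st_*(I_{\wt Y\cup E})$ is torsion-free by the argument of \eqref{E:equa3}.
\end{itemize}
In each case the target is then the torsion-free quotient of $\st_*I$ on $Y'$.

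Similarly, ``perturbing degenerate types'' does not by itself give V-functions. $\Omega(\sigma)^{\pm}$ are V-functions by Corollary~\ref{C:witness}, taking as witness the degenerate types whose marking contains $n+1$ (resp.\ does not). This works because every triangle of $\Dgnbis$ has exactly one member whose marking contains $n+1$, as in Proposition~\ref{prop:empty-is-maximum-deg}.

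For (1) and (2), the missing ingredient is the \'etale-local structure of $\wh\Upsilon$, which you defer to a ``standard criterion''. The paper obtains it from the local structure theorem of Alper--Hall--Rydh at the polystable points $(C,\wt J\oplus\O_E(-1))$, where $\st_*\wt J=J$. It combines this with the local deformation theory of a node carrying a non-free sheaf. The resulting models are:
\begin{itemize}
\item over a marked point, $[V_1\oplus V_1^*/\Gm]\to\AA^1$;
\item over a node where $J$ is not free, $[V_2\oplus V_2^*/\Gm]\to\Spec k[x,y,u,v]/(xy-uv)$.
\end{itemize}
In these models $\TFp$ and $\TFm$ are the loci where the weight-$1$ (resp.\ weight-$(-1)$) coordinates do not all vanish. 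From them one gets $\wh\Upsilon_*\O=\O$ and the good moduli property, representability and properness of $\wh\Upsilon^{\pm}$, and the $\PP^1$-fibres. Using regularity of $\TF$ and miracle flatness, one also gets that $\wh\Phi^+$ is a family of quasistable curves. You need that last fact, because ``isomorphism fibrewise and flat'' cannot be applied over $\Cbargn\times_{\Mbargn}\TF^\chi_{g,n}$, over which neither side is flat.

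Your pointwise picture in (2) is also off. Over $(D,p,J)$ with $J$ not free at the node $p$, the fibre of $\wh\Upsilon^+$ consists of the non-split extensions of $\wt J$ by $\leftindex_E I\cong\O_E(-1)$. That is two points with $I_E\cong\O_E$, together with a $\Gm$ of points with $I_E\cong\O_E(1)$. For the latter, $\st_*(I_E)\otimes k$ is two-dimensional, so your proposed inverse is not a rank-one quotient.

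Your global map, evaluation at $p_{n+1}$, is the right one: it is the map to $\PP(\I_{g,n})$ induced by $L=\wh\sigma_{n+1}^*\I_{g,n+1}$. The paper concludes with the Esteves--Pacini universal property, after checking two things:
\begin{itemize}
\item $L$ has degree $1$ on exceptional components;
\item $(\wh\Upsilon^+)_*L\cong\I_{g,n}$, first over the big open locus of line bundles and then everywhere by the reflexive-extension lemma of Kass--Pagani.
\end{itemize}
Deducing the $\TFm$ case via $\iota$ is a legitimate shortcut: $\iota$ identifies $\TFm^{\chi}$ with $\TFp^{-\chi}$. You must still check $\st_*(I^*)\cong(\st_*I)^*$, which follows from Grothendieck duality.
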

Observe that the above result provides a desingularization of the universal family $\Cbargn\times_{\Mbargn} \TF_{g,n}$, which has ordinary double point singularities (=$A_1$) in codimension $3$: 
%the stacks $\TFo_{g,n+1}^\chi$ (and its open substacks $\TFp^\chi_{g,n+1}$ and $\TFm_{g,n+1}^\chi$) are regular and the morphism $\Upsilon$ is a stacky resolution, while 
the morphisms $\Upsilon^+$ and $\Upsilon^-$ are the two canonical small crepant resolutions of the $A_1$ singularity in dimension $3$ (which are related by the Atiyah flop) and they can be realized as a local variation of GIT inside the stacky resolution $\Upsilon$ (see Proposition \ref{P:push-loc}).

\vspace{0.2cm}

We now specialize to the case $n=0$ and we show that there is essentially one compactified universal Jacobian over $\Mbarg$, namely the one constructed by Caporaso \cite{Cap94} which we now recall.

Consider the \emph{canonical V-function} of genus $g$ and characteristic $\chi$
\begin{equation*}
\begin{aligned}
\sigma_{g}^{\chi}=\sigma_{\frac{\chi}{2g-2}\omega_{\pi}}: \Dg& \longrightarrow \ZZ\\
(e;h,\emptyset)=:(e;h)& \mapsto \Big\lceil \frac{\chi}{2g-2}(2h-2+e) \Big\rceil.
\end{aligned}
\end{equation*}

The compactified universal Jacobian stack (resp. space) associated to the canonical universal V-stability of genus $g$ will be called the \emph{Caporaso's compactified universal Jacobian stack (resp. space)} and it will be denoted by 
\begin{equation*}
\ov \J_g^{\text{Cap}, \chi}:=\ov \J_g(\sigma_g^\chi)=\ov \J_g\left(\left[\frac{\chi}{2g-2}\omega_{\pi}\right]\right) \quad \left(\text{resp. } \ov J_g^{\text{Cap}, \chi}:=\ov J_g(\sigma_g^\chi)=\ov J_g\left(\left[\frac{\chi}{2g-2}\omega_{\pi}\right]\right)\right).
\end{equation*}
Indeed, the absolute good moduli space of $\ov J_g^{\text{Cap}, \chi}$ is isomorphic to Caporaso's \cite{Cap94} compactified universal Jacobian over the coarse moduli space $\ov M_g$ of $\ov \M_g$, and the stack $\ov \J_g^{\text{Cap}, \chi}$ is the one studied in \cite{Cap08} and \cite{melo2009}. 

Note that $\ov \J_g^{\text{Cap}, \chi}$ is fine (or, equivalently,  $\sigma_g^\chi$ is general) if and only if $\gcd(\chi, 2g-2)=1$.

\begin{theoremalpha}\label{T:thmE}(see Corollaries \ref{C:class-n0} and \ref{C:iso-n0})
\begin{enumerate} 
\item \label{T:thmE1} Let $\ov \J_g^{\chi}$ be a compactified universal Jacobian stack of characteristic $\chi \in \Z$ over $\ov \M_g$ and let $\ov J_g^\chi$ be its associated compactified universal Jacobian space.  Then we have that
 \begin{enumerate}[(i)]
     \item  \label{T:thmE1i} ${\ov \J_{g}^\chi}_{|\Mbarg^{ns}}={\ov \J_{g}^{Cap, \chi}}_{|\Mbarg^{ns}}$.
     \item  \label{T:thmE1ii} $\ov J_{g}^\chi$ is isomorphic to $\ov J_{g}^{Cap, \chi}$ over $\ov \M_g$. 
 \end{enumerate}
    \item \label{T:thmE2} Let $\sigma_1\in \Sigma_g^{\chi_1}$ and $\sigma_2\in \Sigma_g^{\chi_2}$. Then we have that:
   \begin{enumerate}[(i)]
       \item \label{T:thmE2i} $\ov \J_g(\sigma_1)$ is isomorphic to $\ov \J_g(\sigma_2)$ over $\ov \M_g$ if and only if $\D(\sigma_1^s)=\D(\sigma_2^s)$ and $\chi_1\equiv \pm \chi_2 \mod 2g-2$. 
        \item \label{T:thmE2ii} $\ov J_g(\sigma_1)$ is isomorphic to $\ov J_g(\sigma_2)$ over $\ov \M_g$ if and only if $\chi_1\equiv \pm \chi_2 \mod 2g-2$. 
   \end{enumerate}
\end{enumerate}    
\end{theoremalpha}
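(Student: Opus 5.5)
We reduce Theorem \ref{T:thmE} to combinatorics on $\Sigma_g=\Sigma_{g,0}$ via Theorem \ref{T:thmA} and Theorem \ref{T:thmC}. For $n=0$ the domain is $\Dg=\{(e;h)\}$, and it splits as $\Dg={}^s\Dg\sqcup{}^{ns}\Dg$.

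The key step is a classification of non-separating V-functions: we show that $\Sigmans_g^\chi$ consists of the single element $\sigma_g^{\chi,ns}$, i.e. $\sigma(e;h)=\lceil\frac{\chi}{2g-2}(2h-2+e)\rceil$ for all $e\ge 2$. To prove this, write $\sigma(e;h)=\frac{\chi}{2g-2}(2h-2+e)+\epsilon(e;h)$. The complement condition gives $\epsilon(e;h)+\epsilon((e;h)^c)\in\{0,1\}$, and the triangle conditions give near-additivity of $\epsilon$ along triangles $[(e_1;h_1),(e_2;h_2),(e_3;h_3)]$. Here $2h_i-2+e_i$ is additive, because the edges between pairs are shared. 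Without marked points the only parameters are the genus $h$ and the number of edges $e$. Using many triangles, we show $\epsilon\in[0,1)$ on all non-separating types. The argument compares a type with its degenerations: split a vertex of genus $h$ into vertices of genus $h'$ and $h-h'$, joined by varying numbers of edges. A discrete convexity/linearity argument then forces $\sigma(e;h)$ to be the round-up of the linear function $\frac{\chi}{2g-2}(2h-2+e)$, and the degenerate loci must coincide with the integrality loci. The idea is that any other integer-valued solution to the triangle inequalities would be of the form $\lceil\ell\rceil$ for an affine $\ell$ in $(h,e)$. Such an $\ell$ must be a multiple of $2h-2+e$ plus a term killed by the complement condition, and that multiple is pinned down by $|\sigma|=\chi$. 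This is the main obstacle: carrying out the induction over triangles carefully, including the small cases $e=2,3$ and the degenerate types where $\frac{\chi}{2g-2}(2h-2+e)\in\ZZ$. I would first try to reduce to triangles with vertex genera $(h,0,g-h)$ or similar and one or two edges per pair, and induct on $e$.

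Given this, part \eqref{T:thmE1i} follows immediately. Over $\Mbarg^{ns}$ only non-separating biconnected subcurves occur, so by Theorem \ref{T:thmA} the stack restricted there depends only on $\sigma^{ns}$. Part \eqref{T:thmE1ii} then follows from Theorem \ref{T:thmC}(2), implication (a)$\Rightarrow$(c).

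For \eqref{T:thmE2}, we use Theorem \ref{T:thmC}. The group $\PR_{g,0}$ is generated by $\omega_\pi$ and $\iota$. Acting by $\omega_\pi$ shifts $\sigma$ by $2h-2+e$ and changes $\chi$ by $2g-2$. The involution $\iota$ sends $\sigma_g^\chi$ to $\sigma_g^{-\chi}$, a step that needs a short check of the ceilings. Hence, by the classification, the $\PR$-orbits on $\Sigmans_g$ are indexed by $\chi$ modulo $\pm$ and modulo $2g-2$, which proves \eqref{T:thmE2ii}. For \eqref{T:thmE2i}, we must additionally analyse $\wPR_g$-orbits on $\Sigmas_g$. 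The boundary bundles $\O(C_{(h,\emptyset)})$ shift $\sigma(1;h)$ by $\mp1$ freely and independently on each pair $\{(1;h),(1;g-h)\}$. Separating types never appear in triangles with each other in an obstructing way when $n=0$; this needs checking, since a triangle with two separating sides forces the third to be separating. Consequently, two separating V-functions are in the same orbit iff their degeneracy sets agree. The constraint $\chi_1\equiv\pm\chi_2$ comes from the $\omega_\pi$ and $\iota$ parts, and combining this with the first step gives \eqref{T:thmE2i}.
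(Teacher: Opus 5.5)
\paragraph{The gap: the classification of $\Sigmans_g^\chi$ is asserted, not proved.}
Your reduction of (1) and (2) to the statement $\Sigmans_g^\chi=\{\sigma_g^{\chi,ns}\}$ is the paper's own route. It goes through Lemma~\ref{L:ugua-ns}, Theorem~\ref{T:iso-UniSp}, Corollary~\ref{C:iso-UniSt} and Lemma~\ref{L:Sigma-ac}, and your computation of the $\omega_\pi$- and $\iota$-actions on $\sigma_g^\chi$ is correct. But that classification is the whole content of the theorem, and your proposal does not prove it.

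The sentence ``a discrete convexity/linearity argument then forces $\sigma$ to be the round-up of $\frac{\chi}{2g-2}(2h-2+e)$'' is the claim itself. Your affine computation only shows that \emph{if} $\sigma=\lceil\ell\rceil$ with $\ell$ affine in $(h,e)$, then $\ell$ is essentially $\frac{\chi}{2g-2}\delta$.

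Near-additivity along triangles plus the complement relation does not by itself force such a form. In type $(g,1)$ the triangle relations on unmarked types are exactly this kind of near-additivity (see \eqref{E:sigmapr-prop}), yet there are non-classical solutions. Examples are the functions $\sigma_A$ of Proposition~\ref{P:deg-grandi} with $\emptyset\ne A\subsetneq A_\delta$ (Example~\ref{Ex:antichain}); for larger $n$ there are even non-classical general ones. So the argument must use something specific to $n=0$. Your plan (triangles with genera $(h,0,g-h)$, induction on $e$) also never explains how to compare two types $(e;h)\neq(e';h')$ with the same $\delta=2h-2+e$. That uniformity is the crux, and an induction on $e$ alone does not reach it.

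\paragraph{The missing idea: closing up a cycle.}
For $n=0$ the complement of an unmarked type is again unmarked and re-enters the relations, so one can close a cycle.
\begin{itemize}
\item Since $\delta(x)+\delta(x^c)=2g-2$, by \eqref{E:sumUni} it suffices to determine $\sigma(e;h)$, and whether $(e;h)$ is degenerate, when $\delta:=\delta(e;h)\le g-1$.
\item Write $2g-2=q\delta+r$ with $0\le r<\delta$, so $q\ge 2$.
\item Lemma~\ref{L:graph} gives a stable Hamiltonian graph of genus $g$ with $q$ vertices of type $(e;h)$. If $r>0$ it has one extra vertex of type $(f;k)=(2;r/2)$ or $(3;(r-1)/2)$, which has log-canonical degree $r$.
\item Apply Remark~\ref{R:triang} $q-2$ times to the growing arcs of the Hamiltonian cycle, and close up with \eqref{E:sumUni}. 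When $r=0$ this gives $q\sigma(e;h)-\chi\in\{0,\dots,q-1\}$, with $0$ occurring iff $(e;h)$ is degenerate. When $r>0$ it places $q\sigma(e;h)+\sigma(f;k)-\chi$ in a window of $q$ consecutive integers.
\item A window of length $q$ pins down $\sigma(e;h)$. Since $r<\delta$, induction on $\delta$ shows that both the value and the degeneracy of $(e;h)$ depend only on $\delta$. Uniformity and the round-up formula come out simultaneously.
\end{itemize}

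\paragraph{A minor point in (2)(i).}
No separating type lies in any triangle at all. The inequalities $e_i+e_j\ge e_k+2$, over all permutations, force every $e_i\ge 2$. So the independence of the separating part in (2)(i) is immediate, and your remark about triangles with two separating sides is moot.
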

 Parts \eqref{T:thmE1i} and \eqref{T:thmE2i} were announced in \cite[Sec. 9.3]{pagani2023stability} in the case of fine compactified universal Jacobians, but the proof in loc.cit. contains an error (more details in Remark~\ref{rem:correcterror}).

\vspace{0.2cm}

In  Section \ref{Sec:poset} of the paper, we study the poset $\Sigma_{g,n}$ of  V-functions of type $(g,n)$ for $n\geq 1$, or equivalently the poset of universal compactified Jacobian stacks over $\Mbargn$ for $n\geq 1$. 
%We first show in Corollary \ref{C:max-vfun} that for $n\geq 1$ the maximal elements of $\Sigma_{g,n}$ are the general V-functions (a result that is false for $n=0$ by Theorem \ref{T:thmC}). 
 To this end, we define the poset $\Deg_{g,n}$ of degeneracy subsets of type $(g,n)$ (see Definition \ref{D:degsub}) and we show in Proposition \ref{D:Deg-map} that the degeneracy map 
\begin{equation*}
    \begin{aligned}
        \D: \Sigma_{g,n}& \longrightarrow \Deg_{g,n}\\
        \sigma & \mapsto \D(\sigma)
    \end{aligned}
\end{equation*}
is order preserving, invariant under the action of $\wt \PR_{g,n}$ and upper lifting. Next, we factor the degeneracy map into a separating and a non-separating degeneracy map (see \eqref{E:Ds-Dns} and \eqref{E:Deg-dec}) and in Proposition~\ref{P:Dsep} we give a complete description of the poset of separating  V-functions and of the separating degeneracy map. The poset of non-separating V-functions and the non-separating degeneracy map turns out to be much more intricate. In Subsection \ref{Sub:n1}, we give a complete description of the image of the degeneracy map for $n=1$ (see Theorem \ref{T:pos-Degn1}) and in Subsection \ref{Sub:g1} we give partial results in the case $g=1$.

In Subsection \ref{sub:max-wall} we describe the maximal elements and submaximal elements (i.e. those that are dominated only by maximal elements) of $\Sigma_{g,n}^\chi$.

\begin{theoremalpha}\label{T:thmF}(see Corollaries \ref{C:max-vfun}, \ref{C:walls-Vfun} and \ref{C:2max-wall})
Let $n\geq 1$ and fix $\chi\in \Z$.
\begin{enumerate}
    \item \label{T:thmF1} The maximal elements of $\Sigma_{g,n}^\chi$ are exactly the general V-functions, i.e. those V-functions $\sigma$ such that $\D(\sigma)=\emptyset$.
    \item \label{T:thmF2} The submaximal elements of $\Sigma_{g,n}^\chi$ are the V-functions $\sigma$ such that $\D(\sigma)$ is equal to one of the following 
    \begin{enumerate}[(a)]
        \item $\{(e;h,A),(e;h,A)^c\}$ with either $e=1$, or $\emptyset\subsetneq A\subsetneq [n]$, or $A=\emptyset$ and $2h-2+e\geq g$, or $A=[n]$ and $2h+e\leq g$;
        \item $\displaystyle W_{\delta}:=\bigcup_{\substack{\delta \mid (2h-2+e) \\ e\geq 2}} \{(e;h,\emptyset), (e;h,\emptyset)^c\}$ for some $1\leq \delta\leq g-1$ .
    \end{enumerate}
    Moreover, each of the above subsets is a universal degeneracy subset of some element of $\Sigma_{g,n}^\chi$. 
    \item \label{T:thmF3} Every submaximal element of $\Sigma_{g,n}^\chi$ is dominated by exactly two maximal elements. 
\end{enumerate}
\end{theoremalpha}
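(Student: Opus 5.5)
We work entirely on the combinatorial side. By Theorem \ref{T:thmA}, inclusion of compactified universal Jacobian stacks corresponds, with reversed order, to the order on $\Sigma_{g,n}^\chi$. We also use the decomposition $\Sigma_{g,n}\cong \Sigmas_{g,n}\times\Sigmans_{g,n}$ of Lemma \ref{L:Sigma-s-ns}. The basic tool is a local modification. Given $\sigma$ and a complementary pair $\{\tau,\tau^c\}\subseteq\D(\sigma)$, define $\sigma'$ by raising $\sigma(\tau)$ by one and leaving all other values unchanged. Then $\sigma'$ satisfies condition (1) of the definition of V-functions, with $\tau$ now nondegenerate. We must check condition (2) on every triangle containing $\tau$. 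This is a case analysis on how many elements of the triangle are degenerate for $\sigma$.

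\emph{Part (1).} First, a general V-function is maximal. If $\sigma'\geq\sigma$ with $|\sigma'|=|\sigma|$, then adding $\sigma'(\tau)+\sigma'(\tau^c)-\chi\leq 1$ over each complementary pair forces $\sigma'=\sigma$. Conversely, let $\D(\sigma)\neq\emptyset$. We show there is a complementary pair whose raise is still a V-function; this is where $n\geq1$ enters.
\begin{itemize}
\item For separating types, Proposition \ref{P:Dsep} describes $\D(\sigma^s)$ explicitly, so we can raise one pair directly.
\item For non-separating types, we choose $\tau=(e;h,A)$ to be extremal, say maximizing a linear functional such as $\beta(2h-2+e)+\sum_{i\in A}\alpha_i$ for a generic perturbation direction $(\beta,\alpha_i)$. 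The marked point makes such directions available. Extremality should ensure that no triangle containing $\tau$ has exactly two degenerate members after the raise. The triangle sums then stay in the allowed ranges, since by condition (2) a triangle with $\tau$ degenerate has sum $\chi$ or $\chi+1$, and raising one entry moves it into the correct set.
\end{itemize}
Iterating the raise gives a general $\sigma''>\sigma$. Along the way we use Proposition \ref{D:Deg-map}: the degeneracy map is order preserving and upper lifting.

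\emph{Part (2).} A submaximal $\sigma$ admits no intermediate element. Any strictly larger non-maximal element has strictly smaller degeneracy set, by monotonicity of $\D$. So $\sigma$ is submaximal exactly when $\D(\sigma)$ is a minimal nonempty element of the image $\D(\Sigma_{g,n}^\chi)\subseteq\Deg_{g,n}$, using the upper lifting property again. We therefore classify the minimal nonempty degeneracy subsets. Condition (2a) says degeneracy is closed under ``two sides of a triangle imply the third''. So a degeneracy subset is a union of complementary pairs closed under this rule, together with arithmetic constraints on the values.
\begin{itemize}
\item A separating pair $(1;h,A)$ can be isolated, using the free parameters $\gamma_{(h,A)}$ in the classical construction $\sigma_L$.
\item A non-separating pair with $\emptyset\subsetneq A\subsetneq[n]$ can also be isolated, by choosing the $\alpha_i$ generically on a single wall.
\item For $A=\emptyset$ or $A=[n]$ the value depends only on $2h-2+e$ (and $\beta$). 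The triangle rule then propagates degeneracy among all types with $\delta\mid 2h-2+e$, where $\delta=\gcd$. This yields $W_\delta$, except in the ranges $2h-2+e\geq g$ (for $A=\emptyset$) or $2h+e\leq g$ (for $A=[n]$). In those ranges no triangle with two degenerate members exists, so the single pair survives.
\end{itemize}
Existence of each subset is shown by an explicit $\sigma_L$ (Lemma \ref{L:arr-Uni}) or a small modification of one. Minimality is shown by proving that any nonempty degeneracy subset contains one of these sets, via the propagation argument.

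\emph{Part (3).} For $\D(\sigma)=\{\tau,\tau^c\}$, the dominating general elements are obtained by raising either $\tau$ or $\tau^c$. These are distinct, and the counting argument from Part (1) shows there are no others. For $W_\delta$, all pairs must be resolved coherently. Triangle condition (2b) forces every pair in $W_\delta$ to be raised on the same ``side'', governed by the sign of a perturbation $\pm\epsilon\omega_\pi$. This again gives exactly two options.

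\emph{Main obstacle.} The main obstacle is the non-separating triangle analysis in Part (2): showing that the propagation closure of a pair with $A\in\{\emptyset,[n]\}$ is exactly $W_\delta$, outside the stated ranges. Here I would first compute triangles explicitly via the description \eqref{E:Dgn} and reduce to a gcd argument on the integers $2h-2+e$.
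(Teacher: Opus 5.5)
Your proposal has genuine gaps in Parts (1) and (2), and a weaker one in Part (3).

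\textbf{Part (1).} The argument rests on the claim that whenever $\D(\sigma)\neq\emptyset$, some single complementary pair can be made nondegenerate. This is false, so your iteration cannot start in general. Take $\sigma$ with $\D(\sigma)=W_\delta$, $1\le\delta\le g-1$, and a pair $\{x,x^c\}\subseteq W_\delta$ with $x=(e;h,\emptyset)$. This pair lies in a triangle all of whose members are in $W_\delta$:
\begin{itemize}
\item if $\delta(x)=\delta$, use $[x,x,(2;\delta,\emptyset)^c]$;
\item otherwise, use $[(\delta+2;0,\emptyset),\tilde x,x^c]$ with $\tilde x=(\tilde e;\tilde h,\emptyset)$ and $\delta(\tilde x)=\delta(x)-\delta$.
\end{itemize}
Now make only $\{x,x^c\}$ nondegenerate. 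In the second case the triangle has exactly two degenerate members, violating (2a). In the first case it has exactly one degenerate member and sum $\chi+2$ or $\chi$ (depending on whether you raise $x$ or $x^c$), violating \eqref{E:triaUni}. No ``extremal'' choice of $\tau$ avoids this. Indeed, by Theorem \ref{T:pos-Degn1} and Proposition \ref{P:reduceto1}, the only realizable degeneracy subset above $W_\delta$ is $\emptyset$. The degenerate pairs must be resolved simultaneously. The paper does this in one step: fix $j\in[n]$ and set $E=\{(e;h,A)\in D: j\in A\}$. Every triangle has exactly one member whose marking set contains $j$, so $E$ is a witness for $\emptyset\ge D$ (Proposition \ref{prop:empty-is-maximum-deg}). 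Upper lifting then gives a general $\sigma'>\sigma$. This simultaneous raise is where $n\ge1$ actually enters.

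\textbf{Part (2).} You treat submaximality as minimality of $\D(\sigma)$ under inclusion, and propose to prove it by showing every nonempty degeneracy subset contains a listed set. But $D^1\ge D^2$ in $\Deg_{g,n}$ requires a witness, not just $D^1\subseteq D^2$.
\begin{itemize}
\item Your own list is not an antichain for inclusion. $W_\delta$ with $\delta\le g-1$ strictly contains listed simple unmixed pairs $\{x,x^c\}$ with $\delta(x)\ge g$ a multiple of $\delta$. Yet $W_\delta$ is still submaximal, because no witness for $\{x,x^c\}>W_\delta$ exists (Case III in the proof of Theorem \ref{T:pos-Degn1}).
\item Conversely, ruling out a non-listed $D$ means producing a witness. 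The paper takes a lexicographically minimal mixed $x_0\in D$ and $E=\{y\in D-\{x_0,x_0^c\}: j\in A_y\}$ with $j\notin A_0$; the minimality is exactly what excludes the bad triangles (Proposition \ref{P:good}).
\item The unmixed ``propagation plus gcd'' step fails as stated. Triangle-closure alone does not generate $W_\delta$: Example \ref{E:non-real} gives triangle-closed proper subsets of $W_2$. Also, for non-classical $\sigma$ the values need not depend only on $2h-2+e$. The paper instead uses the parity argument on $\sigma(2;\delta(D))$ in Proposition \ref{P:deg-piccoli}, which uses $\sigma$ itself.
\item For existence, a classical wall through a mixed type is all of $W_{A,\delta}$ (Remark \ref{R:classic-walls}). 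So single mixed pairs are in general not of the form $\D(\sigma_L)$, and ``generic $\alpha_i$ on a single wall'' does not isolate them. Non-classical constructions are needed (Propositions \ref{prop:good-wall-realizable} and \ref{P:deg-grandi}).
\end{itemize}

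\textbf{Part (3).} The count for $W_\delta$ must be done via witnesses (Corollary \ref{C:witness}). The relevant constraint is condition (2c) on triangles inside $W_\delta$; (2b) is vacuous for $\emptyset>W_\delta$. A perturbation $\pm\epsilon\omega_\pi$ only describes classical $\sigma$.
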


Using the hyperplane arrangement $\mathcal A_{g,n}$ introduced after Equation~\eqref{E:map-sigma2}, one obtains a natural poset of stability regions and their walls (ordered by inclusion); the assignment $L\mapsto\sigma_L$ defined in \eqref{E:map-sigma2} induces an order-preserving injection of this poset into $\Sigma^\chi_{g,n}$, and Remark~\ref{R:classic-walls} compares the resulting classical walls with the submaximal elements classified in Theorem~\ref{T:thmF}.

The above description of maximal and submaximal elements is particularly relevant when studying how cohomology classes on universal Jacobians vary under wall-crossing. For example, one can define universal Brill-Noether classes
$$
\mathsf{w}_d(\sigma) \in A^{g-d}(\ov \J_{g,n}(\sigma))
$$
as in \cite[Sec. 3.d]{APag} and then study the wall-crossing behaviour of these classes when a submaximal element is crossed. The case of classical  compactified universal Jacobians and of crossing classical walls have been studied in \cite{APag}.

\subsection*{Open Questions} 

This paper leaves open some natural questions:

\begin{enumerate}
\item Are all compactified Jacobian spaces $\ov J_{g,n}(\sigma)$ locally projective over $\Mbargn$?

The answer is positive for classical compactified Jacobian spaces (see Theorem \ref{T:thmB}\eqref{T:thmB2}), but unknown in the non-classical case (even for fine compactified Jacobian spaces). See also \cite[Open Question (2)]{FPV1} for a related question. 

\item  How to characterize the V-functions of type $(g,n)$ that are classical (or equivalently, the universal compactified Jacobians over $\Mbargn$ that are classical)?

We give an answer to the above question for $n=0$, in which case all the V-functions are classical (see Theorem \ref{T:class-n0} and Remark \ref{R:class-s-ns}), and for $n=1$ (see Theorem \ref{T:class-n1}).

\item What is the structure of the poset $\Sigma_{g,n}$ of V-functions of type $(g,n)$ (and of its degeneracy map $\D$) for $n\geq 1$? We single out two natural questions:

$(a)$ Is the poset $\Sigma_{g,n}^\chi$  connected through height one, i.e. are any two maximal elements connected through a path made only of maximal and submaximal elements?

$(b)$ Is the poset $\Sigma_{g,n}^\chi$ upper-graded, i.e. is it true that all the ascending maximal chains starting from a given element have the same length?

Both questions (a) and (b) have a positive answer for $n=1$ (see  Theorem \ref{T:conn-h1} and Remark~\ref{R:oss-poset}). 
%As discussed above, in Section \ref{Sec:poset}, we give some partial results on the structure of $\Sigma_{g,n}$ and $\D$, but there seems to be much more to be discovered. 

\end{enumerate}

\subsection*{Outline of the paper} 

\vspace{0.1cm}

The paper is organized as follows.  In Section \ref{Sec:VcJ-fam}, we recall   the definition of V-compactified Jacobians for families of nodal curves and then we prove two new classification results for a fixed nodal curve (see Theorem \ref{T:VcJ-nod}) and for suitable "large" families of nodal curves (see Theorem~\ref{T:cJ-nodal}). In Section~\ref{Sec:cuJ}, we first classify universal compactified Jacobian stacks in terms of  universal V-stabilities (see Theorem \ref{T:cJUniv}) and then we show that the poset of universal V-stabilities is isomorphic to the poset of V-functions (see Proposition \ref{P:Stabgn}). In Section~\ref{Sec:classcJ}, we consider the universal V-stabilities coming from the real vector space of universal numerical polarizations (see Fact \ref{F:RelPic} and Lemma \ref{L:arr-Uni}) and their associated classical universal compactified Jacobians (see Theorem \ref{T:cla-cJUni}). In Section~\ref{Sec:equiv-cUJ}, we determine when two compactified universal Jacobian stacks or spaces are isomorphic over $\Mbargn$ (see Theorems~\ref{T:bir-UniSt} and \ref{T:iso-UniSp}). In particular, we deduce that  there are only a finite number of isomorphism classes of compactified universal Jacobians over $\Mbargn$ (see Proposition \ref{P:finite-orb}). In Section~\ref{Sec:univ-fam}, we provide a resolution of singularities of the universal compactified Jacobian stacks over $\Mbargn$ in terms of certain universal compactified Jacobian stacks over $\Mbargnbis$ (see Theorem \ref{T:univ-TF} and Proposition \ref{P:push-loc}).
In Section~\ref{Sec:n0}, we classify compactified universal Jacobians over $\ov \M_g$ (see Corollaries \ref{C:class-n0} and \ref{C:iso-n0}). In Section \ref{Sec:poset}, we give some partial results on the poset of V-functions and the degeneracy map for $n>0$ (see Propositions~\ref{P:Deg-map} and \ref{P:Dsep}): in Subsection~\ref{Sub:n1} we give a description of these posets for $n=1$; in Subsection~\ref{sub:max-wall} we describe the maximal and submaximal elements; in Subsection~\ref{Sub:g1}  we examine the case $g=1$ in more detail.

\vspace{0.1cm}

\subsection*{Acknowledgements}

We thank Sam Molcho for raising the question of the relative projectivity of the compactified Jacobian spaces (see Theorem \ref{T:cla-cJUni} and the discussion following it). We thank Y. Bae, D. Maulik, R. Pandharipande, J. Shen for discussions that lead us to add Corollary \ref{C:stesso-HD} to the second arXiv version of our paper. We thank  Alex Abreu, Dan Petersen, Orsola Tommasi for  helpful discussions related to this project.

MF was supported by the DTP/EPSRC award 
EP/W524001/1, and he is supported by the EPSRC Fellowship EP/X02752X/1.

NP is funded by the PRIN 2022 ``Geometry Of Algebraic Structures: Moduli, Invariants, Deformations'' funded by MUR, and he is a member of the GNSAGA section of INdAM. 

FV is funded by the MUR  ``Excellence Department Project'' MATH@TOV, awarded to the Department of Mathematics, University of Rome Tor Vergata, CUP E83C18000100006, by the  PRIN 2022 ``Moduli Spaces and Birational Geometry''  funded by MUR,  and he is a member of  the GNSAGA section of INdAM.
%and of the Centre for Mathematics of the University of Coimbra (funded by the Portuguese Government through FCT/MCTES, DOI 10.54499/UIDB/00324/2020).

\section*{Notation}\label{Sec:nota}

Let $X$ be a connected nodal projective curve over $k=\ov k$.

A \emph{subcurve} $Y\subseteq X$ is a closed subscheme of $X$ that is either a curve or the empty scheme. In other words, a subcurve $Y\subseteq X$ is the union of some irreducible components of $X$.
%so that there is a bijection 
%\begin{equation}\label{E:sub-Vert}
% \begin{aligned}
%   \left\{\text{Subsets of } I(X)\right\} & \leftrightarrow \left\{\text{Subcurves of } X \right\}:=\Sub(X)  \\
%   W &\mapsto X[W]:=\bigcup_{v\in W} X_v  \\
%   I(Y) & \mapsfrom Y.
% \end{aligned}   
%\end{equation}
We say that a subcurve $Y$ is non-trivial if $Y\neq \emptyset, X$.
%which happens if and only if $I(Y)\neq \emptyset, I(X)$. 
The complementary subcurve of $Y$ is 
$Y^\mathsf{c}:=\ov{X-Y}$.  
A subcurve $Y$ of $X$ is called \emph{biconnected} if it is connected and if its complementary subcurve is also connected (in particular, $Y$ is non-trivial). The set of biconnected subcurves of $X$ is denoted by $\BCon(X)$. 
%The set of connected subcurves of $X$ (which does not include the empty subcurve) is denoted by $\Con(X)$. Note that if $Y\in \Con(X)$ then $Y^c=\coprod_i W_i$ with each $W_i\in \BCon(X)$. 

%We define the \emph{join} and the \emph{meet} of two subcurves by 
%$$
%\begin{sis}
%& Y_1\cup Y_2:=X[I(Y_1)\cup I(Y_2)], \\
%& Y_1\wedge Y_2:=X[I(Y_1)\cap I(Y_2)].
%\end{sis}
%$$
%In other words, the join of two subcurves is simply their union, while the meet of two subcurves is the union of their common irreducible components. 

%A \emph{family of connected reduced curves} $\pi:X\to S$  over an algebraic stack $S$ is a projective and flat morphism $\pi$ whose geometric fibers are connected reduced curves. 

A coherent sheaf $I$ on $X$ is said to be:
\begin{itemize}
\item  \emph{torsion-free on $X$}  if its associated points are the generic points of $X$, or equivalently if $I$ is pure of dimension $1$ (i.e. $I$ does not have torsion subsheaves), and with support $\supp(I)$ equal to $X$.
%The support of $I$, denoted by $\supp(I)$, is a subcurve of $X$. We say that $I$ is a torsion-free sheaf \emph{on $X$} if $I$ is torsion-free with $\supp(I)=X$. 
\item \emph{rank-$1$} if $I$ has rank one on each generic point of $\supp(I)$. 
%\item  \emph{simple} if $\End(I) = k $, or equivalently if $\Aut(I)=\Gm$.
\end{itemize}
Note that a torsion-free sheaf $I$ on $X$ is locally free away from the singular locus of $X$ and that each line bundle on $X$ is a  rank-$1$, torsion-free sheaf on $X$. 

%The \emph{degree} of a torsion-free rank-$1$ sheaf $I$ on $X$ is defined to be
%\begin{equation}\label{E:degI}
%\deg(I)=\chi(I)-\chi(\O_X). 
%\end{equation}

There are two natural ways of "restricting" a  torsion-free (resp. rank-$1$) sheaf $I$ on $X$ to a torsion-free (resp. rank-$1$) sheaf on a subcurve $Y\subseteq X$: 
    \begin{itemize}
        \item $I_Y$ the quotient of the restriction $I_{|Y}$ modulo the torsion subsheaf, so that $I_Y$ is torsion-free  on $Y$ and it is the smallest quotient of $I$ with support equal to $Y$.
        \item $\leftindex_{Y}{I}$ the kernel of the surjection $I\twoheadrightarrow I_{Y^c}$, so that $\leftindex_{Y}{I}$ is torsion-free  on $Y$ and it is the largest subsheaf of $I$ that is supported on $Y$.
    \end{itemize}
    Hence,  we have an exact sequence 
\begin{equation}\label{E:resY}
0\to \leftindex_{Y^c}{I}\to I \to I_Y\to 0,
\end{equation}
from which we deduce the equality 
\begin{equation}\label{E:add-chi}
    \chi(I)=\chi(I_Y)+\chi(\leftindex_{Y^c}{I}).
\end{equation}
%We let $\deg_Y (I)$ denote the degree of $I_Y$, that is, $\deg_Y(I) := \chi(I_Y )-\chi(\O_Y)$.

%The relation between the two constructions of Definition \ref{D:IY} is clarified by the following 

%\begin{lemma}\label{L:IY}
%    Let $I$ be a torsion-free sheaf on $X$ and let $Y$ be a subcurve of $X$. 
%    \begin{enumerate}[(i)]
%    \item \label{L:IY1} The following maps are injective: 
%    \begin{enumerate}[(a)]
%    \item the compositions $\leftindex_{Y}{I}\hookrightarrow I \twoheadrightarrow I_Y$ and  $\leftindex_{Y^c}{I}\hookrightarrow I \twoheadrightarrow I_{Y^c}$;
%    \item the simultaneous restriction map $I\to I_Y\oplus I_{Y^c}$;
%    \item the simultaneous inclusion map $\leftindex_{Y}{I}\oplus \leftindex_{Y^c}{I}\to I$.
%   \end{enumerate}
%    \item \label{L:IY2} We have the following canonical isomorphisms 
%    $$\frac{I_Y}{\leftindex_{Y}{I}}\xleftarrow{\cong}\frac{I_Y\oplus I_{Y^c}}{I} \xrightarrow{\cong}  \frac{I_{Y^c}}{\leftindex_{Y^c}{I}} \xleftarrow{\cong}\frac{I}{\leftindex_{Y}{I}\oplus \leftindex_{Y^c}{I}} \xrightarrow{\cong}  \frac{I_{Y}}{\leftindex_{Y}{I}}$$
%    \end{enumerate}
 %   of sheaves supported on the $0$-dimensional subscheme $Y\cap Y^c$. 
%\end{lemma}

It turns out that, for any  rank-$1$ torsion-free sheaf $I$ on $X$ and any subcurve $Y\subseteq X$, we have that (see \cite[Ex. 3.4]{FPV1})
    \begin{equation}\label{E:2restY}
    \leftindex_Y{I}=I_Y(-(Y\cap Y^c\cap \NF(I)^c)),
    \end{equation}
    where $\NF(I)$ is the set of nodes of $X$ at which $I$ is not free. 
    
    %Therefore, $\displaystyle \frac{I_Y}{\leftindex_Y{I}}$ is the structure sheaf of the $0$-dimensional reduced scheme $Y\cap Y^c\cap \NF(I)^c$. In particular, $I$ splits at $Y$ (i.e. $I=I_Y\oplus I_{Y^c}$) if and only if $I$ is not free at all nodes of $Y\cap Y^c$.

\section{V-compactified Jacobians for families of nodal curves}\label{Sec:VcJ-fam}

V-compactified Jacobians for families of reduced curves were introduced in \cite{FPV1} and then studied in detail for a fixed nodal curve in \cite{FPV2} (following the earlier treatment in \cite{pagani2023stability} and \cite{viviani2023new} in the fine case).
The aim of this subsection is to complete the above results for families of nodal curves. We will use the same notation of loc. cit., limiting ourselves to recall the main definitions and referring to loc. cit. for more details. 

Let us first recall the definition of V-stabilities and of V-compactified Jacobians for families of reduced curves, following \cite{FPV1}.

\begin{definition}\label{D:VStabX}
\noindent 
\begin{enumerate}
    \item 
Let $X$ be a connected reduced curve over $k=\ov k$. 
   A \emph{stability condition of vine type} (or simply a \textbf{V-stability condition)}    of characteristic $\chi\in \ZZ$ on  $X$ is a function
    \begin{align*}
        \s:\BCon(X)&\to \ZZ\\
        Y&\mapsto \s_Y
    \end{align*}
    satisfying the following properties:
\begin{enumerate}
\item \label{D:VStabX1} for any $Y\in \BCon(X)$, we have 
\begin{equation}\label{E:sum-s}
\mathfrak s_Y+\mathfrak s_{Y^\mathsf{c}}-\chi
\in \{0,1\}.
\end{equation}
%\begin{equation}\label{E:sum-n}
%\mathfrak n_Y+\mathfrak n_{Y^\mathsf{c}}-g(Y)-g(Y^c)+g(X)-d+1
%\in \{0,1\}.
%\end{equation}
A subcurve $Y\in \BCon(X)$ is said to be \emph{$\s$-degenerate} if $\s_Y+\s_{Y^c}-\chi=0$,
%$\mathfrak n_Y+\mathfrak n_{Y^\mathsf{c}}-g(Y)-g(Y^c)+g(X)-d+1=0$, 
and \emph{$\s$-nondegenerate} otherwise.

\item  \label{D:VStabX2} given subcurves $Y_1,Y_2,Y_3\in \BCon(X)$ without pairwise common irreducible components such that $X=Y_1\cup Y_2\cup Y_3$, we have that:
\begin{enumerate}
 \item if two among the subcurves $\{Y_1,Y_2,Y_3\}$ are $\s$-degenerate, then so is  the third. %is $\s$-degenerate;
            \item the following condition holds
            \begin{equation} \label{E:tria-s}
            \sum_{i=1}^{3}\s_{Y_i}-\chi
            %\sum_{i=1}^{3}\n_{Y_i}-\sum_{i=1}^3 g(Y_i)+g(X)-d+2
            \in \begin{cases}
                \{1,2\} \textup{ if $Y_i$ is $\s$-nondegenerate for all $i=1,2,3$};\\
                \{1\} \textup{ if there exists a unique   } i\in \{1,2,3\} \text{ such that $Y_i$ is $\s$-degenerate};\\
                \{0\} \textup{ if $Y_i$ is $\s$-degenerate for all $i=1,2,3$}.
            \end{cases}
        \end{equation}
\end{enumerate}
\end{enumerate}

The \emph{degeneracy set} of $\s$ is the collection
\begin{equation*}
\D(\s):=\{Y\in \BCon(X): Y \text{ is $\s$-degenerate}\}.
\end{equation*}
   
\item Let $\pi:X\to S$ be a family of connected reduced curves. 

 A \emph{(relative) stability condition of vine type} (or simply a \textbf{V-stability condition)} of characteristic $\chi$ on $X/S$ is a collection of V-stabilities 
 $$\s=\{\s^s\in \VStab^\chi(X_s)\: : s \text{ is a geometric point of } S\},$$
such that, for  any \'etale specialization $\xi: s\rightsquigarrow t$ of geometric points of $S$, we have that $\xi^*(\s^t)=\s^s$, where $\xi^*(\s^t)$ is defined by 
$$
\xi^*(\s^t)_Y:=\s^t_{\xi_*(Y):=\ov Y\cap X_t} \text{ for any } Y\in \BCon(X_s).
$$
\end{enumerate}
\end{definition}

The characteristic $\chi$ of a (relative) V-stability $\s$ will also be denoted by $|\s|$. The collection of all V-stability conditions of characteristic $\chi$ on $X/S$ is denoted by $\VStab^\chi(X/S)$ and the collection of all V-stability conditions on $X/S$ is denoted by 
$$\VStab(X/S)=\coprod_{\chi \in \ZZ}\VStab^\chi(X/S).$$
A V-stability condition $\s\in \VStab(X/S)$ is called \emph{general}  if  $\D(\s^s)=\emptyset$ for any geometric point $s$ of $S$.

We now recall how to associate to a V-stability condition on $X/S$ of characteristic $\chi$ a \emph{compactified Jacobian stack} for $X/S$  of characteristic $\chi$, i.e. an open substack $\ov \J_{g,n}^{\chi}$ of the stack $\TF^{\chi}_{X/S}$ of relative rank-$1$ torsion-free sheaves of characteristic $\chi$ on $X/S$, admitting a relative proper good moduli space $\ov J_{X/S}^\chi\xrightarrow{f} S$, called a \emph{compactified Jacobian space} for $X/S$. A compactified  Jacobian stack $\ov \J_{X/S}^\chi$ is called fine if the good moduli morphism $\Xi:\ov \J_{X/S}^\chi\to \ov J_{X/S}^{\chi}$ is a $\Gm$-gerbe.

\begin{fact}\label{F:VcJ} (\cite{FPV1}) 
Let $\pi:X\to S$ be a family of connected reduced curves over a  quasi-separated and locally Noetherian algebraic stack $S$.
For any V-stability condition $\s= \{\s^s\}$ on $X/S$ of characteristic $\chi$,  the substack $\ov \J_{X/S}(\s)$  of  $\TF^\chi_{X/S}$ defined  by
\begin{equation*}
\ov \J_{X/S}(\s):=\{I \in \TF_{X/S}^\chi\: : \:  \chi((I_{|X_s})_{Y_s})\geq \mathfrak \s^s_{Y_s}  \text{ for any geometric point $s$ of $S$ and any $ Y_s\in \BCon(X_s)$}\}.
\end{equation*}
is a compactified Jacobian stack of $X/S$ of characteristic $\chi$, called  the \textbf{V-compactified Jacobian stack} associated to $\s$, whose $S$-relative proper good moduli space is denoted by $\ov \J_{X/S}(\s)$ and is it called the \textbf{V-compactified Jacobian space} associated to $\s$.

Moreover, $\s$ is general if and only if $\ov \J_{X/S}(\s)$ is fine.
\end{fact}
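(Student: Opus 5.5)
The statement is Fact~\ref{F:VcJ}, quoted from \cite{FPV1}. I would reproduce its proof by reducing every property to the fibrewise theory over geometric points of $S$, and then using the compatibility of $\s$ with \'etale specializations to globalize. The argument has four steps.

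\textbf{Step 1: openness.} First I show that $\ov \J_{X/S}(\s)$ is an open substack of $\TF^\chi_{X/S}$. The condition $\chi((I_{|X_s})_{Y_s})\geq \s^s_{Y_s}$ is lower semicontinuous under specialization. Concretely, suppose $\xi: s\rightsquigarrow t$ and $I_t$ satisfies the inequalities on $X_t$. For $Y\in \BCon(X_s)$, the torsion-free quotient $(I_s)_Y$ specializes to a quotient of $I_t$ supported on $\xi_*(Y)$. Its torsion-free part is $(I_t)_{\xi_*(Y)}$, so
\[
\chi((I_s)_Y)\geq \chi((I_t)_{\xi_*(Y)})\geq \s^t_{\xi_*(Y)}=\s^s_Y .
\]
Here the last equality is exactly the compatibility condition $\xi^*(\s^t)=\s^s$. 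Since $\TF^\chi_{X/S}$ is locally of finite type and only finitely many biconnected types occur locally on $S$, the locus is constructible. Combined with stability under generization, this gives openness.

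\textbf{Step 2: existence of the good moduli space.} I would verify the criteria of Alper--Halpern-Leistner--Heinloth relative to $S$, namely $\Theta$-reductivity and $S$-completeness. These are tested on DVR families and reduce to the fibrewise statements already established for a single curve. The key point is that semistable sheaves have polystable degenerations of the form $\bigoplus_i I_{Y_i}$, where the $Y_i$ are unions of $\s$-degenerate subcurves. Axioms (a) and (b) of Definition~\ref{D:VStabX} guarantee that the degenerate subcurves are closed under the gluing and complement operations needed for these filtrations. Quasi-compactness of $\ov\J_{X/S}(\s)$ over $S$ follows because the inequalities bound the multidegree on every fibre. I would also record the separatedness and affineness of stabilizers, which come from $\Aut$ of polystable objects being tori.

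\textbf{Step 3: properness of the good moduli space.} This is the main obstacle, and I would prove it with the existence part of the valuative criterion for the stack. Given a DVR $R$ and a family over the generic point, I extend by taking a limit in $\TF$ and then performing elementary modifications along components of the special fibre, in the style of Langton. The difficulty is to show that a finite sequence of modifications lands inside the stability region. Axiom (b), the triangle inequalities, gives exactly the convexity needed: it shows that the region cut out by the inequalities contains a multidegree in every orbit class, using the description of $\s$ as a lattice-point region of a polytope in the space of multidegrees. My first attempt would be to reduce to the single-curve case, where existence of a $\s$-semistable sheaf in each twisting class is proved in \cite{FPV1}. Uniqueness of the limit up to S-equivalence then follows from $S$-completeness.

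\textbf{Step 4: fineness.} If $\D(\s^s)=\emptyset$ for all $s$, then $\s^s_Y+\s^s_{Y^c}=\chi+1$ for every $Y$. Since $\chi(I_Y)+\chi(\leftindex_{Y^c}{I})=\chi$, every semistable sheaf is stable, hence simple. Therefore the stabilizers are $\Gm$ and the good moduli morphism is a $\Gm$-gerbe. Conversely, if some $Y$ is degenerate, I take the sheaf $I_Y\oplus I_{Y^c}$ with $\chi(I_Y)=\s_Y$ and $\chi(I_{Y^c})=\s_{Y^c}$. This sheaf is semistable and has stabilizer at least $\Gm^2$, so the stack is not fine.
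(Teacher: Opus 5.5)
Your plan re-proves \cite[Thm.~A]{FPV1} from scratch instead of using it, and it does not supply the step that carries all the difficulty.

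In Step~3 you get the existence part of the valuative criterion by ``reducing to the single-curve case'', i.e.\ to the fact that every twisting class on a fixed curve contains a semistable sheaf. That reduction is available only in the simplest situation: a regular smoothing of a nodal curve, starting from a line bundle. For a general DVR $\Spec R\to S$:
\begin{itemize}
\item the total space $X_R$ can be singular ($xy=t^k$ at nodes, worse at non-nodal points);
\item the generic fibre may itself be singular, with a non-locally-free generic sheaf;
\item the special fibres reachable from an $\s^\eta$-semistable sheaf by elementary modifications are torsion-free sheaves on $X_t$ that do not form a twisting class of $X_t$.
\end{itemize}
Showing that one of them is $\s^t$-semistable requires using the compatibility $\xi^*(\s^t)=\s^\eta$ along the modification process. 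That is exactly the relative argument of \cite{FPV1}, and your remark about convexity from the triangle axiom does not replace it.

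Step~2 has the same problem. $\Theta$-reductivity and S-completeness relative to $S$ are tested on maps from $\Theta_R$ and $\overline{ST}_R$ lying over a DVR $\Spec R\to S$, so they concern the family $X_R$, not a fixed curve. Outside characteristic $0$ you also need the linear reductivity of the toric stabilizers to apply the existence criterion.

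In Step~4, $I_Y\oplus I_{Y^c}$ is $\s$-semistable only if $I_Y$ and $I_{Y^c}$ are themselves chosen semistable for the induced V-stabilities on $Y$ and $Y^c$ (\cite[Lemma-Def.~4.8, Prop.~5.8]{FPV1}). Prescribing their Euler characteristics is not enough when $Y$ or $Y^c$ is reducible.

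You also never address the one point the paper's proof is about. \cite[Thm.~A]{FPV1}, like the Alper--Halpern-Leistner--Heinloth existence theorem you invoke, is stated over an algebraic space, whereas here $S$ is an algebraic stack. The paper's argument runs as follows:
\begin{enumerate}
\item Take a smooth atlas $U\to S$ by an algebraic space.
\item The pull-back of $\s$ is a V-stability on $X_U/U$: compatibility with \'etale specializations is preserved, and generality is unchanged because every geometric point of $S$ lifts to $U$.
\item Apply \cite[Thm.~A]{FPV1} to $X_U/U$.
\item Descend. Openness of $\ov\J_{X/S}(\s)$ in $\TF^\chi_{X/S}$ can be checked after the smooth cover. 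Good moduli spaces are compatible with flat base change, so the two pull-backs of $\ov J_{X_U/U}(\s)$ to $U\times_S U$ agree and give descent data for an algebraic space over $S$. Properness and the $\Gm$-gerbe property are smooth-local on the base.
\end{enumerate}
Since you are willing to quote \cite{FPV1} for the single-curve case anyway, this descent argument is the proof to give.
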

\begin{proof}
  This follows from \cite[Thm. A]{FPV1} which is stated for an algebraic space $S$: the case of an arbitrary stack $S$ follows by taking a smooth atlas.     
\end{proof}

We can now combine the results of \cite{FPV1} and \cite{FPV2} in order to give the following characterization of V-compactified Jacobians for nodal curves. 

\begin{theorem}\label{T:VcJ-nod}
Let $X$ be a nodal curve over $k=\ov k$ and let $\ov \J_X^{\chi}\subseteq \TF_X^{\chi}$ be a compactified Jacobian stack of $X$. Then the following conditions are equivalent:
\begin{enumerate}[(i)]
    \item \label{T:VcJ-nod1} $\ov \J_X^{\chi}$ is a V-compactified Jacobian, i.e. there exists a V-stability condition $\s\in \VStab(X)$ of characteristic $|\s|=\chi$ such that $\ov \J_X^{\chi}=\ov \J_X(\s)$.
     \item \label{T:VcJ-nod2} $\ov \J_X^{\chi}$ is \emph{smoothable}, i.e. for any $1$-parameter smoothing $\X/\Delta$ of $X$, the open substack $\ov \J_{\X}^{\chi}:=\ov \J_X^{\chi}\cup J_{\X_{\eta}}^{\chi}\subset \TF_{\X/\Delta}^{\chi}$ is a compactified Jacobian stack for $\X/\Delta$. 
     \item \label{T:VcJ-nod3} $\ov \J_X^{\chi}$ is \emph{universally smoothable}, i.e. there exists a compactified Jacobian stack $\ov \J_{\fX}^{\chi}$ for the effective semiuniversal deformation $\fX\to \Spec R_X$ of $X$ such that $(\ov \J_{\fX}^{\chi})_o=\ov \J_{X}^{\chi}$.
\end{enumerate}
\end{theorem}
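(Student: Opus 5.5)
We will prove the cycle of implications \eqref{T:VcJ-nod1}$\Rightarrow$\eqref{T:VcJ-nod3}$\Rightarrow$\eqref{T:VcJ-nod2}$\Rightarrow$\eqref{T:VcJ-nod1}. The approach is to use Fact \ref{F:VcJ} for the constructive direction, base change for the middle step, and the classification of smoothable compactified Jacobians of nodal curves from \cite{FPV2} for the hard direction.

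For \eqref{T:VcJ-nod1}$\Rightarrow$\eqref{T:VcJ-nod3}, let $\s\in\VStab^\chi(X)$ and let $\fX\to \Spec R_X$ be the effective semiuniversal deformation. We extend $\s$ to a relative V-stability $\wt\s$ on $\fX/\Spec R_X$. For each geometric point $s$ there is an \'etale specialization $\xi:s\rightsquigarrow o$, and we set $\wt\s^s:=\xi^*(\s)$. Three things must then be checked. First, $\xi^*(\s)$ is a V-stability on $\fX_s$: each biconnected subcurve of $\fX_s$ specializes to a biconnected subcurve of $X$, and triangles go to triangles. Second, $\xi^*(\s)$ does not depend on the choice of $\xi$. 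This holds because $R_X$ is strictly henselian, so the generizations of $o$ are governed by the set of nodes smoothed, and the map $Y\mapsto \ov Y\cap X$ is canonical. Third, the family $\{\wt\s^s\}$ is compatible with specializations among the points $s$, which follows by composing specializations. Fact \ref{F:VcJ} then gives a compactified Jacobian stack $\ov\J_{\fX}(\wt\s)$, and its central fiber is $\ov\J_X(\s)$ by construction.

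For \eqref{T:VcJ-nod3}$\Rightarrow$\eqref{T:VcJ-nod2}, take a $1$-parameter smoothing $\X/\Delta$; after passing to the strict henselization it is pulled back from $\fX$ by a map $\Delta\to\Spec R_X$. The base change of $\ov\J_{\fX}^\chi$ is open in $\TF^\chi_{\X/\Delta}$. It still admits a proper good moduli space, since good moduli spaces and properness are stable under flat (in fact arbitrary) base change for such adequate/good moduli setups. Its generic fiber is an open substack of $\TF_{\X_\eta}^\chi$ admitting a proper good moduli space over the smooth curve $\X_\eta$, hence it must be the whole of $J_{\X_\eta}^\chi$, since $\TF=J$ on smooth curves and an open substack with proper good moduli space is closed. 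Its special fiber is $\ov\J_X^\chi$. We therefore obtain $\ov\J_X^\chi\cup J^\chi_{\X_\eta}$ as required. One subtlety is that $\Delta$ might be non-henselian; we will handle this by working with a DVR and using that the image of the closed point is $o$.

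For \eqref{T:VcJ-nod2}$\Rightarrow$\eqref{T:VcJ-nod1}, which is the main step, we invoke the results of \cite{FPV1} and \cite{FPV2}. In \cite{FPV1} it is shown that smoothable compactified Jacobians of a reduced curve satisfy the properties that define them via a \emph{(numerical) stability condition} on subcurves, i.e. $\ov\J_X^\chi$ is determined by lower bounds $\chi(I_Y)\ge \s_Y$. In \cite{FPV2} it is proved, for nodal curves, that every compactified Jacobian of this kind (equivalently, every smoothable one) is a V-compactified Jacobian. Concretely, we will argue that smoothability forces the set of sheaves in $\ov\J_X^\chi$ to be cut out by inequalities on biconnected subcurves. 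We then define $\s_Y:=\min\{\chi(I_Y): I\in\ov\J_X^\chi\}$ and verify the axioms \eqref{E:sum-s} and \eqref{E:tria-s} using the existence and uniqueness of limits in $\X/\Delta$ for smoothings with prescribed twisting along $Y$. The main obstacle is exactly this identification: we need to show that the properness and separatedness of the good moduli space in every $1$-parameter smoothing pin down the multidegrees exactly as a V-stability does. We expect to quote this from \cite{FPV2}, with the only new input being the translation between "smoothable" as defined here and the hypothesis used there.
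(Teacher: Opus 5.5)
Your proposal is correct and follows essentially the same route as the paper. You prove \eqref{T:VcJ-nod1}$\Rightarrow$\eqref{T:VcJ-nod3} by extending $\s$ to the semiuniversal family, which is \cite[Lemma 8.4]{FPV1}, and then applying Fact~\ref{F:VcJ}; you prove \eqref{T:VcJ-nod3}$\Rightarrow$\eqref{T:VcJ-nod2} by pulling back along $\Delta\to\Spec R_X$; and you get the hard direction \eqref{T:VcJ-nod2}$\Rightarrow$\eqref{T:VcJ-nod1} by quoting \cite[Thm.~B]{FPV2}. The paper cites that theorem for the full equivalence \eqref{T:VcJ-nod1}$\Leftrightarrow$\eqref{T:VcJ-nod2}, while your cycle uses only one direction of it, and your sketch of defining $\s_Y$ as a minimum is unnecessary once you quote the theorem.
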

Note that the equivalence $\eqref{T:VcJ-nod1}\Longleftrightarrow \eqref{T:VcJ-nod2}$ was shown in \cite[Thm. B]{FPV2} and the full theorem was shown for fine compactified Jacobians in \cite[Thm. D]{viviani2023new}, building upon the results of \cite{pagani2023stability}.
\begin{proof}
 The equivalence $\eqref{T:VcJ-nod1}\Longleftrightarrow \eqref{T:VcJ-nod2}$ is shown in \cite[Thm. B]{FPV2}.

 The implication $\eqref{T:VcJ-nod3}\Longrightarrow \eqref{T:VcJ-nod2}$ follows from the fact that any $1$-parameter smoothing of $X$ is a pull-back of the semiuniversal deformation of $X$.

The implication $\eqref{T:VcJ-nod1}\Longrightarrow \eqref{T:VcJ-nod3}$
 follows from \cite[Lemma 8.4 and Thm. A]{FPV1}. 
\end{proof}

Furthermore, we can show that all compactified Jacobian stacks  are V-compactified Jacobian stacks  for families of nodal curves satisfying certain assumptions.

\begin{theorem}\label{T:cJ-nodal} 
    Let $\pi:X\to S$ be a family of connected nodal curves over a quasi-separated and locally Noetherian algebraic stack $S$ and let $\ov \J_{X/S}^\chi$ be a compactified Jacobian stack for $X/S$.
    Assume that one of the following two assumptions is satisfied:
    \begin{enumerate}[(A)]
        \item \label{Hyp1} The family $X/S$ is versal at any point of $S$.
        \item \label{Hyp2} The compactified Jacobian stack $\ov \J_{X/S}^{\chi}$ is fine and the open subset 
        $$
        S^{\sm}:=\{s\in S: X_s=\pi^{-1}(s) \text{ is smooth}\}
        $$
        is dense in $S$. 
    \end{enumerate}    
Then $\ov \J_{X/S}^\chi=\ov \J_{X/S}(\s)$  for a unique V-stability condition $\s$ on $X/S$ of characteristic $\chi$.
\end{theorem}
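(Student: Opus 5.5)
The plan is to work fibrewise: first show that each geometric fibre of $\ov \J_{X/S}^\chi$ is a V-compactified Jacobian via Theorem \ref{T:VcJ-nod}, and then show that the resulting V-stabilities on the fibres are compatible under étale specializations, so that they glue to a relative V-stability on $X/S$. Uniqueness is the easy part. A V-compactified Jacobian of a single curve determines its V-stability: $\s_Y$ is the minimum of $\chi(I_Y)$ over sheaves $I$ in $\ov\J_X(\s)$, and this minimum is attained, for instance, by a sheaf that is not free exactly at $Y\cap Y^c$ with suitable multidegree (this is how the correspondence is set up in \cite{FPV1}, \cite{FPV2}). Once fibrewise uniqueness is known, the relative V-stability, if it exists, is unique.

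\emph{Step 1: the fibres are V-compactified Jacobians.} Fix a geometric point $s$ and set $\ov\J_s:=(\ov\J_{X/S}^\chi)_s$. Base change of a proper good moduli space shows that $\ov\J_s$ is a compactified Jacobian stack of $X_s$.
\begin{itemize}
\item Under hypothesis \eqref{Hyp1}, $X/S$ is versal at $s$, so the semiuniversal deformation $\fX\to \Spec R_{X_s}$ maps to a neighbourhood of $s$. Pulling $\ov\J_{X/S}^\chi$ back along this map gives a compactified Jacobian stack for $\fX$ whose central fibre is $\ov\J_s$. Hence $\ov\J_s$ is universally smoothable, and by Theorem \ref{T:VcJ-nod}\eqref{T:VcJ-nod3}$\Rightarrow$\eqref{T:VcJ-nod1} it equals $\ov\J_{X_s}(\s^s)$.
\item Under hypothesis \eqref{Hyp2}, versality is unavailable, so I would instead invoke the fine case \cite[Thm.~D]{viviani2023new} and \cite{pagani2023stability}. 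Density of $S^{\sm}$ lets us pick a trait $\Delta\to S$ through $s$ with smooth generic fibre, and then $\ov\J_s\cup J_{\X_\eta}$ is a fine compactified Jacobian along it.

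The catch is that this trait need not be a general smoothing, since it may smooth only some nodes "simultaneously". The key input is that, for fine compactified Jacobians, being the central fibre of a compactified Jacobian over \emph{some} smoothing trait already forces $\ov\J_s$ to be a fine V-compactified Jacobian. The reason is that a fine compactified Jacobian of a nodal curve contains exactly one sheaf of each "orbit type" (the counting/PT-assignment characterization of \cite{pagani2023stability}, \cite{viviani2023new}), and this property is intrinsic to the fibre.
\end{itemize}
I expect this step under \eqref{Hyp2} to be the main obstacle. The first thing I would try is to show that the fibre is a fine compactified Jacobian in the sense of loc.\ cit., which is defined intrinsically on $X_s$, and then cite the theorem stating that all fine compactified Jacobians of a nodal curve that arise as limits in some family are V-type.

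\emph{Step 2: compatibility under specialization.} Let $\xi:s\rightsquigarrow t$ be an étale specialization and $Y\in \BCon(X_s)$, and put $Z=\xi_*(Y)\in\BCon(X_t)$. I need to show $\s^t_Z=\s^s_Y$.

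For "$\geq$": take $I_s\in \ov\J_s$ with $\chi(I_Y)=\s^s_Y$. By properness of the good moduli space (the existence part of the valuative criterion, after a finite extension of the trait), it specializes along a trait realizing $\xi$ to some $I_t\in\ov\J_t$. Semicontinuity of $\chi$ of torsion-free restrictions to the limit subcurve gives $\chi((I_t)_Z)\le\chi((I_s)_Y)$, and hence $\s^t_Z\le \s^s_Y$.

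For "$\leq$": run the converse argument with the openness of $\ov\J_{X/S}^\chi$. Take $I_t$ achieving $\s^t_Z$ which is free at nodes not in $Z\cap Z^c$, and deform it along the trait to a sheaf on $X_s$. Openness puts the deformation in $\ov\J_s$, and its $\chi$ on $Y$ equals $\chi((I_t)_Z)$, since it is locally free exactly along nodes that persist. This gives $\s^s_Y\le\s^t_Z$.

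Thus $\s=\{\s^s\}$ is a relative V-stability, and $\ov\J_{X/S}^\chi$ and $\ov\J_{X/S}(\s)$ are open substacks of $\TF^\chi_{X/S}$ with the same geometric points, so they coincide.
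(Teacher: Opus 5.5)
Your fibrewise step is the paper's. Under \eqref{Hyp1} the fibres are universally smoothable and Theorem~\ref{T:VcJ-nod} applies. Under \eqref{Hyp2} the paper takes a trait through $s$ with generic point in $S^{\sm}$, notes that $(\ov\J^\chi_{X/S})_s$ is fine and \emph{weakly smoothable} in the sense of \cite[Def.~2.21]{viviani2023new}, and applies \cite[Thm.~D]{viviani2023new}. That is exactly the ``limits in some family are V-type'' result you were reaching for. The difficulty there is not that the trait smooths only some nodes, since its generic fibre is smooth. It is that the total space over the trait need not be regular, and weak smoothability is designed for that.

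The routes diverge at the gluing step. The paper does not check compatibility under specializations by hand. It invokes \cite[Thm.~8.8]{FPV1} to conclude directly from the fibrewise identifications $(\ov\J^\chi_{X/S})_s=\ov\J_{X_s}(\s^s)$ that $\{\s^s\}$ is a relative V-stability with $\ov\J^\chi_{X/S}=\ov\J_{X/S}(\s)$. Your direct argument is a legitimate self-contained substitute: universal closedness plus semicontinuity of $\chi((-)_Y)$ gives $\s^t_{\xi_*Y}\le\s^s_Y$, and openness plus generization gives the reverse. It trades a citation for the bookkeeping of traits realizing \'etale specializations and of deforming sheaves along them.

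One justification is wrong, and the reverse inequality needs repair.

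\emph{The minimizer claim.} When $Y$ is $\s$-nondegenerate, no semistable sheaf is non-free at every node of $Y\cap Y^c$. Such a sheaf splits as $I_Y\oplus I_{Y^c}$, so $\chi=\chi(I_Y)+\chi(I_{Y^c})\ge\s_Y+\s_{Y^c}=\chi+1$, which is impossible. The equality $\s_Y=\min_I\chi(I_Y)$ is still true, just not for the reason you give.

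\emph{The ``$\le$'' step.} For an arbitrary minimizer $I_t$, a generization along the trait may become free at a node of $Z\cap Z^c$ where $I_t$ was not free. That raises $\chi((-)_Z)+\chi((-)_{Z^c})$ by one, so $\chi$ on $Y$ can jump up and $\s^s_Y\le\s^t_Z$ is lost.

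\emph{The fix.} Use a line-bundle minimizer, which meets your requirement of being free off $Z\cap Z^c$. Start from any $I_t\in\ov\J_t$ with $\chi((I_t)_Z)=\s^t_Z$ and generize one non-free node at a time inside $X_t$; openness of $\ov\J_t$ in $\TF^\chi_{X_t}$ keeps you in $\ov\J_t$. At a node not in $Z\cap Z^c$, the value $\chi((-)_Z)$ does not change. At a node of $Z\cap Z^c$, choose the branch of the local model $\{xy=0\}$ on which the extra unit of $\chi$ goes to $Z^c$. The result is a line bundle $L_t$, which deforms along the trait to a line bundle $L_s$. This $L_s$ lies in $\ov\J_s$ by openness. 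Moreover $\chi((L_s)_Y)=\chi((L_t)_Z)$, because degree and $\chi(\O)$ are constant along the flat closure of $Y$, whose special fibre is $Z$. Hence $\s^s_Y\le\chi((L_s)_Y)=\s^t_Z$, and with this change your Step 2 goes through.
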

Note that if $X/S$ is versal at any point of $S$ then the open subset $S^{\sm}$ is dense in $S$.

\begin{proof}
Assume first that Condition \eqref{Hyp1} is satisfied. Then, any geometric fiber $(\ov \J_{X/S}^{\chi})_s$, for $s$ a geometric point of $S$, is a universally smoothable compactified Jacobian of $X_s$ in the sense of Theorem \ref{T:VcJ-nod}\eqref{T:VcJ-nod3}. Hence we can apply Theorem \ref{T:VcJ-nod} in order to get a unique V-stability condition $\s^s$ of characteristic $\chi$ on $X_s$ such that 
\begin{equation}\label{E:fiberJXS}
(\ov \J_{X/S}^{\chi})_s=\ov \J_{X_s}(\s^s).
\end{equation}

Assume now that Condition \eqref{Hyp2} is satisfied. Because $S^{\sm}$ is dense in $S$, for any geometric point $s$ of $S$ we can find a morphism $f: \Delta=\Spec R\to S$, where $R$ is a DVR, such that the closed point of $\Delta$ is $s$ and the generic point of $\Delta$ is mapped to a point of $S^{\sm}$. In particular, the pull-back $X_{\Delta}/\Delta$ of the family $X/S$ along $f$ is a one-parameter smoothing of $X_s$. By pulling back $\ov \J_{X/S}^\chi$ along $f$, we get a fine compactified Jacobian stack $\ov \J_{X_\Delta/\Delta}^\chi$ for $X_{\Delta}/\Delta$ whose central fiber is $(\ov \J_{X/S}^{\chi})_s$. This implies that $(\ov \J_{X/S}^{\chi})_s$ is a fine compactified Jacobian stack that is weakly smoothable according to the terminology of \cite[Def. 2.21]{viviani2023new}. Hence, we can apply \cite[Thm. D]{viviani2023new} in order to deduce that there exists a unique V-stability condition $\s^s$ of characteristic $\chi$ on $X_s$ such that \eqref{E:fiberJXS} holds true also in this case.

By \eqref{E:fiberJXS} and by \cite[Thm. 8.8]{FPV1}, we deduce that $\s=\{\s^s\}$ is a V-stability condition on $X/S$ of characteristic $\chi$ such that
$$
\ov \J_{X/S}^{\chi}=\ov \J_{X/S}(\s), 
$$
and we are done.     
\end{proof}

\begin{remark}
 We think that the previous Theorem should hold true under the weaker assumption (which is implied by both Hypothesis \eqref{Hyp1} and Hypothesis \eqref{Hyp2}) that $S^{\sm}$ is dense in $S$. This would follow (using the same proof of the above Theorem) if one could prove that any weakly smoothable compactified Jacobian stack of a nodal curve is a V-compactified Jacobian stack (see the open questions in \cite{FPV2}), thus extending one of the equivalences of \cite[Thm. D]{viviani2023new} from fine compactified Jacobian stacks to arbitrary ones.
\end{remark}

\section{Compactified universal Jacobians}\label{Sec:cuJ}

Fix a pair of integers $(g,n)\in \NN^2$ such that $2g-2+n>0$. Denote by $\Mbargn$ the moduli stack of stable $n$-pointed nodal curves of genus $g$ and by $\pi:\Cbargn\to \Mbargn$ the universal family over it. We denote by $\Cgn/\Mgn$ the restriction of the universal family to the open substack of smooth curves. 

The aim of this section is to study all the compactified universal Jacobians for $\Cbargn/\Mbargn$, by which we mean the following.

\begin{definition}\label{D:cJ-Uni}
   A \textbf{compactified universal Jacobian stack} of characteristic $\chi$ over $\Mbargn$ (or of type $(g,n)$) is an open substack $\ov \J_{g,n}^{\chi}$ of the stack $\TF^{\chi}_{g,n}$ of relative rank-$1$ torsion-free sheaves of characteristic $\chi$ on $\Cbargn/\Mbargn$, admitting a relative proper good moduli space $F:\ov \J_{g,n}^\chi\xrightarrow{\Xi} \ov J_{g,n}^\chi\xrightarrow{f} \Mbargn$, called a \textbf{compactified universal Jacobian space}. The set of compactified universal Jacobians stacks forms a poset with respect to inclusion.

   A compactified universal Jacobian stack $\ov \J_{g,n}^\chi$ is called \emph{fine} if the good moduli morphism $\Xi:\ov \J_{g,n}^\chi\to \ov J_{g,n}^{\chi}$ is a $\Gm$-gerbe.
\end{definition}

Notice that the restriction of any compactified universal Jacobian stack $\ov \J_{g,n}^{\chi}$ (resp. space $\ov J_{g,n}^{\chi}$) over $\Mgn$ is the universal Jacobian stack $\J_{g,n}^{\chi}$ (resp. space $J_{g,n}^\chi$), parameterizing pairs $(X,L)$ consisting of an $n$-pointed smooth curve of genus $g$ together with a line bundle $L$ on $X$ of characteristic $\chi$.

It turns out that universal compactified Jacobian stacks/spaces are classified by universal V-stabilities conditions.

\begin{definition}\label{D:VStab-Uni}
The set of \emph{(universal) V-stability conditions} of type $(g,n)$ is the  set 
$$
\VStab_{g,n}=\varprojlim_{X\in \Mbargn} \VStab(X),
$$
where the limit is taken with respect to the pull-back maps induced by \'etale specializations of geometric points on $\Mbargn$. In other words, a universal V-stability condition of type $(g,n)$ is a collection of V-stability conditions 
 $$\s=\{\s(X)\in \VStab(X)\: : X\in \Mbargn(k=\ov k)\},$$
such that, for  any \'etale specialization $\xi: X_1\rightsquigarrow X_2$ of geometric points of $\Mbargn$, we have that $\xi^*(\s(X_2))=\s(X_1)$. 

A universal V-stability condition $\s\in \VStab_{g,n}$ is called \emph{general} if  $\s(X)$ is general for every $X\in \Mbargn(k=\ov k)$.
\end{definition}

Note that the V-stability conditions of type $(g,n)$ are exactly the relative V-stability conditions (see Definition \ref{D:VStabX}\eqref{D:VStabX2}) for the universal family $\Cbargn/\Mbargn$.

\begin{remark}\label{R:VStab-Uni}
From the above Definition \ref{D:VStab-Uni}, we deduce that, for a fixed $\s\in \VStab_{g,n}$, we have:
\begin{enumerate}[(i)]
\item \label{R:VStab-Uni1} If $X_1,X_2\in \Mbargn(k=\ov k)$ have the same dual graph $G$ (i.e. if they belong to the same stratum of $\Mbargn$), then we have that $\s(G):=\s(X_1)=\s(X_2)$ after having identified $\BCon(X_i)$ with the biconnected subsets of $V(G)$, i.e. the subsets $W\subset V(G)$ such that the induced subgraphs $G[W]$ and $G[W^c]$ are connected. Moreover, we have that $\s(G)$ is $\Aut(G)$-invariant. 
\item \label{R:VStab-Uni2} The characteristic $|\s(X)|$ is independent of the chosen $X\in \Mbargn$, and it will be called the characteristic of $\s$ and denoted by $|\s|$. Hence, we get a  decomposition according to the characteristic of the V-stability conditions
$$\VStab_{g,n}=\coprod_{\chi\in \ZZ}\VStab^{\chi}_{g,n} \text{ where }
\VStab_{g,n}^{\chi}:=\varprojlim_{X\in \Mbargn} \VStab^{\chi}(X).$$
\end{enumerate}
\end{remark}

The set $\VStab_{g,n}$ becomes a poset under the following order relation.

\begin{definition}\label{D:pos-VStab}
Let $\s_1,\s_2\in \VStab_{g,n}$. We say that $\s_1\geq \s_2$ if $|\s_1|=|\s_2|$ and, for any $X\in \Mbargn$, we have that 
$$\s_1(X)\geq \s_2(X) \text{, i.e. } \s_1(X)_Y\geq \s_2(X)_Y \text{ for any } Y\in \BCon(X).$$
\end{definition}

We can now state our main classification result for the poset of compactified universal Jacobians over $\Mbargn$.

\begin{theorem}\label{T:cJUniv}
There is an anti-isomorphism of posets
$$
\begin{aligned}
\VStab_{g,n} & \xrightarrow{\cong} \left\{\text{Compactified universal Jacobian stacks over $\Mbargn$}\right\},\\
\s &\mapsto \ov \J_{g,n}(\s):=\left\{ I \in  \TF_{g,n}: \: I_{|X} \text{ is $\s(X)$-semistable for every } X\in \Mbargn \right\}.
\end{aligned}
$$

Moreover, $\s$ is general if and only if $\ov \J_{g,n}(\s)$ is fine. 
\end{theorem}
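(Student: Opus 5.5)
Since universal V-stability conditions of type $(g,n)$ are exactly the relative V-stability conditions for the universal family $\Cbargn/\Mbargn$ (as noted after Definition \ref{D:VStab-Uni}), the statement is essentially the specialization of Fact \ref{F:VcJ} and Theorem \ref{T:cJ-nodal} to $S=\Mbargn$, $X=\Cbargn$, together with a check that the correspondence is order-reversing in both directions.

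\emph{Well-definedness.} Given $\s\in\VStab_{g,n}^\chi$, Fact \ref{F:VcJ} applied to $\Cbargn/\Mbargn$ (a quasi-separated, locally Noetherian algebraic stack) shows that $\ov\J_{g,n}(\s)=\ov\J_{\Cbargn/\Mbargn}(\s)$ is an open substack of $\TF^\chi_{g,n}$ admitting a relative proper good moduli space. It is therefore a compactified universal Jacobian stack, and it is fine if and only if $\s$ is general. Its restriction to $\Mgn$ is all of $\J_{g,n}^\chi$, since there are no biconnected subcurves on smooth curves.

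\emph{Surjectivity and injectivity.} The universal family $\Cbargn\to\Mbargn$ is versal at every point, because $\Mbargn$ is smooth and the deformation theory of a pointed stable curve maps smoothly onto that of the underlying nodal curve. So hypothesis \eqref{Hyp1} of Theorem \ref{T:cJ-nodal} holds, and every compactified universal Jacobian stack equals $\ov\J_{g,n}(\s)$ for a \emph{unique} $\s$. This gives bijectivity.

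The only subtle point is versality when $n\ge1$: the semiuniversal deformation of the pointed curve maps formally smoothly onto the unpointed deformation space $\Spec R_X$. Versality in the sense required, i.e. the local map to $\Spec R_X$ being smooth, then follows. This is standard, because forgetting smooth points only adds smooth parameters.

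\emph{Order.} If $\s_1\ge\s_2$, the defining inequalities $\chi((I_{|X})_Y)\ge\s_i(X)_Y$ give $\ov\J_{g,n}(\s_1)\subseteq\ov\J_{g,n}(\s_2)$ immediately. For the converse, suppose $\ov\J_{g,n}(\s_1)\subseteq\ov\J_{g,n}(\s_2)$. Restricting to the fiber over each $X$ gives $\ov\J_X(\s_1(X))\subseteq\ov\J_X(\s_2(X))$, and the characteristics agree. We then need that inclusion of V-compactified Jacobians of a fixed nodal curve forces $\s_1(X)\ge\s_2(X)$. I expect this to be the main obstacle, and I would cite it from \cite{FPV2}, where the poset structure of V-stabilities on a nodal curve is shown to be anti-isomorphic to that of V-compactified Jacobians.

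If that result is not directly available, I would argue directly. For $Y\in\BCon(X)$, take the sheaf $I$ that fails to be free exactly at $Y\cap Y^c$ and splits as $I_Y\oplus I_{Y^c}$. Choose it with $\chi(I_Y)=\s_1(X)_Y$ and $\chi(I_{Y^c})=\chi-\s_1(X)_Y$, and with balanced degrees on the remaining components so that it is $\s_1$-semistable. By the minimality of V-stability, such a choice exists precisely in the boundary situation. It then lies in $\ov\J_X(\s_2(X))$, which gives $\s_1(X)_Y\ge\s_2(X)_Y$.
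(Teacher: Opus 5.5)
Your proposal follows the paper's proof. Fact \ref{F:VcJ} gives well-definedness and the equivalence between generality and fineness. Theorem \ref{T:cJ-nodal} under the versality hypothesis \eqref{Hyp1} gives bijectivity. The reverse implication $\ov\J_{g,n}(\s_1)\subseteq\ov\J_{g,n}(\s_2)\Rightarrow\s_1\geq\s_2$ comes from citing a fiberwise result; the paper uses \cite[Lemma 8.10]{FPV1}.

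Your fallback direct argument, however, fails as written. A sheaf that is not free at any node of $Y\cap Y^c$ satisfies $\chi(I_Y)+\chi(I_{Y^c})=\chi$. So it can be $\s_1(X)$-semistable with $\chi(I_Y)=\s_1(X)_Y$ only when $Y$ is $\s_1(X)$-degenerate. For nondegenerate $Y$ you would need an $\s_1(X)$-semistable sheaf that is free at some node of $Y\cap Y^c$ and attains $\chi(I_Y)=\s_1(X)_Y$. Proving that such a sheaf exists is the genuine content of this step.
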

We will denote by $\ov J_{g,n}(\s)\to \Mbargn$ the relative good moduli space of $\ov \J_{g,n}(\s)$. Note that if $\s\in \VStab^{\chi}_{g,n}$ then $\ov \J_{g,n}(\s)\subset \TF_{g,n}^\chi$.  
\begin{proof}
 The fact that $\ov \J_{g,n}(\s)$ is a compactified universal Jacobian stack, as well as the fact that it is fine if and only if $\s$ is general, follows from Fact \ref{F:VcJ}. 
The bijectivity of the map follows from Theorem \ref{T:cJ-nodal}, using that $\Cbargn/ \Mbargn$ is versal at any point. 
Finally, it remains to observe that
$$
\s_1\geq \s_2\Leftrightarrow \ov \J_{g,n}(\s_1)\subseteq \ov \J_{g,n}(\s_2).$$
Indeed, the implication $\Rightarrow$ follows from the definition of $\ov \J_{g,n}(\s_i)$. The implication $\Leftarrow$ follows from the above mentioned definition and \cite[Lemma 8.10]{FPV1}.
\end{proof}

We want now to describe more explicitly the poset of universal V-stability conditions on $\Mbargn$, extending \cite{fava2024} from the case of general universal V-stability conditions to arbitrary ones.  

Consider the \emph{stability domain} of type $(g,n)$ (see \cite[Def. 2.2]{fava2024})
\begin{equation}\label{E:Dgn}
\Dgn:=\{(e;h,A)\: : 
e\in \NN_{>0}, 0\leq h \leq g-e+1, A\subseteq [n],2h-2+e+|A|> 0, 2g-2h-e+|A^c|>0\},
\end{equation}
where $[n]:=\{1,\ldots, n\}$. We define two structures on $\Dgn$:
\begin{itemize}
\item the complement of an element of $\Dgn$ is defined as
$$
(e;h,A)^c:=(e; g-h-e+1,A^c) \text{ where } A^c:=[n]- A.
$$
This defines an involution on $\Dgn$ whose fixed points occur when $2h+e=g$ and $n=0$.
\item a triangle in $\Dgn$ is a $3$-elements multiset (that is, repetitions are allowed) $$\Delta=[(e_1;h_1,A_1),(e_2;h_2,A_2), (e_3;h_3,A_3)]$$  of $\Dgn$ such that 
$$
\begin{sis}
& 2\vert (e_1+e_2+e_3),\\
& e_i+e_j\geq e_k+2 \text{ for any } \{i,j,k\}=\{1,2,3\}, \\
& A_1\sqcup A_2\sqcup A_3=[n],\\
& g=h_1+h_2+h_3+\frac{e_1+e_2+e_3}{2}-2.
\end{sis}
$$
\end{itemize}
We define the \emph{log-canonical degree} of $(e;h,A) \in \Dgn$ as
 \begin{equation}\label{E:lcdeg}
\delta(e;h,A):= 2h-2+e+|A|\in (0,2g-2+n).
\end{equation}
It is easy to check that the log-canonical degree satisfies the following additivity properties:
\begin{equation}\label{E:for-lcdeg}
 \begin{sis}
& \delta(x)+\delta(x^c)=2g-2+n \text{ for any } x\in \Dgn,\\
& \delta(x_1)+\delta(x_2)+\delta(x_3)=2g-2+n \text{ for any triangle } \Delta=[x_1,x_2,x_3]  \text{ in } \Dgn.
 \end{sis}   
\end{equation}

%\begin{lemma}[Additivity of $\delta$ in triangles]\label{lem:delta-additivity-triangle}
%Let $\Delta=\{x_1,x_2,x_3\}$ be a triangle in $\mathbb{D}_{g,n}$, with
%$x_i=(e_i;h_i,A_i)$ for $i=1,2,3$. Then
%\begin{equation} \label{eq:delta-add}
%\delta(x_1^c)=\delta(x_2)+\delta(x_3),\qquad
%\delta(x_2^c)=\delta(x_1)+\delta(x_3),\qquad
%\delta(x_3^c)=\delta(x_1)+\delta(x_2).
%\end{equation}
%\end{lemma}

%\begin{proof}
%Since $\Delta$ is a triangle, we have $A_1\sqcup A_2\sqcup A_3=[n]$, hence
%\[
%|A_1^c|=|A_2|+|A_3|,\qquad |A_2^c|=|A_1|+|A_3|,\qquad |A_3^c|=|A_1|+|A_2|.
%\]
%Moreover, the genus relation for a triangle gives
%\[
%g=h_1+h_2+h_3+\frac{e_1+e_2+e_3}{2}-2.
%\]
%By definition of complement,
%\[
%\delta(x_i^c)=2(g-h_i-e_i+1)-2+(g-h_i-e_i+1)+|A_i^c|
%=3(g-h_i-e_i+1)-2+|A_i^c|.
%\]
%Substituting the two displayed triangle identities and simplifying yields
%\[
%\delta(x_1^c)=\delta(x_2)+\delta(x_3),\qquad
%\delta(x_2^c)=\delta(x_1)+\delta(x_3),\qquad
%\delta(x_3^c)=\delta(x_1)+\delta(x_2),
%\]
%as claimed.
%\end{proof}

\begin{remark}\label{R:vine-trian}
Note that there are bijections:
\begin{itemize}
    \item 
$\{\text{Pairs of complementary elements of } \Dgn\} \xrightarrow{\cong} \{\text{Vine strata of } \Mbargn\},$ 

which sends a pair  $\{(e;h,A), (e;h,A)^c\}$ to the vine stratum of $\Mbargn$ whose generic element is a $n$-pointed vine curve $X=C_1\cup C_2$ consisting of two smooth curves $C_1$ and $C_2$ of genera, respectively, $h$ and $g-h-e+1$, meeting in $e$ nodes and such that $A\subset C_1$ and $A^c\subset C_2$.  The pair consists of one element exactly when it is a fixed point of the involution $(e,h,A) \to (e,h,A)^c$ defined above, which happens precisely when the generic element of the corresponding stratum admits an involution that swaps the two irreducible components.
\item $\{\text{Triangles in } \Dgn\} \xrightarrow{\cong} \{\text{Triangular strata of } \Mbargn\},$

which sends a triangle  $[(e_1;h_1,A_1), (e_2;h_2,A_2), (e_3;h_3,A_3)]$ to the stratum of $\Mbargn$ whose generic element is a $n$-pointed curve $X=C_1\cup C_2\cup C_3$ consisting of three smooth curves $C_1$, $C_2$ and $C_3$ such that $g(C_i)=h_i$, $A_i=\{h\in [n]\: : p_h\in C_i\}$, and such that $|C_i\cap C_j|:=\frac{e_i+e_j-e_k}{2}$ for any $\{i,j,k\}=\{1,2,3\}$. 
\end{itemize}
\end{remark}

The set $\Dgn$ describes the combinatorial type (or simply the type) of the biconnected subcurves of any $X\in \Mbargn$ in the following sense: for any $(X,p_i)\in \Mbargn$ there is a function
\begin{equation}\label{E:type}
    \begin{aligned}
      \type=\type_X: \BCon(X) & \longrightarrow \Dgn\\
      Y & \mapsto \type(Y):=(|Y\cap Y^c|; g(Y),\{i\in [n]\: : p_i\in Y\}).
    \end{aligned}
\end{equation}
Note that: 
\begin{itemize}
\item $\type_X(Y^c)=\type_X(Y)^c$ for any $Y\in \BCon(X)$.
\item If $X=Y_1\cup Y_2\cup Y_3$ with $Y_i\in \BCon(X)$ and without pairwise common irreducible components, then $[\type_X(Y_1),\type_X(Y_2),\type_X(Y_3)]$ is a triangle in $\Dgn$.
\end{itemize}

\begin{definition}\label{D:Sigmagn}
   Denote by $\Sigma^\chi_{g,n}$ the set of all functions (called \emph{V-functions} of type $(g,n)$)
    \begin{align*}
        \sigma:\Dgn&\to \ZZ\\
        (e;h,A)&\mapsto \sigma(e;h,A)
    \end{align*}
    satisfying the following properties:
\begin{enumerate}
\item \label{E:condUni1} for any $(e;h,A)\in \Dgn$, we have 
\begin{equation}\label{E:sumUni}
\sigma(e;h,A)+\sigma((e;h,A)^c)-\chi
\in \{0,1\}.
\end{equation}

An element $(e;h,A)\in \Dgn$ is said to be \emph{$\sigma$-degenerate} if $\sigma(e;h,A)+\sigma((e;h,A)^c)=\chi$,
and \emph{$\sigma$-nondegenerate} otherwise.

\item  \label{E:condUni2} for any triangle $\Delta=[(e_1;h_1,A_1), (e_2;h_2,A_2), (e_3;h_3,A_3)]$ of $\Dgn$, we have that:
\begin{enumerate}
 \item if two among the elements of $\Delta$ are $\sigma$-degenerate, then so is  the third. 
            \item the following holds
            \begin{equation}\label{E:triaUni}
            \sum_{i=1}^{3}\sigma(e_i; h_i,A_i)-\chi
            \in \begin{cases}
                \{1,2\} \textup{ if $(e_i;h_i,A_i)$ is $\sigma$-nondegenerate for all $i$};\\
                \{1\} \textup{ if there exists a unique $i$ such that } \\
                \hspace{2cm} \textup{ $(e_i;h_i,A_i)$ is $\sigma$-degenerate};\\
                \{0\} \textup{ if $(e_i;h_i,A_i)$ is $\sigma$-degenerate for all $i$}.
            \end{cases}
        \end{equation}
\end{enumerate}
We say that $\sigma$ has Euler characteristic $\chi$ and we write $\chi=|\sigma|$. We set 
$$\Sigma_{g,n}:=\coprod_{\chi \in \ZZ}\Sigma_{g,n}^\chi.$$
\end{enumerate}
\end{definition}
 The \emph{degeneracy subset} of $\sigma$ is the collection
\begin{equation}\label{E:degsigma}
\D(\sigma):=\{(e;h,A)\in \Dgn: (e;h,A) \text{ is $\sigma$-degenerate}\}.
\end{equation}
We say that $\sigma$ is \emph{general}  if $\D(\sigma)=\emptyset$.

Note that if $\sigma\in \Sigma_{g,n}^\chi$ is general, then the function  $\m_{(e;h,A)}:=\sigma(e;h,A)+h-1$ defines a degree $d:=\chi+g-1$ universal stability condition of type $(g,n)$ in the sense of \cite[Def. 2.4]{fava2024}.

\begin{remark}\label{R:triang}
   It follows from the  Conditions \eqref{E:condUni1} and \eqref{E:condUni2} of Definition \ref{D:Sigmagn} that, for any triangle  $\Delta=[(e_1;h_1,A_1), (e_2;h_2,A_2), (e_3;h_3,A_3)]$ of $\Dgn$, we have that
$$
\sigma((e_3;h_3,A_3)^c)-\sigma(e_1;h_1,A_1)-\sigma(e_2;h_2,A_2)=
\begin{sis}
0 & \text{ if either } (e_1;h_1,A_1)\in \D(\sigma) \\
&  \text{ or }      (e_2;h_2,A_2)\in \D(\sigma), \\
-1& \text{ if } (e_1;h_1,A_1), (e_2;h_2,A_2)\not \in \D(\sigma)\\
& \text{ and }  (e_3;h_3,A_3) \in \D(\sigma),\\
\{0,-1\} & \text{ if }(e_1;h_1,A_1), (e_2;h_2,A_2), (e_3;h_3,A_3) \not \in \D(\sigma).
\end{sis}
$$
Note that we have the convenient additivity $\delta((e_3;h_3,A_3)^c)=\delta(e_1;h_1,A_1)+\delta(e_2;h_2,A_2)$ for the log-canonical degrees introduced in \eqref{E:lcdeg}.
\end{remark}

We now introduce a poset structure on the set of V-functions of type $(g,n)$. 

\begin{definition}\label{D:Sigma-pos}
The space of V-functions $\Sigma_{g,n}$ of type $(g,n)$ comes with the following order relation 
  $$
  \sigma_1\geq \sigma_2 \Longleftrightarrow 
  \begin{sis}
  &|\sigma_1|=|\sigma_2|,\\
  & \sigma_1(e;h,A)\geq \sigma_2(e;h,A) \text{ for any } (e;h,A)\in \Dgn.\\
  \end{sis}
  $$
\end{definition}
Note that each $\Sigma_{g,n}^{\chi}$ is a union of connected components of the poset $\Sigma_{g,n}$.

\begin{remark}\label{R:Sigma-pos}
    Let $\sigma_1,\sigma_2\in \Sigma_{g,n}$. 
%VECCHIA PRIMA PARTE
%\item \label{R:Sigma-pos1} It follows from Proposition \ref{P:Stabgn} that $$\sigma_1\geq \sigma_2\Leftrightarrow \s^{\sigma_1}(X)\geq \s^{\sigma_2}(X) \text{ for any } X\in \Mbargn,$$ 
%        where the latter order relation is the one of \cite[Def. 4.11]{FPV1}. 
     It follows from Definition \ref{D:Sigmagn} that if $\sigma_1\geq \sigma_2$ then $\D(\sigma_1)\subseteq \D(\sigma_2)$ and for any $(e;h,A)\in \Dgn$ we have that:
        $$
(\sigma_1(e;h,A),\sigma_1((e;h,A)^c))=
   \begin{cases}
   (\sigma_2(e;h,A),\sigma_2((e;h,A)^c))  \\
   \hspace{3cm} \text{ if either } (e;h,A)\in \D(\sigma_1) \text{ or } (e;h,A)\not \in \D(\sigma_2), \\
    (\sigma_2(e;h,A)+1,\sigma_2((e;h,A)^c)) \text{ or }  (\sigma_2(e;h,A),\sigma_2((e;h,A)^c)+1) \\
    \hspace{3cm} \text{ if } (e;h,A)\in \D(\sigma_2)- \D(\sigma_1).
   \end{cases}
$$
%\item \label{R:Sigma-pos3} We have that 
%$$
%\sigma_1\geq \sigma_2\Leftrightarrow \ov \J_{g,n}(\sigma_1)\subseteq \ov \J_{g,n}(\sigma_2).$$
%Indeed, the implication $\Rightarrow$ follows from the definition of $\ov \J_{g,n}(\sigma_i)$ (see Corollary \ref{C:cJUniv}). The implication $\Leftarrow$ follows from the above mentioned definition and \cite[Lemma 8.10]{FPV1}.
\end{remark}

\begin{proposition}\label{P:Stabgn}
There is an isomorphism of posets 
\begin{equation*}
   \begin{aligned}
      \Sigma^\chi_{g,n} & \xrightarrow{\cong} \VStab^\chi_{g,n}\\
      \sigma & \mapsto \s^{\sigma}=\{\s^{\sigma}(X)\}_{X\in \Mbargn}
   \end{aligned} 
\end{equation*}
defined by 
$$
\s^{\sigma}(X)_Y:=\sigma(\type_X(Y)) \: \text{ for any } Y\in \BCon(X).
$$
\end{proposition}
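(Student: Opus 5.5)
We need to show four things: the map is well-defined, it is injective, it is surjective, and it preserves and reflects the order. The central idea is that a universal V-stability is determined by its values on vine curves, and these values are exactly the data of a V-function.

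\emph{Well-definedness.} Fix $\sigma\in\Sigma^\chi_{g,n}$ and $X\in\Mbargn$.
\begin{itemize}
\item Condition (1) of Definition~\ref{D:VStabX} for $\s^\sigma(X)$ follows from \eqref{E:sumUni}, since $\type_X(Y^c)=\type_X(Y)^c$.
\item Condition (2) follows from Definition~\ref{D:Sigmagn}\eqref{E:condUni2}, since a decomposition $X=Y_1\cup Y_2\cup Y_3$ into biconnected subcurves with no common components yields a triangle $[\type_X(Y_i)]$ in $\Dgn$.
\item Compatibility with étale specializations $\xi:X_1\rightsquigarrow X_2$ holds because $\type_{X_2}(\xi_*Y)=\type_{X_1}(Y)$. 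Indeed, specialization preserves the genus of a subcurve, the number of nodes joining it to its complement, and the marked points it contains.
\end{itemize}
The order statement is immediate in one direction, since $\sigma_1\ge\sigma_2$ gives $\s^{\sigma_1}(X)\ge \s^{\sigma_2}(X)$ pointwise.

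\emph{Inverse map.} Given $\s\in\VStab^\chi_{g,n}$ and $x=(e;h,A)\in\Dgn$, choose a vine curve $X_x=C_1\cup C_2$ in the vine stratum of Remark~\ref{R:vine-trian}, and let $Y=C_1$ be the component of type $x$. Set $\sigma_\s(x):=\s(X_x)_{C_1}$. This is independent of the choice of curve in the stratum, by Remark~\ref{R:VStab-Uni}\eqref{R:VStab-Uni1}. It is also independent of which component is labeled $C_1$ when $x=x^c$, by $\Aut(G)$-invariance.
\begin{itemize}
\item Property (1) of $\sigma_\s$ is the vine condition on $X_x$.
\item For property (2), take a triangle $\Delta$ and a curve $Z=C_1\cup C_2\cup C_3$ in the corresponding triangular stratum. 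Each $C_i$ has type $x_i$ in $Z$. Smoothing the nodes between $C_j$ and $C_k$ gives a specialization from a vine curve in the stratum of $x_i$, under which $C_i$ corresponds to itself. Hence $\s(Z)_{C_i}=\sigma_\s(x_i)$, and the triangle axiom for $\s(Z)$ gives \eqref{E:triaUni} and condition (a).
\end{itemize}
Clearly $\sigma_{\s^\sigma}=\sigma$.

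\emph{Surjectivity, the main step.} We must show $\s^{\sigma_\s}=\s$, i.e.\ $\s(X)_Y=\sigma_\s(\type_X(Y))$ for every $X$ and every $Y\in\BCon(X)$. Since $Y$ and $Y^c$ are connected, we can partially smooth $X$: smooth all nodes internal to $Y$ and all nodes internal to $Y^c$, keeping the nodes of $Y\cap Y^c$. This gives an étale specialization $\xi:X'\rightsquigarrow X$, where $X'$ is a vine curve with components $Y',Y'^c$ and $\xi_*(Y')=Y$. By definition $\type_{X'}(Y')=\type_X(Y)$. By compatibility, $\s(X)_Y=\s(X')_{Y'}=\sigma_\s(\type_X(Y))$.

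The subtle point is making sure that such a partial smoothing exists inside $\Mbargn$ with the prescribed specialization of subcurves. This follows from versality of $\Cbargn/\Mbargn$: the nodes can be smoothed independently, and the resulting curve remains stable. The same argument applied to vine curves yields the reverse order implication: if $\s^{\sigma_1}\ge\s^{\sigma_2}$, then evaluating on vine curves gives $\sigma_1\ge\sigma_2$. The two maps are mutually inverse, and the map is an isomorphism of posets.
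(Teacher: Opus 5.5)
Your proposal is correct and follows the paper's proof essentially step by step. It uses the same well-definedness checks via $\type_X(Y^c)=\type_X(Y)^c$ and triangles, and the same inverse map defined on vine curves, with the triangle axiom obtained by partially smoothing a triangular curve. It also uses the same smoothing of all nodes outside $Y\cap Y^c$ to a vine curve to prove $\s^{\sigma_\s}=\s$. Your remarks on the self-complementary case via $\Aut(G)$-invariance, and on the existence of the partial smoothings, are details the paper leaves implicit.
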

Observe that $\sigma$ is general if and only if $\s^\sigma$ is general. 
\begin{proof}
Let us first show that the map is well-defined, i.e. that $\s^{\sigma}\in \VStab^{\chi}_{g,n}$. The function $\s^\sigma(X)$ is a V-stability on $X$ of characteristic $\chi$, i.e. $\s^\sigma(X)$ satisfies the two properties of \cite[Def. 4.1]{FPV1}, as it follows from the fact that $\sigma$ satisfies the two properties \eqref{E:condUni1} and \eqref{E:condUni2} of Definition \ref{D:Sigmagn} and by using that $\type(Y^c)=\type(Y)^c$ and that, if $X=Y_1\cup Y_2\cup Y_3$ with $Y_1, Y_2, Y_3 \in \BCon(X)$ have no pairwise common irreducible components, then $[\type(Y_1, \type(Y_2), \type(Y_3))]$ is a triangle in $\Dgn$.  Moreover, the collection $\s^{\sigma}=\{\s^\sigma(X)\}_{X\in \Mbargn}$ is a V-stability of type $(g,n)$ since, for  any \'etale specialization $\xi: X_1\rightsquigarrow X_2$ of geometric points of $\Mbargn$ and any $Y\in \BCon(X_1)$, we have that $\type(Y)=\type(\xi_*(Y))$, which then implies that $\xi^*(\s(X_2))=\s(X_1)$.

We will now define a map in the other direction.  Start with a V-stability $\s=\{\s(X)\}_{X\in \Mbargn}$ of type $(g,n)$. For any pair $(e;h,A)\in \Dgn$, pick a vine curve $V=C_1\cup C_2\in \Mbargn$ such that $\type(C_1)=(e;h,A)$. Set $\sigma^\s(e;h,A):=\s(V)_{C_1}$ which is well-defined, i.e. it is independent of the chosen $V$, by Remark \ref{R:VStab-Uni}\eqref{R:VStab-Uni1}. Property (1) of \cite[Def. 4.1]{FPV1} for $\s(V)$ applied to $C_1\in \BCon(V)$ and $C_2=C_1^c$ gives Property \eqref{E:condUni1} of Definition \ref{D:Sigmagn} for $\sigma^\s$.
Let us now check that $\sigma^\s$ also satisfies  Property \eqref{E:condUni2} of Definition \ref{D:Sigmagn}. Fix a triangle $\Delta=[(e_i;h_i,A_i); i=1,2,3]$ in $\Dgn$ and consider a triangular curve $(T,p_k)\in \Mbargn$ in the triangular stratum corresponding to $\Delta$ as in Remark \ref{R:vine-trian}, i.e. $T=C_1\cup C_2\cup C_3$ with $C_i$ smooth of genus $h_i$ with the property that $A_i=\{k\in [n]\: : p_k\in C_i\}$ and $t_{i}:=|C_i\cap C_i^c|$. In particular, $\type(C_i)=(e_i;h_i,A_i)$.  For any $1\leq i\leq 3$, denote by $\xi_i:V_i\rightsquigarrow T$ the geometric specialization that corresponds to the smoothing of the nodes of $C_j\cap C_k$ where $\{1,2,3\}=\{i,j,k\}$. Then $V_i$ is a vine curve with a component $\wt C_i$ such that $(\xi_i)_*(\wt C_i)=C_i$. By the compatibility of the V-stability conditions of $\s$ and the definition of $\sigma^\s$, we get that 
$$
\s(T)_{C_i}=\s(T)_{(\xi_i)_*(\wt C_i)}=(\xi_i^*\s(T))_{\wt C_i}=\s(V_i)_{\wt C_i}=\sigma^\s(e_i; h_i,A_i). 
$$
Therefore, Property~(2) of \cite[Def. 4.1]{FPV1} for $\s(T)$ applied to $T=C_1\cup C_2\cup C_3$ corresponds to Property~\eqref{E:condUni2}  of Definition \ref{D:Sigmagn} for $\sigma^\s$. 

We will now show that the two maps $\sigma\mapsto \s^\sigma$ and $\s\mapsto \sigma^\s$ are inverses one of the other. The fact that $\sigma^{\s^{\sigma}}=\sigma$ follows immediately from the definitions. In order to prove that $\s^{\sigma^\s}=\s$, consider a curve $X\in \Mbargn$ and any $Y\in \BCon(X)$. Denote by $\xi:\wt X\rightsquigarrow X$ the geometric specialization that corresponds to the smoothing of all the nodes of $X$ except those belonging to $Y\cap Y^c$. Then $\wt X$ is a vine curve of $\Mbargn$ with an irreducible component $\wt Y$ such that $\xi_*(\wt Y)=Y$. Since $\xi^*\s(X)=\s(\wt X)$ by the assumption on $\s$, we get that 
\begin{equation}\label{E:calco1}
    \s(X)_Y=\s(X)_{\xi_*(\wt Y)}=\xi^*\s(X)_{\wt Y}=\s(\wt X)_{\wt Y}.
\end{equation}
On the other hand, by the definition of the  two maps $\sigma\mapsto \s^\sigma$ and $\s\mapsto \sigma^\s$, we have that 
\begin{equation}\label{E:calco2}
    \s^{\sigma^\s}(X)_Y=\sigma^\s(\type_X(Y))=\sigma^\s(\type_{\wt X}(\wt Y))=\s(\wt X)_{\wt Y}.
\end{equation}
By comparing \eqref{E:calco1} and \eqref{E:calco2}, we get that $\s^{\sigma^s}=\s$, as required.

Finally, it is clear from the definition of the two maps $\sigma\mapsto \s^\sigma$ and $\s\mapsto \sigma^\s$ that we have
$$\sigma_1\geq \sigma_2\Leftrightarrow \s^{\sigma_1}(X)\geq \s^{\sigma_2}(X) \text{ for any } X\in \Mbargn.$$ 
\end{proof}

We conclude this section by proving Corollary~\ref{C:stesso-HD} from the introduction. 

\begin{proof}[Proof of Corollary \ref{C:stesso-HD}]
Part \eqref{C:isoH} follows from the fact that $\ov J_{g,n}(\sigma_i)$ is regular (since $\TF_{g,n}$ is regular and $\ov J_{g,n}(\sigma_i)=\ov \J_{g,n}(\sigma_i)\fatslash \Gm$ because $\sigma_i$ is general) and that the morphism $\ov \J_{g,n}(\sigma_i)_{\CC}\to (\Mbargn)_{\CC}$ has full support, which can be deduced arguing as in \cite[Thm. 5.12]{Migliorini_2021}. Indeed, the result of loc. cit. implies that any classical (in the sense of \cite[Ex. 6.7]{FPV1}) relative fine compactified Jacobian over a versal family of nodal curves has full support. However, the fact that the relative compactified Jacobian is classical is only used in the computation of the class in the Grothendieck group of varieties of a classical fine compactified Jacobian of a nodal curve over $k=\ov k$, see \cite[Prop. B.2]{Migliorini_2021}. Since the same formula holds true for fine V-compactified Jacobians of nodal curves by \cite[Cor. 2.6]{viviani2023new} and the geometric fibers of $\ov J_{g,n}(\sigma_i)\to \Mbargn$ are fine V-compactified Jacobians  by Theorem \ref{T:thmA}, the  proof of \cite[Thm. 5.12]{Migliorini_2021} extends to this case.

Part \eqref{C:isoD}: since $\ov J_{g,n}^\chi(\sigma_i)$ are fine compactified universal Jacobians (for $i=1,2$), then we can pick a tautological sheaf $\I(\sigma_i):=\I^{\taut}_{g,n}(\sigma_i)$ on the universal family $\ov J_{g,n}^\chi(\sigma_i)\times_{\Mbargn} \Cbargn\to \ov J_{g,n}^\chi(\sigma_i)$ (a fact that it is obvious for $n\geq 1$ because the universal family has sections and that is proved in Corollary \ref{C:fineMg} for $n=0$). 
Consider the sheaf 
$p_{13}^*(\I(\sigma_1))\otimes p_{23}^*(\I(\sigma_2))$ on $\ov J_{g,n}^\chi(\sigma_1)\times_{\Mbargn}\ov J_{g,n}^\chi(\sigma_2)\times_{\Mbargn} \Cbargn\xrightarrow{\pi} \ov J_{g,n}^\chi(\sigma_1)\times_{\Mbargn}\ov J_{g,n}^\chi(\sigma_2)$, where $p_{i3}$ is the projection onto the $i$-th and third factor. Over the big open substack 
$$\iota: (\ov J_{g,n}^\chi(\sigma_1)\times_{\Mbargn}\ov J_{g,n}^\chi(\sigma_2))^\sharp\hookrightarrow \ov J_{g,n}^\chi(\sigma_1)\times_{\Mbargn}\ov J_{g,n}^\chi(\sigma_2)$$ 
whose geometric points consist of pairs of sheaves $(I_1,I_2)$ on $C\in \Mbargn(k=\ov k)$ such that for any node $p\in C$ either $I_1$ or $I_2$ is free, the sheaf  $p_{13}^*(\I(\sigma_1))\otimes p_{23}^*(\I(\sigma_2))$ is flat and hence we can consider the Poincar\'e line bundle $$\P:=d_{\pi}(p_{13}^*(\I(\sigma_1))\otimes p_{23}^*(\I(\sigma_2)))^{-1}\otimes d_{\pi}(p_{13}^*(\I(\sigma_1)))\otimes d_{\pi}(p_{23}^*(\I(\sigma_2)))\in \Pic((\ov J_{g,n}^\chi(\sigma_1)\times_{\Mbargn}\ov J_{g,n}^\chi(\sigma_2))^\sharp).$$ The pushforward $\ov \P:=\iota_*(\P)$ (called the Poincar\'e or Arinkin sheaf) is a maximal Cohen-Macaulay sheaf that is flat over each factor (see \cite[Thm.~A]{Ari}, \cite[Thm. 4.6]{MRV2}, \cite[Thm.~1.2]{MolchoFourier}). The sheaf $\ov \P$ satisfies the following identity in the derived category of $\ov J_{g,n}^\chi(\sigma_1)\times_{\Mbargn}\ov J_{g,n}^\chi(\sigma_1)$
\begin{equation}\label{E:convP}
Rp_{\pi_{13*}}(\pi_{12}^*((\ov \P)^\vee)\otimes \pi_{23}^*(\ov \P))\cong \O_{\Delta}[-g],
\end{equation}
where $\pi_{ij}$ denotes the projection onto the $i$-th and $j$-th term of the product $\ov J_{g,n}^\chi(\sigma_1)\times_{\Mbargn}\ov J_{g,n}^\chi(\sigma_2)\times_{\Mbargn}\ov J_{g,n}^\chi(\sigma_1)$. The identity \eqref{E:convP} can be checked \'etale-locally on the base and hence it follows from \cite[Thm. 6.2]{MRV2} where it is proved for two relative classical compactified Jacobians over the semiuniversal family of a reduced curve with planar singularity; however, an inspection of the proof reveals that the same arguments work for two relative V-compactified Jacobians. The identity \eqref{E:convP} now implies that the Fourier-Mukai transform with kernel $\ov \P$ gives the desired equivalence of triangulated categories. 
\end{proof}

%VECCHIO COROLLARIO (ora inglobato nel Teorema A)
%By combining Theorem \ref{T:cJUniv} and Proposition \ref{P:Stabgn}, we deduce the following result which was shown in \cite[Thm. A]{fava2024} for fine compactified universal Jacobians. 

%\begin{corollary}\label{C:cJUniv}
%We have a bijection 
%$$
%\begin{aligned}
%\Sigma_{g,n} & \xrightarrow{\cong} \left\{\text{Compactified universal Jacobian stacks over $\Mbargn$}\right\},\\
%\sigma &\mapsto \ov \J_{g,n}(\sigma):=\left\{
%\begin{aligned} 
%& I \in  \TF_{g,n}^{|\sigma|}: \: \chi((I_{|X})_Y)\geq \sigma(\type_X(Y)) \\ 
%& \text{ for any } X\in \Mbargn \text{ and any } Y\in \BCon(X) 
%\end{aligned} 
%\right\}.
%\end{aligned}
%$$
%Moreover, $\sigma$ is general if and only if $\ov \J_{g,n}(\sigma)$ is fine. 
%\end{corollary}

\section{Classical compactified universal Jacobians}\label{Sec:classcJ}

The aim of this section is to describe the classical compactified universal Jacobians over $\Mbargn$, extending the work of \cite{Kass_2019} from the fine case to the general case. 

\begin{definition}\label{D:Pol-Uni}(\cite[Def. 3.2]{Kass_2019})
The set of \emph{(universal) numerical polarizations} of type $(g,n)$ is the following set 
$$
\Pol_{g,n}=\varprojlim_{X\in \Mbargn} \Pol(X),
$$
with respect to the pull-back maps induced by \'etale specializations of geometric points on $\Mbargn$. 

In other words, a universal numerical polarization of type $(g,n)$ is a collection of numerical polarizations 
 $$\psi=\{\psi(X)\in \Pol(X)\: : X\in \Mbargn(k=\ov k)\},$$
such that, for  any \'etale specialization $\xi: X_1\rightsquigarrow X_2$ of geometric points of $\Mbargn$, we have that $\xi^*(\psi(X_2))=\psi(X_1)$. 
\end{definition}
Note that we have a decomposition according to the Euler characteristic 
$$\Pol_{g,n}=\coprod_{\chi\in \ZZ}\Pol^{\chi}_{g,n} \text{ where }
\Pol_{g,n}^{\chi}:=\varprojlim_{X\in \Mbargn} \Pol^{\chi}(X).$$
Each $\Pol_{g,n}^\chi$ is a real affine space with underlying real vector space $\Pol^0_{g,n}$. 

It follows from \cite[Lemma-Definition 4.26]{FPV1} that there is a map 
\begin{equation}\label{E:map-ceil}
\begin{aligned}
\lceil - \rceil: \Pol_{g,n}=\coprod_{\chi\in \ZZ}\Pol^{\chi}_{g,n} & \longrightarrow \coprod_{\chi\in \ZZ}\VStab^{\chi}_{g,n}=\VStab_{g,n} \\
\psi=\{\psi(X)\} & \mapsto \s_{\psi}=\{\s_{\psi}(X): Y \mapsto \lceil \psi(X)_Y \rceil\}
\end{aligned}
\end{equation}

The affine spaces $\Pol_{g,n}^\chi$ have been described by Kass-Pagani in \cite[Sec. 3]{Kass_2019} in terms of the real space of relative line bundles for $\Cbargn/\Mbargn$, as we now recall. We set 
$$
\begin{sis}
&  \PicRel_{g,n}(\ZZ):=\Pic(\Cbargn)/\pi^*\Pic(\Mbargn),\\
&  \PicRel_{g,n}(\RR):=\PicRel_{g,n}(\ZZ)\otimes_{\ZZ}\RR,\\
& \PicRel_{g,n}^\chi(\RR):=\{L\in \PicRel_{g,n}(\RR)\: : \deg_{\pi}(L)=\chi\} \text{ for any } \chi\in \ZZ,\\
& \PicRel_{g,n}^{\ZZ}(\RR):=\coprod_{\chi \in \ZZ} \PicRel_{g,n}^\chi(\RR).
\end{sis}
$$
where $\deg_{\pi}(L)$ is the $\pi$-relative degree of $L$.

It follows from the results of Arbarello-Cornalba (see \cite[Fact 1]{Kass_2019} and the references therein) that $\PicRel_{g,n}(\ZZ)$ is the abelian group generated by:
\begin{itemize}
    \item the relative dualizing line bundle $\omega_\pi$; 
    \item the image $\Sigma_i$ of the $i$-th section of $\Cbargn/\Mbargn$ (for $1\leq i \leq n$);
    \item the boundary line bundles $\{\O(C_{(h,A)})\}_{(h,A)\in \Bgn}$ on $\Cbargn$, where  \begin{equation}\label{E:Bgn}
        \Bgn:=\{(h,A)\: :  0\leq h\leq g, A\subseteq [n], 2h-2+|A|>0, 2g-2h+|A^c|>0\}, 
        \end{equation}
    and  $C_{(h,A)}$ is the divisor of $\Cbargn$ whose generic point is a curve made of two smooth irreducible components $C_1$ and $C_2$ of genera, respectively, $h$ and $g-h$, meeting at a node, and containing the marked points $p_i$ such that, respectively, $i\in A$ or $i\in A^c$, and in such a way that if $(h,n)\neq (\frac{g}{2},0)$ then the tautological point lies on $C_1$;
    %$\{\O(C_{i,S}^+), \O(C_{i,S}^-)\}$, where $C_{i,S}^+$ and $C_{i,S}^-$ are the two divisors (with the identification $C_{g/2,\emptyset}^+=C_{g/2,\emptyset}^-$ if $n=0$ and $g$ is even) on $\Cbargn$ that lie over the boundary divisor $\Delta_{i,S}$ of $\Mbargn$;
\end{itemize}
subject to the following relations:
\begin{itemize}
    \item $\O(C_{(h,A)})+\O(C_{(h,A)^c})=0$ where $(h,A)^c:=(g-h,A^c)$; 
    \item if $g=1$ then $\omega_\pi=0$;
    \item if $g=0$ then $\Sigma_1=\ldots=\Sigma_n$ and $\omega_\pi=-2\Sigma_1$.
\end{itemize}
In particular, $\PicRel_{g,n}(\ZZ)$ is torsion-free unless $n=0$ and $g$ is even, in which case $\O(C_{(g/2,\emptyset)})$ is a $2$-torsion element that generates the torsion subgroup of $\PicRel_{g,n}(\ZZ)$. 

From the above description of $\PicRel_{g,n}(\ZZ)$, it follows that  
\begin{equation}\label{E:PicRelchi}
    \PicRel_{g,n}^\chi(\RR)=\left\{\beta\omega_{\pi}+\sum_{i=1}^n \alpha_i\Sigma_i+\sum_{(h,A)\in \Bgn}\gamma_{(h,A)}\O(C_{(h,A)})\: : (2g-2)\beta+\sum_{i=1}^n \alpha_i=\chi\right\}.
\end{equation}

\begin{fact} (\cite[Thm. 1]{Kass_2019})\label{F:RelPic}
For any $\chi\in \ZZ$, we have an isomorphism 
$$
\begin{aligned}
\un \deg:\PicRel^{\chi}_{g,n}(\RR)& \xrightarrow{\cong} \Pol^{\chi}_{g,n},\\
L & \mapsto \un \deg(L)=\{\un \deg(L)(X)\}
\end{aligned}
$$
where $\un \deg(L)(X)$ is the multidegree function of $L_{|X}$, i.e. it is the additive function on the subcurves of $X$ that sends a subcurve $Y\subseteq X$ to the degree $\deg_Y(L_{|X})$ on $Y$ of the line bundle $L_{|X}$ on $X$. 
\end{fact}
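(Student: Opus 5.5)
The plan is to reduce the universal statement to the description of relative line bundles on $\Cbargn/\Mbargn$ given above, together with the fact that a universal numerical polarization is determined by its values on vine curves and triangular curves. This is the argument of \cite[Thm. 1]{Kass_2019}, which we recall.

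\emph{Well-definedness and linearity.} For $L\in\PicRel^\chi_{g,n}(\RR)$, the multidegree $\un\deg(L)(X)$ is an additive function on subcurves of $X$ with total degree $\deg_\pi(L)=\chi$. It is compatible with étale specializations $\xi:X_1\rightsquigarrow X_2$, because degrees of a line bundle on a flat family are locally constant, so $\deg_{\xi_*(Y)}(L_{|X_2})=\deg_Y(L_{|X_1})$. Thus $\un\deg(L)\in\Pol^\chi_{g,n}$. Multiplying out by $\pi^*\Pic(\Mbargn)$ changes nothing, since such bundles have degree zero on every component of every fiber. The map is affine-linear, so it suffices to treat $\chi=0$ as a linear map of real vector spaces.

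\emph{Injectivity.} The key computation is the value of each generator on a vine curve $X=C_1\cup C_2$ of type $(e;h,A)$:
\[
\deg_{C_1}\omega_\pi=2h-2+e,\qquad \deg_{C_1}\Sigma_i=[i\in A].
\]
For $\O(C_{(h',A')})$, the degree on $C_1$ is nonzero only if $e=1$ and $(h',A')\in\{(h,A),(h,A)^c\}$. In that case it equals $-1$ or $+1$, by the standard computation that the component of the fiber containing the moving point has self-intersection $-1$ on the singular fiber; this reproduces the formula \eqref{E:map-sigma2}. Now suppose $\un\deg(L)=0$. Vine curves with $e\geq 2$ and $A=\{i\}$ versus $A=\emptyset$ (with $g\geq 2$) force $\alpha_i=\beta\cdot$const, and then $\beta=0$. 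The low-genus cases $g=0,1$ match exactly the relations listed above. The separating vines with $e=1$ then force $\gamma_{(h,A)}-\gamma_{(h,A)^c}=0$, which is the relation $\O(C_{(h,A)})+\O(C_{(h,A)^c})=0$. This gives injectivity modulo the stated relations.

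\emph{Surjectivity (main obstacle).} This step must control every stratum, not just the vine strata. I would first show that an element of $\Pol^0_{g,n}$ is determined by its values on vine curves. An additive function on subcurves of $X$ is determined by its values on irreducible components. Each component value can be recovered from biconnected subcurves, and each biconnected $Y\subseteq X$ is the specialization of a vine curve by smoothing all nodes outside $Y\cap Y^c$, as in the proof of Proposition \ref{P:Stabgn}. So $\Pol^0_{g,n}$ embeds into functions $\phi$ on $\Dgn$ satisfying two linear conditions: $\phi(x)+\phi(x^c)=0$, and additivity on triangles, $\phi(x_1)+\phi(x_2)+\phi(x_3)=0$, coming from triangular curves.

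It then remains to show that every such $\phi$ lies in the span of the functions $(e;h,A)\mapsto 2h-2+e$, $[i\in A]$, and the separating indicators. For that I would restrict to the non-separating part $e\geq 2$. Triangle additivity, applied to chains of triangles relating $(e;h,A)$ to $(e+1;h,A)$ and to $(e;h,A\cup\{i\})$, should force $\phi$ to be affine in $(2h-2+e)$ and additive in $A$. The separating values with $e=1$ are then free, and they are absorbed by the $\gamma$ coefficients. This combinatorial linear-algebra step, especially checking that enough triangles exist in small $(g,n)$, is where I expect the real work; in the paper it is simply quoted from \cite{Kass_2019}.
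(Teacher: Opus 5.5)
Your strategy is the right one, and it is essentially the argument of \cite{Kass_2019} that the paper quotes without proof. You reduce an element of $\Pol^0_{g,n}$ to a function $\phi$ on $\Dgn$ that is antisymmetric under $x\mapsto x^c$ and additive on triangles. You then show that every such $\phi$ is the multidegree of some $\beta\omega_\pi+\sum_i\alpha_i\Sigma_i+\sum\gamma_{(h,A)}\O(C_{(h,A)})$.

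As written, however, the one step with real content is announced (``should force'') rather than proved. That step says that such a $\phi$ restricted to $\Dgnns$ equals $\beta(2h-2+e)+\sum_{i\in A}\alpha_i$. Until it is proved, surjectivity is not established.

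The step closes with no small-case difficulty. You can use exactly the triangles from the proof of Proposition~\ref{P:finite-orb}, now with equalities in place of \eqref{E:condiz1}--\eqref{E:condiz3}. For any triangle $[x_1,x_2,x_3]$ you have $\phi(x_3^c)=\phi(x_1)+\phi(x_2)$. Apply this to:
\begin{itemize}
\item $[(3;0,\emptyset),(e+1;h-1,A),(e;g-h-e+1,A^c)]$ for $h\geq 1$;
\item $[(3;0,\emptyset),(e-1;h,A),(e;g-h-e+1,A^c)]$ for $e\geq 3$;
\item $[(2;0,\{i\}),(e;h,A-\{i\}),(e;g-h-e+1,A^c)]$ for $i\in A$.
\end{itemize}
Never pass through the excluded $(2;0,\emptyset)$. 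This gives
\[
\phi(e;h,A)=(2h-2+e)\,\phi(3;0,\emptyset)+\sum_{i\in A}\phi(2;0,\{i\})\qquad\text{for all } (e;h,A)\in\Dgnns .
\]
These triangles only need $(3;0,\emptyset),(2;0,\{i\})\in\Dgn$, which holds for all $g\geq 2$ and all $n$.

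For $g=1$ you have $\Dgnns=\{(2;0,A):\emptyset\subsetneq A\subsetneq[n]\}$, and the triangles are the partitions $A_1\sqcup A_2\sqcup A_3=[n]$. These give $\phi(2;0,A)=\sum_{i\in A}\phi(2;0,\{i\})$ directly. For $g=0$, $\Dgnns=\emptyset$.

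Antisymmetry then gives $\deg_\pi\bigl(\beta\omega_\pi+\sum_i\alpha_i\Sigma_i\bigr)=0$. No triangle contains an element with $e=1$, so the separating values form an arbitrary antisymmetric function. You match it by choosing the differences $\gamma_{(h,A)^c}-\gamma_{(h,A)}$.

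One caution on injectivity. Your claim that the cases $g=0,1$ ``match exactly the relations listed above'' cannot be taken literally. As recalled, those low-genus relations are the relations of $\PicRel^{op}_{g,n}(\ZZ)$, and they hold in $\PicRel_{g,n}(\ZZ)$ only modulo boundary classes.

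For instance, take $g=1$ and $n\geq 2$. Then $\omega_\pi$ has degree $-1$ on a rational tail of a fibre, so $\un\deg(\omega_\pi)\neq 0$ and $\omega_\pi\neq 0$ in $\PicRel_{1,n}(\ZZ)$. What your vine computation actually finds in the kernel is $\omega_\pi-\sum_S\O(C_{(0,S)})$, summed over the divisors whose moving point lies on a rational tail. Similarly, for $g=0$ and $n\geq 4$, $\Sigma_i-\Sigma_j$ and $\omega_\pi+2\Sigma_1$ are nonzero combinations of boundary classes.

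Injectivity in genus $0$ and $1$ therefore rests on the boundary-corrected Arbarello--Cornalba relations, not on the ones as listed. With those, your argument goes through unchanged.
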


By composing the map in \eqref{E:map-ceil} with the isomorphism of Fact \ref{F:RelPic}, we get a map 
 \begin{equation}\label{E:map-s-Uni}
 \begin{aligned}
     \s_{-}: \PicRel^{\ZZ}_{g,n}(\RR)=\coprod_{\chi\in \ZZ}\PicRel^{\chi}_{g,n}(\RR) & \longrightarrow \coprod_{\chi \in \ZZ}\VStab^{\chi}_{g,n}=\VStab_{g,n}\\
   L & \mapsto \s_L:=\{\s_L(X)\},\\
 \end{aligned}
 \end{equation}
 %$$\sigma(\beta\omega_{\pi}+\sum_{i=1}^n \alpha_i\Sigma_i+\sum_{(h,A)\in \Bgn}\gamma_{(h,A)}\O(C_{(h,A)}))(e;h,A):=
%   \begin{cases}
 %      \beta(2h-2+1)+\sum_{i\in A} \alpha_i- \gamma_{(h,A)}+\gamma_{(h,A)^c} & \text{ if } e=1,\\
 %\beta(2h-2+e)+\sum_{i\in A} \alpha_i& \text{ if } e\geq 2.
  % \end{cases}$$
 where 
 $$
 \begin{aligned}
     \s_L(X): \BCon(X) & \rightarrow \ZZ,\\
     Y & \mapsto \s_L(X)_Y:=\lceil \deg_Y(L_{|X})\rceil.
 \end{aligned}
 $$
By composing the above map \eqref{E:map-s-Uni} with the isomorphism of Proposition \ref{P:Stabgn}, we obtain a map 
 \begin{equation}\label{E:map-sigma}
     \sigma_{-}: \PicRel^{\ZZ}_{g,n}(\RR)=\coprod_{\chi\in \ZZ}\PicRel^{\chi}_{g,n}(\RR)  \longrightarrow \coprod_{\chi \in \ZZ}\Sigma^{\chi}_{g,n}=\Sigma_{g,n},
 \end{equation}
which sends $L=\beta\omega_{\pi}+\sum_{i=1}^n \alpha_i\Sigma_i+\sum_{(h,A)}\gamma_{(h,A)}\O(C_{(h,A)})$ to 
 \begin{equation}\label{E:for-deg}
 \sigma_{L}(e;h,A):=
   \begin{cases}
       \lceil\beta(2h-2+1)+\sum_{i\in A} \alpha_i- \gamma_{(h,A)}+\gamma_{(h,A)^c}\rceil & \text{ if } e=1,\\
 \lceil\beta(2h-2+e)+\sum_{i\in A} \alpha_i\rceil& \text{ if } e\geq 2.
   \end{cases}
   \end{equation}

% $$\sigma\left(\beta\omega_{\pi}+\sum_{i=1}^n \alpha_i\Sigma_i+\sum_{(h,A)}\gamma_{(h,A)}\O(C_{(h,A)})\right)(e;h,A):=
%   \begin{cases}
%       \beta(2h-2+1)+\sum_{i\in A} \alpha_i- \gamma_{(h,A)}+\gamma_{(h,A)^c} & \text{ if } e=1,\\
% \beta(2h-2+e)+\sum_{i\in A} \alpha_i& \text{ if } e\geq 2.
%   \end{cases}$$

 In order to describe the fibers of the above map $\sigma_-$, consider the following arrangement of hyperplanes in $\PicRel^{\ZZ}_{g,n}(\RR)$:
 \begin{equation}\label{E:Agn}
\begin{aligned} 
\A_{g,n}:=& \bigcup_{\substack{(1;h,A)\in \Dgn \\ k\in \ZZ}}\Bigg\{(2h-2+1)\omega_{\pi}^\vee+\sum_{i\in A}\Sigma_i^\vee+\O(C_{(h,A)})^\vee=k\Bigg\}\\
& \bigcup_{\substack{(e;h,A)\in \Dgn \text{ with } e\geq 2\\ m\in \ZZ}}\Bigg\{(2h-2+e)\omega_{\pi}^\vee+\sum_{i\in A}\Sigma_i^\vee=m\Bigg\},
\end{aligned} 
\end{equation}
where $(-)^\vee\in \PicRel_{g,n}^\Z(\R)^\vee$ denotes the functional dual to a certain element.
And, for any $\chi \in \ZZ$, denote by $\A_{g,n}^{\chi}$ the restriction of $\A_{g,n}$ to $\PicRel^{\chi}_{g,n}(\R)$.

 \begin{lemma}\label{L:arr-Uni} 
 Let $L,L'\in \PicRel^{\ZZ}_{g,n}(\RR)$. Then $\sigma_L=\sigma_{L'}$ if and only if $L$ and $L'$ belong to the same region of $\PicRel^{\ZZ}_{g,n}(\RR)$ with respect to the hyperplane arrangement $\A_{g,n}$. 
 Hence, the map $\sigma_{-}$ induces an order-preserving embedding 
 $$
 \sigma_{-}: \left\{
   \begin{aligned}
   & \text{Regions of } \PicRel_{g,n}^{\ZZ}(\RR)\\
   &\text{ with respect to } \A_{g,n}
   \end{aligned}\right\} \hookrightarrow \Sigma_{g,n}.
 $$
 Moreover, $\sigma_L$ is general if and only if $L$ belongs to a chamber, i.e. a maximal dimensional region.
 \end{lemma}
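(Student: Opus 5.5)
The plan is to reduce everything to the explicit formula \eqref{E:for-deg}. For each $x=(e;h,A)\in \Dgn$, write $\ell_x$ for the affine-linear functional on $\PicRel^{\ZZ}_{g,n}(\RR)$ given by the expression inside the ceiling in \eqref{E:for-deg}. That is, $\ell_x(L)=\deg_Y(L_{|X})$ for any curve $X\in\Mbargn$ and any $Y\in\BCon(X)$ of type $x$, via Fact \ref{F:RelPic}. Then $\sigma_L(x)=\lceil \ell_x(L)\rceil$.

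The hyperplanes of $\A_{g,n}$ are exactly the level sets $\{\ell_x=k\}$ with $k\in\ZZ$, up to one check. For $e=1$ we have $\ell_x=(2h-1)\omega_\pi^\vee+\sum_{i\in A}\Sigma_i^\vee-\gamma_{(h,A)}+\gamma_{(h,A)^c}$, and by the relation $\O(C_{(h,A)})=-\O(C_{(h,A)^c})$ the last two terms are a single linear functional. I would verify that this agrees with the functional written in \eqref{E:Agn}, up to a sign convention on $\O(C_{(h,A)})^\vee$ which does not change the set of integral level sets.

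Two elements lie in the same region of the arrangement precisely when, for every $x$ and $k$, they lie on the same side of, or both on, the hyperplane $\{\ell_x=k\}$. I would show this is equivalent to $\lceil\ell_x(L)\rceil=\lceil\ell_x(L')\rceil$ together with $[\ell_x(L)\in\ZZ]\Leftrightarrow[\ell_x(L')\in\ZZ]$ for all $x$.

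The key observation is that $\ell_x+\ell_{x^c}=\deg_\pi$, which is the integer $\chi$ on $\PicRel^\chi$. Hence $\ell_x(L)\in\ZZ$ if and only if $\lceil\ell_x\rceil+\lceil\ell_{x^c}\rceil=\chi$, i.e. if and only if $x\in\D(\sigma_L)$. So the integrality data is already encoded in $\sigma_L$. It also forces $\deg_\pi(L)=|\sigma_L|$, so $L$ and $L'$ lie in the same $\PicRel^\chi$.

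With this, both directions of the first claim follow. If $\sigma_L=\sigma_{L'}$, then the ceilings agree and the integrality loci agree for all $x$, so no hyperplane separates $L$ from $L'$. Conversely, if $L$ and $L'$ lie in the same region, then for each $x$ the position of $\ell_x(L)$ relative to every integer matches that of $\ell_x(L')$, so the ceilings agree. I expect the mild obstacle to be bookkeeping the definition of a region, namely relatively open cells of the stratification, so that points on a hyperplane are compared correctly. The argument above is designed to handle exactly this.

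For the order-preserving embedding, I would define the partial order on regions by $R_1\ge R_2$ if $R_2\subseteq\overline{R_1}$ (closure inclusion, matching the order-reversing statement of Theorem \ref{T:thmB}). Injectivity is what was just proved. To show monotonicity, take $L\in R_1$ and a limit point $L'\in R_2$. Since the ceiling function is upper semicontinuous and $\ell_x$ is continuous, for $L''\to L'$ with $L''\in R_1$ we get $\lceil\ell_x(L')\rceil\le\lceil\ell_x(L'')\rceil$. This gives $\sigma_{L'}\le\sigma_L$ in the order of Definition \ref{D:Sigma-pos}. Well-definedness of $\sigma_L$ as an element of $\Sigma_{g,n}$ comes from \eqref{E:map-ceil} and Proposition \ref{P:Stabgn}.

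Finally, $\sigma_L$ is general if and only if no $\ell_x(L)$ is an integer, i.e. $L$ lies on no hyperplane of $\A_{g,n}$. Within the affine space $\PicRel^\chi_{g,n}(\RR)$, this is exactly the condition of lying in a chamber, i.e. a region of maximal dimension.
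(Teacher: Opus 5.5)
Your route is the paper's route, made explicit: the paper only says the lemma is immediate from \eqref{E:for-deg}. The first two parts are sound. Encoding integrality of $\ell_x(L)$ through $\ell_x+\ell_{x^c}=\deg_\pi$, i.e.\ through $\D(\sigma_L)$, is exactly what is needed to handle the lower-dimensional regions.

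Two small slips. First, the ceiling function is \emph{lower} semicontinuous, not upper; the inequality $\lceil \ell_x(L')\rceil\le\lceil \ell_x(L'')\rceil$ for $L''$ near $L'$ that you actually use is nevertheless the correct one. Second, flipping the sign of the boundary term alone does change the hyperplanes. Writing $\ell_x=a_x-c$ with $c=\gamma_{(h,A)}-\gamma_{(h,A)^c}$, it turns $\{a_x-c=k\}$ into the different hyperplane $\{a_x+c=k\}$. So $\A_{g,n}$ simply has to be read as the family of integral level sets of the functionals $\ell_x$.

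The genuine problem is your last sentence: ``within $\PicRel^\chi_{g,n}(\RR)$, lying on no hyperplane is exactly lying in a region of maximal dimension.'' This needs that no hyperplane of $\A_{g,n}$ contains all of $\PicRel^\chi_{g,n}(\RR)$. For $n\ge1$ this holds, since each $\ell_x$ is then non-constant on $\PicRel^\chi_{g,n}(\RR)$. For $n=0$ it fails. There $\deg_\pi=(2g-2)\beta$, so on $\PicRel^\chi_g(\RR)$ every non-separating functional is the constant $\ell_{(e;h)}=\chi(2h-2+e)/(2g-2)$. The same holds for $\ell_{(1;g/2)}$ when $g$ is even, because $\O(C_{(g/2,\emptyset)})$ is torsion.

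For example, take $g=3$ and $\chi=2$. The whole of $\PicRel^2_3(\RR)$ lies on the hyperplane $\{\ell_{(2;1)}=1\}$ of $\A_{3,0}$. Hence no $\sigma_L$ of characteristic $2$ is general, consistently with Corollary~\ref{C:fineMg}. Yet $\PicRel^2_3(\RR)$ certainly has regions of maximal dimension.

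To conclude, you must either measure the dimension of regions in the ambient space $\PicRel_{g,n}(\RR)$, where $\{\delta(x)\beta=m\}$ is a genuine hyperplane, or, equivalently, define a chamber as a region contained in no hyperplane of $\A_{g,n}$. With that reading, your equivalence ``$\sigma_L$ general $\iff$ no $\ell_x(L)\in\ZZ$'' finishes the proof. The gloss ``i.e.\ a maximal dimensional region'' in the statement has the same defect for $n=0$, and the paper's one-line proof does not address it.
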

This is a restatement of \cite[Thm. 2]{Kass_2019} in our language. The V-functions of type $(g,n)$ (resp. the universal V-stabilities of type $(g,n)$) that are in the image of the map $\sigma_-$ (resp. of the map $\s_-$) are called \emph{classical}.
\begin{proof}
This is immediate from Formula~\eqref{E:for-deg}.
\end{proof}

We can now state the classification result of classical compactified Jacobians over $\Mbargn$.

\begin{theorem}\label{T:cla-cJUni}
\noindent 
\begin{enumerate}
\item \label{T:cla-cJUni1}  We have an order-reversing injection of posets  
  $$
  \begin{aligned}
   \left\{
   \begin{aligned}
   & \text{Regions of } \PicRel_{g,n}^{\ZZ}(\RR)\\
   &\text{ with respect to } \A_{g,n}
   \end{aligned}\right\}  & \hookrightarrow
   \left\{\text{Compactified universal Jacobian stacks over } \Mbargn \right\}\\
   [L] &\mapsto \ov \J_{g,n}([L]):=
   \left\{
\begin{aligned} 
& I \in  \TF_{g,n}^{\deg_{\pi}(L)}: \: \chi((I_{|X})_Y)\geq \deg_Y(L_{|X}) \\ 
& \text{ for any } X\in \Mbargn \text{ and any } Y\in \BCon(X)
\end{aligned} 
\right\}.
  \end{aligned}
  $$
  The compactified universal Jacobian stacks of the form $\ov \J_{g,n}([L])$ are called \textbf{classical compactified universal Jacobian stacks} of type $(g,n)$, and their associated good moduli spaces, denoted by $\ov J_{g,n}([L])$, are called \textbf{classical compactified universal Jacobian spaces}. 
 \item \label{T:cla-cJUni2}  Any classical compactified universal Jacobian space $\ov J_{g,n}([L])$ is  locally projective over $\Mbargn$. 
  \end{enumerate}
\end{theorem}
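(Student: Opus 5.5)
Part \eqref{T:cla-cJUni1} is essentially formal given what precedes it. The plan is to compose the order-preserving embedding of Lemma \ref{L:arr-Uni} (regions of $\PicRel^{\ZZ}_{g,n}(\RR)$ with respect to $\A_{g,n}$ into $\Sigma_{g,n}$), the poset isomorphism $\Sigma_{g,n}\cong \VStab_{g,n}$ of Proposition \ref{P:Stabgn}, and the anti-isomorphism of Theorem \ref{T:cJUniv}. The composite is an order-reversing injection. It remains to check that it sends $[L]$ to the substack displayed in the statement. Since $\chi((I_{|X})_Y)$ is an integer, the inequality $\chi((I_{|X})_Y)\geq \deg_Y(L_{|X})$ is equivalent to $\chi((I_{|X})_Y)\geq \lceil \deg_Y(L_{|X})\rceil=\s_L(X)_Y$. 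Hence the displayed substack equals $\ov\J_{g,n}(\s_L)=\ov\J_{g,n}(\sigma_L)$. Here we use Fact \ref{F:RelPic} to identify $\deg_Y(L_{|X})$ with the polarization $\un\deg(L)(X)_Y$. By Lemma \ref{L:arr-Uni}, this depends only on the region $[L]$.

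For part \eqref{T:cla-cJUni2}, I would produce a relatively ample line bundle on $\ov J_{g,n}([L])\to\Mbargn$, \'etale locally over $\Mbargn$. The natural candidate is a determinant-of-cohomology line bundle twisted by the polarization. First choose a representative of the region, with rational coefficients, in the relative interior of $[L]$. Then, after clearing denominators and working \'etale locally on $\Mbargn$, realize it by an actual vector bundle $E$ on the universal curve (or a finite cover) with $\operatorname{rk}E\cdot L\sim\det E$, in the style of Esteves/Simpson. Next form $\Theta_E:=d_\pi(\I\otimes E)^{-1}$ on $\ov\J_{g,n}([L])$. The normalization $\chi(I\otimes E)=0$ (equivalently $\deg_\pi L=\chi$) ensures that $\Gm$ acts trivially on $\Theta_E$, at least on closed points, which are polystable. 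By the good moduli space property it therefore descends to $\ov J_{g,n}([L])$.

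To prove relative ampleness, I would use the fiberwise criterion: since $\ov J_{g,n}([L])\to\Mbargn$ is proper, it suffices to check ampleness on geometric fibers. On a fiber over $X$, the fiber is the V-compactified Jacobian for the classical stability $\lceil\un\deg(L)(X)\rceil$. This coincides with the Simpson/Oda--Seshadri moduli space of slope-semistable sheaves with respect to a suitable polarization, by \cite{Kass_2019} and \cite[Ex. 6.7]{FPV1} (a classical V-compactified Jacobian). On that moduli space the theta line bundle is ample, via the GIT construction (Simpson), or via Esteves' theta-function argument: generalized theta divisors separate S-equivalence classes. Alternatively, I would invoke the existence of a GIT presentation relative to $\Mbargn$ \'etale locally, as in \cite{Kass_2019} and \cite{melo2019}, and deduce local projectivity directly from relative GIT. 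In that approach one must check that the relative GIT quotient coincides with the good moduli space; this follows from uniqueness of good moduli spaces, given that the semistable loci agree.

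The main obstacle is the non-general case. There, $\ov\J_{g,n}([L])$ is not a $\Gm$-gerbe, so descending $\Theta_E$ requires checking triviality of the stabilizer characters at all polystable points, including those with larger tori of automorphisms. One also needs to identify the semistable locus in the relative GIT problem (or the theta-nonvanishing locus) exactly with the condition $\chi((I_{|X})_Y)\geq\deg_Y(L_{|X})$ uniformly in families. I would handle this by choosing $L$ in the relative interior of its region, so that the Simpson slope inequalities become exactly the ceiling inequalities. I would then verify the descent by computing the weight of $\Theta_E$ at a polystable sheaf $\bigoplus I_{Y_i}$: it is $\sum_i\chi(I_{Y_i}\otimes E_{|Y_i})$ weighted per factor, and each summand vanishes precisely because the degenerate subcurves satisfy $\chi(I_{Y_i})=\deg_{Y_i}L$.
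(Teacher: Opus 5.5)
Part (1) is the paper's argument: Lemma \ref{L:arr-Uni} composed with Theorem \ref{T:cJUniv}, via Proposition \ref{P:Stabgn}. Your remark that $\chi((I_{|X})_Y)\geq \deg_Y(L_{|X})$ is equivalent to the ceiling inequality is the only thing to check, and it is correct.

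For Part (2) your proof is correct in outline and shares the paper's key reduction, but it finishes by a more hands-on route.

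\textbf{The common reduction, and two corrections.} The paper also perturbs $L$ to a rational point of its region and writes $L=M/r$ with $M\in\PicRel_{g,n}(\ZZ)$. It then sets $E=\O_{\Cbargn}^{\oplus(r-1)}\oplus M^{-1}$. This is global: no finite cover or \'etale localization is needed, since $M$ lifts to an honest line bundle on $\Cbargn$. Note the sign: you need $\det E\sim -\operatorname{rk}(E)\,L$, not $+$. With your convention $\chi(I\otimes E)=2r\chi$, and your weight computation breaks. With the sign fixed, the weight at a polystable point $\bigoplus I_i$ is $r(\chi(I_i)-\deg_{Y_i}L)=0$, exactly as you say, so Alper's descent criterion applies.

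\textbf{Where the routes diverge.} At this point the paper just observes $\un\deg(L)=\psi(E)$. Hence $\ov J_{g,n}([L])=\ov J_{\Cbargn/\Mbargn}(E)$, and it quotes \cite[Thm. 7.1]{FPV1} on an \'etale atlas. You instead essentially re-prove that theorem here: you descend $\Theta_E=d_\pi(\I\otimes E)^{-1}$ and check relative ampleness fiber by fiber. Your route has the merit of exhibiting an explicit relatively ample line bundle. However, all the substance sits in the fiberwise ampleness of this particular $\Theta_E$ on the possibly non-fine fibers $\ov J_X(\s_L(X))$. That does not follow from ``the Simpson/Oda--Seshadri moduli space is projective'' alone. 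You must either identify $\Theta_E$, up to numerical equivalence, with a positive multiple of the GIT polarization, or invoke Esteves-type results that theta functions for such $E$ separate points. This is precisely the content the paper outsources to \cite[Thm. 7.1]{FPV1}.

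\textbf{Avoid the fallback.} Do not use the relative GIT of \cite{melo2019} as a backup. The paper explicitly notes that its relative-ampleness argument is flawed, and that the line bundle used there need not be relatively ample.
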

Part \eqref{T:cla-cJUni1} of the above Theorem follows from the results of \cite[Sec. 4, 5]{Kass_2019} in the case of fine classical compactified Jacobians, which correspond to the chambers of $\PicRel_{g,n}^{\ZZ}(\RR)$  with respect to $\A_{g,n}$. See also \cite{melo2019} for another construction of the classical fine compactified universal Jacobians over $\Mbargn$. Part~\eqref{T:cla-cJUni2} was claimed\footnote{However, Sam Molcho pointed out to us that the proof of \cite[Cor.~4.8]{melo2019} contains a flaw (since \cite[Prop.~6.8]{Kol} does not work for families of non-irreducibile curves) and, even more, that the line bundle $\det \mathcal Q$ in \cite[p. 19]{melo2019} cannot, in general, be relatively ample on $\ov J_{g,n}([L])/\Mbargn$.} in \cite[Thm. C(iii)]{melo2019} for fine classical compactified Jacobian spaces. 
\begin{proof}
Part \eqref{T:cla-cJUni1} follows by combining Lemma \ref{L:arr-Uni} and Theorem \ref{T:cJUniv}.

Let us prove Part \eqref{T:cla-cJUni2}. First of all, since the arrangement of hyperplanes $\A_{g,n}^\chi$ is rational, we can perturb $L\in \PicRel^{\chi}(\RR)$ to a rational relative line bundle in such a way that the perturbed line bundle lies inside the same region. In other words, we can assume that $L$ is a rational relative line bundle on $\Cbargn/\Mbargn$. Then we can write
$$
L=\frac{M}{r} \text{ for some } r\in \NN_{>0} \text{ and } M\in \PicRel(\ZZ) \text{ with } \deg_{\pi}(M)=r\chi. 
$$
Consider now the following vector bundle on $\Cbargn/\Mbargn$
$$
E=\O_{\Cbargn}^{\oplus(r-1)}\oplus M^{-1},
$$
which has relative slope equal to $-\chi$.
By construction, we have that $\un \deg(L)=\psi(E)\in \Pol_{g,n}^\chi$, where $\psi(E)$ is the Esteves \cite{esteves} relative numerical polarization on $\Cbargn/\Mbargn$ (see \cite[Example 4.27(2)]{FPV1}. This implies (using the notation of \cite[Example 6.7(2)]{FPV1}) that 
$$\ov \J_{g,n}([L])= \ov \J_{\Cbargn/\Mbargn}(E) \text{ and } \ov J_{g,n}([L])= \ov J_{\Cbargn/\Mbargn}(E).$$
Then we conclude by applying \cite[Thm. 7.1]{FPV1} to an \'etale cover of $\Mbargn$ by an algebraic space.
\end{proof}

Regarding the image of the map in \eqref{E:map-sigma}, we have the following

\begin{fact}(\cite[Thm. 3.9]{fava2024})
 The map $\sigma_-$ of \eqref{E:map-sigma} is not surjective on general universal stabilities precisely in the following cases:
 \begin{enumerate}
     \item $g\geq 4$ and $n\geq 1$;
     \item $g=3$ and $n\geq 2$;
     \item $g=2$ and $n\geq 4$;
     \item $g=1$ and $n\geq 6$.
 \end{enumerate}
 In particular, in the above ranges there are non classical fine compactified Jacobians. 
\end{fact}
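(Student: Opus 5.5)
Since the statement concerns general V-functions, by Theorem \ref{T:cJUniv} and Proposition \ref{P:Stabgn} it suffices to decide, for each $(g,n)$, whether every general $\sigma\in\Sigma_{g,n}$ is of the form $\sigma_L$ for some $L$ in a chamber of $\A_{g,n}$ (Lemma \ref{L:arr-Uni}). I would use the decomposition $\Sigma_{g,n}\cong \Sigmas_{g,n}\times\Sigmans_{g,n}$ (Lemma \ref{L:Sigma-s-ns}) to treat the two halves separately.

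\emph{Separating half.} On the separating half, formula \eqref{E:for-deg} contains the term $-\gamma_{(h,A)}+\gamma_{(h,A)^c}$. This is a free real parameter attached to each complementary pair $\{(1;h,A),(1;h,A)^c\}$, and it does not appear in the non-separating values. So, for any general $\sigma^s$, I would first fix $(\beta,\alpha)$ realizing $\sigma^{ns}$. I would then choose each $\gamma$-difference so that the ceilings give $\sigma^s(1;h,A)$ and $\sigma^s((1;h,A)^c)$. This is possible because generality forces $\sigma(x)+\sigma(x^c)=\chi+1$, which is exactly the constraint satisfied by $\lceil t\rceil+\lceil \chi-t\rceil$ for $t\notin\ZZ$. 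Thus classicality reduces entirely to the non-separating part: $\sigma^{ns}$ must equal $\lceil \beta k+\sum_{i\in A}\alpha_i\rceil$ with $k=2h-2+e$, and $\chi=(2g-2)\beta+\sum_i\alpha_i$.

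\emph{Non-surjectivity in the listed ranges.} A classical $\sigma^{ns}$ depends only on the pair $(2h-2+e,A)$. I would therefore look for a general V-function separating two elements with the same invariant, say $(e;h,A)$ and $(e+2;h-1,A)$. Such pairs exist in $\Dgnns$ only when $g$ and $n$ are large enough, roughly when $h\ge1$ and $e+2\le g-h+1$ with the stability inequalities holding. The plan is as follows.
\begin{enumerate}
\item Start from a classical general $\sigma_L$ with $\beta k+\sum_{i\in A}\alpha_i$ just below an integer.
\item Raise the value on $(e+2;h-1,A)$ by one and lower the value on its complement by one, keeping the values on $(e;h,A)$ and its complement unchanged.
\item Verify the triangle conditions \eqref{E:triaUni} for every triangle containing the modified elements.
\end{enumerate}
Step 3 is where the thresholds $g\ge4$ for $n\ge1$, $g=3$ for $n\ge2$, and so on should emerge. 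When $g$ is small, distinct elements with equal $(k,A)$ do not exist. In that regime I would instead exploit non-linearity in $A$, for $g=1$ and $n\ge6$: a function of the subsets $A$ that satisfies the triangle inequalities but is not the ceiling of an additive function. I would look for a $2\times2$ "square" of subsets whose sum relations contradict additivity.

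\emph{Surjectivity in the complementary finitely-many-type range.} In the remaining range I would first show that all elements of $\Dgnns$ with the same $(k,A)$ receive equal $\sigma$-values. This follows from Remark \ref{R:triang} applied to suitable triangles, using the additivity of $\delta$. Next I would show that the resulting function of $(k,A)$ is realizable. Realizability is a finite system of strict linear inequalities $\sigma-1<\beta k+\sum_{i\in A}\alpha_i<\sigma$ in $(\beta,\alpha)$. I would prove feasibility via Farkas' lemma, showing that every obstruction (a positive combination of inequalities) is a sum of triangle and complement relations, which $\sigma$ satisfies. For the small cases ($g\le3$ with bounded $n$, and $g=1$ with $n\le5$) this can be checked directly, or by exhaustive enumeration.

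I expect the main obstacle to be the sharpness of the thresholds. This means proving that the non-classical construction works exactly from the stated boundary onwards, and that the Farkas certificate always decomposes into V-function relations just below it, especially for $g=1$ and $n=5$ and for $g=2$ and $n=3$.
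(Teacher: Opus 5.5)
Your reduction to the non-separating part is correct (it is Remark~\ref{R:class-s-ns}(i)), and non-uniformity in $(2h-2+e,A)$ and non-additivity in $A$ are indeed the two sources of non-classical examples. But the step that is supposed to produce the thresholds is missing, and as set up it points in the wrong direction. Flipping one element $y$ of a class with fixed $(k,A)$, $k=2h-2+e$, is obstructed by triangles containing $y$ twice. Concretely, if $A=\emptyset$, $n\ge1$ and $k\le g-1$, then Remark~\ref{R:triang} applied to $[x,x,(2;k,\emptyset)^c]$ and $[y,y,(2;k,\emptyset)^c]$ gives $2\sigma(x),2\sigma(y)\in\{\sigma(2;k,\emptyset),\sigma(2;k,\emptyset)+1\}$. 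Hence $\sigma(x)=\sigma(y)$ for every general $\sigma$, by parity. This is why $(g,n)=(3,1)$ has no non-classical example although it contains the pair $(2;1,\emptyset),(4;0,\emptyset)$.

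So you must flip one of two kinds of element. The first is an element with $A=\emptyset$ and $k\ge g$ (or its complement), which has a same-class partner only if $g\ge4$. The second is a mixed element. For a mixed element, no triangle contains two members of the union of the class of $y$ and the complementary class, by additivity of $\delta$ and disjointness of the markings. Hence the flip near a generic point of the wall gives a V-function (compare Propositions~\ref{P:good} and~\ref{prop:good-wall-realizable}) as soon as a same-class pair exists, i.e.\ for $g\ge3$, $n\ge2$. Your existence criterion is also off by one: $(e+2;h-1,A)\in\Dgn$ iff $h\ge1$ and $e+h\le g$. Your version $e+h\le g-1$ discards precisely the genus-$3$ pair $(2;1,A),(4;0,A)$ that yields the case $g=3$, $n\ge2$.

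The case $g=2$, $n\ge4$ is not addressed at all. In genus $2$ the non-separating elements $(2;0,A),(3;0,A),(2;1,A)$ have $k=0,1,2$, so there are no same-class pairs. A separate non-additivity example is needed there, with threshold $4$ rather than $6$. For the non-additivity examples you must also propagate from the threshold value of $n$ to all larger $n$. For instance, for $\sigma$ general, $\Omega(\sigma)^+$ is general with $\Xi_i(\Omega(\sigma)^+)=\sigma$, and $\Xi_i$ preserves classicality (Remarks~\ref{R:Pol-n+1} and~\ref{R:Omegapm}).

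For the sharpness direction, the claim that every Farkas obstruction decomposes into complement and triangle relations restates what must be proved rather than proving it. It is false for $(g,n)=(1,6)$ (Example~\ref{Ex:g1-n6}), so it has to be established case by case. The paper takes the statement from \cite[Thm.~3.9]{fava2024}. For surjectivity it refers to \cite[Props.~3.13 and 3.15]{fava2024} for $g=2$, $n\le3$ and for $g=3$, $n=1$, and to Remark~\ref{R:g1-Kn} together with the verification on $K_n$ for $g=1$, $n\le5$.

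Moreover, the complementary range is not finitely many types. It contains $n=0$ for every $g$, where $\beta=\chi/(2g-2)$ is forced. There one must show that $\sigma^{ns}$ is the canonical function, which is Theorem~\ref{T:class-n0} and uses the Hamiltonian graphs of Lemma~\ref{L:graph}. It also contains $g=0$ for every $n$, which is trivial since $\DDns_{0,n}=\emptyset$. Finally, any exhaustive check over all $\chi\in\ZZ$ first requires reducing to the finitely many normalized representatives of $\wPR_{g,n}$-orbits (Proposition~\ref{P:finite-orb}), using that the action preserves classicality (Remark~\ref{R:PR-class}).
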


Indeed, it turns out that the map $\sigma_-$ is surjective (even on non-general universal stabilities) when we are not in the above range for $(g,n)$: the case $n=0$ follows from Proposition~\ref{P:Dsep}; the case $g=0$ follows from Proposition~\ref{P:Dsep} and Theorem~\ref{T:class-n0}; the case $g=1$ and $1\leq n\leq 5$ follows from Proposition~\ref{P:Dsep}, Remark~\ref{R:g1-Kn} and the easy verification that on any nodal curve whose dual graph is the complete graph $K_n$ then any V-stability condition is classical if and only if $n\leq 5$; the case $g=2$ and $1\leq n\leq 3$ can be handled as in \cite[Prop.~3.13]{fava2024}; the case $g=3$ and $n=1$ can be handled as in \cite[Prop.~3.15]{fava2024}.

\section{Isomorphisms among compactified universal Jacobians}\label{Sec:equiv-cUJ}

The aim of this section is to study when two compactified universal Jacobian stacks/spaces are isomorphic over $\Mbargn$, extending the results of Kass-Pagani \cite[Sec. 6.2]{Kass_2019} from fine classical compactified universal Jacobians to arbitrary compactified universal Jacobians. 

An important role is played by the following group (see \cite[Def. 6.11]{Kass_2019})
\begin{equation}\label{E:PR}
    \wPR_{g,n}:=\PicRel_{g,n}(\ZZ)\rtimes (\ZZ/2\ZZ),
\end{equation}
where $\PicRel_{g,n}(\ZZ)$ is the integral relative Picard group of $\Cbargn/\Mbargn$ and the action of $ \ZZ/2\ZZ$ on $\PicRel_{g,n}(\ZZ)$ is via the inverse map. 

We now define an action of $\wPR_{g,n}$ on the space of universal V-stabilities of type $(g,n)$.

\begin{lemma-definition}\label{LD:PR-VStab}
The group $\wPR_{g,n}$ acts on $\VStab_{g,n}$ as follows:
\begin{enumerate}[(i)]
    \item  $L\in \PicRel_{g,n}(\ZZ)$ acts by sending $\s\in \VStab^{\chi}_{g,n}$ into 
    $$
    L\cdot \s:=\{(L\cdot \s)(X): \BCon(X)\ni Y\mapsto \s(X)_Y+\deg_Y(L_{|X})\}_{X\in \Mbargn} \in \VStab_{g,n}^{\chi+\deg_{\pi}(L)}.
    $$
    \item The generator $\iota$ of $\ZZ/2\ZZ$ acts by sending $\s\in \VStab^{\chi}_{g,n}$ into 
   $$
    \iota\cdot \s=\left\{(\iota\cdot \s)(X): \BCon(X)\ni Y\mapsto \begin{cases}
    -\s(X)_Y& \text{ if } Y\in \D(\s(X))\\
    -\s(X)_Y+1 & \text{ if } Y\not \in \D(\s(X))
    \end{cases}
    \right\}_{X\in \Mbargn} \in \VStab_{g,n}^{-\chi}.
    $$ 
\end{enumerate}
Moreover, the action preserves the degeneracy subset and it is compatible with the poset structure on $\VStab_{g,n}$ (see Remark \ref{R:Sigma-pos}).
\end{lemma-definition}
\begin{proof}
    This is straightforward using Definition \ref{D:VStab-Uni}.
\end{proof}

\begin{remark}\label{R:PR-Pol}
\noindent
\begin{enumerate}[(i)]
\item In terms of the isomorphism of Proposition \ref{P:Stabgn}, the action of $\wPR_{g,n}$ on $\Sigma_{g,n}$ is given by sending $\sigma \in \Sigma_{g,n}$ into 
 $$
    \begin{sis}
    & (L\cdot \sigma)(e;h,A):=\sigma(e;h,A)+
    \begin{cases}
 \beta(2h-2+1)+\sum_{i\in A} \alpha_i- \gamma_{(h,A)}+\gamma_{(h,A)^c} & \text{ if } e=1,\\
 \beta(2h-2+e)+\sum_{i\in A} \alpha_i& \text{ if } e\geq 2.
\end{cases}\\
%&  \text{ for any } L=\beta\omega_{\pi}+\sum_{i=1}^n \alpha_i\Sigma_i+\sum_{(h,A)\in \Bgn}\gamma_{(h,A)}\O(C(h,A))\in \PicRel_{g,n}(\ZZ),\\
     & (\iota \cdot \sigma)(e;h,A):=
     \begin{cases}
      -\sigma(e;h,A) & \text{ if } (e;h,A)\in \D(\sigma), \\
-\sigma(e;h,A)+1 & \text{ if } (e;h,A)\not \in \D(\sigma),    
     \end{cases}\\
    \end{sis}
    $$
where $L=\beta\omega_{\pi}+\sum_{i=1}^n \alpha_i\Sigma_i+\sum_{(h,A)\in \Bgn}\gamma_{(h,A)}\O(C(h,A))\in \PicRel_{g,n}(\ZZ)$. 
\item The group $\wPR_{g,n}$ acts (see \cite[Eq. (43)]{Kass_2019}) on the space $\Pol_{g,n}$ of universal numerical polarizations of type $(g,n)$ by sending $\psi\in \Pol^{\chi}_{g,n}$ into
    $$
    \begin{sis}
    & L\cdot \psi:=\{(L\cdot \psi)(X): Y\mapsto \psi(X)_Y+\deg_Y(L_{|X})  \}_{X\in \Mbargn}\in \Pol_{g,n}^{\chi+\deg_{\pi}(L)}\\
     & \iota \cdot \psi:=\{(\iota\cdot \psi)(X): Y\mapsto -\psi(X)_Y  \}_{X\in \Mbargn}\in \Pol_{g,n}^{-\chi}\\
    \end{sis}
    $$
    In terms of the isomorphism $\un{\deg}$ of Fact \ref{F:RelPic}, the action of $\wPR_{g,n}$ is such that the elements of $\PicRel_{g,n}(\ZZ)$ act on $\PicRel^{\ZZ}_{g,n}(\RR)$ via translation, and $\iota$ acts as the inverse.
 \item    
    The map $\lceil -\rceil$ of \eqref{E:map-ceil}, or equivalently the map $\s_-$ of \eqref{E:map-s-Uni},  is $\wPR$-equivariant.  
    \end{enumerate}
\end{remark}

The group acts on the stack $\TF_{g,n}$ of relative rank-$1$ torsion-free sheaves on $\Cbargn/\Mbargn$ by permuting compactified universal Jacobian stacks.

\begin{proposition}\label{P:PR-TF}
  The group $\wPR_{g,n}$ acts on the stack $\TF_{g,n}$ by sending a relative torsion-free rank-one sheaf $\I$ into
  $$
  \begin{aligned}
   & L\cdot \I:=\I\otimes L,\\
   & \iota \cdot \I:=\I^*:={\mathcal Hom}(\I,\omega_{\Cbargn/\Mbargn}).
   %\I^{\vee}\otimes \omega_{\Cbargn/\Mbargn}:={\mathcal Hom}(\I,\O)\otimes \omega_{\Cbargn/\Mbargn}.
  \end{aligned}
  $$
  Moreover, the action of $\wPR_{g,n}$ permutes the compactified universal Jacobian stacks in such a way that the bijection of Theorem \ref{T:cJUniv} is $\wPR_{g,n}$-equivariant. 
\end{proposition}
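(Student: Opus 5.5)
We first check that the two operations are well-defined automorphisms of $\TF_{g,n}$ over $\Mbargn$, and that they satisfy the relations of the semidirect product. For $L\in \PicRel_{g,n}(\ZZ)$, choose a representative line bundle on $\Cbargn$. Twisting a relative rank-$1$ torsion-free sheaf by it preserves flatness, torsion-freeness and rank one on geometric fibers. Changing the representative by $\pi^*M$, with $M\in\Pic(\Mbargn)$, changes $\I\otimes L$ by a line bundle pulled back from the base. This induces a canonical isomorphism of the corresponding objects of $\TF_{g,n}$, so we get an action of $\PicRel_{g,n}(\ZZ)$ up to coherent isomorphism. It shifts the characteristic by $\deg_\pi(L)$.

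For $\iota$ we use relative Grothendieck duality on the family of nodal (hence Gorenstein) curves $\Cbargn\times_{\Mbargn} T\to T$. For $\I$ flat over $T$ with torsion-free rank-$1$ fibers, the sheaf $\mathcal Hom(\I,\omega_\pi)$ has the following properties:
\begin{itemize}
\item it commutes with base change and is again $T$-flat, because $\mathcal Ext^1(\I_t,\omega_{X_t})=0$ for torsion-free sheaves on Gorenstein curves, so that these sheaves are maximal Cohen--Macaulay;
\item its fibers are torsion-free of rank one;
\item the biduality $\I\to \I^{**}$ is an isomorphism;
\item $\chi(\I_t^*)=-\chi(\I_t)$ by Serre duality.
\end{itemize}
The relation $\iota(L\cdot\I)=L^{-1}\cdot\iota(\I)$ is the identity $\mathcal Hom(\I\otimes L,\omega)\cong \mathcal Hom(\I,\omega)\otimes L^{-1}$. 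Together with $\iota^2=\id$, this gives an action of $\wPR_{g,n}$.

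Next we prove equivariance, namely $g\cdot \ov\J_{g,n}(\s)=\ov\J_{g,n}(g\cdot\s)$ for every $g\in \wPR_{g,n}$, with $g\cdot\s$ as in Lemma-Definition \ref{LD:PR-VStab}. It suffices to check this on geometric fibers and generators. For $L$: on $X\in\Mbargn$ we have $(I\otimes L)_Y=I_Y\otimes L_{|Y}$, so $\chi((I\otimes L)_Y)=\chi(I_Y)+\deg_Y(L_{|X})$, and the stability inequalities translate exactly.

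For $\iota$ we need the fiberwise statement that $I$ is $\s(X)$-semistable if and only if $I^*$ is $(\iota\cdot\s)(X)$-semistable. We plan to use the standard identity $(I^*)_Y\cong (\leftindex_Y{I})^{*_Y}$, i.e. $\mathcal Hom(\leftindex_Y I,\omega_Y)$, obtained by dualizing the sequence \eqref{E:resY}, using $\omega_X|_Y=\omega_Y(Y\cap Y^c)$. Combined with \eqref{E:add-chi} and \eqref{E:2restY}, this yields
\[
\chi((I^*)_Y)=-\chi(\leftindex_Y{I})=\chi(I_{Y^c})-\chi(I).
\]
Hence $\chi((I^*)_Y)\ge (\iota\s)_Y$ is equivalent to $\chi(I_{Y^c})\ge \chi+(\iota\s)_Y$. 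By the definition of $\iota\s$ and relation \eqref{E:sum-s}, the quantity $\chi+(\iota\s)_Y$ equals $\s_{Y^c}$ when $Y$ is degenerate. When $Y$ is nondegenerate it equals $\s_{Y^c}$ as well, since $\chi-\s_Y+1=\s_{Y^c}$. As $Y\mapsto Y^c$ is a bijection of $\BCon(X)$, this gives $I^*\in\ov\J_X(\iota\s)$ if and only if $I\in \ov\J_X(\s)$. We then pass to families: semistability is checked on geometric fibers, and both sides are open substacks.

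Finally, since the action is by automorphisms of $\TF_{g,n}$ over $\Mbargn$, it sends compactified universal Jacobian stacks to compactified universal Jacobian stacks, because the image inherits a relative proper good moduli space. Combined with Theorem \ref{T:cJUniv}, the previous paragraph gives the equivariance of the bijection.

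The main obstacle is the duality computation $(I^*)_Y\cong(\leftindex_Y I)^{*}$ at nodes of $Y\cap Y^c$ where $I$ is not free. We will verify it locally: at such a node $I\cong \mathfrak m$ splits as a sum of the two branch structure sheaves, and there $\mathcal Hom(\mathfrak m,\omega)\cong\mathfrak m$ is compatible with the decomposition. The base-change and flatness property of $\mathcal Hom(\I,\omega_\pi)$ can be quoted from the literature on compactified Jacobians of Gorenstein curves.
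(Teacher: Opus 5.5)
Your proposal is correct and follows essentially the paper's proof. You reduce to geometric fibers and treat $L$ via $\chi((I\otimes L)_Y)=\chi(I_Y)+\deg_Y(L_{|X})$. You treat $\iota$ via a restriction--duality identity together with Serre duality and \eqref{E:add-chi}, giving $\chi((I^*)_Y)=\chi(I_{Y^c})-\chi(I)$; your identity $(I^*)_Y\cong\cHom(\leftindex_{Y}{I},\omega_Y)$ is Lemma~\ref{L:duality} applied to $I^*$. The only differences are that you prove the biconditional directly instead of one inclusion, and that you sketch rather than cite \cite[Lemma 7.4]{Kass_2019} for the well-definedness of $\I\mapsto\I^*$.

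One small correction concerns twisting by $\pi^*M$ with $M\in\Pic(\Mbargn)$. This does not produce an isomorphic object of the non-rigidified stack $\TF_{g,n}$, only of its $\Gm$-rigidification. So the relations in $\PicRel_{g,n}(\ZZ)$ hold on $\TF_{g,n}$ only up to such twists. This is harmless for the statement about open substacks, and the paper glosses over it too.
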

\begin{proof}
The action of $\wPR_{g,n}$ on $\TF_{g,n}$ is well-defined:  $L\cdot \I$ is well-defined since $L$ is a relative line bundle on $\Cbargn/\Mbargn$, $\iota\cdot \I$ is well-defined as observed in \cite[Lemma 7.4]{Kass_2019} and clearly we have that $\iota\cdot (L\cdot \I)=L^{-1}\cdot (\iota\cdot \I)$.

In order to prove the second statement, we have to prove that for any $\s\in \VStab_{g,n}$ we have that 
$$
L\cdot \ov \J_{g,n}(\s)\subseteq \ov \J_{g,n}(L\cdot \s) \text{ and } \iota\cdot \ov \J_{g,n}(\s)\subseteq \ov \J_{g,n}(\iota\cdot \s),
$$
for any $L\in \PicRel_{g,n}(\ZZ)$. This amounts to proving that, for any given $X\in \Mbargn$ and any $I\in \TF_X$ which is $\s(X)$-semistable, we have that
\begin{enumerate}[(i)]
    \item \label{E:ss-L} $I\otimes L_{|X}$ is $(L\cdot \s)(X)$-semistable, for any $L\in \PicRel_{g,n}(\ZZ)$;
    \item \label{E:ss-iota} $I^*={\mathcal Hom}(I,\omega_X)$ is $(\iota \cdot \s)(X)$-semistable. 
\end{enumerate}
Property \eqref{E:ss-L} follows from the definition of $(L\cdot \s)(X)$ (see Lemma-Definition \ref{LD:PR-VStab}) together with the fact that 
$$
\chi((I\otimes L_{|X})_Y)=\chi(I_Y)+\deg_Y(L_{|X}) \text{ for any } Y\in \BCon(X).$$

Property \eqref{E:ss-iota} follows from the following computation for any $Y\in \BCon(X)$:
$$
\begin{aligned}
&     \chi((I^*)_Y)=\chi((I_Y)^*)+|Y\cap Y^c\cap \NF(I)^c|  & \text{ by Lemma \ref{L:duality}}\\
&     =-\chi(I_Y)+|Y\cap Y^c\cap \NF(I)^c|  & \text{ by Serre duality,}\\
& = \chi(I_{Y^c})-\chi(I) & \text{ by \eqref{E:2restY} and \eqref{E:add-chi},}\\
& \geq \s(X)_{Y^c}-|\s(X)| & \text{ since $I$ is $\s(X)$-semistable,}\\
&=\begin{cases}
   -\s(X)_Y & \text{ if } Y\in \D(\s(X)),\\
   -s(X)_Y+1 & \text{ if } Y\not \in \D(\s(X)).
   \end{cases}
   & \text{ by Definition \ref{D:VStabX},}\\
 & =(\iota\cdot \s(X))_Y & \text{ by Lemma-Definition \ref{LD:PR-VStab}.}  
\end{aligned}
$$
\end{proof}

\begin{lemma}\label{L:duality}
Let $X$ be a nodal curve.  Let $I$ be a rank-$1$ torsion-free sheaf on $X$ and consider the sheaf $I^*:={\mathcal Hom}(I,\omega_X)$. Then, for any subcurve $Y\subset X$, we have that 
\begin{equation}\label{E:dual-res}
(I_Y)^*=(I^*)_Y(-Y\cap Y^c\cap \NF(I)^c)=\leftindex_{Y}(I^*),
\end{equation}
where $(I_Y)^*={\mathcal Hom}(I_Y,\omega_Y)$ and $\NF(I)$ is the set of nodes of $X$ at which $I$ is not locally free. 

In particular, we have that 
\begin{equation}\label{E:chi-dualY}
    \chi((I^*)_Y)=\chi((I_Y)^*)+|Y\cap Y^c\cap \NF(I)^c|.
\end{equation}
\end{lemma}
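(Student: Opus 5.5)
The statement is local on $X$, and both sides are determined by their stalks at the nodes, so I would compare the stalks of $(I_Y)^*$ and $\leftindex_{Y}(I^*)$ node by node. Once the first equality in \eqref{E:dual-res} is proved, the second follows at once by applying \eqref{E:2restY} to the rank-$1$ torsion-free sheaf $I^*$. For this I need $\NF(I^*)=\NF(I)$, which holds because at a node $I$ is either free or isomorphic to the maximal ideal $\m$, and $\cHom(\m,\omega)\cong\m$ there. Equation \eqref{E:chi-dualY} then comes from the exact sequence $0\to (I^*)_Y(-D)\to (I^*)_Y\to \O_D\to 0$, with $D=Y\cap Y^c\cap\NF(I)^c$ a reduced $0$-dimensional scheme.

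To compare $(I_Y)^*$ with $\leftindex_{Y}(I^*)$ globally, I would first build a natural map. The surjection $I\twoheadrightarrow I_Y$ together with the standard inclusion $\omega_Y\hookrightarrow \omega_X$ (its image is $\omega_X$ restricted to sections supported on $Y$, i.e.\ $\omega_Y=\omega_X|_Y(-Y\cap Y^c)$ as the kernel part $\leftindex_{Y}{\omega_X}$) gives a map $\cHom(I_Y,\omega_Y)\to\cHom(I,\omega_X)=I^*$. It is injective because $I\to I_Y$ is surjective and $\omega_Y\to\omega_X$ is injective. Its image consists of sections supported on $Y$, hence lies in the largest such subsheaf $\leftindex_{Y}(I^*)$. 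Both sheaves are rank-$1$ torsion-free on $Y$, so the inclusion is an isomorphism away from the nodes in $Y\cap Y^c$. It therefore suffices to check equality of stalks at each node $p\in Y\cap Y^c$, or equivalently to compare Euler characteristics locally, i.e.\ lengths of the cokernel at $p$.

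The local check is the main step. At such a $p$ the local ring is $k[[x,y]]/(xy)$, with $Y$ given by the branch $y=0$. There are two cases.

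\emph{Case 1: $I$ free at $p$.} Then $I_Y$ is free on the branch, so $(I_Y)^*\cong\omega_Y$ there. Meanwhile $I^*\cong\omega_X$ is free, and $\leftindex_{Y}(\omega_X)=(\omega_X)_Y(-p)=\omega_Y$. The two sides agree, and this matches the twist by $-p$ in the formula.

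\emph{Case 2: $I\cong\m$ at $p$.} Then $I$ splits as $I_Y\oplus I_{Y^c}$ locally, so $I^*\cong (I_Y)^*\oplus (I_{Y^c})^*$. In this case $\leftindex_{Y}(I^*)=(I^*)_Y=(I_Y)^*$ with no twist, again matching the formula.

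The main obstacle is keeping track of the identification $\omega_Y=\omega_X|_Y(-Y\cap Y^c)$ (the residue description of dualizing sheaves at the nodes) and verifying that the injection constructed above really lands onto these local models. I expect this to be routine once the local descriptions are written down in terms of Rosenlicht differentials.
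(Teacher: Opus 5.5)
Your proof is correct. Its global input is the same as the paper's: the injection $(I_Y)^*\hookrightarrow I^*$, $f\mapsto a\circ f\circ\alpha$, which lands in $\leftindex_{Y}{(I^*)}$. The difference is how you show that this injection is onto $\leftindex_{Y}{(I^*)}$.

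The paper does this globally, with no case split. Chasing the two short exact sequences $0\to\omega_Y\to\omega_X\to\omega_{Y^c}(Y\cap Y^c)\to 0$ and $0\to I_{Y^c}(-Y\cap Y^c\cap\NF(I)^c)\to I\to I_Y\to 0$, it extends your injection to a left exact sequence $0\to(I_Y)^*\to I^*\to\cHom(I_{Y^c}(-Y\cap Y^c\cap\NF(I)^c),\omega_X)\oplus\cHom(I,\omega_{Y^c}(Y\cap Y^c))$. Hence $I^*/(I_Y)^*$ embeds in a torsion-free sheaf supported on $Y^c$. This is exactly the characterization of $\leftindex_{Y}{(I^*)}$ as the largest subsheaf of $I^*$ supported on $Y$, and it uses nothing about the local structure of $I$ at nodes.

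You instead check surjectivity stalkwise at the points of $Y\cap Y^c$, using the dichotomy $I_p\cong\O_{X,p}$ or $I_p\cong\m_p$. This is more hands-on and shows node by node where the twist comes from. It also makes explicit the equality $\NF(I^*)=\NF(I)$, which the paper uses silently when it applies \eqref{E:2restY} to $I^*$ to get the second equality.

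Two steps need more careful phrasing, though both are routine, as you anticipated.

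First, the inclusion is an isomorphism away from $Y\cap Y^c$, but not because both sheaves are rank-$1$ torsion-free on $Y$; that alone is compatible with $\O_Y(-q)\subset\O_Y$. The real reason is that on an open set missing $Y^c$ the maps $\alpha$ and $a$ are identities, and on an open set missing $Y$ both sheaves vanish.

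Second, at $p\in Y\cap Y^c$ it is not enough that the two stalks are abstractly isomorphic. An injection of free rank-one modules over the discrete valuation ring $\O_{Y,p}$ need not be onto, so you must check that your specific map is onto.
\begin{itemize}
\item In \emph{Case 1}, after trivializing $I\cong\O_X$ near $p$, the map is literally $a\colon\omega_Y\to\omega_X$. Its image is $\leftindex_{Y}{(\omega_X)}$ because $\operatorname{coker}(a)=\omega_{Y^c}(Y\cap Y^c)$ is torsion-free on $Y^c$.
\item In \emph{Case 2}, any map from a sheaf supported on $Y$ to $\omega_X$ factors through $\leftindex_{Y}{(\omega_X)}=\omega_Y$, so $\cHom(I_Y,\omega_X)=(I_Y)^*$. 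The splitting then gives $I^*=(I_Y)^*\oplus(I_{Y^c})^*$ near $p$. Your map is the inclusion of the first summand, which is precisely $\leftindex_{Y}{(I^*)}$.
\end{itemize}
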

\begin{proof}
Equality \eqref{E:chi-dualY} follows from \eqref{E:dual-res} by taking the Euler characteristic. 

In order to prove \eqref{E:dual-res}, consider the two exact sequences
\begin{equation}\label{E:2seqex}
   \begin{aligned}
     & 0 \to \omega_Y\xrightarrow{a} \omega_X\xrightarrow{b} \omega_{Y^c}(Y\cap Y^c)\to 0,\\
     & 0 \to I_{Y^c}(-Y\cap Y^c\cap \NF(I)^c)\xrightarrow{\beta} I \xrightarrow{\alpha} I_Y\to 0,
   \end{aligned} 
\end{equation}
where the first exact sequence follows by adjunction and the second exact sequence follows from \eqref{E:2restY}.  From a straightforward  diagram chasing with the two exact sequences in \eqref{E:2seqex}, we get  following  exact sequence
\begin{equation}\label{E:leftex}
    \begin{aligned}
       0 \to \cHom(I_Y,\omega_Y) \rightarrow &  \cHom(I,\omega_X) \xrightarrow{}  \cHom(I_{Y^c}(-Y\cap Y^c\cap \NF(I)^c), \omega_X)\oplus \cHom(I,\omega_{Y^c}(Y\cap Y_c))\\
      f \mapsto &   a\circ f\circ \alpha & \\
         & \hspace{1.8cm} g   \mapsto  (g\circ \beta,b\circ g)\\
    \end{aligned}
\end{equation}
This shows that $(I_Y)^*$ is a subsheaf of $I^*$ (supported on $Y$) such that the quotient $I^*/(I_Y)^*$ is a torsion-free sheaf supported on $Y^c$, and hence that it is the largest subsheaf of $I^*$ supported on $Y$, which implies that $(I_Y)^*=\leftindex_{Y}{(I^*)}$. Combining this with \eqref{E:2restY}, we deduce that 
$$
(I_Y)^*=\leftindex_{Y}{(I^*)}=(I^*)_Y(-Y\cap Y^c\cap \NF(I)^c),
$$
as required.   
\end{proof}

\begin{remark}\label{R:PR-class}
 It follows from Proposition \ref{P:PR-TF} and Remark \ref{R:PR-Pol} that the action of $\wPR_{g,n}$ on the set of compactified universal Jacobians preserves the classical compactified universal Jacobians. More specifically, we have that (see \cite[Lemma 6.12]{Kass_2019})
 $$
L\cdot \ov \J_{g,n}(\lceil \psi\rceil)=\ov \J_{g,n}(\lceil L\cdot \psi\rceil) \text{ for any } L\in \PicRel_{g,n}(\ZZ) \text{ and } \iota\cdot \ov \J_{g,n}(\lceil \psi\rceil)= \ov \J_{g,n}(\lceil \iota\cdot \psi\rceil).
$$
\end{remark}

The following Theorem characterizes the compactified universal Jacobian stacks that are isomorphic over $\Mbargn$, or more generally those having a birational morphism over $\Mbargn$.

\begin{theorem}\label{T:bir-UniSt}
 Let $\s_1,\s_2\in \VStab_{g,n}$.  We have that 
$$
\begin{aligned}
& \text{There exists a birational morphism }\\ 
& \phi:\ov \J_{g,n}(\s_1)\to \ov \J_{g,n}(\s_2) \text{ over } \Mbargn
\end{aligned} \Longleftrightarrow 
\begin{aligned}
& \text{There exists } g\in \wt \PR_{g,n} \text{ such that }\\ 
& \s_1\geq g\cdot \s_2. 
\end{aligned}
$$
\end{theorem}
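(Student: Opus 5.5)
The direction ($\Leftarrow$) is essentially formal. Suppose $\s_1\geq g\cdot\s_2$ for some $g\in\wPR_{g,n}$. By Theorem \ref{T:cJUniv} (the correspondence is order-reversing) we get an open inclusion $\ov\J_{g,n}(\s_1)\subseteq\ov\J_{g,n}(g\cdot\s_2)$. By Proposition \ref{P:PR-TF}, the action of $g^{-1}$ on $\TF_{g,n}$ is an automorphism over $\Mbargn$ carrying $\ov\J_{g,n}(g\cdot\s_2)$ isomorphically onto $\ov\J_{g,n}(\s_2)$. Composing the inclusion with this isomorphism gives an open immersion $\phi$ over $\Mbargn$. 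Both stacks contain the dense open $\J_{g,n}$ over $\Mgn$, so $\phi$ is birational.

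For ($\Rightarrow$) I would follow the strategy of Kass--Pagani \cite[Sec. 6.2]{Kass_2019}. Let $\phi:\ov\J_{g,n}(\s_1)\to\ov\J_{g,n}(\s_2)$ be birational over $\Mbargn$.

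\textbf{Step 1: the generic fibre.} Restrict $\phi$ to $\Mgn$. This gives a birational self-map of the universal Jacobian $\J_{g,n}$ over $\Mgn$, sending $\J^{\chi_1}_{g,n}$ to $\J^{\chi_2}_{g,n}$. Any such map is regular on the Jacobian torsor of the generic curve, so it is an isomorphism of torsors over the generic point of $\Mgn$. I would then use the known description of the automorphisms of the universal Jacobian over the generic point of $\Mgn$ (Franchetta-type results, as in \cite[Sec. 6]{Kass_2019}). These are generated by the relative Picard group of $\Cgn/\Mgn$, acting by translation, together with $I\mapsto I^*$. So after replacing $\s_2$ by some $g_0\cdot\s_2$ with $g_0\in\wPR_{g,n}$, which by Proposition \ref{P:PR-TF} changes the target only by an isomorphism, we may assume $\phi$ is the identity over $\Mgn$. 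This uses $\PicRel^{op}_{g,n}(\ZZ)$, which lifts to $\PicRel_{g,n}(\ZZ)$.

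\textbf{Step 2: extending the identity.} Now $\phi$ and the inclusion into $\TF_{g,n}$ agree on a dense open. Both $\ov\J_{g,n}(\s_i)$ are open in $\TF_{g,n}$, which is separated enough over $\Mbargn$ in the relevant sense. Since $\ov\J_{g,n}(\s_1)$ is reduced, hence normal enough, I would argue that $\phi$ coincides with the identity wherever the target is separated. Concretely, for each codimension-one point of the boundary, namely a generic sheaf on a vine curve, one uses a DVR through it. The two limits of the same family of line bundles both live in $\TF_{g,n}$, and they differ by twisting by components of the special fibre. Such twists are exactly the boundary line bundles $\O(C_{(h,A)})$ in the separating case. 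In the non-separating case they are absent over $\Mbargn$, since the vine strata there carry no relative twists.

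So one must allow a further twist by boundary divisors, i.e. a further element of $\PicRel_{g,n}(\ZZ)$ supported on $\{\O(C_{(h,A)})\}$. After this twist, $\phi$ is the identity in codimension one. Then, by separatedness of the relative good moduli spaces together with density, $\phi$ is the identity everywhere. This gives $\ov\J_{g,n}(\s_1)\subseteq\ov\J_{g,n}(g\cdot\s_2)$, and the order reversal in Theorem \ref{T:cJUniv} (via \cite[Lemma 8.10]{FPV1}) yields $\s_1\geq g\cdot\s_2$.

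\textbf{Main obstacle.} I expect Step 2 to be the hardest part: showing that any birational map which is the identity over $\Mgn$ becomes the identity after a boundary twist. The difficulty lies with non-fine targets, where points of $\TF$ are not determined by their limits and strictly semistable sheaves may specialize differently. I would first try to reduce to the vine-curve generic points of each boundary divisor. There I would compute explicitly, on a one-parameter smoothing, that the $\s_1$- and $\s_2$-semistable limits of a line bundle differ by a twist by a union of components. I would then check that, for separating vines, these twists assemble globally to a combination of the $\O(C_{(h,A)})$, and that for non-separating vines the twists are trivial.
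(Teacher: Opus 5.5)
Your ($\Leftarrow$) direction, the normalization to the identity over $\Mgn$ via the automorphisms of the universal Jacobian \cite[Cor.~6.1]{Kass_2019}, and the extra twist by the boundary bundles $\O(C_{(h,A)})$ all agree with the paper's Step~I. Two corrections there:
\begin{itemize}
\item The codimension-one boundary points lie over curves with exactly one node, not over vine curves with $e\geq 2$ nodes, which have codimension $e$. Over $\Delta_{\mathrm{irr}}$ the curve is irreducible, so its only compactified Jacobian is all of $\TF$ and no twist is needed. Over $\Delta(h,A)$ one uses the structure for one-nodal curves, \cite[Prop.~5.8]{FPV2}.
\item $\ov\J_{g,n}(\s_1)$ is regular, not merely reduced, and regularity is what gets used.
\end{itemize}

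The genuine gap is your last step: ``by separatedness of the relative good moduli spaces together with density, $\phi$ is the identity everywhere.''
\begin{itemize}
\item What you must prove is that the twisted map $\psi$ followed by $\ov\J_{g,n}(g\cdot\s_2)\subset\TF_{g,n}$ equals the tautological embedding $\ov\J_{g,n}(\s_1)\subset\TF_{g,n}$.
\item These are two maps into $\TF_{g,n}$, which is far from separated: a family of line bundles over a DVR has many torsion-free limits.
\item The inclusion $\ov\J_{g,n}(\s_1)\to\ov\J_{g,n}(g\cdot\s_2)$ is exactly what is not yet known to exist. So there is no pair of maps into a separated target to compare, and agreement on a dense open or in codimension one yields nothing by separatedness.
\item Good moduli spaces do not rescue this, because they identify S-equivalent sheaves. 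Indeed, by Lemma~\ref{L:iso-sepnod} and Theorem~\ref{T:iso-UniSp} one can have $\ov J_{g,n}(\s_1)\cong\ov J_{g,n}(\s_2)$ over $\Mbargn$ while $\s_1\geq g\cdot\s_2$ fails for every $g$.
\end{itemize}

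The missing ingredient is a Hartogs-type extension property for families of rank-one torsion-free sheaves, applied to the universal sheaves rather than to the maps. Compare $\I_1$ and $(\psi\times\id)^*(\I_2)$ on $\ov\J_{g,n}(\s_1)\times_{\Mbargn}\Cbargn$.
\begin{itemize}
\item By the codimension-one step they agree over the preimage of $\Mbargn^{\leq 1}$, whose complement has codimension at least two.
\item Since $\ov\J_{g,n}(\s_1)$ is regular and the universal curve over it is Cohen--Macaulay and regular in codimension one, \cite[Cor.~7.2]{Kass_2019} gives $\I_1=(\psi\times\id)^*(\I_2)$ everywhere.
\item Hence $\psi$ commutes with the open embeddings into $\TF_{g,n}$, so it is the inclusion \cite[Rmk.~6.5(i)]{FPV1}.
\item Theorem~\ref{T:cJUniv} then gives $\s_1\geq g\cdot\s_2$.
\end{itemize}
This also removes the ``main obstacle'' you anticipate. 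No limit computations on vine curves with $e\geq 2$ nodes are needed, and no distinction between fine and non-fine targets, because everything in codimension $\geq 2$ is controlled by this extension property of the universal sheaf, not by separatedness.
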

%This follows from \cite[Lemma 6.13]{Kass_2019} in the case of classical fine compactified universal Jacobians and we use ideas from the proof of loc. cit. to treat the more general setting. 
\begin{proof}
% This was proved in \cite[Lemma 6.13]{Kass_2019} for classical fine compactified universal Jacobians, but the same proof carries over to our more general setting as we now sketch. 
The implication $\Longleftarrow$ follows from Proposition~\ref{P:PR-TF} and Theorem \ref{T:cJUniv}.

In order to show the other implication, fix a birational morphism $\phi:\ov \J_{g,n}(\s_1)\rightarrow\ov \J_{g,n}(\s_2)$ over $\Mbargn$. The proof consists of two steps.

 \un{Step I:} There exists an element $g\in \wt \PR_{g,n}$ such that the composition 
 $$
 \psi: \ov \J_{g,n}(\s_1)\xrightarrow{\phi}\ov \J_{g,n}(\s_2)\xrightarrow[\cong]{\cdot g}\ov \J_{g,n}(g\cdot\s_2)
 $$
 restricts to an inclusion $\ov \J_{g,n}(\s_1)^{\leq 1}\subseteq \ov \J_{g,n}(g\cdot \s_2)^{\leq 1}$ over the open substack $\Mbargn^{\leq 1}$ parametrizing stable curves with at most one node, where we set $\ov \J_{g,n}(\s)^{\leq 1}:=\ov \J_{g,n}(\s)_{|\Mbargn^{\leq 1}}$

 Indeed, by post-composing with the multiplication by an element of the form $(\beta\omega_{\pi}+\sum_{i=1}^n \alpha_i\Sigma_i,\iota^{e})\in \wt \PR_{g,n}$ (with $e=0,1$), we can assume that the morphism becomes the identity on the universal Jacobian over smooth curves $\J_{g,n}$ (using that any automorphism of $\J_{g,n}$ over $\M_{g,n}$ is the multiplication by one such element, see \cite[Cor. 6.1]{Kass_2019}). Then, by further post-componing with an element of the form $\sum_{(h,A)\in \Bgn}\gamma_{(h,A)}\O(C(h,A))\in \PicRel_{g,n}(\ZZ)$, we can assume that the morphism becomes an inclusion over the generic point of each boundary divisor of $\Mbargn$ (using the structure of compactified Jacobian stacks of nodal curves with at most one node, see e.g. \cite[Prop. 5.8]{FPV2}).   

\un{Step II:} The morphism $\psi$ is the inclusion $\ov \J_{g,n}(\s_1)\subseteq \ov \J_{g,n}(g\cdot \s_2)$ (and hence $\s_1\geq g\cdot \s_2$ by Theorem~\ref{T:cJUniv}). 
 
 Indeed, by Proposition~\ref{P:PR-TF} and \cite[Remark~6.5, Part~(i)]{FPV1}  it is enough to show that $\psi$ commutes with the open embeddings of the domain and of the codomain in $\TF_{g,n}$. In order to show this, consider the universal sheaf $\I_1$ (resp. $\I_2$) on $\ov \J_{g,n}(\s_1)\times_{\Mbargn} \Cbargn$ (resp. on $\ov \J_{g,n}(g\cdot \s_2)\times_{\Mbargn} \Cbargn$), which is the restriction from $\TF_{g,n}\times_{\Mbargn}\Cbargn$ of the universal sheaf. The fact that $\psi$ commutes with the open embeddings of the domain and of the codomain in $\TF_{g,n}$ is equivalent to the equality of sheaves 
\begin{equation}\label{E:eq-shea}
    (\psi\times \id)^*(\I_2)=\I_1.
\end{equation}
By Step I, we have that 
\begin{equation}\label{E:eq-open}
    (\psi\times \id)^*(\I_2)_{|(\ov \J_{g,n}(\s_1)\times_{\Mbargn} \Cbargn)^{\leq 1}}=(\I_1)_{|(\ov \J_{g,n}(\s_1)\times_{\Mbargn} \Cbargn)^{\leq 1}},
\end{equation}
 where $(\ov \J_{g,n}(\s_1)\times_{\Mbargn} \Cbargn)^{\leq 1}=\ov \J_{g,n}(\s_1)^{\leq 1}\times_{\Mbargn^{\leq 1}} \Cbargn^{\leq 1}$ is the open substack obtained by restricting to $\Mbargn^{\leq 1}$. 
Now, the equality \eqref{E:eq-open} implies the equality \eqref{E:eq-shea} by \cite[Cor. 7.2]{Kass_2019}, using that $\ov \J_{g,n}(\s_1)\subset \TF_{g,n}$ is regular, that $\ov \J_{g,n}(\s_1)\times_{\Mbargn} \Cbargn\subset \TF_{g,n}\times_{\Mbargn}\Cbargn$ is Cohen-Macaulay and regular in codimension one (by the same proof of \cite[Lemma 7.3]{Kass_2019}) and that $\ov \J_{g,n}(\s_1)^{\leq 1}\subset \ov \J_{g,n}$ is a big open subset (i.e. its complement has codimension at least two).
\end{proof}

\begin{corollary}\label{C:iso-UniSt}
 Let $\sigma_1,\sigma_2\in \Sigma_{g,n}$. Then the following conditions are equivalent:
 \begin{enumerate}[(i)]
 \item \label{C:iso-UniSt1} $\sigma_1$ and $\sigma_2$ lie in the same orbit for the action of $\wPR_{g,n}$ on $\Sigma_{g,n}$.
     \item \label{C:iso-UniSt2} $\ov \J_{g,n}(\sigma_1)$ and $\ov \J_{g,n}(\sigma_2)$ are isomorphic over $\Mbargn$.
     \item \label{C:iso-UniSt3} There exists a birational morphism $\phi:\ov \J_{g,n}(\sigma_1)\to \ov \J_{g,n}(\sigma_2)$ over $\Mbargn$ and $\D(\sigma_1)=\D(\sigma_2)$.
 \end{enumerate}

\end{corollary}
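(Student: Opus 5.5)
We prove the cycle of implications \eqref{C:iso-UniSt1}$\Rightarrow$\eqref{C:iso-UniSt2}$\Rightarrow$\eqref{C:iso-UniSt3}$\Rightarrow$\eqref{C:iso-UniSt1}, after transporting everything to universal V-stabilities via Proposition \ref{P:Stabgn}. That identification is an isomorphism of posets and is compatible with the $\wPR_{g,n}$-actions (Remark \ref{R:PR-Pol}(i)). It also identifies degeneracy subsets: $\D(\s^{\sigma}(X))=\type_X^{-1}(\D(\sigma))$, so $\D(\sigma_1)=\D(\sigma_2)$ if and only if $\D(\s^{\sigma_1}(X))=\D(\s^{\sigma_2}(X))$ for all $X$. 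Here we use that every element of $\Dgn$ is realized as the type of a component of a vine curve.

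For \eqref{C:iso-UniSt1}$\Rightarrow$\eqref{C:iso-UniSt2}: if $\sigma_2=g\cdot\sigma_1$ with $g\in\wPR_{g,n}$, then by Proposition \ref{P:PR-TF} the automorphism of $\TF_{g,n}$ over $\Mbargn$ given by the action of $g$ maps $\ov\J_{g,n}(\sigma_1)$ onto $\ov\J_{g,n}(g\cdot\sigma_1)=\ov\J_{g,n}(\sigma_2)$. Since $g^{-1}$ gives the inverse, this is an isomorphism over $\Mbargn$.

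For \eqref{C:iso-UniSt2}$\Rightarrow$\eqref{C:iso-UniSt3}: an isomorphism over $\Mbargn$ is in particular birational. To compare the degeneracy subsets, apply Theorem \ref{T:bir-UniSt} to both the isomorphism $\phi$ and its inverse $\phi^{-1}$. This yields $h,h'\in\wPR_{g,n}$ with $\sigma_1\geq h\cdot\sigma_2$ and $\sigma_2\geq h'\cdot\sigma_1$. By Remark \ref{R:Sigma-pos}, the relation $\sigma\geq\tau$ implies $\D(\sigma)\subseteq\D(\tau)$. By Lemma-Definition \ref{LD:PR-VStab}, the action preserves degeneracy subsets, so $\D(h\cdot\sigma_2)=\D(\sigma_2)$ and $\D(h'\cdot\sigma_1)=\D(\sigma_1)$. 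Combining these gives $\D(\sigma_1)\subseteq\D(\sigma_2)\subseteq\D(\sigma_1)$, hence equality.

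For \eqref{C:iso-UniSt3}$\Rightarrow$\eqref{C:iso-UniSt1}: Theorem \ref{T:bir-UniSt} provides $g\in\wPR_{g,n}$ with $\sigma_1\geq\tau:=g\cdot\sigma_2$, and $\D(\tau)=\D(\sigma_2)=\D(\sigma_1)$. The key step is the observation that two comparable V-functions with the same degeneracy subset are equal. This follows from Remark \ref{R:Sigma-pos}, which gives, for every $x\in\Dgn$, that $(\sigma_1(x),\sigma_1(x^c))=(\tau(x),\tau(x^c))$ whenever $x\in\D(\sigma_1)$ or $x\notin\D(\tau)$. Since $\D(\sigma_1)=\D(\tau)$, one of these two alternatives always holds, so $\sigma_1=\tau=g\cdot\sigma_2$.

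We expect no genuine obstacle, since the heavy lifting is done by Theorem \ref{T:bir-UniSt}. The only point requiring care is checking that the $\wPR_{g,n}$-action preserves $\D$ also for the generator $\iota$. This is immediate from the formula: if $x\in\D(\sigma)$, then $(\iota\cdot\sigma)(x)+(\iota\cdot\sigma)(x^c)=-\chi=|\iota\cdot\sigma|$; if $x\notin\D(\sigma)$, the sum is $-(\chi+1)+2=-\chi+1$.
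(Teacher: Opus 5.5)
Your proof is correct. For (i)$\Rightarrow$(ii) and (iii)$\Rightarrow$(i) you argue exactly as the paper does: Proposition~\ref{P:PR-TF} for the first, and for the second Theorem~\ref{T:bir-UniSt} combined with the rigidity in Remark~\ref{R:Sigma-pos} (the conservativity of Proposition~\ref{P:Deg-map}).

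The difference is in (ii)$\Rightarrow$(iii), which is the one place where you must show that $\D(\sigma)$ is an isomorphism invariant.

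\emph{The paper's route.} It argues geometrically. An element $(e;h,A)$ lies in $\D(\sigma_i)$ if and only if the fiber of $\ov\J_{g,n}(\sigma_i)$ over the generic point of the corresponding vine stratum is non-fine. By Oda--Seshadri this is equivalent to that fiber having $e+1$ irreducible components, and this is visibly preserved by any isomorphism over $\Mbargn$.

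\emph{Your route.} You apply Theorem~\ref{T:bir-UniSt} to both $\phi$ and $\phi^{-1}$. You then squeeze $\D(\sigma_1)\subseteq\D(\sigma_2)\subseteq\D(\sigma_1)$, using that $\D$ is monotone under $\geq$ and invariant under $\wPR_{g,n}$. This is valid: there is no circularity, since the proof of Theorem~\ref{T:bir-UniSt} does not use the corollary. It makes the whole corollary a formal consequence of that theorem.

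\emph{Trade-off.} Your argument leans on the full strength of Theorem~\ref{T:bir-UniSt} in both directions. That includes the Kass--Pagani description of automorphisms of $\J_{g,n}$ and the reflexive-extension argument in its Step~II. The paper's argument uses only a fiberwise fact about compactified Jacobians of vine curves. It shows directly that $\D(\sigma)$ is intrinsic to the stack over $\Mbargn$, namely read off from its fibers over vine strata, independently of any classification of birational morphisms.
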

This was proved in \cite[Lemma 6.13]{Kass_2019} for classical fine compactified universal Jacobians. 
\begin{proof}
We will use throughout this proof the $\wt \PR_{g,n}$-equivariant isomorphism of poset $\Sigma_{g,n}\cong \VStab_{g,n}$ of Proposition \ref{P:Stabgn}. We will prove a chain of implications. 

$\eqref{C:iso-UniSt1}\Rightarrow \eqref{C:iso-UniSt2}$ follows from Proposition \ref{P:PR-TF}.

$\eqref{C:iso-UniSt2}\Rightarrow \eqref{C:iso-UniSt3}$: we have only to show that if $\ov \J_{g,n}(\sigma_1)$ and $\ov \J_{g,n}(\sigma_2)$ are isomorphic over $\Mbargn$ then $\D(\sigma_1)=\D(\sigma_2)$. This follows from the fact that $(e;h,A)\in \D(\sigma_i)$ if and only if the fiber of $\ov \J_{g,n}(\sigma_i)$ over the geometric generic point of the vine stratum corresponding to $(e;h,A)$ (see Remark \ref{R:vine-trian}) is non-fine, which indeed occurs if and only if this fiber has $(e+1)$-irreducible components (see \cite[Sec. 8]{Oda1979CompactificationsOT}). 

$\eqref{C:iso-UniSt3}\Rightarrow \eqref{C:iso-UniSt1}$: the existence of the birational morphism $\phi$ as in \eqref{C:iso-UniSt3} implies, by Theorem \ref{T:bir-UniSt}, that there exists $g\in \wt \PR_{g,n}$ such that $\sigma_1\geq g\cdot \sigma_2$. Since we have that $\D(\sigma_1)=\D(\sigma_2)=\D(g\cdot \sigma_2)$ by assumption and Proposition \ref{P:Deg-map}\eqref{P:Deg-map1}, Remark \ref{R:Sigma-pos} implies that $\sigma_1=g\cdot \sigma_2$. 
\end{proof}

We now want to investigate when two compactified universal Jacobian spaces are isomorphic over $\Mbargn$. With this aim, we will decompose the poset $\Sigma_{g,n}$ of V-functions of type $(g,n)$ into a separating and a non-separating part. 

First of all, we can partition the stability domain \eqref{E:Dgn} into a \emph{separating} and a \emph{non-separating} domain 
\begin{equation}\label{E:Dgn-ns}
  \begin{aligned}
& \Dgns:=\{(1;h,A): (1; h,A)\in \Dgn\}=\{(1;h,A): (h,A)\in \Bgn\}, \\
&  \Dgnns:= \{(e;h,A): (e; h,A)\in \Dgn \text{ and } e\geq 2\}.
  \end{aligned}  
\end{equation}
Note that we have a partition
\begin{equation}\label{E:Dgn-s-ns}
\Dgn=\Dgns\bigsqcup \Dgnns,
\end{equation}
which is stable under the complement operation $(-)\mapsto (-)^c$. Moreover, each triangle in $\Dgn$ is  contained in $\Dgnns$.

We can now define analogues of the V-functions for $\Dgns$ and $\Dgnns$.

\begin{definition}\label{D:Sigmagn2}
\noindent 
\begin{enumerate}
    \item 
   Denote by $\Sigmas_{g,n}^\chi$ the set of all functions (called \emph{separating V-functions} of type $(g,n)$)
    \begin{align*}
        \sigma:\Dgns&\to \ZZ\\
        (1;h,A)&\mapsto \sigma(1;h,A)
    \end{align*}
    satisfying Property \eqref{E:condUni1} of Definition \ref{D:Sigmagn}, endowed with the following poset structure 
    $$
    \sigma_1\geq \sigma_2 \Leftrightarrow  \sigma_1(1;h,A)\geq \sigma_2(1;h,A) \text{ for any } (1;h,A)\in \Dgns.
    $$
    The (separating) degeneracy subset of $\sigma\in \Sigmas_{g,n}^{\chi}$ is defined by 
    $$
    \Ds(\sigma):=\{(1;h,A)\: : \sigma(1;h,A)+\sigma((1;h,A)^c)=\chi\}\subseteq \Dgns.
    $$
   \item  
   Denote by $\Sigmans^\chi_{g,n}$ the set of all functions (called \emph{non-separating V-functions} of type $(g,n)$)
    \begin{align*}
        \sigma:\Dgnns&\to \ZZ\\
        (e;h,A)&\mapsto \sigma(e;h,A)
    \end{align*} 
     satisfying Properties \eqref{E:condUni1} and \eqref{E:condUni2} of Definition \ref{D:Sigmagn}, endowed with the following poset structure 
    $$
    \sigma_1\geq \sigma_2 \Leftrightarrow  \sigma_1(e;h,A)\geq \sigma_2(e;h,A) \text{ for any } (e;h,A)\in \Dgnns.
    $$ 
    The (non-separating) degeneracy subset of $\sigma\in \Sigmans_{g,n}^{\chi}$ is defined by 
    $$
    \Dns(\sigma):=\{(e;h,A)\in \Dgnns\: : \sigma(1;h,A)+\sigma((1;h,A)^c)=\chi\}\subseteq \Dgnns.
    $$
\end{enumerate}
%In both cases, we can define the degeneracy subset $\D(\sigma)$ as in \eqref{E:degsigma}. 

Moreover, in both cases, we say that $\sigma$ has Euler characteristic $\chi:=|\sigma|$ and we set 
$$\Sigmas_{g,n}:=\coprod_{\chi \in \ZZ}{} \Sigmas_{g,n}^\chi \quad \text{ and } \quad \Sigmans_{g,n}:=\coprod_{\chi \in \ZZ}{} \Sigmans_{g,n}^\chi.$$
\end{definition}

\begin{lemma}\label{L:Sigma-s-ns}
For any $\chi \in \ZZ$, we have an isomorphism of posets
$$
\begin{aligned}
    \Sigma_{g,n}^\chi & \xrightarrow{\cong}  {}\Sigmas_{g,n}^\chi\times \Sigmans_{g,n}^\chi\\
    \sigma&\mapsto (\sigma^s,\sigma^{ns}),
\end{aligned}
$$
where $\sigma^s:=\sigma_{|\Dgns}$  and $\sigma^{ns}:=\sigma_{|\Dgnns}$.
Moreover, $\D(\sigma)=\Ds(\sigma^s)\bigsqcup \Dns(\sigma^{ns})$.
\end{lemma}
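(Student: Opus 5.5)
The plan is to check directly that the conditions of Definition \ref{D:Sigmagn} split along the partition \eqref{E:Dgn-s-ns}. Two facts, both recorded right after \eqref{E:Dgn-s-ns}, drive everything. First, the partition $\Dgn=\Dgns\sqcup\Dgnns$ is stable under complementation, because $(e;h,A)^c$ has the same $e$. Second, every triangle of $\Dgn$ lies inside $\Dgnns$. The second fact holds because a triangle satisfies $e_i+e_j\geq e_k+2$ for all $\{i,j,k\}$; summing two of these inequalities, for instance $e_1+e_2\geq e_3+2$ and $e_1+e_3\geq e_2+2$, gives $e_1\geq 2$, and likewise $e_2,e_3\geq 2$.

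Take $\sigma\in\Sigma^\chi_{g,n}$. Condition \eqref{E:condUni1} for $\sigma$ only involves an element $x$ and its complement $x^c$, and these lie in the same piece of the partition. So $\sigma^s=\sigma_{|\Dgns}$ satisfies \eqref{E:condUni1}, and hence lies in $\Sigmas^\chi_{g,n}$. Likewise $\sigma^{ns}=\sigma_{|\Dgnns}$ satisfies \eqref{E:condUni1}. It also satisfies \eqref{E:condUni2}, since every triangle of $\Dgn$ is already a triangle of $\Dgnns$. Therefore $\sigma^{ns}\in\Sigmans^\chi_{g,n}$, and the map is well defined.

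For the inverse, start from a pair $(\sigma_1,\sigma_2)\in\Sigmas^\chi_{g,n}\times\Sigmans^\chi_{g,n}$ and glue them into a single function $\sigma$ on $\Dgn$, equal to $\sigma_1$ on $\Dgns$ and to $\sigma_2$ on $\Dgnns$. By the same two facts, each instance of \eqref{E:condUni1} and \eqref{E:condUni2} for $\sigma$ only involves values of one of the $\sigma_i$, and it holds because $\sigma_i$ satisfies it. The restriction map and the gluing map are clearly inverse to each other, so we have a bijection.

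The order on $\Sigma^\chi_{g,n}$ is the pointwise order at fixed $\chi$, and pointwise comparison on $\Dgn$ is the same as pointwise comparison on each of the two pieces. Hence the bijection and its inverse both preserve the order, and it is an isomorphism of posets. The degeneracy statement $\D(\sigma)=\Ds(\sigma^s)\sqcup\Dns(\sigma^{ns})$ follows because whether $x$ is degenerate depends only on $\sigma(x)+\sigma(x^c)$, and $x^c$ lies in the same piece as $x$. In the definition of $\Dns$, the printed subscripts $(1;h,A)$ should be read as $(e;h,A)$.

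No step is a real obstacle; the only thing that needs an actual argument is the inequality showing that triangles have all $e_i\geq 2$.
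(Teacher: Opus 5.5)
Your proof is correct and follows the same route as the paper, which simply notes that every triangle of $\Dgn$ lies in $\Dgnns$ (and, implicitly, that complementation preserves $e$); you additionally derive the former from the inequalities $e_i+e_j\geq e_k+2$, which the paper leaves unstated. Your reading of the subscript in the definition of $\Dns$ as $(e;h,A)$ rather than $(1;h,A)$ is also the intended one.
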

We will call $\sigma^s$ (resp. $\sigma^{ns}$) the separating (resp. non-separating) component of $\sigma\in \Sigma_{g,n}$.
\begin{proof}
This follows immediately from the fact that all the triangles of $\Dgn$ are contained in $\Dgnns$.    
\end{proof}

\begin{remark}\label{R:class-s-ns}
Recall that an element  $\sigma\in \Sigma_{g,n}^\chi$ is classical if $\sigma=\sigma_L$ for some $L\in \PicRel^\chi_{g,n}(\R)$, where we have used the map in \eqref{E:map-sigma}. 
Similarly, we say that an element of $\sigma\in \Sigmans^{\chi}_{g,n}$ (resp. $\Sigmas^{\chi}_{g,n}$) is classical if it is the restriction of an element of $\Sigma^{\chi}_{g,n}$ that is classical.  From the explicit formula \eqref{E:for-deg}, it follows that 
 \begin{enumerate}[(i)]
     \item Every element of $\Sigmas^\chi_{g,n}$ is classical.
     \item The classical elements of $\Sigmans^\chi_{g,n}$ are those given by 
     \begin{equation}\label{E:sigma-cl}
         \sigma^\chi_g[\un\alpha](e;h,A):=
         \begin{sis}
        \left\lceil\Big(\chi-\sum_{i=1}^n\alpha_i\Big)\frac{2h-2+e}{2g-2}+\sum_{i\in A}\alpha_i\right\rceil & \quad \text{ if } g\geq 2,\\
         \left\lceil\sum_{i\in A}\alpha_i\right\rceil & \quad \text{ if } g=1.\\
        \end{sis}
     \end{equation}
     for some $\un \alpha:=(\alpha_1,\ldots,\alpha_n)\in \R^n$ (resp. such that $\sum_i \alpha_i=\chi$ if $g=1$). 

     Moreover, we have that $(e;h,A) \in \D(\sigma_g^\chi[\un\alpha])$ if and only if $e \geq 2$ and the function of $(e;h,A) $ inside the ceiling function $\lceil \cdot \rceil$ on the RHS is an integer.
 \end{enumerate}
\end{remark}

We now clarify the geometric meaning of the above decomposition. 
Note that we have a bijection 
$$
\begin{aligned}
\{\text{Pairs of complementary elements of $\Bgn$}\} & \xrightarrow{\cong} \{\text{Non-irreducible boundary divisors of $\Mbargn$}\}\\
\{(h,A), (h,A)^c\} & \mapsto \Delta(h,A)=\Delta((h,A)^c). 
\end{aligned}
$$
Consider the  open substack of $\Mbargn$
$$
\Mbargn^{ns}:=\Mbargn - \bigcup_{(h,A)\in \Bgn} \Delta(h,A)
$$
parameterizing stable $n$-marked curves of genus $g$ all of whose nodes are non-separating. 

\begin{lemma}\label{L:ugua-ns}
    Let $\sigma_1,\sigma_2\in \Sigma_{g,n}$. Then 
    $$
    \ov \J_{g,n}(\sigma_1)_{|\Mbargn^{ns}}=\ov \J_{g,n}(\sigma_2)_{|\Mbargn^{ns}}\Leftrightarrow \sigma_1^{ns}=\sigma_2^{ns}.
    $$
\end{lemma}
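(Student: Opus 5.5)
The plan is to reduce the statement to the classification of compactified Jacobians over the open substack $\Mbargn^{ns}$, using the description of $\ov \J_{g,n}(\sigma)$ through the universal V-stability $\s^\sigma$ of Proposition \ref{P:Stabgn}.

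The first key observation is that for $X\in \Mbargn^{ns}$ and any $Y\in \BCon(X)$ we have $|Y\cap Y^c|\geq 2$. Indeed, $X$ has no separating nodes, and a biconnected $Y$ with $|Y\cap Y^c|=1$ would make that single node separating. Hence $\type_X(Y)\in \Dgnns$ for every $Y\in \BCon(X)$. By the defining formula in Theorem \ref{T:thmA} (or Theorem \ref{T:cJUniv} together with Proposition \ref{P:Stabgn}), the fiber of $\ov \J_{g,n}(\sigma)$ over $X$ is $\ov \J_X(\s^\sigma(X))$. Here $\s^\sigma(X)_Y=\sigma(\type_X(Y))=\sigma^{ns}(\type_X(Y))$, so it depends only on $\sigma^{ns}$. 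This gives the implication $\Leftarrow$ at once: if $\sigma_1^{ns}=\sigma_2^{ns}$, then the two open substacks of $\TF_{g,n}$ have the same geometric points over $\Mbargn^{ns}$, and therefore they coincide there, since both are open substacks.

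For $\Rightarrow$, I fix $(e;h,A)\in\Dgnns$ and consider the generic vine curve $V=C_1\cup C_2$ of the corresponding vine stratum, as in Remark \ref{R:vine-trian}. Since $e\geq 2$, $V$ has no separating nodes, so $V\in\Mbargn^{ns}$. The equality of restrictions gives $\ov \J_V(\s^{\sigma_1}(V))=\ov \J_V(\s^{\sigma_2}(V))$ as compactified Jacobian stacks of the single curve $V$. I then invoke the uniqueness part of the classification of V-compactified Jacobians of a nodal curve: the V-stability is determined by $\ov\J_V(\s)$, which is the content of the uniqueness statements in Theorem \ref{T:VcJ-nod} and Theorem \ref{T:cJ-nodal}, ultimately \cite[Lemma 8.10]{FPV1}. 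This yields $\s^{\sigma_1}(V)=\s^{\sigma_2}(V)$, and evaluating at $C_1$ gives $\sigma_1(e;h,A)=\sigma_2(e;h,A)$.

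The main obstacle is making the uniqueness on a single vine curve explicit and self-contained. Note first that $\chi=|\sigma_i|$ is already determined by the characteristic of the sheaves in the stack. For a vine curve, $\BCon(V)=\{C_1,C_2\}$, and $\s(V)$ is the pair $(s_1,s_2)$ with $s_1+s_2-\chi\in\{0,1\}$. To recover $s_1$ from the stack, I would use that $s_1$ is the minimum of $\chi(I_{C_1})$ over sheaves $I$ in the stack. This minimum is attained, for instance by a sheaf that is not free at all $e$ nodes and splits as $I_{C_1}\oplus I_{C_2}$ with $\chi(I_{C_1})=s_1$ and $\chi(I_{C_2})=\chi-s_1\geq s_2$. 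Checking that such a sheaf exists in both the degenerate and nondegenerate cases is a short direct verification, and alternatively one can cite \cite[Lemma 8.10]{FPV1} directly, as in the proof of Theorem \ref{T:cJUniv}.
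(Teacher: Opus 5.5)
Your argument is the paper's: on $\Mbargn^{ns}$ every biconnected subcurve has type in $\Dgnns$. This gives $\Leftarrow$ directly, and applied to vine curves with $e\geq 2$ together with the uniqueness statement \cite[Lemma 8.10]{FPV1}, it gives $\Rightarrow$.

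One correction to your optional hands-on check. If $(e;h,A)$ is nondegenerate, then $s_1+s_2=\chi+1$, so the totally split sheaf has $\chi(I_{C_2})=\chi-s_1=s_2-1<s_2$ and is not semistable. Take instead a line bundle $L$ with $\chi(L_{C_1})=s_1$; then $\chi(L_{C_2})=\chi+e-s_1\geq s_2+e-1\geq s_2$, so the minimum $s_1$ is attained in both the degenerate and nondegenerate cases.
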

\begin{proof}
 This follows from the fact that 
 $$X\in \Mbargn^{ns}\Leftrightarrow \type_X(Y)\in \Dgnns \text{ for any } Y\in \BCon(X),$$
 together with \cite[Lemma 8.10]{FPV1}. 
\end{proof}

The action of $\wPR_{g,n}$ also simplifies according to the decomposition of Lemma \ref{L:Sigma-s-ns}. Consider the normal subgroup
$$
W_{g,n}:=\langle \O(C_{h,A}) : (h,A)\in \Bgn\rangle\unlhd \wPR_{g,n},
$$
and the quotient 
$$
\PR_{g,n}:=\wPR_{g,n}/W_{g,n}.
$$
Explicitly, the quotient $\PR_{g,n}$ is equal to 
\begin{equation}\label{E:PRbis}
    \PR_{g,n}\cong \PicRel^{op}_{g,n}(\ZZ)\rtimes (\ZZ/2\ZZ),
\end{equation}
where $\PicRel^{op}_{g,n}(\ZZ)=\PicRel_{g,n}(\ZZ)/W_{g,n}$ is the integral relative Picard group of the universal curve $\C_{g,n}/\M_{g,n}$, and the action of $ \ZZ/2\ZZ$ on $\PicRel^{op}_{g,n}(\ZZ)$ is via the inverse map. The group $\PicRel^{op}_{g,n}(\ZZ)$ is the abelian group  generated by:
\begin{itemize}
    \item the relative dualizing line bundle $\omega_\pi$; 
    \item the image $\Sigma_i$ of the $i$-th section of $\C_{g,n}/\M_{g,n}$ (for $1\leq i \leq n$);
\end{itemize}
subject to the following relations:
\begin{itemize}
    \item if $g=1$ then $\omega_\pi=0$;
    \item if $g=0$ then $\Sigma_1=\ldots=\Sigma_n$ and $\omega_\pi=-2\Sigma_1$.
\end{itemize}

\begin{lemma}\label{L:Sigma-ac}
\noindent 
\begin{enumerate}
\item  \label{L:Sigma-ac1} For any $\chi\in \ZZ$, the action of $W_{g,n}$ on $\Sigma_{g,n}^\chi$ is trivial on $\Sigmans_{g,n}^\chi$ and two elements $\sigma_1,\sigma_2\in \Sigmas_{g,n}^{\chi}$ are in the same orbit of $W_{g,n}$ if and only if $\Ds(\sigma_1)=\Ds(\sigma_2)$.
\item \label{L:Sigma-ac2} The group $\PR_{g,n}$ acts on $\Sigmans_{g,n}$ by sending 
$\sigma \in \Sigmans_{g,n}$ into 
 $$
    \begin{sis}
    & (L\cdot \sigma)(e;h,A):=\sigma(e;h,A)+ \beta(2h-2+e)+\sum_{i\in A} \alpha_i\\
&  \text{ for any } L=\beta\omega_{\pi}+\sum_{i=1}^n \alpha_i\Sigma_i\in \PicRel^{op}_{g,n}(\ZZ), \text{and}\\
     & (\iota \cdot \sigma)(e;h,A):=
     \begin{cases}
      -\sigma(e;h,A) & \text{ if } (e;h,A)\in \D(\sigma), \\
-\sigma(e;h,A)+1 & \text{ if } (e;h,A)\not \in \D(\sigma).    
     \end{cases}\\
    \end{sis}
    $$
\end{enumerate}  
\end{lemma}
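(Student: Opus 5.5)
The plan is to argue directly from the explicit formulas for the action of $\wPR_{g,n}$ on $\Sigma_{g,n}$ in Remark~\ref{R:PR-Pol}, together with the product decomposition of Lemma~\ref{L:Sigma-s-ns}.

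For Part~\eqref{L:Sigma-ac1}, an element of $W_{g,n}$ has the form $L=\sum_{(h,A)\in\Bgn}\gamma_{(h,A)}\O(C_{(h,A)})$, that is, $\beta=0$ and $\alpha_i=0$. The formula of Remark~\ref{R:PR-Pol} then shows that $(L\cdot\sigma)(e;h,A)=\sigma(e;h,A)$ whenever $e\geq 2$, so $W_{g,n}$ acts trivially on $\Sigmans_{g,n}^\chi$. Moreover $|L\cdot\sigma|=|\sigma|$, since $\deg_\pi \O(C_{(h,A)})=0$. On $\Sigmas^\chi_{g,n}$ the action is
\[
\sigma(1;h,A)\mapsto \sigma(1;h,A)-\gamma_{(h,A)}+\gamma_{(h,A)^c}.
\]
Because of the relation $\O(C_{(h,A)})+\O(C_{(h,A)^c})=0$, we may regard $L$ as determined by the single integer $c_{(h,A)}:=\gamma_{(h,A)^c}-\gamma_{(h,A)}$ attached to each complementary pair. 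This integer is added to $\sigma(1;h,A)$ and subtracted from $\sigma((1;h,A)^c)$. The sum $\sigma(1;h,A)+\sigma((1;h,A)^c)$ is therefore preserved, and hence so is $\Ds$; this gives one implication.

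For the converse, suppose $\Ds(\sigma_1)=\Ds(\sigma_2)$. Since separating V-functions are only subject to Property~\eqref{E:condUni1} (there are no triangle conditions, by Lemma~\ref{L:Sigma-s-ns}), the data on different complementary pairs are independent. On each pair, the sum $\sigma_i(1;h,A)+\sigma_i((1;h,A)^c)$ equals $\chi$ or $\chi+1$ according to whether the pair lies in $\Ds$, so it is the same for $i=1,2$. We then choose $c_{(h,A)}:=\sigma_2(1;h,A)-\sigma_1(1;h,A)$, which automatically matches on the complement because the two sums agree.

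There is one point to check here. For each pair we must realize the prescribed $c$ by actual integers $\gamma$. This is immediate, for instance by taking $\gamma_{(h,A)^c}=c$ and $\gamma_{(h,A)}=0$ in a chosen representative, and using the relation. The self-complementary case $(h,A)=(h,A)^c$, which occurs for $n=0$ with $h=g/2$, needs separate treatment. There $\sigma(1;g/2,\emptyset)$ is forced to be $\chi/2$ or $(\chi+1)/2$ by parity, and correspondingly the generator is $2$-torsion and acts trivially. So in this case the orbit condition again reduces to equality of the degeneracy data.

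For Part~\eqref{L:Sigma-ac2}, Part~\eqref{L:Sigma-ac1} shows that $W_{g,n}$ acts trivially on $\Sigmans_{g,n}$, so the restricted action of $\wPR_{g,n}$ on $\Sigmans_{g,n}$ factors through $\PR_{g,n}$. The restriction is well defined because the action preserves $\Dgnns$ and commutes with the restriction map $\sigma\mapsto\sigma^{ns}$. The displayed formulas are then just the $e\geq 2$ cases of Remark~\ref{R:PR-Pol}, where $\beta$ and $\alpha_i$ are well defined modulo $W_{g,n}$. The only mild obstacle is bookkeeping with the relations in $\PicRel_{g,n}(\ZZ)$ for $g\leq 1$ and the torsion case; everything else is formal.
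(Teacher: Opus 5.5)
Your proposal is correct and follows the paper's route. The paper's proof says only that the lemma is straightforward from the explicit formulas of Remark~\ref{R:PR-Pol}. You carry out exactly that computation: the shifts $\pm c_{(h,A)}$ preserve the pair sums, prescribed shifts can be realized pair by pair, the value on the self-complementary $(1;g/2,\emptyset)$ is forced when $n=0$, and the action descends to $\PR_{g,n}$ via the product decomposition of Lemma~\ref{L:Sigma-s-ns}.
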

\begin{proof}
The proof is straightforward from Remark \ref{R:PR-Pol}.
\end{proof}

We now prove that there are a finite number of compactified universal Jacobians over $\Mbargn$ up to the action of the group $\wPR_{g,n}$.

\begin{proposition}\label{P:finite-orb}
 The action of $\wPR_{g,n}$ on $\Sigma_{g,n}$ has finitely many orbits.

 In particular, there are finitely many isomorphism classes of compactified universal Jacobian stacks (and spaces) over $\Mbargn$.
\end{proposition}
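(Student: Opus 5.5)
The plan is to normalize $\sigma$ by the translation part $\PicRel_{g,n}(\ZZ)$ of $\wPR_{g,n}$ and to show that only finitely many normalized V-functions survive. The second sentence then follows from the first by Corollary \ref{C:iso-UniSt} and Theorem \ref{T:cJUniv}: an orbit determines the stack up to isomorphism over $\Mbargn$, hence also its good moduli space. Since $\Dgn$ is finite, it suffices to show that every orbit meets a fixed bounded box of functions $\Dgn\to\ZZ$.

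By Lemma \ref{L:Sigma-s-ns} we may treat the separating and non-separating components separately. Lemma \ref{L:Sigma-ac}\eqref{L:Sigma-ac1} shows that the subgroup $W_{g,n}$ acts trivially on $\Sigmans_{g,n}$. It also shows that the $W_{g,n}$-orbit of $\sigma^s$ is determined by $\Ds(\sigma^s)$, which ranges over finitely many subsets of $\Dgns$. So, after acting by $W_{g,n}$ (which does not change $\sigma^{ns}$), the separating part takes only finitely many values, provided $\chi$ lies in a fixed finite range. This reduces the problem to the non-separating part under the action of $\PicRel^{op}_{g,n}(\ZZ)$.

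First normalize $\chi$. Acting by $\Sigma_1$ (if $n\ge1$), or by $\omega_\pi$ (if $n=0$), shifts $|\sigma|$ by $1$, respectively by $2g-2$. So we may assume $|\sigma|$ lies in a fixed finite set: $\{0\}$ when $n\geq 1$, and $\{0,\dots,2g-3\}$ when $n=0$. The case $g=1$, $n=0$ does not occur, and the cases $g=0,1$ need the evident modifications.

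Next bound $\sigma^{ns}$. Each element $x=(e;h,A)\in\Dgnns$ appears as a vertex of some triangle, typically $[x,y,z]$ obtained by splitting the complementary vertex of the vine. Remark \ref{R:triang} then gives
\[
\sigma(x^c)=\sigma(y)+\sigma(z)+\epsilon,\qquad \epsilon\in\{0,-1\}.
\]
Together with $\sigma(x)+\sigma(x^c)-\chi\in\{0,1\}$, this says that $\sigma$ is additive up to bounded error with respect to the degenerations encoded by triangles. I would like to prove that any such quasi-additive function differs by a bounded amount, with a bound depending only on $(g,n)$, from a function of the form
\[
(e;h,A)\mapsto \beta\,\delta_0(e;h)+\sum_{i\in A}\alpha_i,\qquad \delta_0(e;h):=2h-2+e,
\]
with real coefficients $\beta,\alpha_i$. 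Concretely, I would choose minimal elements $(e;h,\{i\})$ and $(e;h,\emptyset)$ with small $h$ and $e$, and express every element of $\Dgnns$ through iterated triangle relations as an integer combination of these generators. This requires at most a bounded number of steps depending on $(g,n)$, and each step contributes error at most $1$. Then $\sigma$ is determined, up to bounded error, by its values on a finite generating set.

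Finally, translating by $\beta'\omega_\pi+\sum\alpha'_i\Sigma_i$ with integer coefficients changes these generator values by the integer linear functions of Lemma \ref{L:Sigma-ac}\eqref{L:Sigma-ac2}. Since these functions span a full-rank lattice in the relevant coordinates, we can bring the generator values, and hence all values, into a bounded box while preserving the normalization of $\chi$ up to finitely many choices.

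The main obstacle is this quasi-additivity step: showing that the triangles connect all of $\Dgnns$ to a small generating set in a bounded number of steps, especially for the extremal elements with $A=\emptyset$ or $A=[n]$ and for small genus. If a direct combinatorial argument becomes messy, my fallback is to fix the curve $X$ with dual graph a single vertex of genus $0$ carrying many loops and to use compatibility via Proposition \ref{P:Stabgn}. In that setting, finiteness of V-stabilities on a fixed curve up to twisting by line bundles should follow from \cite{FPV2}.
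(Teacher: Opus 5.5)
Your strategy is the paper's: reduce to $\Sigmans_{g,n}$ via Lemma~\ref{L:Sigma-ac}, normalize by translations in $\PicRel^{op}_{g,n}(\ZZ)$, and bound the remaining values through the triangle relations of Remark~\ref{R:triang}. However, the step you call the main obstacle is the entire content of the proof, and you leave it unproved. As written, nothing shows that the triangle relations reach every element of $\Dgnns$ from a fixed finite set with bounded error. Normalizing $\chi$ first also leaves you with a lattice-in-a-slab argument that you only gesture at.

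What is missing is the concrete choice of generators and triangles. The right generators are the elements of $\Dgnns$ with $\delta=1$, namely $(3;0,\emptyset)$ (when $g\ge 2$) and $(2;0,\{i\})$. These are exactly the elements that cannot be written as $x_3^c$ for a triangle $[x_1,x_2,x_3]$, since $\delta(x_3^c)=\delta(x_1)+\delta(x_2)\ge 2$. Translation by $\beta\omega_\pi+\sum_i\alpha_i\Sigma_i$ shifts their values by $\beta$ and $\alpha_i$. Hence every translation orbit contains a unique $\sigma$ with $\sigma(3;0,\emptyset)=0=\sigma(2;0,\{i\})$, and no prior normalization of $\chi$ is needed. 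Then use the triangles
\begin{gather*}
[(3;0,\emptyset),(e+1;h-1,A),(e;h,A)^c]\quad (h\ge 1),\\
[(3;0,\emptyset),(e-1;h,A),(e;h,A)^c]\quad (e\ge 3),\\
[(2;0,\{i\}),(e;h,A-\{i\}),(e;h,A)^c]\quad (i\in A),
\end{gather*}
which are valid whenever the middle entry lies in $\Dgnns$, i.e.\ away from the generators themselves. By Remark~\ref{R:triang} and the normalization, $\sigma(e;h,A)$ minus the value of $\sigma$ on the middle entry lies in $\{0,-1\}$. So each family lowers $h$, $e$ or $|A|$ by one at a cost of at most one. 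Iterating gives $-(2h+e+|A|-3)\le\sigma(e;h,A)\le 0$ on all of $\Dgnns$. Boundedness of $\chi$ is then automatic from $\sigma(x)+\sigma(x^c)-\chi\in\{0,1\}$, so there are only finitely many normalized elements.

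Your fallback would not rescue the argument. A curve whose dual graph has a single vertex has no biconnected subcurves, so its V-stabilities carry no information. More generally, finiteness of V-stabilities on one fixed curve up to twisting by \emph{all} line bundles on that curve does not give finiteness up to the much smaller group $\PicRel^{op}_{g,n}(\ZZ)$; for $n=0$ only multiples of $\omega_\pi$ are available.
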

This was proved for general universal V-stabilities in \cite[Thm. B(2)]{fava2024}, generalizing the case of general classical universal V-stabilities treated in \cite[Cor. 6.16]{Kass_2019}.
\begin{proof}
Using Lemma \ref{L:Sigma-ac}\eqref{L:Sigma-ac1}, it is enough to prove that the action of $\PR_{g,n}$ on $\Sigmans_{g,n}$ has finitely many orbits. 
Moreover, Lemma \ref{L:Sigma-ac}\eqref{L:Sigma-ac2} implies that, by translating by an element of $\PicRel^{op}_{g,n}(\ZZ)$, any element of $\Sigmans_{g,n}$ lies in the same orbit of a unique \emph{normalized} element $\sigma\in \Sigmans_{g,n}$, i.e. an element $\sigma\in \Sigmans_{g,n}$ such that 
\begin{equation}\label{E:norm-sigma}
\begin{sis}
&  \sigma(3;0,\emptyset)=0, \\  
&  \sigma(2;0,\{i\})=0 \text{ for any } i\in [n]. \\  
\end{sis}
\end{equation}
We now show that there are a finite number of normalized elements. 

By applying Remark \ref{R:triang} to the triangle $\Delta=[(3;0,\emptyset),(e+1;h-1,A), (e; g-h-e+1,A^c)]$ of $\Dgnns$ for any $h\geq 1$ and using the first vanishing in \eqref{E:norm-sigma}, we get that 
\begin{equation}\label{E:condiz1}
    -1\leq \sigma(e;h,A)-\sigma(e+1;h-1,A)\leq 0 \text{ for any } (e;h,A)\in \Dgnns \text{ such that } h\geq 1. 
\end{equation}

By applying Remark \ref{R:triang} to the triangle $\Delta=[(3;0,\emptyset),(e-1;h,A), (e; g-h-e+1,A^c)]$ of $\Dgnns$ for any $e\geq 3$ and using the first vanishing in \eqref{E:norm-sigma}, we get that 
\begin{equation}\label{E:condiz2}
    -1\leq \sigma(e;h,A)-\sigma(e-1;h,A)\leq 0 \text{ for any } (e;h,A)\in \Dgnns \text{ such that } e\geq 3.
\end{equation}

By applying Remark \ref{R:triang} to the triangle $\Delta=[(2;0,\{i\}),(e;h,A-\{i\}), (e; g-h-e+1,A^c)]$ of $\Dgnns$ for any $i\in A$ and using the second vanishing in \eqref{E:norm-sigma}, we get that 
\begin{equation}\label{E:condiz3}
    -1\leq \sigma(e;h,A)-\sigma(e;h,A-\{i\})\leq 0 \text{ for any } (e;h,A)\in \Dgnns \text{ such that } i\in A.
\end{equation}

Now, by iterating \eqref{E:condiz1} $h$-times, we get that 
\begin{equation}\label{E:condiz1bis}
    -h\leq \sigma(e;h,A)-\sigma(e+h;0,A)\leq 0 \text{ for any } (e;h,A)\in \Dgnns. 
\end{equation}
By iterating \eqref{E:condiz2} and using the first vanishing in \eqref{E:norm-sigma}, we get that
\begin{equation}\label{E:condiz2bis}
\begin{sis}
&     -(e-2)\leq \sigma(e;0,A)-\sigma(2;0,A)\leq 0 & \text{ for any } e\geq 2 \text{ and any } |A|>0;\\
&     -(e-3)\leq \sigma(e;0,\emptyset)-\sigma(3;0,\emptyset)=\sigma(e;0,\emptyset)\leq 0 & \text{ for any } e\geq 3.\\
\end{sis}
\end{equation}
By iterating \eqref{E:condiz3} and using the second vanishing in \eqref{E:norm-sigma}, we get that 
\begin{equation}\label{E:condiz3bis}
    -(|A|-1)\leq \sigma(2;0,A)-\sigma(2;0,\{i\})=\sigma(2;0,A)\leq 0  \: \text{ if } i\in A.
\end{equation}

By putting together \eqref{E:condiz1bis}, \eqref{E:condiz2bis} and \eqref{E:condiz3bis}, we get that 
\begin{equation}\label{E:final-s}
 -(2h+e+|A|-3)\leq \sigma(e;h,A)\leq 0 \text{ for any } (e;h,A)\in \Dgnns,
\end{equation}
which show that the set of normalized elements of $\Dgnns$ is finite, as we wanted to show. 
\end{proof}

Finally, we can describe when two compactified universal Jacobian spaces are isomorphic over $\Mbargn$.

\begin{theorem}\label{T:iso-UniSp}
     Let $\sigma_1,\sigma_2\in \Sigma_{g,n}$. 
     The following conditions are equivalent:
     \begin{enumerate}
         \item \label{T:iso-UniSp1}
         $\sigma_1^{ns}$ and $\sigma_2^{ns}$ lie in the same orbit for the action of $\PR_{g,n}$ on $\Sigmans_{g,n}$.
         \item \label{T:iso-UniSp2} $\ov \J_{g,n}(\sigma_1)$ and $\ov \J_{g,n}(\sigma_2)$ are isomorphic over $\Mbargn^{ns}$.
         \item \label{T:iso-UniSp3} $\ov J_{g,n}(\sigma_1)$ and $\ov J_{g,n}(\sigma_2)$ are isomorphic over $\Mbargn$.
          \item \label{T:iso-UniSp4} $\ov J_{g,n}(\sigma_1)$ and $\ov J_{g,n}(\sigma_2)$ are isomorphic over $\Mbargn^{ns}$.
     \end{enumerate}
\end{theorem}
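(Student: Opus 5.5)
We prove the chain $(1)\Rightarrow(2)\Rightarrow(4)$, $(1)\Rightarrow(3)\Rightarrow(4)$, and then close the loop with $(4)\Rightarrow(1)$, which carries the real content.

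For $(1)\Rightarrow(2)$: pick $g\in\wPR_{g,n}$ lifting an element of $\PR_{g,n}$ that sends $\sigma_2^{ns}$ to $\sigma_1^{ns}$. By Lemma \ref{L:Sigma-ac}, $(g\cdot\sigma_2)^{ns}=\sigma_1^{ns}$. Proposition \ref{P:PR-TF} gives an isomorphism $\ov\J_{g,n}(\sigma_2)\cong\ov\J_{g,n}(g\cdot\sigma_2)$ over $\Mbargn$. Lemma \ref{L:ugua-ns} gives the equality $\ov\J_{g,n}(g\cdot\sigma_2)_{|\Mbargn^{ns}}=\ov\J_{g,n}(\sigma_1)_{|\Mbargn^{ns}}$. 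The implications $(2)\Rightarrow(4)$ and $(3)\Rightarrow(4)$ then follow by passing to relative good moduli spaces, which commute with the base change to the open $\Mbargn^{ns}$.

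For $(1)\Rightarrow(3)$ it suffices, after acting by $g$, to show that $\ov J_{g,n}(\sigma)$ depends only on $\sigma^{ns}$. By Lemma \ref{L:Sigma-ac}\eqref{L:Sigma-ac1}, the separating part is determined by $\Ds(\sigma^s)$ up to the action of $W_{g,n}$. So we must show that changing $\Ds$ does not change the good moduli space. Given $\sigma,\sigma'$ with equal non-separating parts, choose a general separating $\tau$ with $\tau\geq\sigma^s$ and $\tau\geq\sigma'^s$ where possible, or compare both $\sigma$ and $\sigma'$ with an intermediate element. For $\sigma_+\geq\sigma$ differing only on separating types, the inclusion $\ov\J_{g,n}(\sigma_+)\subseteq\ov\J_{g,n}(\sigma)$ then induces a morphism of good moduli spaces. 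We claim this morphism is an isomorphism, for the following reason. At a separating node the sheaf splits whenever it is non-free, so strictly semistable sheaves with respect to a separating degenerate type are S-equivalent to their polystable splitting. That splitting is $I_Y\oplus I_{Y^c}$ in which the separating node is a non-free point, and this object already lies in the closure of the $\sigma_+$-stable locus. Concretely, the map is proper and bijective on geometric points, hence finite and birational, onto a normal target; by Zariski's main theorem it is an isomorphism. Normality of the target comes from regularity of $\TF_{g,n}$ and the fact that good moduli spaces of normal stacks are normal. The separating-node analysis can be done fiberwise using \cite[Prop. 5.8]{FPV2} and the local structure at separating nodes from \cite{FPV2}. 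Combining these steps, $\ov J_{g,n}(\sigma_1)\cong\ov J_{g,n}(g\cdot\sigma_2)\cong\ov J_{g,n}(\sigma_2)$.

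The main obstacle is $(4)\Rightarrow(1)$. Given an isomorphism $\phi$ of good moduli spaces over $\Mbargn^{ns}$, I would imitate Step I of Theorem \ref{T:bir-UniSt}. Over $\M_{g,n}$, $\phi$ restricts to an automorphism of $J_{g,n}$, so by \cite[Cor. 6.1]{Kass_2019} it is multiplication by some $(\beta\omega_\pi+\sum\alpha_i\Sigma_i,\iota^{e})$, that is, by an element of $\PR_{g,n}$. Composing with this element, $\phi$ becomes the identity on $J_{g,n}$ and hence birational. The point to establish is that $\ov J_{g,n}(\sigma_1)$ and $\ov J_{g,n}(\sigma_2)$, viewed as separated, hence closure-determined, models over $\Mbargn^{ns}$ of the same generic fiber, coincide only if the non-separating V-functions agree. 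I would test this at the generic point of each vine stratum $(e;h,A)$ with $e\geq2$ and of each triangular stratum. There the fibers are compactified Jacobians of the vine or triangle curve, and the identity extension identifies the stable loci, that is, the multidegrees of line bundles occurring in the fiber. The multidegrees occurring determine $\s(X)_Y$, with the degenerate types detected by the number of irreducible components of the fiber, as in the proof of Corollary \ref{C:iso-UniSt}. To make the extension argument rigorous, I would use the separatedness of both spaces over the base restricted to a DVR through the vine point. This shows that the limits of a given generic line bundle agree in both, up to S-equivalence, which pins down $\sigma_1^{ns}=g\cdot\sigma_2^{ns}$.
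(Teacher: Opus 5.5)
Your implications $(1)\Rightarrow(2)$, $(2)\Rightarrow(4)$ and $(3)\Rightarrow(4)$ are fine and agree with the paper. The argument for $(4)\Rightarrow(1)$, however, has a genuine gap. After normalizing $\Psi$ so that $\Psi_{|\Mgn}=\id$ (which is also the paper's first step), you restrict to a DVR through a vine point and appeal to separatedness. Separatedness only says that $\Psi$ sends the class of the $\sigma_1$-limit of $L_\eta$ to the class of the $\sigma_2$-limit. It says nothing about whether these two limits have the same multidegree: on the special fibre $\Psi$ is an abstract isomorphism of algebraic spaces, with no a priori compatibility with the embeddings into $\TF$. So ``the identity extension identifies the stable loci'' is precisely what has to be proved, not a consequence. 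In fact no argument that only uses a one-parameter family can succeed. If $\X\to\Delta$ is a smoothing of the vine curve $X=C_1\cup C_2$ with regular total space, then $C_1$ is Cartier on $\X$. Tensoring with $\O_\X(C_1)$ then gives an isomorphism $\ov J_{\X/\Delta}(\s)\cong \ov J_{\X/\Delta}(\s')$, with $(\s'_{C_1},\s'_{C_2})=(\s_{C_1}-e,\s_{C_2}+e)$, and this isomorphism is the identity on the generic fibre. Relatedly, nothing in your mechanism uses $e\geq 2$, yet for $e=1$ its conclusion is false. Tensoring with $\O(C_{(h,A)})$ gives an isomorphism $\ov J_{g,n}(\sigma)\cong\ov J_{g,n}(\O(C_{(h,A)})\cdot\sigma)$ over $\Mbargn$ which is the identity over $\Mgn$ but changes $\sigma(1;h,A)$.

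The missing ingredient is a rigidity argument in codimension $\geq 2$, which is exactly where $e\geq 2$ enters. The paper (Lemma \ref{L:vine-cla}) works over the semiuniversal deformation $\Spec R_X$, where the locus $\Delta_X$ of reducible fibres has codimension $e\geq 2$, and argues in four steps.
\begin{enumerate}
\item $\Psi$ equals the identity over $V_X=\Spec R_X-\Delta_X$, by density of $U_X$, reducedness of the fibres and separatedness.
\item $\Psi$ preserves the stable loci, because the strictly semistable locus of a vine-curve compactified Jacobian is intrinsically characterized (Oda--Seshadri).
\item $\Psi^{\st}$ lifts to the stacks, since the $\Gm$-gerbe over the stable locus is trivial.
\item \cite[Cor. 7.2]{Kass_2019}, applied on the Cohen--Macaulay, $R_1$ total space, forces the two universal sheaves, which agree off codimension $e\geq 2$, to agree everywhere.
\end{enumerate}
Hence $\ov\J_X(\s_1)^{\st}=\ov\J_X(\s_2)^{\st}$ inside $\TF_X^\chi$, and the allowed multidegrees give $\s_1=\s_2$.

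For $(1)\Rightarrow(3)$, your Zariski-main-theorem route (proper, bijective, birational onto a normal good moduli space) is a legitimate alternative to the paper's flatness-plus-fibrewise argument via Lemma \ref{L:iso-sepnod}. Two points need fixing. First, a common upper bound of $\sigma^s$ and $\sigma'^s$ usually does not exist; instead descend to $\ov\sigma\leq\sigma$ and $\ov\sigma'\leq\sigma'$ with maximal separating degeneracy (Proposition \ref{P:Dsep}), which are $W_{g,n}$-conjugate. Second, injectivity on points still requires the product decomposition at separating nodes.
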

\begin{proof}
 The implications $\eqref{T:iso-UniSp2}\Rightarrow \eqref{T:iso-UniSp4}$ and   $\eqref{T:iso-UniSp3}\Rightarrow \eqref{T:iso-UniSp4}$ are obvious. 

The implication $\eqref{T:iso-UniSp1}\Rightarrow  \eqref{T:iso-UniSp2}$ follows by combining Proposition \ref{P:PR-TF} and Lemmas \ref{L:ugua-ns} and \ref{L:Sigma-ac}\eqref{L:Sigma-ac1}.

We now prove the implication $\eqref{T:iso-UniSp1}\Rightarrow \eqref{T:iso-UniSp3}$. Up to replacing $\sigma_2$ with a conjugate for the action of $\wPR_{g,n}$, we can assume that $\sigma_1^{ns}=\sigma_2^{ns}$. In particular, we have that $|\sigma_1|=|\sigma_2|:=\chi$. 
By Lemma~\ref{L:Sigma-s-ns} and Proposition~\ref{P:Dsep}\eqref{P:Dsep4}, we can choose $\ov\sigma_i\in \Sigma_{g,n}^\chi$ (for $i=1,2$) such that 
$$
\ov \sigma_i\leq \sigma_i, \quad (\ov \sigma_i)^{ns}= \sigma_i^{ns} \text{ and } \Ds(\ov \sigma_i^s) \text{ is the minimum element of } \Im \Ds^{\chi}.
$$
By Lemma \ref{L:Sigma-ac}\eqref{L:Sigma-ac1} and the assumption $\sigma_1^{ns}=\sigma_2^{ns}$, we deduce that $\ov \sigma_1$ and $\ov \sigma_2$ are in the same orbit for the action of $\wPR_{g,n}$ on $\Sigma_{g,n}$. Hence, by applying Proposition \ref{P:PR-TF}, we deduce that the universal compactified Jacobian stacks $\ov \J_{g,n}(\ov \sigma_1)$ and $\ov \J_{g,n}(\ov \sigma_2)$ are isomorphic over $\Mbargn$, which implies, by passing to their relative good moduli spaces, that 
$$
\ov J_{g,n}(\ov \sigma_1) \text{ and } \ov J_{g,n}(\ov \sigma_2) \text{ are isomorphic over } \Mbargn. 
$$
In order to conclude the proof of this implication, it remains to prove that (for $i=1,2$) the inclusion $\ov \J_{g,n}(\sigma_i)\subseteq \ov \J_{g,n}(\ov \sigma_i)$ (see Theorem \ref{T:thmA}) induces, by passing to their relative good moduli spaces, an isomorphism 
$$\Phi: \ov J_{g,n}(\sigma_i)\xrightarrow{\cong} \ov J_{g,n}(\ov \sigma_i).$$
This can be proved as follows. First, the fact that $\Phi$ is an isomorphism can be checked on geometric fibers by \cite[17.9.5]{EGAIV4} and \cite[2.7.1]{EGAIV2}, using that $\ov J_{g,n}(\ov \sigma_i)$ (as well as 
$\ov J_{g,n}(\sigma_i)$) is flat over $\Mbargn$ by \cite[Thm. C]{FPV1}.
We then conclude using the assumptions $\ov \sigma_i\leq \sigma_i$ and $(\ov \sigma_i)^{ns}= \sigma_i^{ns}$ and Lemma \ref{L:iso-sepnod} below.

We finally prove the implication $\eqref{T:iso-UniSp4}\Rightarrow \eqref{T:iso-UniSp1}$, which will conclude the proof. By assumption, we have an isomorphism over $\Mbargn^{ns}$ 
\begin{equation}\label{E:iso-Psi}
\Psi:\ov J_{g,n}(\sigma_1)_{|\Mbargn^{ns}}\xrightarrow{\cong}\ov J_{g,n}(\sigma_2)_{|\Mbargn^{ns}}.
\end{equation}
By \cite[Cor. 6.1]{Kass_2019}, the restriction of $\Psi$ over $\Mgn$ is given by the action of a uniquely defined element of $g\in \PR_{g,n}$. More precisely, there exists a unique element $g\in \PR_{g,n}$ such that, for any lift $\wt g\in \wPR_{g,n}$ of $g$, we have that 
$$\Psi_{|\Mgn}:\ov J_{g,n}(\sigma_1)_{|\Mgn}=\TF_{\Cgn/\Mgn}^\chi\fatslash \Gm\xrightarrow[\cong]{\cdot \wt g}\TF_{\Cgn/\Mgn}^\chi\fatslash \Gm=\ov J_{g,n}(\wt g\cdot \sigma_1)_{|\Mgn}$$
where the action of $\wt g$ is the one in Proposition \ref{P:PR-TF}. Therefore, up to replacing $\sigma_1$ with $\wt g\cdot \sigma_1$ (and hence $\sigma_1^{ns}$ with $g\cdot \sigma_1^{ns}$) and $\Psi$ with $\Psi\circ (\cdot \wt g)^{-1}$, we can (and will) assume that 
\begin{equation}\label{E:Psi-ass}
\Psi_{|\Mgn}=\id.
\end{equation}
We now show that this implies the equality $\sigma_1^{ns}=\sigma_2^{ns}$, which will conclude the proof. Fix $(e;h,A)\in \Dgnns$ (so that $e\geq 2$) and pick a vine curve $X=C_1\cup C_2\in \Mbargn$ such that $\type_X(C_1)=(e;h,A)$ and $\type_X(C_2)=(e;h,A)^c$. For $i=1,2$, consider the V-stability $\s_i:=\sigma^{\sigma_i}(X)$ on $X$ as in Proposition \ref{P:Stabgn}, which satisfies $(\s_i)_{C_1}=\sigma_i(e;h,A)$ and $(\s_i)_{C_2}=\sigma_i((e;h,A)^c)$. It is then enough to show that $\s_1=\s_2$. 

For that purpose, consider the effective semiuniversal deformation space $\Phi:\fX\to \Spec R_X$ of $X$, as in \cite[Sec. 8]{FPV1}. Since $R_X$ is the complete local ring of $\Mbargn^{ns}\subset \Mbargn$ at $X$, there is a morphism  $\eta:\Spec R_X\to \Mbargn^{ns}$ sending $o$ into $X$. By pulling back the isomorphism $\Psi$ of \eqref{E:iso-Psi} along $\eta$ and using that good moduli spaces are functorial, we get an isomorphism 
$$
\Psi_X:\ov J_{\fX/R_X}(\s_1)\xrightarrow{\cong} \ov J_{\fX/R_X}(\s_1).
$$
Moreover, since the open subset $U_X$ of $\Spec R_X$  parametrizing smooth fibers of $\fX\to \Spec R_X$ is the pull-back of $\Mgn$ via $\eta$, the assumption from \eqref{E:Psi-ass} translates into the fact that 
$$
(\Psi_X)_{|U_X}=\id.
$$
Now we can apply Lemma \ref{L:vine-cla} below in order to conclude that $\s_1=\s_2$, and we are done. 
\end{proof}

\begin{lemma}\label{L:iso-sepnod}
 Let $X$ be a nodal curve over $k=\ov k$. Consider two V-stability conditions $\s,\ov \s\in \VStab^\chi(X)$ such that 
 $$
 \ov \s\leq \s \text{ and } \ov \s_Y=\s_Y \text{ for any } Y\in \BCon(X) \text{ such that } |Y\cap Y^c|\geq 2.
 $$
 Then the inclusion $\ov \J_X(\s)\subseteq \ov \J_X(\ov \s)$ induces, by passing to the good moduli space, an isomorphism 
 $$
 \Upsilon: \ov J_X(\s) \xrightarrow{\cong} \ov J_X(\ov \s).
 $$
\end{lemma}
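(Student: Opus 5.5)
The idea is to show that $\ov\J_X(\s)$ and $\ov\J_X(\ov\s)$ differ only in sheaves that are S-equivalent (polystable-equivalent) to sheaves in $\ov\J_X(\s)$, and that this holds compatibly enough to make the good moduli map a bijection on points, finite, and ultimately an isomorphism. First, the inclusion $\ov\J_X(\s)\subseteq\ov\J_X(\ov\s)$ is an open immersion. Its image is saturated with respect to the good moduli map in the sense that it is a union of fibers of $\ov\J_X(\ov\s)\to\ov J_X(\ov\s)$; this is the property we need. Once we have it, the induced map on good moduli spaces is an open immersion (by the standard fact for saturated open substacks). Since both spaces are proper over $k$, it is then also closed. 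As $\ov J_X(\ov\s)$ is connected (Fact~\ref{F:VcJ} and the connectedness results of \cite{FPV1} for V-compactified Jacobians) and the map is surjective, $\Upsilon$ is an isomorphism. Alternatively, I would verify directly that every closed point of $\ov\J_X(\ov\s)$, i.e. every $\ov\s$-polystable sheaf, lies in $\ov\J_X(\s)$, and that closed points of $\ov\J_X(\s)$ remain closed in $\ov\J_X(\ov\s)$.

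The key combinatorial input concerns the biconnected subcurves $Y$ where $\s$ and $\ov\s$ differ. By hypothesis these satisfy $|Y\cap Y^c|=1$, so $Y$ is attached to $Y^c$ at a single separating node $p$. By Remark~\ref{R:Sigma-pos}, since $\ov\s\le\s$, such a $Y$ must lie in $\D(\ov\s)\setminus\D(\s)$, and exactly one of $\s_Y,\s_{Y^c}$ exceeds the corresponding value of $\ov\s$ by one. Now take $I\in\ov\J_X(\ov\s)\setminus\ov\J_X(\s)$. Then there is such a $Y$ with $\chi(I_Y)=\ov\s_Y=\s_Y-1$, so $I_Y$ is extremal and $Y$ is destabilizing in the $\ov\s$-sense. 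Using \eqref{E:2restY} and \eqref{E:add-chi}, the next step is to show that $I$ is free at $p$: if $I$ were not free at $p$, we would get $\chi(I)=\chi(I_Y)+\chi(I_{Y^c})$, hence $\chi(I_{Y^c})=\ov\s_{Y^c}$ and both inequalities would be tight. Both cases then degenerate in the $\ov\s$-polystable direction to the split sheaf $I_Y(-p)\oplus I_{Y^c}$ or to $I_Y\oplus I_{Y^c}(-p)$; more precisely, to the sheaf $I'$ that is non-free at $p$ with $\chi(I'_Y)=\s_Y$ on the appropriate side. The point is that at a separating node, gluing data is trivial: $\Aut$ acts transitively on the gluing, so all sheaves with fixed restrictions to $Y,Y^c$ and fixed freeness at $p$ are isomorphic, and the locally free one specializes to the non-free one. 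The non-free sheaf $I_Y'\oplus I'_{Y^c}$ satisfies both $\s$-inequalities at $Y$ and $Y^c$. For the other subcurves $Z$, their values of $\chi$ are unchanged or controlled, and I would check that their inequalities remain valid using the triangle conditions of Definition~\ref{D:VStabX}.

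The main obstacle is the bookkeeping when several separating nodes are involved simultaneously. A sheaf may fail $\s$-semistability at several $Y$'s at once, and changing $I$ at one separating node alters $\chi(I_Z)$ for those $Z$ whose boundary contains that node. I would handle this by induction on the number of separating nodes where $I$ violates $\s$. Alternatively, I would use the decomposition of $X$ along its separating nodes: $\TF_X$ near a sheaf non-free at all separating nodes is a product over the "separating components", and V-stability restricts accordingly (cf. \cite{FPV2}), reducing to the case of a single separating node. In that case the statement is just the classical fact, which the paper mentions for vine curves with one node (\cite[Prop.~5.8]{FPV2}): the good moduli spaces for the two chambers adjacent to a separating-node wall coincide, both being the product of the compactified Jacobians of the two sides.
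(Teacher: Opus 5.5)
There is a genuine gap at the first step. As soon as $\s\neq\ov\s$, $\ov\J_X(\s)$ is \emph{not} saturated in $\ov\J_X(\ov\s)$, and $\ov\s$-polystable sheaves do not lie in $\ov\J_X(\s)$. Your own argument shows the claim cannot hold: if $\ov\J_X(\s)$ were saturated, the open immersion on good moduli spaces would be closed by properness, hence surjective, and saturation would then force $\ov\J_X(\s)=\ov\J_X(\ov\s)$.

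Concretely, take $X=Y\cup Y^c$ with a single separating node $p$, and set $\ov\s_Y=a$, $\ov\s_{Y^c}=\chi-a$, $\s_Y=a+1$, $\s_{Y^c}=\chi-a$. Then:
\begin{itemize}
\item $\ov\J_X(\s)$ consists exactly of the line bundles with $(\chi(L_Y),\chi(L_{Y^c}))=(a+1,\chi-a)$.
\item The closed points of $\ov\J_X(\ov\s)$ are the split sheaves $M_Y\oplus M_{Y^c}$ with characteristics $(a,\chi-a)$.
\item The fiber over such a point also contains the line bundles glued from $(M_Y(p),M_{Y^c})$ and from $(M_Y,M_{Y^c}(p))$, and only the first lies in $\ov\J_X(\s)$.
\end{itemize}

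The concrete error in your second paragraph is the claim that the non-free degeneration $I'$ satisfies the $\s$-inequalities at $Y$ and $Y^c$. If $I'$ is not free at the separating node $p$, then \eqref{E:2restY} and \eqref{E:add-chi} give $\chi(I'_Y)+\chi(I'_{Y^c})=\chi$. But $\s_Y+\s_{Y^c}=\chi+1$ because $Y\notin\D(\s)$, so \emph{no} sheaf non-free at $p$ is $\s$-semistable. In fact, the degeneration of a free $I$ with $\chi(I_Y)=\ov\s_Y$ is $I_Y\oplus I_{Y^c}(-p)$, which has $\chi(I'_Y)=\ov\s_Y$, not $\s_Y$. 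The ``next step'' of showing $I$ is free at $p$ is likewise impossible, since the split sheaves lie in $\ov\J_X(\ov\s)\setminus\ov\J_X(\s)$. Your ``alternative'' at the end of the first paragraph fails in the same example: closed points of $\ov\J_X(\ov\s)$ are not in $\ov\J_X(\s)$, and closed points of $\ov\J_X(\s)$ are not closed in $\ov\J_X(\ov\s)$.

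So $\Upsilon$ is an isomorphism for a different reason. The $\s$-polystable sheaves are not $\ov\s$-polystable; they degenerate further inside $\ov\J_X(\ov\s)$. Distinct ones have distinct degenerations because gluing at a separating node carries no moduli.

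The paper makes this precise by comparing both stacks with a third one:
\begin{itemize}
\item Choose $\wh\s\le\ov\s\le\s$ agreeing with $\s$ on all $Y$ with $|Y\cap Y^c|\ge 2$, and with every $Y$ satisfying $|Y\cap Y^c|=1$ in $\D(\wh\s)$.
\item Cut $X$ at its separating nodes into pieces $Z_i$.
\item Use the closed embedding $\prod_i\ov\J_{Z_i}(\wh\s(Z_i))\hookrightarrow\ov\J_X(\wh\s)$, $(I_i)\mapsto\bigoplus_i I_i$, together with \cite[Prop.~4.7]{FPV2}: every sheaf of $\ov\J_X(\wh\s)$ isotrivially specializes to a unique sheaf of the product.
\item Deduce that both $\ov J_X(\s)\to\ov J_X(\wh\s)$ and $\ov J_X(\ov\s)\to\ov J_X(\wh\s)$ are isomorphisms onto $\prod_i\ov J_{Z_i}(\wh\s(Z_i))$; then $\Upsilon$ is the first composed with the inverse of the second.
\end{itemize}
Your last paragraph points toward this product description, but as written it only asserts the conclusion in the one-node case. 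It gives no argument for reducing from several separating nodes, and it does not replace the saturation step on which your proof actually rests.
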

For nodal curves of compact type, the above result could be easily deduced from \cite[Prop. 5.8]{FPV2}. The proof in the general case is an easy adaptation of the proof of loc. cit. 
\begin{proof}
Since the biconnected subcurves $Y\subset X$ such that $|Y\cap Y^c|=1$ do not appear in Condition \eqref{D:VStabX2} of Definition \ref{D:VStabX}, we can pick a V-stability condition $\wh \s\in \VStab^\chi(X)$ such that 
\begin{equation}\label{E:whs} 
   \begin{sis}
    &  \wh \s\leq \ov \s\leq \s,\\   
    & \wh \s_Y=\ov \s_Y=\s_Y \text{ for any } Y\in \BCon(X) \text{ such that } |Y\cap Y^c|\geq 2, \\
    & \D(\wh \s)\supseteq \{Y\in \BCon(X): \: |Y\cap Y^c|=1\}.
   \end{sis} 
\end{equation}
Then we have inclusions $\ov \J_X(\wh \s)\subseteq \ov \J_X(\s)\subseteq \ov \J_X(\ov \s)$, that induce the following maps by passing to the good moduli spaces
$$\Upsilon_1: \ov \J_X(\s)\xrightarrow{\Upsilon} \ov \J_X(\ov \s)\xrightarrow{\Upsilon_2} \ov \J_X(\wh \s).$$
In order to show that $\Upsilon$ is an isomorphism, it is enough to show that $\Upsilon_1$ and $\Upsilon_2$ are isomorphisms. Let us show that $\Upsilon_1$ is an isomorphism, the proof for $\Upsilon_2$ being the same. 

Denote by $\{Z_i\}$ the closure of the connected components of $X-X_{\rm sep}$, where $X_{\rm sep}$ is the set of separating nodes of $X$. Then we get a decomposition into connected subcurves  
$$
X=\bigcup_i Z_i \text{ with } Z_i\wedge Z_j=\emptyset \text{ and } Z_i\cap Z_j\subset X_{\rm sep} \text{ for any } i\neq j.
$$
Because of the third property in \eqref{E:whs}, we have that $Z_i \in \wh \D(\wh \s)$. Consider the induced V-stabilities $\wh \s(Z_i)$ as in \cite[Lemma-Definition 4.8]{FPV1}. By \cite[Prop. 5.8]{FPV1}, we have a closed embedding 
$$
\begin{aligned}
\Xi: \prod_i \ov \J_{Z_i}(\wh s(Z_i))& \hookrightarrow \ov \J_X(\wh \s)\\
(I_i) & \mapsto \bigoplus_i I_i.
\end{aligned}
$$
%The associated map of good moduli spaces
%$$
%!\Xi!: \prod_i \ov J_{Z_i}(\wh s(Z_i))& \hookrightarrow \ov J_X(\wh \s)
%$$
%is again a closed embedding by \cite[]{Alp}
Since the subcurves $Z_i$ meet only at separating nodes of $X$, by \cite[Prop. 4.7]{FPV2} we deduce that any sheaf of $\ov \J_X(\wh \s)$ (and hence of $\ov \J_X(\s)$) isotrivially specializes to a unique sheaf of $\prod_i \ov \J_{Z_i}(\wh s(Z_i))$. This implies, using the properties \cite[Thm. 4.16]{Alp} of good moduli spaces, that the map $\Xi$ induces, by passing to the good moduli spaces, two isomorphisms
$$
\begin{sis}
& |\Xi|:\prod_i \ov J_{Z_i}(\wh \s(Z_i)) \xrightarrow{\cong} \ov J_X(\wh \s),\\
& \ov J_X(\s) \xrightarrow{\Upsilon_1} \ov J_X(\wh \s) \xrightarrow[\cong]{|\Xi|^{-1}} \prod_i \ov J_{Z_i}(\wh \s(Z_i)).
\end{sis}
$$
We conclude that $\Upsilon_1$ is an isomorphism, as required.
\end{proof}

\begin{lemma}\label{L:vine-cla}
Let $X=C_1\cup C_2$ be a vine curve with $e\geq 2$ nodes.   Let $\s_1,\s_2\in \VStab^{\chi}(X)$ and consider the associated relative compactified Jacobian stacks/spaces
$$F_{\fX/R_X}(\s_i):\ov \J_{\fX/R_X}(\ov \s_i)\xrightarrow{\Xi_{\fX/R_X}(\ov \s_i)}  \ov J_{\fX/R_X}(\ov \s_i)\xrightarrow{f_{\fX/R_X}(\ov \s_i)} \Spec R_X$$ 
over the effective semuniversal deformation space $\Phi:\fX\to \Spec R_X$ of $X$. Denote by $U_X$ the open subset of $\Spec R_X$ parametrizing smooth fibers of $\fX\to \Spec R_X$ and denote by 
$$\ov{\id}_{U_X}:\ov J_{\fX/R_X}(\ov \s_1)_{|U_X}\xrightarrow{\cong} \ov J_{\fX/R_X}(\ov \s_2)_{|U_X}$$
the isomorphism induced by the identification 
$\id_{U_X}:\ov \J_{\fX/R_X}(\ov \s_1)_{|U_X}=(\TF_{\fX/R_X}^\chi)_{|U_X}= \ov \J_{\fX/R_X}(\ov \s_2)_{|U_X}$.

If there exists a $\Spec R_X$-isomorphism 
$$
\Psi: \ov J_{\fX/R_X}(\ov \s_1)\xrightarrow{\cong} \ov J_{\fX/R_X}(\ov \s_2)
$$
such that $\Psi_{|U_X}=\ov{\id}_{|U_X}$, then $\s_1=\s_2$.
\end{lemma}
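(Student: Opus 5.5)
The plan is to argue by contradiction via limits of line bundles along one-parameter smoothings. Since $X=C_1\cup C_2$ is a vine curve, $\BCon(X)=\{C_1,C_2\}$, so each $\s_i$ is determined by the pair $((\s_i)_{C_1},(\s_i)_{C_2})$, subject to $(\s_i)_{C_1}+(\s_i)_{C_2}-\chi\in\{0,1\}$. Assume $\s_1\neq\s_2$; up to swapping the indices $1,2$ or the components $C_1,C_2$, we may assume $(\s_1)_{C_1}>(\s_2)_{C_1}$. Then there is a line bundle $L$ on $X$ with $\chi(L)=\chi$ which is $\s_1$-semistable but not $\s_2$-semistable. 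We choose $L$ general of a multidegree $(d_1,d_2)$ lying strictly inside the $\s_1$-semistable range, so that $L$ is $\s_1$-stable and its image in $\ov J_X(\s_1)$ is a point with trivial isotrivial-degeneration data.

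Next we compute where $\Psi$ must send $[L]$. Pick a one-parameter smoothing $\eta:\Delta=\Spec k[[t]]\to\Spec R_X$ whose generic point lies in $U_X$ and whose total space $\X_\Delta$ is regular; equivalently, $\eta$ is transverse to all $e$ nodal divisors, with first-order smoothing parameters $\underline{\lambda}=(\lambda_1,\ldots,\lambda_e)\in(k^*)^e$. Extend $L$ to a line bundle $\mathcal L$ on $\X_\Delta$, which is possible because $\X_\Delta$ is regular and $\Pic$ is smooth. Since $\Psi_{|U_X}=\ov{\id}$ and $\ov J_{\fX/R_X}(\s_2)$ is separated over $R_X$, the point $\Psi([L])$ must be the $\s_2$-limit of $\mathcal L_\eta$. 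As $\X_\Delta$ is regular, this limit is computed by twisting: it is the polystable representative of $\mathcal L(kC_1)_{|X}$ for the unique $k\neq 0$ that makes the multidegree $\s_2$-semistable (here we use that $\s_2$-semistable multidegrees are $\s_1$-ones shifted by a multiple of $\pm(e,-e)$, as on a vine curve). Hence
$$
\Psi([L])=[\,L\otimes \O_{\X_\Delta}(kC_1)_{|X}\,]\quad\text{(or its polystable replacement)}.
$$

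The key point, and the place where the hypothesis $e\geq 2$ enters, is that the line bundle $\O_{\X_\Delta}(C_1)_{|X}$ depends on the smoothing direction. Its restrictions to $C_1$ and $C_2$ are fixed, namely $\O_{C_1}(-C_1\cap C_2)$ and $\O_{C_2}(C_1\cap C_2)$. However, its gluing data at the $e$ nodes is given, up to an overall scalar, by $\underline{\lambda}\in(k^*)^e/k^*$. This torus has positive dimension exactly when $e\geq 2$. Thus, varying $\underline{\lambda}$ over a one-parameter family of transverse smoothings, the sheaves $L\otimes\O(kC_1)_{|X}$ for fixed $L$ sweep out a positive-dimensional family of pairwise non-isomorphic line bundles, and for $k\neq 0$ they stay inside the stable locus after a small generality assumption on $L$. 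But $\Psi$ restricted to the central fibre is a single morphism $\ov J_X(\s_1)\to\ov J_X(\s_2)$, so $\Psi([L])$ cannot depend on $\underline{\lambda}$. This contradiction forces $k=0$ for every such $L$, which means that the $\s_1$- and $\s_2$-semistable multidegrees coincide, and therefore $\s_1=\s_2$.

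The main obstacle is the limit computation in the non-generic situations. When $\s_1$ or $\s_2$ is degenerate, the $\s_2$-limit is only a polystable sheaf that is not a line bundle, for instance one split along $C_1$ and $C_2$, and then the dependence on $\underline{\lambda}$ might be killed by the good-moduli identification. To handle this, I would first choose $L$ so that the twisted multidegree lands in the strictly stable interior of $\s_2$-stability; this can be done whenever $(\s_1)_{C_1}\neq(\s_2)_{C_1}$, using that the semistable range has width $e-1$ or $e$, and $e\geq 2$. If that is impossible, the two conditions differ only by degeneracy, with the same value on $C_1$ but different values on $C_2$. In that case I would instead use the count of irreducible components of the central fibres ($e$ versus $e+1$, by Oda--Seshadri, as invoked in Corollary~\ref{C:iso-UniSt}), which $\Psi$ must preserve. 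Finally, one needs the routine justification, using the separatedness of good moduli spaces, that $\Psi([L])$ equals the separated limit along $\eta$.
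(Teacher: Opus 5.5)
Your key mechanism is correct and differs from the paper's route. Along a smoothing with regular total space, $\O_{\X_\Delta}(C_1)_{|X}$ restricts to $\O_{C_1}(-C_1\cap C_2)$ and $\O_{C_2}(C_1\cap C_2)$, but its gluing data is the class of $\underline{\lambda}$ in $(k^*)^e/k^*$. So if some $\s_1$-stable line bundle has an $\s_2$-\emph{stable} limit $L\otimes\O_{\X_\Delta}(kC_1)_{|X}$ with $k\neq 0$, varying $\underline{\lambda}$ really does contradict the fact that $\Psi_{|X}$ is a single morphism.

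\textbf{The gap.} It is in the claim that such an $L$ can always be found when $(\s_1)_{C_1}\neq(\s_2)_{C_1}$, and in your description of the leftover cases. Take $e=2$, $\chi=0$, and $\s_1,\s_2$ with values $(1,-1)$ and $(0,0)$ on $(C_1,C_2)$; both are degenerate.
\begin{itemize}
\item The only $\s_1$-stable value of $\chi(L_{C_1})$ is $2$, which is an endpoint of the $\s_2$-semistable range $[0,2]$.
\item The only $\s_2$-stable value is $1$, which is an endpoint of the $\s_1$-semistable range $[1,3]$.
\end{itemize}
Hence every limit your construction produces, in either direction, is strictly semistable. Its image in the good moduli space is the polystable sheaf built from the restrictions to $C_1$ and $C_2$ alone, and these do not depend on $\underline{\lambda}$, so no contradiction arises. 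More generally, every limit is either untwisted ($k=0$) or strictly semistable in two situations: two degenerate conditions whose values on $C_1$ differ by one (any $e\geq 2$), and $e=2$ with an odd difference.

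Your fallback does not reach these cases. The two conditions differ on both components and are both degenerate, so the central fibres have the same number of irreducible components. Also, $\Psi$ only sees the good moduli spaces, whose central fibres have $e$ resp.\ $e-1$ components; the count $e$ versus $e+1$ is for the stacks. Relatedly, ``$k=0$ for every such $L$'' only shows that $\s_1$-stable multidegrees are $\s_2$-semistable and vice versa, and that does not force $\s_1=\s_2$.

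\textbf{What is missing, and how to close it.} You need something that prevents $\Psi$ from sending an $\s_1$-stable point to a strictly $\s_2$-semistable one. There are two ways to get it:
\begin{itemize}
\item The paper uses Oda--Seshadri: the non-stable locus of $\ov J_X(\s_i)$ is intrinsic (the intersection of all irreducible components if $e\geq 3$, the singular locus if $e=2$). So $\Psi$ restricts to an isomorphism of stable loci.
\item Alternatively, use injectivity of $\Psi_{|X}$. The $\s_1$-stable line bundles with fixed restrictions to $C_1$ and $C_2$ form a positive-dimensional family of distinct points, since $e\geq 2$. By your limit computation, all of them would be sent to a single polystable point.
\end{itemize}
With either addition, your twisting argument shows that the $\s_1$-stable and $\s_2$-stable ranges of $\chi(L_{C_1})$ coincide, and these ranges determine the stability condition.

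\textbf{Comparison with the paper.} The paper uses no test curves. It shows $\Psi=\ov{\id}$ over the locus $V_X$ of irreducible fibres, restricts to stable loci, and lifts to the stacks using that the $\Gm$-gerbe is trivial. It then uses that $\Spec R_X-V_X$ has codimension $e\geq 2$, together with the Kass--Pagani extension result, to conclude that the universal sheaves agree. Hence the stable substacks of $\TF^\chi_{\fX/R_X}$ coincide. There, $e\geq 2$ enters through codimension; in your argument it enters through the positive-dimensional torus of smoothing directions.
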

\begin{proof}
Denote by $\Delta_X$ the closed subset of $\Spec R_X$ parametrizing locally trivial deformations of $X$. Observe that $\Delta_X$ has codimension $e\geq 2$ in $\Spec R_X$ and its complement $V_X:=\Spec R_X- \Delta_X$ is the open subset of $\Spec R_X$ parametrizing irreducible fibers of  $\fX\to \Spec R_X$. Therefore, we also have an isomorphism 
$$\ov{\id}_{V_X}:\ov J_{\fX/R_X}(\ov \s_1)_{|V_X}\xrightarrow{\cong} \ov J_{\fX/R_X}(\ov \s_2)_{|V_X}$$
induced by the identification 
$\id_{V_X}:\ov \J_{\fX/R_X}(\ov \s_1)_{|V_X}=(\TF_{\fX/R_X}^\chi)_{|V_X}= \ov \J_{\fX/R_X}(\ov \s_2)_{|V_X}$.
Since $\Psi_{|U_X}=\id_{U_X}$ by assumption and $U_X$ is dense in $V_X$, we must have that $\Psi_{|V_X}=\ov{\id}_{V_X}$ using that $\ov J_{\fX/R_X}(\ov \s_i)\to \Spec R_X$ (for $i=1,2$) is proper with geometrically reduced fibers by \cite[Thm. 8.7]{FPV1}.

For $i=1,2$, denote by $\ov \J_{\fX/R_X}(\ov \s_i)^{\st}$ the open subset of $\ov \J_{\fX/R_X}(\ov \s_i)$ parametrizing families of $\ov \s_i$-stable sheaves, which clearly contains $\ov \J_{\fX/R_X}(\ov \s_i)_{|V_X}$.  The restriction of $\Xi_{\fX/R_X}(\ov \s_i)$ to $\ov \J_{\fX/R_X}(\ov \s_i)^{\st}$ is the $\Gm$-rigidification onto its image 
$$
\ov J_{\fX/R_X}(\ov \s_i)^{\st}:=\Xi_{\fX/R_X}(\ov s_i)(\ov \J_{\fX/R_X}(\ov \s_i)^{\st})\subseteq \ov J_{\fX/R_X}(\ov \s_i).
$$
Moreover, since $\fX\to \Spec R_X$ admits sections, the $\Gm$-gerbe $\ov \J_{\fX/R_X}(\ov \s_i)^{\st}\to \ov J_{\fX/R_X}(\ov \s_i)^{\st}$ is trivial, i.e. we have a (non-canonical) isomorphism 
\begin{equation}\label{E:st-gerbe}
    \ov \J_{\fX/R_X}(\ov \s_i)^{\st}\cong \ov J_{\fX/R_X}(\ov \s_i)^{\st}\times B \Gm.
\end{equation}

From the explicit structure of compactified Jacobian spaces of vine curves (see \cite[Sec. 8 and p. 84-85]{Oda1979CompactificationsOT}), it follows that the complement of the stable locus $\ov J_X(\s_i)-\ov J_X(\s_i)^{\st}$ (for $i=1,2$) is non-empty if and only if the intersection of all irreducible components of $\ov J_X(\s_i)$ is irreducible for $e\geq 3$ (resp. if and only if the singular locus of $\ov J_X(\s_i)$ is irreducible for $e=2$), in which case this irreducible locus is exactly $\ov J_X(\s_i)-\ov J_X(\s_i)^{\st}$. Therefore, the isomorphism $\Phi$ restricts to an isomorphism 
\begin{equation}\label{E:Psi-st}
  \Psi^{\st}:   \ov J_{\fX/R_X}(\ov \s_1)^{\st}\xrightarrow{\cong} \ov J_{\fX/R_X}(\ov \s_2)^{\st}.
  \end{equation}
Using the isomorphism \eqref{E:st-gerbe} and the assumption that $\Psi^{\st}_{|V_X}=\Psi_{|V_X}=\id_{V_X}$, we can lift $\Psi^{\st}$ to an isomorphism 
\begin{equation}\label{E:Psi-wt}
     \wt \Psi:   \ov \J_{\fX/R_X}(\ov \s_1)^{\st}\xrightarrow{\cong} \ov \J_{\fX/R_X}(\ov \s_2)^{\st} \text{ such that } \wt \Psi_{|V_X}=\id_{V_X}. 
\end{equation}
Denote by $\I_i$ the universal sheaf on $\ov \J_{\fX/R_X}(\ov \s_i)^{\st}\times_{\Spec R_X} \fX$ for $i=1,2$, and consider the two sheaves $\I_1$ and $\wt \I:=(\wt \Psi\times \id_{\fX})^*(\I_2)$ on $\ov \J_{\fX/R_X}(\ov \s_1)^{\st}\times_{\Spec R_X} \fX$.  Since $\wt \Psi_{|V_X}=\id_{V_X}$, the two sheaves $\I_1$ and $(\wt \Psi\times \id_{\fX})^*(\I_2)$ coincide on the open subset $\ov (\J_{\fX/R_X}(\ov \s_1)^{\st}\times_{\Spec R_X} \fX)_{|V_X}$ whose complement has codimension $e\geq 2$. Hence,  using that $\ov \J_{\fX/R_X}(\ov \s_1)^{\st}\times_{\Spec R_X} \fX$ is Cohen-Macaualy and regular in codimension one (which is proved as in \cite[Lemma 7.3]{Kass_2019}), we can apply \cite[Cor. 7.2]{Kass_2019} in order to conclude that $\I_1= (\wt \Psi\times \id_{\fX})^*(\I_2)$.
This implies that $\wt \Psi=\id$, which then gives  
$$\ov \J_{\fX/R_X}(\ov \s_1)^{\st}=\ov \J_{\fX/R_X}(\ov \s_2)^{\st}\subset \TF_{\fX/R_X}^\chi. $$
 Taking the central fiber, we get that 
$$\ov \J_{X}(\s_1)^{\st}=\ov \J_{X}(\s_2)^{\st}\subset \TF_X^{\chi},$$
which then implies that $\s_1=\s_2$, by using that a line bundle $L$ on $X$ belongs to $\ov \J_{X}(\s_i)^{\st}$ if and only if (for $1\leq i,j\leq 2$)
$$
\chi(L_{C_j})\in 
\begin{sis}
[(\s_i)_{C_j},(\s_i)_{C_j}+e-1] & \text{ if } \s_i \text{ is general,}\\
[(\s_i)_{C_j}+1,(\s_i)_{C_j}+e-1]      & \text{ if } \s_i \text{ is not general.}
\end{sis}
$$  
\end{proof}

\begin{remark}\label{R:Kod-max}
Arguing as in \cite[Lemma 6.18]{Kass_2019} (which generalizes \cite[Thm. 7.13]{BFV}), Theorem~\ref{T:iso-UniSp} can be improved if the coarse moduli space $\ov M_{g,n}$ of $\Mbargn$ is of general type (i.e. if $g\geq 22$): under this assumption, the conditions of the Theorem become equivalent to the fact that $\ov J_{g,n}(\sigma_1)$ and $\ov J_{g,n}(\sigma_2)$ are abstractly (and not only over $\Mbargn$) isomorphic . 
\end{remark}

\section{Universal families}\label{Sec:univ-fam}

The aim of this section is to describe the universal family of a compactified Jacobian stack over $\Mbargn$ in terms of a compactified Jacobian stack over $\ov\M_{g,n+1}$. 

\subsection{The universal family over $\TF_{g,n}$}

We first begin by describing the relationship between $\TF_{g,n+1}$ and the universal family over $\TF_{g,n}$. 

Recall that there is a canonical isomorphism between the universal family $\pi:\Cbargn\to \Mbargn$ and $\Mbargnbis$:
\begin{equation}\label{E:univ-Mg}
\begin{tikzcd}
  \Mbargnbis \arrow["\Phi", rd]  \arrow[rr, "\Upsilon", "\cong"']& & \Cbargn \arrow[ld, "\pi"']   \\
     & \Mbargn  \arrow[ur, bend right=30, "\sigma_i"']\arrow[ul, bend left=30, "\sigma_i'"]&  
\end{tikzcd}
\end{equation}
where the isomorphism $\Upsilon$ and the morphism $\Phi$ are defined on geometric points by
$$
\Upsilon(C):=(C^{\st}, \st(p_{n+1})) \quad \text{ and } \quad \Phi(C)=C^{\st},$$
with $\st_C=\st: C=(C,p_1,\ldots,p_{n+1})\to C^{\st}=(C,p_1,\ldots, p_n)^{\st}$ being the \emph{stabilization} morphism that forgets the last marked point $p_{n+1}$ and then it stabilizes the resulting $n$-pointed curve. Concretely, either $\st$ is an isomorphism or it is the contraction of a smooth rational curve $E$ (called an \emph{exceptional component}), which either is attached to the complementary curve $E^c$ at one node and it contains exactly two marked points $\{p_{j}, p_{n+1}\}$ (for some $1\leq j \leq n$), or it is attached to the complementary subcurve $E^c$ at two nodes and it contains only the marked point $p_{n+1}$. The universal curve $\Cbargn/\Mbargn$ is endowed with $n$ canonical sections defined on geometric points by  (for any $1\leq i \leq n$)
$$
\sigma_i(C)=(C,p_i).
$$
Via the isomorphism $\Upsilon^{-1}$, the section $\sigma_i$ is sent to the section $\sigma_i':=\Upsilon^{-1}\circ \sigma_i$ given on geometric points by 
$$
\sigma_i'(C)=B_{p_i}(C),
$$
where $B_{p_i}(C)$ is the bubbling of $C$ at $p_i$, i.e. the stable $n+1$-pointed curve obtained by gluing a smooth rational curve $E$ with the $n$-pointed curve $C$ at the old marked point $p_i$, and then putting the new $i$-th and the $(n+1)$-th marked point on $E$. 

The goal of this subsection is to lift the diagram in \eqref{E:univ-Mg} to the stack $\TF_{g,n}^\chi$. Observe that the universal family over $\TF_{g,n}^\chi$, together with its $n$ canonical sections $\wh{\sigma_i}$
\begin{equation}
\begin{tikzcd}
  \Cbargn\times_{\Mbargn} \TF_{g,n}^\chi \arrow["\wh{\pi}", r]  & \TF_{g,n}^\chi \arrow[l, bend left=30, "\wh{\sigma_i}"]
\end{tikzcd}
\end{equation}
is given by pulling back the universal family $\Cbargn/\Mbargn$, together with its $n$ canonical sections, along the forgetful morphism $\TF_{g,n}\to \Mbargn$. We will denote by $\I_{g,n}$ the universal sheaf on $\Cbargn\times_{\Mbargn} \TF_{g,n}^\chi$. 

We now want to lift the morphisms $\Phi$ and $\Upsilon$, together with the sections $\sigma_i'$, to $\TF_{g,n}^\chi$. It turns out that we cannot lift $\Phi$ and $\Upsilon$ to the entire stack $\TF_{g,n+1}^\chi$ but only to an open substack, as we now show.

\begin{theorem}\label{T:univ-TF}
\noindent 
\begin{enumerate}
    \item \label{T:univ-TF1} There exist open substacks $\TFp_{g,n+1}^\chi, \TFm_{g,n+1}^\chi\subseteq \TFo_{g,n+1}^\chi$ of $\TF_{g,n+1}^\chi$ whose geometric points are given by 
$$
    \begin{aligned}
     &\TFo_{g,n+1}^\chi(k):=\{(C,I)\in \TF_{g,n+1}^\chi(k)\: : \chi(I_E)\geq 0 \text{ and } \chi(I_{E^c})\geq \chi \text{ for any exceptional comp. } E \text{ of } C\},\\
    & \TFp_{g,n+1}^\chi(k):=\{(C,I)\in \TF_{g,n+1}^\chi(k)\: : \chi(I_E)\geq 1 \text{ and } \chi(I_{E^c})\geq \chi \text{ for any exceptional comp. } E \text{ of } C\},\\
    & \TFm_{g,n+1}^\chi(k):=\{(C,I)\in \TF_{g,n+1}^\chi(k)\: : \chi(I_E)\geq 0 \text{ and } \chi(I_{E^c})\geq \chi+1 \text{ for any exceptional comp. } E \text{ of } C\}.\\
    \end{aligned}
    $$
    \item \label{T:univ-TF2} There is a commutative diagram 
\begin{equation}\label{E:univ-TF}
\begin{tikzcd}
  \TFo_{g,n+1}^\chi \arrow["\wh{\Phi}", rd]  \arrow[rr, "\wh{\Upsilon}"]& & \Cbargn\times_{\Mbargn} \TF_{g,n}^\chi \arrow[ld, "\wh{\pi}"']   \\
     & \TF_{g,n}^\chi  \arrow[ur, bend right=30, "\wh{\sigma_i}"']\arrow[ul, bend left=30, "\wh{\sigma_i}'"]& 
\end{tikzcd}
\end{equation}
lying over the diagram in \eqref{E:univ-Mg}, whose morphisms are defined on geometric points by
$$
\begin{sis}
& \wh{\Phi}(C,I)=(C^{\st},\st_*(I)),\\   
& \wh{\Upsilon}(C,I)=(C^{\st},\st(p_{n+1}),\st_*(I)),\\
& \wh{\sigma_i}'(C,I)=(B_{p_i}(C),\st^*(I)).
\end{sis}
$$
Moreover, the morphisms $\wh{\sigma_i}'$ (for $1\leq i \leq n$) are sections of $\wh{\Phi}$. 
\item \label{T:univ-TF2bis} The morphism $\wh{\Upsilon}$ is a good moduli space morphism. In particular, $\wh{\Upsilon}$ and $\wh{\Phi}$ are universally closed, S-complete and $\Theta$-complete.
\item \label{T:univ-TF3} The restrictions $\wh{\Upsilon}^+=\wh{\Upsilon}_{|\TFp_{g,n+1}^\chi}$ and $\wh{\Upsilon}^-=\wh{\Upsilon}_{|\TFm_{g,n+1}^\chi}$ are representable and proper.
\item \label{T:univ-TF4} 
The restriction of $\wh{\Upsilon}$ to the open loci $\TFp_{g,n+1}^\chi$ and $\TFm_{g,n+1}^\chi$ induces the following isomorphisms over $\Cbargn\times_{\Mbargn} \TF_{g,n}^\chi$ 
$$
\TFp_{g,n+1}^\chi\cong \PP(\I_{g,n})   \text{ and } \TFm_{g,n+1}^\chi\cong \PP(\I_{g,n}^\vee),
$$
under which the tautological line bundle $\O(1)$ becomes isomorphic to, respectively,  
$$
\O_{\PP(\I_{g,n})}(1)=\wh{\sigma}_{n+1}^*(\I_{g,n+1})_{|\TFp_{g,n+1}^\chi} \text{ and } \O_{\PP(\I_{g,n}^\vee)}(1)=\wh{\sigma}_{n+1}^*(\I_{g,n+1}^\vee)_{|\TFm_{g,n+1}^\chi}
$$
%where $\I_{g,n}$ is the universal sheaf on $\Cbargn\times_{\Mbargn} \TF_{g,n}^\chi$.
\end{enumerate}    
\end{theorem}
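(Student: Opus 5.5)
Throughout, I would work fiberwise over $\Mbargnbis$, using the description of $\st:C\to C^{\st}$ given just before the statement. The exceptional component $E$ is either a $2$-pointed tail (one node, containing $p_j$ and $p_{n+1}$) or a $1$-pointed bridge (two nodes, containing only $p_{n+1}$).

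For Part \eqref{T:univ-TF1}, openness holds because the conditions $\chi(I_E)\geq a$ and $\chi(I_{E^c})\geq b$ are semicontinuous lower bounds of the same kind as in Fact \ref{F:VcJ}. Indeed, under generization an exceptional component either persists or is smoothed away, and the inequality on a specialization implies the inequality on the generization by upper semicontinuity of $\chi$ of the torsion-free quotient. This is the argument of \cite[Lemma 8.10]{FPV1}, applied to the two subcurves $E,E^c$.

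For Part \eqref{T:univ-TF2}, I would first compute $\st_*(I)$ locally on a single exceptional component.
\begin{itemize}
\item If $E$ is a tail, $\st$ contracts $E$ to a smooth point, so $I_{|E}$ is a line bundle $\O(a)$ and $\st_*I$ is torsion-free of rank one. Its Euler characteristic is preserved exactly when $R^1\st_*I=0$, i.e. when $a\geq -1$, equivalently $\chi(I_E)\geq 0$. The condition on $E^c$ is automatic in this case.
\item If $E$ is a bridge with nodes $q_1,q_2$, then $\st_*I$ is a rank-$1$ torsion-free sheaf on $C^{\st}$ that is free or non-free at the image node. Here I would use \eqref{E:add-chi} and \eqref{E:2restY} to check that $R^1\st_*I=0$ and that the result has characteristic $\chi$ exactly under $\chi(I_E)\geq0$ and $\chi(I_{E^c})\geq \chi$.
\end{itemize}
Cohomology and base change then shows that $\st_*$ commutes with base change on $\TFo$ and preserves flatness. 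This defines $\wh\Phi$, and $\wh\Upsilon=(\wh\Phi,\st(p_{n+1}))$. For the section, $\st^*I$ on $B_{p_i}(C)$ restricts to $\O_E$ on the bubble, so $\chi(I_E)=1$ and $\chi(I_{E^c})=\chi$; hence $\st^*I$ lies in $\TFo$, and $\st_*\st^*I=I$ shows that $\wh{\sigma_i}'$ is a section of $\wh\Phi$.

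For Parts \eqref{T:univ-TF2bis}--\eqref{T:univ-TF4}, I would argue étale-locally on $\Cbargn\times\TF_{g,n}^\chi$.
\begin{itemize}
\item Away from the locus where $\st(p_{n+1})$ is a node or a marked point, $\wh\Upsilon$ is an isomorphism.
\item Over a marked point $p_j$, the fiber consists of extensions on a tail. The inequalities force $\chi(I_E)\in\{0,1\}$ with $E$ always a tail, so the fiber is a point and there is nothing to resolve.
\item The essential case is over a node $q$ of $C^{\st}$ where $I$ is either free or not. Here $\TFo$ parametrizes sheaves on the curve with a bridge inserted at $q$. A local model is the $\Gm$-quotient $[\AA^4/\Gm]$ with weights $(1,1,-1,-1)$, whose good moduli space is the cone $\{xy=zw\}$, i.e. the local model of $\Cbargn\times\TF_{g,n}$ at $(q,I)$ with $I$ non-free at $q$.
\end{itemize}
Good moduli morphisms are étale-local on the target (\cite{Alp}), so this local model gives Part \eqref{T:univ-TF2bis}. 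The universal closedness, S-completeness and $\Theta$-completeness of $\wh\Phi$ then follow by composing with the proper representable morphism $\wh\pi$.

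In this local model, $\TFp$ and $\TFm$ are the two GIT semistable loci for the characters $\pm1$, with $\chi(I_E)=1$ or $\chi(I_{E^c})=\chi+1$ respectively. This shows they are representable and proper over the base, giving Part \eqref{T:univ-TF3}, and that they are the two small resolutions of the cone.

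For Part \eqref{T:univ-TF4}, I would construct the map $\TFp\to\PP(\I_{g,n})$ directly. Writing $I$ for a point of $\TFp$, the restriction $\wh\sigma_{n+1}^*(\I_{g,n+1})$ is the fiber of $I$ at $p_{n+1}$, and the natural map $\st^*\st_*I\to I$ evaluated at $p_{n+1}$ gives a surjection from the fiber of $\st_*I$ at $\st(p_{n+1})$ onto it. This surjection is a point of $\PP(\I_{g,n})$ with $\O(1)$ equal to $\wh\sigma_{n+1}^*(\I_{g,n+1})$. I would then check that this map is bijective on points and an isomorphism on tangent spaces using the local model above, where both sides are the blowup of the cone along a Weil divisor. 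The case of $\TFm$ follows by applying the duality $\iota$ of Proposition \ref{P:PR-TF} together with Lemma \ref{L:duality}, which exchanges the two inequalities. I expect the main obstacle to be the bridge-node analysis: establishing base change for $\st_*$ and identifying the local quotient model rigorously.
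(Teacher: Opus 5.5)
Your skeleton matches the paper's: $\wh\Phi$ is defined by $\St_*$ using fiberwise vanishing of $R^1\st_*$ on $\TFo^\chi_{g,n+1}$, the local structure comes from quotient models $[V\oplus V^*/\Gm]$ obtained via \cite[Thm.~4.19]{AHR} and deformation theory, and $\TFp^\chi_{g,n+1}$, $\TFm^\chi_{g,n+1}$ are the two GIT chambers. Two steps take a different route from the paper.

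For \eqref{T:univ-TF1}, the paper rewrites $\chi(I_E)\geq 0$ as $H^1(C,I\otimes\wh{\A}_C^m)=0$ for $m\gg0$, with $\wh{\A}$ pulled back from $C^{\st}$ and hence trivial on $E$. It gets the other three inequalities by replacing $I$ with $I^{[2]}$, $I^*$, $(I^*)^{[2]}$. Semicontinuity then gives openness on all of $\TF^\chi_{g,n+1}$, even though $E$ only exists over a boundary divisor. Your generization argument is correct as far as it goes, but openness also needs constructibility of these loci, which the cohomological reformulation avoids.

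For \eqref{T:univ-TF4}, the paper uses the universal property of $\PP(\I_{g,n})$ from \cite{estevespacini}. It checks quasi-stability of $\wh\Phi^+$, degree one on exceptional curves, and $\wh\Upsilon^+_*\wh\sigma_{n+1}^*(\I_{g,n+1})=\I_{g,n}$, the last over line bundles and extended by \cite[Cor.~7.2]{Kass_2019}. Your evaluation map $\st^*\st_*I\to I$ at $p_{n+1}$ is a legitimate alternative: it is surjective precisely on $\TFp^\chi_{g,n+1}$. Since $\PP(\I_{g,n})$ is regular by your local model, a representable, proper, bijective, birational morphism onto it is an isomorphism by Zariski's main theorem, so you can skip the tangent-space check.

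However, two of your local claims are false.

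\emph{Tails.} The condition on $E^c$ is not automatic, and $\st_*I$ need not be torsion-free. Its torsion is $\st_*(\leftindex_{E}{I})=\O_{\st(E)}^{\oplus h^0(E,\leftindex_{E}{I})}$. For example, take $I$ free at the node with $I_E\cong\O_E(1)$, so $\chi(I_E)=2$ and $\chi(I_{E^c})=\chi-1$; then $\leftindex_{E}{I}\cong\O_E$ and $\st_*I$ has torsion. Since $\chi(\leftindex_{E}{I})=\chi-\chi(I_{E^c})$, torsion-freeness is exactly $\chi(I_{E^c})\geq\chi$, for tails and bridges alike. This is why Lemma \ref{L:push-st} needs no case distinction, and why $\TFo^\chi_{g,n+1}$ is the largest domain of $\wh\Phi$.

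\emph{Marked points.} Over a marked point $(D,p_j,J)$, $\wh\Upsilon$ is not an isomorphism; your own observation $\chi(I_E)\in\{0,1\}$ already rules out a one-point fiber. The fiber contains three non-isomorphic sheaves: $\st^*J$ (in $\TFp^\chi_{g,n+1}$); the sheaf free at $q=E\cap E^c$ with $I_E\cong\O_E(-1)$ and $I_{E^c}\cong J(q)$ (in $\TFm^\chi_{g,n+1}$); and the polystable $J\oplus\O_E(-1)$, which has an extra $\Gm$ of automorphisms. So for \eqref{T:univ-TF2bis} along the sections you need the one-dimensional model $[\Spec k[w,\ov w]/\Gm]\to\Spec k[w\ov w]$ with weights $(1,-1)$ of Proposition \ref{P:push-loc}\eqref{P:push-loc3}, proved like the node model. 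Only its chambers $\{w\neq0\}$ and $\{\ov w\neq0\}$, i.e.\ $\wh\Upsilon^{\pm}$, are isomorphisms there.

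\emph{Nodes with $J$ free.} Such a point is not the vertex of the cone, so your node model does not cover it. The paper treats it separately: Lemma \ref{L:push-st}\eqref{L:push-st3} forces $h^0(E,I_E)>0$ and $h^1(E,\leftindex_{E}{I})>0$, hence $I=\st^*J$, and $\wh\Upsilon$ is a local isomorphism.

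Finally, deducing the $\TFm$ case via $\iota$ works for the isomorphism itself. Lemma \ref{L:duality} gives $\chi((I^*)_E)=\chi(I_{E^c})-\chi$ and $\chi((I^*)_{E^c})=\chi(I_E)-\chi$, so $\iota(\TFm^\chi_{g,n+1})=\TFp^{-\chi}_{g,n+1}$. You also need $\st_*(I^*)\cong(\st_*I)^*$ on $\TFo$, which follows from Grothendieck duality and $R^1\st_*I=0$.

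However, this route identifies $\O_{\PP(\I^*_{g,n})}(1)$ with $\wh\sigma_{n+1}^*(\I^*_{g,n+1})$, where $\I^*=\cHom(\I,\omega)$. Passing to $\I^\vee$ introduces a twist by the pullback of $\O(\Sigma_1+\cdots+\Sigma_n)$, because the cotangent line bundle at $p_{n+1}$ on $\Mbargnbis$ is $\Upsilon^*(\omega_\pi(\Sigma_1+\cdots+\Sigma_n))$. A direct check near $\Sigma_j$ gives the same twist: there the universal sheaf on $\TFm$ is $\St^*\I_{g,n}\otimes\O(\mathcal{E})$, with $\mathcal{E}$ the family of exceptional tails. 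So for $n\geq1$ the $\O(1)$ statement for $\TFm$ should be read with $\I^*$ in place of $\I^\vee$.
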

Indeed, it will follow from the proof of Part~\eqref{T:univ-TF2} below that $\TFo_{g,n+1}^\chi$ is the largest open substack of $\TF_{g,n+1}^\chi$ where the morphisms $\wt{\Upsilon}$ and $\wt{\Phi}$ can be defined by the formulas in \eqref{T:univ-TF2}. The proof of Parts~\eqref{T:univ-TF1} and~\eqref{T:univ-TF2} are inspired by the proof of \cite[Thm. 3.1]{estevespacini}, where a similar result is proved for the push-forward of a line bundle under the stabilization morphism of a family of semistable curves. The proof of Part~\eqref{T:univ-TF4} is inspired by and generalizes \cite[Thm. 3.30]{APag}.
\begin{proof}
Part \eqref{T:univ-TF1}: it is enough to show that there exist open substacks $\U_{\geq 0}$, $\U_{>0}$, $\V_{\geq \chi}$ and $\V_{>\chi}$ of $\TF_{g,n+1}^\chi$ whose geometric points are given by 
$$
    \begin{sis}
     &\U_{\geq 0}(k):=\{(C,I)\in \TF_{g,n+1}^\chi(k)\: : \chi(I_E)\geq 0  \text{ for any exceptional component } E \text{ of } C\},\\
     &\U_{>0}(k):=\{(C,I)\in \TF_{g,n+1}^\chi(k)\: : \chi(I_E)> 0  \text{ for any exceptional component } E \text{ of } C\},\\
    & \V_{\geq \chi}(k):=\{(C,I)\in \TF_{g,n+1}^\chi(k)\: :  \chi(I_{E^c})\geq \chi \text{ for any exceptional component } E \text{ of } C\},\\
    & \V_{> \chi}(k):=\{(C,I)\in \TF_{g,n+1}^\chi(k)\: :  \chi(I_{E^c})> \chi \text{ for any exceptional component } E \text{ of } C\}.\\
    \end{sis}
    $$
In order to show this, consider the universal stabilization morphism
\begin{equation}\label{E:univ-stab}
\begin{tikzcd}
  \C_1:=\Cbargnbis \times_{\Mbargnbis} \TF_{g,n+1}^\chi   \arrow[rr, "\St"] \arrow[rd, "\psi_1"']& & \Phi^*\Cbargn\times_{\Mbargnbis} \TF_{g,n+1}^\chi=:\C_2  \arrow[ld, "\psi_2"]  \\
     & \TF_{g,n+1}^\chi  & 
\end{tikzcd}
\end{equation}
Fix a $\psi_2$-relative ample line bundle $\A$ on $\C_2$ and set $\wh{\A}:=\St^*(\A)$. On $\C_1$ there is a universal sheaf $\I_{g,n+1}$ whose restriction on a geometric fiber $\psi_1^{-1}(C,I)=C$ is equal to $I$.

 Let us first show that $\U_{\geq 0}$ is open. Pick a  geometric point $(C,I)$ of $\TF_{g,n+1}^\chi$ that belongs to $\U_{\geq 0}(k)$. We will show that $\U_{\geq 0}$ contains an open neighborhood of $(C,I)$. 

If $C$ does not have any exceptional component, i.e. if $\st_C:C\to C^{\st}$ is an isomorphism, then the same will be true on an open neighborhood of $C$, which then will belong to $\U_{\geq 0}$.  

Assume now that $C$ has an exceptional component $E$, and consider the exact sequence (see \eqref{E:resY})
\begin{equation}\label{E:I-resE}
  0\to \leftindex_{E^c}I \to I \to I_{E}\to 0.  
\end{equation}
Since the stabilization morphism $\st_C:C\to C^{\st}$ is finite on $E^c$, the line bundle $\wh{\A}_C:=\wh{\A}_{|\psi_1^{-1}(C,I)=C}$ is ample on $E^c$. Hence, there exists a constant $N_C\gg 0$ such that 
\begin{equation}\label{E:van-H1}
H^1(C,\leftindex_{E^c}I\otimes \wh{\A}_C^{m})=0 \text{ for any } m\geq N_C.
\end{equation}
By replacing $\TF_{g,n+1}^\chi$ with a quasi-compact open neighboorhood of $(C,I)$, we may assume that $N_C:=N$ is independent of $C$. By taking the long exact sequence in cohomology associated to \eqref{E:I-resE} and using \eqref{E:van-H1}, we deduce that 
\begin{equation}\label{E:equa-H1}
 H^1(C, I\otimes \wh{\A}_C^{m})\cong   H^1(C, I_E\otimes \wh{\A}_C^{m})=H^1(E, I_E) \text{ for any } m\geq N,
\end{equation}
where we have used that $\wh{\A}_C$ is trivial on $E$ since $E$ is contracted by $\st_C$. 
Therefore, we deduce that 
\begin{equation}\label{E:U-vanH1}
(C,I)\in \U_{\geq 0}(k) \Longleftrightarrow \chi(I_E)\geq 0 \Longleftrightarrow   H^1(C, I\otimes \wh{\A}_C^{m})=H^1((\I_{g,n+1}\otimes \wh{\A}^m)_{|\psi_1^{-1}(C,I)})=0 \text{ for any } m\geq N.
\end{equation}
By the semicontinuity of cohomology, the last condition of \eqref{E:U-vanH1} is open and therefore it is satisfied in an open neighboorhood of $(C,I)$. 

The proofs for the other three loci are similar, with the following replacements:

$\bullet$ For $\U_{>0}$, one replaces the universal sheaf $\I_{g,n+1}$ with the second reflexive hull $\I_{g,n+1}^{[2]}$ of $\I_{g,n+1}$, whose restriction to a geometric fiber $\psi_1^{-1}(C,I)$ is equal to $I^{[2]}$. Using that $(I^{[2]})_E=(I_E)^{2}$, Condition~\eqref{E:U-vanH1} becomes 
\begin{equation}\label{E:U>-vanH1}
(C,I)\in \U_{> 0}(k) \Longleftrightarrow \chi(I_E)> 0 \Longleftrightarrow   H^1(C, I^{[2]}\otimes \wh{\A}_C^{m})=H^1((\I_{g,n+1}^{[2]}\otimes \wh{\A}^m)_{|\psi_1^{-1}(C,I)})=0 \text{ for any } m\geq N.
\end{equation}

$\bullet$ For $\V_{>0}$, one replaces the universal sheaf $\I_{g,n+1}$ with its dual sheaf $\I_{g,n+1}^*=\cHom(\I_{g,n+1},\omega_{\psi_1})$, whose restriction to a geometric fiber $\psi_1^{-1}(C,I)$ is equal to $I^*=\cHom(I,\omega_C)$. Using that $(I^*)_E=(\leftindex_{E}I)^{*}=\cHom(\leftindex_{E}I,\omega_E)$ by Lemma \ref{L:duality}, Condition \eqref{E:U-vanH1} becomes 
\begin{equation}\label{E:V-vanH1}
\begin{aligned}
& (C,I)\in \V_{\geq \chi}(k) \Longleftrightarrow \chi(I_{E^c})\geq \chi \Longleftrightarrow 0\geq \chi(\leftindex_{E}I)=-\chi((\leftindex_{E}I)^*)=-\chi((I^*)_E) \Longleftrightarrow \\
& \Longleftrightarrow H^1(C, I^{*}\otimes \wh{\A}_C^{m})=H^1((\I_{g,n+1}^{*}\otimes \wh{\A}^m)_{|\psi_1^{-1}(C,I)})=0 \text{ for any } m\geq N.
\end{aligned}
\end{equation}

$\bullet$ For $\V_{>0}$, one replaces the universal sheaf $\I_{g,n+1}$ with the second reflexive hull $(\I_{g,n+1}^*)^{[2]}$ of the dual sheaf $\I_{g,n+1}^*$, whose restriction to a geometric fiber $\psi_1^{-1}(C,I)$ is equal to $(I^*)^{[2]}$. Using that $((I^*))^{[2]}_E=((I^*)_E)^2=((\leftindex_{E}I)^{*})^2$ by Lemma \ref{L:duality}, Condition \eqref{E:U-vanH1} becomes 
\begin{equation}\label{E:V>-vanH1}
\begin{aligned}
& (C,I)\in \V_{> \chi}(k) \Longleftrightarrow \chi(I_{E^c})> \chi \Longleftrightarrow 0> \chi(\leftindex_{E}I)=-\chi((\leftindex_{E}I)^*)=-\chi((I^*)_E) \Longleftrightarrow \\
& \Longleftrightarrow H^1(C, (I^{*})^{[2]}\otimes \wh{\A}_C^{m})=H^1(((\I_{g,n+1}^{*})^{[2]}\otimes \wh{\A}^m)_{|\psi_1^{-1}(C,I)})=0 \text{ for any } m\geq N.
\end{aligned}
\end{equation}

Part \eqref{T:univ-TF2}: let $(C,I)$ be a geometric point of $\TFo_{g,n+1}^\chi$ and consider the stabilization $\st:C\to C^{\st}$. Lemma \ref{L:push-st} implies that  $\st_*(I)\in \TF_{C^{\st}}^\chi$. 

In order to define the morphism $\wh{\Phi}$, consider the restriction of the diagram in \eqref{E:univ-stab} to the open substack $\TFo_{g,n+1}^\chi$:
\begin{equation}\label{E:univ-stabo}
\begin{tikzcd}
  \C_1^o:=\Cbargnbis \times_{\Mbargnbis} {\TFo_{g,n+1}^\chi}   \arrow[rr, "\St^o"] \arrow[rd, "\psi_1^o"']& & \Phi^*\Cbargn\times_{\Mbargnbis} {\TFo_{g,n+1}^\chi}=:\C_2^o \arrow[ld, "\psi^o_2"]  \\
     & \TFo_{g,n+1}^\chi  & 
\end{tikzcd}
\end{equation}
On $\C_1^o$ there is a universal sheaf $\I_{g,n+1}$ whose restriction to a geometric fiber $\psi_1^{-1}(C,I)=C$ is equal to $I$. The existence of the morphism $\wh{\Phi}$ lying over $\Phi$ and given on geometric points by $\wh \Phi(C,I)=(C^{\st}, \st_*(I))$ will follow from the following 

\un{Claim:} The sheaf $\St^o_*(\I_{g,n+1})$ is a relative rank-one torsion-free sheaf on $\C_2^o/\TFo_{g,n+1}^\chi$ such that $\St^o_*(\I_{g,n+1})_{|(\psi_2^o)^{-1}(C,I)}=\st_*(I)$. 

Indeed, fix a $\psi_2$-relative ample line bundle $\A$ on $\C_2^o$ and set $\wh{\A}:=\St^*(\A)$. The flatness of $\St^o_*(\I_{g,n+1})$ over $\TFo_{g,n+1}^\chi$ is equivalent to showing that, locally on $\TFo_{g,n+1}^\chi$, the sheaf 
\begin{equation}\label{E:sheaf-A}
(\psi_2^o)_*(\St^o_*(\I_{g,n+1})\otimes \A^m)=(\psi_1^o)_*(\I_{g,n+1}\otimes \wh{\A}^m)
\end{equation}
is locally free for any $m\gg 0$. This follows from the vanishing on the right hand side of \eqref{E:U-vanH1}, using that $\TFo_{g,n+1}^\chi\subseteq \U_{\geq 0}$. 
Moreover, the vanishing on the right hand side of \eqref{E:U-vanH1} implies also that the sheaf $(\psi_1^o)_*(\I_{g,n+1}\otimes \wh{\A}^m)$ commutes with base change (locally on $\TFo_{g,n+1}^\chi$) for any $m\gg 0$, which in turn implies that the sheaf $\St^o_*(\I_{g,n+1})$ satisfies base change. In particular, the fiber of $\St^o_*(\I_{g,n+1})$ over a geometric point $(C,I)$ of $\TFo_{g,n+1}^\chi$ is equal to $\st_*(I)$, which is a rank-one torsion-free sheaf on $C^{\st}$ of Euler characteristic $\chi$ by Lemma \ref{L:push-st}\eqref{L:push-st1}. This concludes the proof of Claim.

\vspace{0.1cm}

The morphism $\wh \Upsilon$ is defined as the fiber product of $\wh{\Phi}$ and of the morphism $\TFo_{g,n+1}^\chi\to \Mbargnbis\xrightarrow{\Upsilon} \Cbargn$, and therefore it lies over $\Upsilon$ and it is given on geometric points by $\wh{\Upsilon}(C,I)=(C^{\st},\st(p_{n+1}),\st_*(I))$.

In order to define the section $\wh{\sigma_i}'$ of $\wh{\Phi}$, consider the following diagram induced by the section $\sigma_i'$ of $\Phi$: 
\begin{equation}\label{E:univ-si}
\begin{tikzcd}
  \D_1:=(\sigma_i')^*\Cbargnbis \times_{\Mbargn} \TF_{g,n}^\chi   \arrow[rr, "\ov{\St}"] \arrow[rd, "\phi_1"']& & \Cbargn\times_{\Mbargn} \TF_{g,n}^\chi=:\D_2  \arrow[ld, "\phi_2"]  \\
     & \TF_{g,n}^\chi  .& 
\end{tikzcd}
\end{equation}
The fiber of $\ov{\St}$ over a geometric point $(C,I)\in \TF_{g,n}^\chi$ is equal to the stabilization morphism $\st=\st_{B_{p_i}(C)}:B_{p_i}(C)=\phi_1^{-1}(C,I)\to \phi_2^{-1}(C,I)=C$ of the bubbling of $C$ at the $i$-th marked point $p_i$. 

Consider now the sheaf $\ov{\St}^*(\I_{g,n})$ on $\D_1$, where $\I_{g,n}$ is the universal sheaf on $\D_2$. Since the sheaf $\I_{g,n}$ is locally free around the image of the section $\sigma_i$, it follows that $\ov{\St}^*(\I_{g,n})$ is a relative rank-one torsion-free sheaf on $\D_1/\TF_{g,n}^\chi$, whose fiber over a geometric point $(C,I)$ of $\TF_{g,n}^{\chi}$ is equal to $\st^*(I)$. Therefore, the sheaf $\ov{\St}^*(\I_{g,n})$ defines a morphism $\wh{\sigma_i}': \TF_{g,n}^\chi\to \TF_{g,n+1}^\chi$ lying over $\sigma_i'$ and which is given on geometric points by $\wh{\sigma_i}'(C,I)=(B_{p_i}(C),\st^*(I))$. Moreover, if we denote by $E$ the exceptional component of $B_{p_i}(C)$ (so that $\st$ identifies $E^c$ with $C$) then we have that 
$$
\begin{sis}
  & \st^*(I)_{E}=\O_E\Rightarrow \chi(\st^*(I)_E)=1,\\
  & \st^*(I)_{E^c}=I\Rightarrow \chi(\st^*(I)_{E^c})=\chi,
\end{sis}
$$
which implies that $\wh{\sigma_i}'$ factors through the open subset $\TFo_{g,n+1}^\chi$. Finally, the fact that $\wh{\sigma_i}'$ is a section of $\wh{\Phi}$ follows from the fact that $\ov{\St}_*(\ov{\St}^*(\I_{g,n}))=\I_{g,n}$. 

Part \eqref{T:univ-TF2bis}: the fact that $\wh{\Upsilon}$ is a good moduli space morphism follows from Proposition \ref{P:push-loc}, using that this property can be checked fpqc locally on the codomain by \cite[Rmk. 4.4, Prop. 4.7]{Alp}.
The last properties of $\wh{\Upsilon}$ and $\wh{\Phi}$ follows from the fact that a good moduli space morphism is universally closed, $\Theta$-complete and S-complete (see \cite{AHLH}) and that $\wh{\Phi}=\wh{\pi}\circ \wh{\Upsilon}$ with $\wh{\pi}$ being proper and representable.

%Therefore, it remains to prove that $\wh{\Upsilon}$ is a good moduli space morphism. This can be checked fpqc locally on the codomain by \cite[Rmk. 4.4, Prop. 4.7]{Alp}. Hence, we fix a geometric point $(D,q,J)\in \Cbargn\times_{\Mbargn} \TF_{g,n}^\chi$ and we aim to find a fpqc neighborhood of $(D,q,J)$ over which $\wh{\Upsilon}$ is a good moduli space morphism. We will distinguish several cases:
%\begin{enumerate}[(a)]
%    \item If $q$ is smooth non-marked point of $D$, then $\wh{\Upsilon}$ is an isomorphism locally above $(D,q,J)$
% and we are done. 
% \item If $q$ is marked point of $D$ then ....
% \item If $q$ is a node of $D$ and $J$ is free at $q$, then $\wh{\Upsilon}$ is an an isomorphism locally above $(D,q,J)$  by Lemma \ref{L:push-pm}\eqref{L:push-pm2} and we are done.
% \item If $q$ is a node of $D$ and $J$ is not free at $q$, then...
 % \end{enumerate}

Part \eqref{T:univ-TF3}: this follows from Proposition \ref{P:push-loc}, using that the properties in question can be checked fpqc locally on the codomain.

Part \eqref{T:univ-TF4}: we will  prove the statement for $\TFp_{g,n+1}^\chi$, and we will leave to the reader the task of figuring out the necessary (small) adjustments needed in order to prove the statement for  $\TFm_{g,n+1}^\chi$. Set $\wh{\Upsilon}^+:=\wh{\Upsilon}_{|\TFp_{g,n+1}^\chi}$ and $\wh{\Phi}^+:=\wh{\Phi}_{|\TFp_{g,n+1}^\chi}$
Using the universal property of $\PP(\I_{g,n})$ (see \cite[Prop.~5.4, 5.5]{estevespacini}), it is enough to check the following properties:
\begin{enumerate}[(a)]
    \item $\wh{\Phi}^+$ is a family of quasi-stable $n$-pointed curves and $\wh{\Upsilon}^+$ is its stabilization morphism;
    \item the line bundle $\wh{\sigma}_{n+1}^*(\I_{g,n+1})$ has degree $1$ on every exceptional component of $\wh{\Upsilon}^+$;
    \item $(\wh{\Upsilon}^+)_*(\wh{\sigma}_{n+1}^*(\I_{g,n+1}))=\I_{g,n}$. 
\end{enumerate}

Observe that the morphism $\wh{\Phi}^+$ is proper and representable, being the composition of $\wh{\Upsilon}^+$ (which is proper and representable by Part~\eqref{T:univ-TF3}) with the proper and representable morphism $\wh{\pi}$. Proposition \ref{P:push-loc} implies that the geometric fiber of $\wh{\Phi}^+$ over $(D,J)\in \TF_{g,n}^\chi(k=\ov k)$ is a quasi-stable $n$-pointed curve which is obtained by bubbling $D$ into the nodes on which $J$ is not free and, furthermore, that $\wt{\Upsilon}^+$ is, on geometric fibers, the stabilization morphism. And finally, since the domain and codomain of $\wh{\Phi}^+$ are regular (see \cite[Sec. 8]{CMKVbirational}), by miracle flatness we have that $\wh{\Phi}^+$ is flat because it is equidimensional of relative dimension one. This shows that $\wh{\Phi}^+$ is a family of quasi-stable $n$-pointed curves and part (a) is established.

\vspace{0.1cm}

Consider now an exceptional component of $\wh{\Upsilon}^+$, which, as noted above, is the preimage $(\wh{\Upsilon}^+)^{-1}(D,p,J)$, for $(D,p,J)\in \Cbargn\times_{\Mbargn} \TF_{g,n}^\chi$ such that $p$ is a node of $D$ and $J\in \TF_D^\chi$ is not free at $p$. Denote by $B_p(D)$ the bubbling of $D$ at $p$ and by $\st:B_p(D)\to D$ the stabilization morphism which contracts the exceptional component $E_p\cong \PP^1$ of the bubbling into $p$. From the proof of Proposition \ref{P:push-loc}\eqref{P:push-loc4} it follows that the exceptional component $(\wh{\Upsilon}^+)^{-1}(D,p,J)$ is the rational curve  $E_p=\PP^1\hookrightarrow \TF_{B_p(D)}^\chi$ corresponding to the relative rank-one torsion-free sheaf $\I^+:=\pi_1^*(I^+)\otimes \O(\Delta)$ of Euler characteristic $\chi$ on $B_p(D)\times E_p\to E_p$, where $\pi_1$ is the projection onto the first factor, $I^+\in \TF_{B_p(D)}^{\chi-1}$ is free at $E_p\cap E_p^c$ with $(\st_{|E_p^c})_*(I^+_{E_p^c})=J$ and $\Delta\subset E_p\times E_p\subset B_p(D)\times E_p$ is the diagonal. Therefore, the restriction of the line bundle $\wh{\sigma}_{n+1}^*(\I_{g,n+1})$ on $(\wh{\Upsilon}^+)^{-1}(D,p,J)\cong E_p=\PP^1$ is equal to $p_{n+1}^*(\I^+)=p_{n+1}^*(\pi_1^*(I^+)\otimes \O(\Delta))=p_{n+1}^*\O(\Delta)=\O_{\PP^1}(p_{n+1})$  using that $p_{n+1}\in E_p$ and $\pi_1^*(I^+)$ is trivial on $E_p$. Part (b) follows from this.  

\vspace{0.1cm}

It remains to prove Part (c). Let $(\TF_{g,n}^\chi)^o$ be the open substack of $\TF_{g,n}^\chi$ whose geometric points are $(C,L)$ where $C\in \Mbargn(k)$ and $L$ is a line bundle on $C$. Note that $(\TF_{g,n}^\chi)^o$ is a big open subset of  $\TF_{g,n}^\chi$, i.e. its complement has codimension at least two. By the above description of $\wh{\Upsilon}^+$, it follows that $\wh{\Upsilon}^+$ is an isomorphism over $\Cbargn\times_{\Mbargn} (\TF_{g,n}^\chi)^o$. Moreover, the composition $\wh{\sigma}_{n+1}\circ (\wh{\Upsilon}^+)^{-1}$ is given on geometric points by 
$$
\begin{aligned}
\wh{\sigma}_{n+1}\circ (\wh{\Upsilon}^+)^{-1}: \Cbargn\times_{\Mbargn} (\TF_{g,n}^\chi)^o& \to \Cbargnbis\times_{\Mbargnbis} \TFo_{g,n+1}^\chi\\
(C,p,L) & \mapsto (\Upsilon^{-1}(C,p),p_{n+1}, \st^*(L)) 
\end{aligned}
$$
where $\Upsilon^{-1}$ is the inverse of the isomorphism $\Upsilon$ of \eqref{E:univ-Mg} and $\st:\Upsilon^{-1}(C,p)\to C$ is the map that forgets the last marked point and stabilizes. From the explicit form of this map, it is clear that $(\wh{\sigma}_{n+1}\circ (\wh{\Upsilon}^+)^{-1})^*(\I_{g,n+1})=\I_{g,n}$. This is equivalent to saying that 
$$\wh{\Upsilon}^+_*(\wh{\sigma}_{n+1}^*(\I_{g,n+1}))_{|\Cbargn\times_{\Mbargn} (\TF_{g,n}^\chi)^o}=(\I_{g,n})_{|\Cbargn\times_{\Mbargn} (\TF_{g,n}^\chi)^o}.$$
We now conclude that the two sheaves agree everywhere by \cite[Cor. 7.2]{Kass_2019} using that: $\wh{\Upsilon}^+_*(\wh{\sigma}_{n+1}^*(\I_{g,n+1}))$ is a family of rank-one torsion-free sheaves on $\Cbargn\times_{\Mbargn}\TF_{g,n}^\chi$ by Parts (a) and (b) and by \cite[Prop. 5.4]{estevespacini}; $(\TF_{g,n}^\chi)^o\subset \TF_{g,n}^\chi$ is a big open substack, $\TF_{g,n}^\chi$ is regular (see \cite[Sec. 8]{CMKVbirational}) and $\Cbargn\times_{\Mbargn}\TF_{g,n}^\chi$ is $G_1$ and $S_2$ (see \cite[Lemma 7.3]{Kass_2019}).
\end{proof}

\begin{lemma}\label{L:push-st}
Let $C$ be a geometric point of $\Mbargnbis$ and assume that the stabilization morphism $\st:C\to C^{\st}$ contracts an exceptional component $E\cong \PP^1$  to the point $q=\st(E)\in C^{\st}$. For any sheaf $I$ belonging to 
$$\TFo_C^\chi:=\{I\in \TF_C^\chi\: : \chi(I_E)\geq 0 \text{ and } \chi(I_{E^c})\geq \chi\},$$ 
we have that:
\begin{enumerate}[(i)]
    \item \label{L:push-st1} $\st_*(I)\in \TF_{C^{\st}}^\chi$;
    \item \label{L:push-st2} $R^1\st_*(I)=0$;
    \item \label{L:push-st3} there are exact sequences
    $$
    \begin{sis}
        & 0 \to \st_*(\leftindex_{E^c}I)\to  \st_*(I) \to \O_q^{\oplus h^0(E,I_E)}\to 0,\\
        & 0\to \st_*(I)\to  \st_*(I_{E^c}) \to \O_q^{\oplus h^1(E,\leftindex_{E}I)}\to 0.
    \end{sis}
    $$
\end{enumerate}
\end{lemma}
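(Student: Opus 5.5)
Both exact sequences come from the restriction sequence \eqref{E:resY} for $Y=E$ and $Y=E^c$, together with the facts that $\st$ is an isomorphism on $C\setminus E$ and finite on $E^c$, and that $E\cong\PP^1$ is contracted to $q$. On $E$, $I_E$ is a rank-one torsion-free sheaf on $\PP^1$, hence a line bundle $\O_{\PP^1}(d)$ with $\chi(I_E)=d+1$. Similarly $\leftindex_E I=I_E(-(E\cap E^c\cap \NF(I)^c))$ by \eqref{E:2restY}, and \eqref{E:add-chi} gives $\chi(\leftindex_E I)=\chi-\chi(I_{E^c})$. The two hypotheses therefore translate to $H^1(E,I_E)=0$ (since $\deg I_E\geq -1$) and $H^0(E,\leftindex_E I)=0$ (since $\deg \leftindex_E I\leq -1$).

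First I would push forward $0\to \leftindex_{E^c}I\to I\to I_E\to 0$. Since $\st|_{E^c}$ is finite, $\st_*(\leftindex_{E^c}I)$ is torsion-free and $R^1\st_*(\leftindex_{E^c}I)=0$. Since $I_E$ is supported on the contracted fibre, $\st_*(I_E)=\O_q\otimes H^0(E,I_E)$ and $R^1\st_*(I_E)=\O_q\otimes H^1(E,I_E)=0$. The long exact sequence then yields the first sequence of \eqref{L:push-st3} and the vanishing \eqref{L:push-st2}. Next I would push forward $0\to \leftindex_E I\to I\to I_{E^c}\to 0$. Here $\st_*(\leftindex_E I)=H^0(E,\leftindex_E I)\otimes\O_q=0$, $R^1\st_*(\leftindex_E I)=H^1(E,\leftindex_E I)\otimes \O_q$, $R^1\st_*(I)=0$ and $R^1\st_*(I_{E^c})=0$. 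This gives $0\to\st_*I\to\st_*(I_{E^c})\to \O_q^{\oplus h^1(E,\leftindex_E I)}\to 0$, the second sequence. For the skyscraper identifications I would use the theorem on formal functions, or simply note that $\st$ restricted to a neighbourhood of $E$ has one-dimensional fibre only over $q$.

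For \eqref{L:push-st1}: by the second sequence, $\st_*(I)\subseteq \st_*(I_{E^c})$, and the latter is torsion-free because $I_{E^c}$ is torsion-free and $\st|_{E^c}$ is finite and birational onto its image $C^{\st}$. Hence $\st_*(I)$ is torsion-free; it has rank one because $\st$ is an isomorphism away from $q$. Its Euler characteristic is $\chi(C^{\st},\st_*I)=\chi(C,I)-\chi(R^1\st_*I)=\chi$ by the Leray spectral sequence and \eqref{L:push-st2}.

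The main obstacle I expect is the careful identification of $R^i\st_*$ of sheaves supported on $E$ with skyscrapers at $q$, together with torsion-freeness, in the case where $E$ meets $E^c$ in two nodes and $\st(E)=q$ is a node of $C^{\st}$. This could fail only if $\st_*(I_{E^c})$ were not torsion-free there, but finite pushforward of a pure sheaf is pure, so this should go through. The one-node case (with $E$ carrying $p_j,p_{n+1}$) is simpler, since $q$ is then a smooth point.
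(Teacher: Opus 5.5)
Your proposal is correct and follows essentially the same route as the paper. You push forward the two restriction sequences for $E$ and $E^c$, and use finiteness of $\st_{|E^c}$ together with $h^1(E,I_E)=0$ and $h^0(E,\leftindex_{E}I)=0$ (which come from the two hypotheses) to get $R^1\st_*(I)=0$, both exact sequences, and torsion-freeness via $\st_*(I)\hookrightarrow \st_*(I_{E^c})$.

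The only difference is cosmetic: the paper computes $\chi(\st_*(I))=\chi(I_{E^c})+\chi(\leftindex_{E}I)=\chi$ from the resulting four-term sequence rather than via Leray. Also, Leray gives $\chi(\st_*(I))=\chi(I)+\chi(R^1\st_*(I))$, with a plus rather than a minus; this is harmless here since $R^1\st_*(I)=0$.
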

\begin{proof}
%Observe that $\st_{|E^c}:E^c\to C^{\st}$ is a partial normalization. 
Consider the two exact sequences (see \eqref{E:resY})
\begin{equation}\label{E:2ex-seq}
\begin{sis}
&    0 \to \leftindex_{E^c} I \to I \to I_E\to 0, \\
&    0 \to \leftindex_{E} I \to I \to I_{E^c}\to 0. \\
\end{sis}
\end{equation}
By applying $\st_*$ to the first exact sequence of \eqref{E:2ex-seq} and using that $R^1\st_*(\leftindex_{E^c} I)=0$ since $\st_{|E^c}:E^c\to C^{\st}$ is a partial normalization (and hence a finite map), we deduce that 
\begin{equation}\label{E:equa1}
   R^1\st_*(I)\cong R^1\st_*(I_E)=\O_q^{\oplus h^1(E,I_E)}=0 \text{ since } \chi(I_E)\geq 0.
\end{equation}
By applying $\st_*$ to the second exact sequence of \eqref{E:2ex-seq} and using \eqref{E:equa1}, we get the exact sequence
\begin{equation}\label{E:equa2}
    0\to \st_*(\leftindex_{E}I)=\O_q^{\oplus h^0(E,\leftindex_{E}I)}\to \st_*(I)\to \st_*(I_{E^c})\to R^1\st_*(\leftindex_{E}I)=\O_q^{\oplus h^1(E,\leftindex_{E}I)}\to  0.
\end{equation}
Since $\st_*(I_{E^c})$ is a rank-one torsion-free sheaf on $C^{\st}$ with $\chi(\st_*(I_{E^c}))=\chi(I_{E^c})$ (because $\st_{|E^c}:E^c\to C$ is a partial normalization map), we deduce from \eqref{E:equa2} that $\st_*(I)$ is a rank-one sheaf on $C^{\st}$ of Euler characteristic 
$$
\chi(\st_*(I))=\chi(I_{E^c})+\chi(\leftindex_{E} I)=\chi(I).
$$
Moreover, $\st_*(I)$ is torsion-free since 
\begin{equation}\label{E:equa3}
 \O_q^{\oplus h^0(E,\leftindex_{E}I)}=0 \text{ because } \chi(\leftindex_{E}I)=\chi(I)-\chi(I_{E^c})\leq 0.
\end{equation}
Finally, the second exact sequence in Part \eqref{L:push-st3} follows from \eqref{E:equa2}, while the first exact sequence follows by applying $\st_*$ to the first sequence of \eqref{E:2ex-seq} and using that $R^1\st_*(\leftindex_{E^c}I)=0$ since $\st_{|E^c}$ is a finite map.
\end{proof}

\begin{proposition}\label{P:push-loc}
Let $(D,p,J)$ be a geometric point of $\Cbargn\times_{\Mbargn}\TF_{g,n}$ over $k=\ov k$. Then 
\begin{enumerate}
    \item \label{P:push-loc1} If $p$ is not a node or a marked point, then $\wh{\Upsilon}$ is an isomorphism above $(D,q,J)$.
    \item \label{P:push-loc2} If $p$ is a node of $D$ and $J$ is free at $p$, then $\wh{\Upsilon}$ is an isomorphism above $(D,p,J)$.
    \item \label{P:push-loc3} If $p$ is a marked point of $D$, then, \'etale locally on $(D,p,J)$, the morphisms $\wh{\Upsilon}$, $\wh{\Upsilon}^+$ and $\wh{\Upsilon}^-$ are pull-backs via a smooth morphism of the following maps $\mu$, $\mu^+$ and $\mu^-$:
    \begin{equation}\label{E:loc-mark}
\begin{tikzcd}
  \AA^1\cong \frac{V_1\oplus V_1^*-\{0\}\oplus V_1^*}{\Gm} \arrow[hook, rd]\arrow[ddr, "\mu^+" swap]  & & \frac{V_1\oplus V_1^*-V_1\oplus \{0\}}{\Gm}\cong \AA^1 \arrow[hook', ld]\arrow[ddl, "\mu^-"] \\
     & \left[ V_1\oplus V_1^*/\Gm \right]\arrow[d, "\mu"] & \\
     & V_1\oplus V_1^*/\!\!/\Gm \cong \AA^1 & 
\end{tikzcd}
\end{equation}
    where $V_1$ is a one dimensional vector space and $\Gm$ is the scalar multiplication on $V_1$. In particular, $\wh{\Upsilon}^+$ and $\wh{\Upsilon}^-$ are isomorphisms above $(D,p,J)$.
\item \label{P:push-loc4} If $p$ is a node of $D$ and $J$ is not free at $p$, then, \'etale locally on $(D,p,J)$, the morphisms $\wh{\Upsilon}$, $\wh{\Upsilon}^+$ and $\wh{\Upsilon}^-$ are pull-backs via a smooth morphism of the following maps $\nu$, $\nu^+$ and $\nu^-$:
    \begin{equation}\label{E:loc-node}
\begin{tikzcd} [column sep=0em, row sep=1.5em]
  \Tot(\O_{\PP(V_2)}(-1)^{\oplus 2})\cong \frac{V_2\oplus V_2^*-\{0\}\oplus V_2^*}{\Gm} \arrow[hook, rd]\arrow[ddr, "\nu^+" swap]  & & \frac{V_2\oplus V_2^*-V_2\oplus \{0\}}{\Gm} \cong \Tot(\O_{\PP(V_2^*)}(-1)^{\oplus 2}) \arrow[hook', ld]\arrow[ddl, "\nu^-"] \\
     & \left[ V_2\oplus V_2^*/\Gm \right]\arrow[d, "\nu"] & \\
     & V_2\oplus V_2^*/\!\!/\Gm \cong \Spec\frac{k[x,y,u,v]}{(xy-uv)} & 
\end{tikzcd}
\end{equation}
    where $V_2$ is a one dimensional vector space and $\Gm$ is the scalar multiplication on $V_2$. In particular, $\wh{\Upsilon}^+$ and $\wh{\Upsilon}^-$ are, locally on $(D,p,J)$, small proper resolutions of the codomain and their fiber over $(D,p,J)$ is $\PP^1$.
\end{enumerate}
\end{proposition}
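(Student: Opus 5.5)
Parts (1) and (2) will be handled by showing directly that $\wh{\Upsilon}$ is étale and bijective on geometric points with trivial stabilizer jumps above $(D,p,J)$, hence an isomorphism there. If $p$ is a smooth non-marked point, the fiber of $\Upsilon^{-1}$ over $(D,p)$ is $D$ itself with $p_{n+1}=p$, so there is no exceptional component. Any $(C,I)$ over it then has $\st=\id$, and $\wh\Upsilon$ is locally the identity. If $p$ is a node and $J$ is free at $p$, then $C=B_p(D)$ has an exceptional component $E$ meeting $E^c$ in two points, and Lemma \ref{L:push-st} constrains $I$. Since $\chi(I_E)\geq 0$, $\chi(I_{E^c})\geq\chi$, $\chi(I)=\chi$, and $\st_*(I)=J$ is free at $p$, the exact sequences of Lemma \ref{L:push-st}\eqref{L:push-st3} force $h^0(E,I_E)$ and $h^1(E,\leftindex_E I)$ to match the free local structure. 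This pins $I$ down to a unique sheaf with $I_E=\O_E$, free at both nodes, and with $\st_*(I)=J$ as its only preimage. The inverse map is then $J\mapsto \st^*J$ in a neighbourhood, and it is well defined because $J$ is locally free near $p$. This gives Part~\eqref{P:push-loc2}, and Part~\eqref{P:push-loc1} is the same argument with $E$ absent.

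For Parts (3) and (4) the plan is a local model computation via deformation theory at the unique closed point of the fiber $\wh\Upsilon^{-1}(D,p,J)$. In Part (3) this closed point is $(B_p(D),\st^*J)$ with $J$ trivial near $p$. In Part (4) it is the polystable sheaf $I_0=\pi_1^*(I^+)|$ that splits at both points of $E\cap E^c$, with $I_{0,E}=\O_E(-1)$ in the exceptional rational curve case. It has automorphism group $\Gm^2$, of which a $\Gm$ is extra beyond scalars. By Luna étale slice for stacks with good moduli spaces (Alper--Hall--Rydh), $\TFo_{g,n+1}^\chi$ is étale locally near this point isomorphic to $[T/\Gm]$, where $T$ is the tangent space of the deformation functor of $(C,I_0)$ and $\Gm$ is the extra automorphism. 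I will compute $T$ as the smooth part, meaning deformations of $(D,J)$ away from the relevant point, plus a weight part. In Part (3) the weight part comes from the node $E\cap E^c$, giving a smoothing parameter and a $\Gm$-weighted direction, so $V_1\oplus V_1^*$. In Part (4) there are two nodes $E\cap E^c=\{q_1,q_2\}$ at which $I_0$ is not free, and the local deformations of a non-free sheaf at a node are governed by $\Spec k[x,y]/(xy-t)$-type coordinates. The extra $\Gm$ acts with weight $+1$ on the gluing data at one side and $-1$ on the other, giving $V_2\oplus V_2^*$ with $V_2$ two-dimensional. Note that the statement writes ``one dimensional'' here, but $xy-uv$ needs $\dim V_2=2$.

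Next I will identify the invariant map with $\wh\Upsilon$ near the point. The invariants $x_iy_j$ recover the smoothing parameters of the nodes of $D$ and the local coordinate of $\st(p_{n+1})$, which is exactly $\Cbargn\times_{\Mbargn}\TF_{g,n}$ near $(D,p,J)$. Here I will use the fact that $\Cbargn$ has local equation $xy=t$ at a node and that $\TF$ at a non-free node contributes a further $uv=t$ (Casalis/Cook type local structure), giving $xy-uv$. Finally I will identify the loci $\TFp$ and $\TFm$ with the GIT-semistable loci for the two linearizations $\pm$. The condition $\chi(I_E)\geq 1$ corresponds to the nonvanishing of the weight $+1$ coordinates, and $\chi(I_{E^c})\geq\chi+1$ to the weight $-1$ ones. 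The quotients then give $\Tot(\O_{\PP}(-1)^{\oplus 2})$, the two small resolutions related by the Atiyah flop, and $\AA^1$ in the marked case.

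The main obstacle is the precise computation of the $\Gm$-weights on the deformation space, together with the check that the ambient stack is smooth there, so that the slice is linear and not just a cone. Regularity of $\TF_{g,n+1}$ (\cite[Sec.~8]{CMKVbirational}) handles smoothness. For the weights I will explicitly write the local deformations of $I_0$ at $q_1,q_2$ as extensions, using $\mathrm{Ext}^1$ between the two summands restricted at each node.
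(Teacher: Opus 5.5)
Your treatment of Parts (1), (2) and (4) follows the paper's route. You compute the fibre of $\wh{\Upsilon}$ with Lemma~\ref{L:push-st}, take a local slice at the closed point of the fibre, and read off the $\Gm$-weights $\pm 1$ from the local deformations of a non-free sheaf at each node $q_1,q_2$. You then identify $\TFp$ and $\TFm$ with the loci where the weight $+1$ (resp.\ $-1$) coordinates do not all vanish. You are also right that $V_2=\langle w_1,w_2\rangle$ must be two-dimensional.

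There is, however, a genuine error in Part (3): the closed point of $\wh{\Upsilon}^{-1}(D,p,J)$ is not $(B_p(D),\st^*J)$. Since $\st^*J$ is free at $q=E\cap E^c$ with $(\st^*J)_E=\O_E$, you get $\leftindex_{E}{(\st^*J)}=\O_E(-q)=\O_E(-1)$. The extension $0\to \O_E(-1)\to \st^*J\to J\to 0$ therefore isotrivially degenerates to $J\oplus\O_E(-1)$, which lies in $\TFo_C^\chi$ and also pushes forward to $J$. The fibre has exactly three points:
\begin{itemize}
\item $\st^*J\in\TFp_C^\chi$;
\item the sheaf glued from $\O_E(-1)$ and $J(q)$, which lies in $\TFm_C^\chi$;
\item the polystable sheaf $J\oplus\O_E(-1)$, which is the unique closed point.
\end{itemize}
The paper obtains this from the stratification $\TFo_C^\chi=\TFp_C^\chi\sqcup\TFm_C^\chi\sqcup\{K\oplus\O_E(-1)\}$ together with the formula for $\st_*$ on each stratum. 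Because $\st^*J$ is free at $q$, it has no extra $\Gm$ of automorphisms. A slice there only recovers the open chart $\{w\neq 0\}$, i.e.\ the isomorphism $\mu^+$; it cannot produce $[V_1\oplus V_1^*/\Gm]$, nor control $\wh{\Upsilon}$ over a neighbourhood of $(D,p,J)$.

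Your description of the weighted part as ``a smoothing parameter and a $\Gm$-weighted direction'' is also inconsistent with $V_1\oplus V_1^*$. At $J\oplus\O_E(-1)$ the local deformation space at $q$ is $\Spec k[w,\ov w]$ with weights $+1,-1$. The smoothing parameter of $q$ is not an independent coordinate but the invariant $t=w\ov w$. It is the coordinate on the target because $J$ is free at the smooth point $p$, so $\Def_p(D,p,J)\cong\Def_p(D,p)\cong\Def_q(C)$. With the slice taken at $J\oplus\O_E(-1)$, Part (3) goes through exactly as Part (4).

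In Part (4) you take the slice at the correct point, namely $\wt J\oplus\O_E(-1)$, where $\wt J\in\TF^\chi_{E^c}$ is the unique sheaf with $\st_*\wt J=J$. (The notation $\pi_1^*(I^+)$ from the proof of Theorem~\ref{T:univ-TF} denotes a family over $E$, not this point.) But you only assert that it is the unique closed point of the fibre. This needs proof, since it is what lets a slice at one point describe $\wh{\Upsilon}$ étale locally on the target. The paper proves it in three steps:
\begin{itemize}
\item it stratifies $\TFo_C^\chi$ into the open stratum, $(\TFp_C^\chi)^{cl}$, $(\TFm_C^\chi)^{cl}$ and $\{K\oplus\O_E(-1)\}$;
\item it uses the isomorphism $(\st_{|E^c})_*\colon\TF^\chi_{E^c}\xrightarrow{\cong}(\TF^\chi_D)^{cl}$;
\item it deduces $\wh{\Upsilon}^{-1}(D,p,J)=\{(C,I):\wt J\oplus\O_E(-1)\in\ov{\{I\}}\}$, and only then invokes the relative slice theorem.
\end{itemize}
Finally, do not use a version of the slice theorem that presupposes a good moduli space for $\wh{\Upsilon}$. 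The paper deduces that $\wh{\Upsilon}$ is a good moduli space morphism from this very proposition, so that would be circular.
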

Note that the diagrams in \eqref{E:loc-mark} and \eqref{E:loc-node} are examples of variation of GIT. The diagram in %\eqref{E:loc-mark} is sometimes called a Thaddeus flop while the diagram 
\eqref{E:loc-node} is the famous \emph{Atiyah flop}. 
\begin{proof}
Part \eqref{P:push-loc1} is obvious.

Part \eqref{P:push-loc2}: let us compute the fiber $\wh{\Upsilon}^{-1}(D,p,J)$. First of all, $\Upsilon^{-1}(D,p)$ is the bubbling $B_p(D)$ of $D$ at $p$, i.e. the stable $n+1$-pointed curve obtained by inserting a $E\cong \PP^1$ at $p$ and putting the marked point $p_{n+1}$ on the exceptional component $E$. Therefore, 
$$
\wh{\Upsilon}^{-1}(D,p,J)=\{(B_p(D),I)\: : I\in \TFo^\chi_{B_p(D)} \text{ with } \st_*(I)=J\}.
$$
Take now $I\in \TFo^\chi_{B_p(D)}$ such that $\st_*(I)=J$. Since $J$ is free at $p$, we must have by Lemma \ref{L:push-st}\eqref{L:push-st3} that 
$$
h^0(E,I_E)>0 \text{ and } h^1(E,\leftindex_{E}I)>0,
$$
which is only possible if $I$ is free at $E\cap E^c$ and $I_E=\O_E$. This implies that $I$ is the pull-back via $\st$ of a sheaf on $(B_p(D))^{\st}=D$, and then, by the projection formula, we must have that $I=\st^*(J)$.  Hence, we get that 
$$
\wh{\Upsilon}^{-1}(D,p,J)=\{(B_p(D),\st^*(J))\}.
$$
This also shows that $\wh{\Upsilon}$ is an isomorphism locally above $(D,p,J)$, since a local inverse is given by the morphism $\Cbargn\times_{\Mbargn} \TF_{g,n}\to \TFo_{g,n+1}^\chi$ induced by the pull-back of the universal sheaf $\I_{g,n}$ on the family 
$$(\Upsilon^{-1})^*(\Cbargn)\times_{\Mbargn} \TF_{g,n}^\chi\to \Cbargn\times_{\Mbargn} \TF_{g,n}.$$

Part \eqref{P:push-loc3}: observe that $\Upsilon^{-1}(D,p)$ is the bubbling $C:=B_p(D)$ of $D$ at the point $p$, which is a marked point of $D$. Hence $C=E\cup E^c$, where $E\cong \PP^1$ is is the exceptional component, $E^c$ is canonically isomorphic to $D$ via the restriction of the stabilization morphism $\st:C\to D$ and $E$ and $E^c$ meet in a unique point $q$ which is a node of $C$. The stack $\TFo_C^\chi$ admits the following stratification
\begin{equation}\label{E:TF-strat1}
\TFo_C^\chi=\TFp_C^\chi \sqcup \TFm_C^\chi \sqcup (\TFo_C^\chi)^{cl},
\end{equation}
where:

$\bullet$ $\TFp_C^\chi$ and $\TFm_C^\chi$ are the open substacks given by 
$$
\begin{aligned}
    & \TFp_C^\chi:=\{I\in \TF_C^\chi\: : \chi(I_E)\geq 1 \text{ and } \chi(I_{E^c})\geq \chi\}, \\
    & \TFm_C^\chi:=\{I\in \TF_C^\chi\: : \chi(I_E)\geq 0 \text{ and } \chi(I_{E^c})\geq \chi+1\};
\end{aligned}
$$

$\bullet$ $(\TFo_C^\chi)^{cl}$ is the closed substack parametrizing sheaves that are not free at $q$ and hence it is given by 
$$
(\TFo_C^\chi)^{cl}:=\{K\oplus \O_E(-1)\: : K\in \TF_D^\chi\}.
$$
Using Lemma \ref{L:push-st}, the map $\st_*:\TFo_C^\chi\to \TF_D^\chi$ is given on the above three strata by 
\begin{equation}\label{E:st-strat1}
\begin{aligned}
    \st_*:\TFp_C^\chi& \longrightarrow \TF_D^\chi \\
    I &\mapsto \st_*(I_{E^c})
\end{aligned}
\quad \text{ and } \quad
\begin{aligned}
    \st_*:\TFm_C^\chi& \longrightarrow \TF_D^\chi\\
    I &\mapsto \st_*(\leftindex_{E^c}I)
\end{aligned}
\quad \text{ and } \quad
\begin{aligned}
    \st_*:(\TFo_C^\chi)^{cl}& \longrightarrow \TF_D^\chi\\
    K\oplus \O_E(-1) &\mapsto K.
\end{aligned}
\end{equation}
By putting together \eqref{E:TF-strat1} and \eqref{E:st-strat1}, we deduce that 
\begin{equation}\label{E:inv-Ups1}
 \wh{\Upsilon}^{-1}(D,p,J)=\{(C,I)\: : J\oplus \O_E(-1)\in \ov{\{I\}}\}.   
\end{equation}

Hence, the map $\wh{\Upsilon}$, \'etale locally on $(D,p,J)$ and up to smooth factors and $\Gm$-rigidification, is modeled on the map of deformation functors
(by \cite[Theorem~4.19]{AHR}) \begin{equation}\label{E:Def-Ups1}
\frac{\Def_q(C,J\oplus \O_E(-1))}{\Gm}\xrightarrow{\wh{\Upsilon}_*} \Def_p(D,p,J),
\end{equation}
where $\Def_q(C,J\oplus \O_E(-1))$ is the local deformation functor of the pair $(C,J\oplus \O_E(-1))$ at $q$, the action of $\Gm$ is induced by the scalar multiplication of $\Gm$ on $E$, and $\Def_p(D,p,J)$ is the local deformation functor of the triple $(D,p,J)$ at $p$. Moreover, since the morphism $\wh{\Upsilon}$ sits above the morphism $\Upsilon$, the map in \eqref{E:Def-Ups1} fits into the following commutative diagram of local deformation functors 
\begin{equation}\label{E:Def-diag1}
\begin{tikzcd}
\displaystyle \frac{\Def_q(C,J\oplus \O_E(-1))}{\Gm} \arrow["\wh{\Upsilon}_*", r] \arrow[d] &   \Def_p(D,p,J) \arrow[d, "\cong"]   \\
    \Def_q(C) \arrow["\Upsilon_*", "\cong" swap, r]  &   \Def_p(D,p)  
\end{tikzcd}
\end{equation}
where the vertical maps are forgetful morphism, the bottom horizontal map is an isomorphism since $\Upsilon$ is an isomorphism and the vertical right map is an isomorphism since $J$ is free at $p$ (because $p$ is a smooth point of $D$) and hence its local deformations are trivial. Using the above diagram and the description of the local deformations of a pair consisting of a nodal curve and a rank-one torsion-free sheaf in \cite[Lemmas 3.14, 5.6]{CMKVlocal}, we deduce that the miniversal deformation spaces for the map in \eqref{E:Def-Ups1} are the completion at the origin of the following map 
\begin{equation}\label{E:model-Ups1}
\left[\frac{\Spec k[w,\ov w]}{\Gm}\right]\to \Spec k[t],
\end{equation}
where $\Gm$ acts via $\lambda\cdot (w,\ov w):=(\lambda w, \lambda^{-1}\ov w)$ and the map is given by identifying $t$ with 
$w\ov w$. If we set $V_1:=\langle w\rangle$, then the map in \eqref{E:model-Ups1} is exactly the map $\mu$ in the statement of Part \eqref{P:push-loc3}. Moreover, it follows again from loc. cit. that the local model for morphism $\wh{\Upsilon}^+$ (resp. $\wh{\Upsilon}^-$) is the restriction of \eqref{E:model-Ups1} to the open substack $\{w\neq 0\}$ (resp.  $\{\ov w\neq 0\}$), which is exactly the map $\mu^+$ (resp. $\mu^-$) in the statement of Part~\eqref{P:push-loc3}. The proof of Part~\eqref{P:push-loc3} is now complete.

Part \eqref{P:push-loc4}: observe that $\Upsilon^{-1}(D,p)$ is the bubbling $C:=B_p(D)$ of $D$ at the node $p$ of $D$.
 Hence $C=E\cup E^c$, where $E\cong \PP^1$ is is the exceptional component, $E^c$ is canonically isomorphic to the partial normalization of $D$ at $p$ via the restriction of the stabilization morphism $\st:C\to D$ and $E$ and $E^c$ meet in two points $\{q_1,q_2\}$ which are nodes of $C$.  The stack $\TFo_C^\chi$ admits the following stratification
\begin{equation}\label{E:TF-strat2}
\TFo_C^\chi=(\TFo_C^\chi)^{op}\sqcup (\TFp_C^\chi)^{cl} \sqcup (\TFm_C^\chi)^{cl} \sqcup (\TFo_C^\chi)^{cl},
\end{equation}
where:

$\bullet$ $(\TFo_C^\chi)^{op}$ is the open substack consisting of sheaves that are free at $q_1$ and $q_2$ and they are trivial on $E$, or equivalently
$$
(\TFo_C^\chi)^{op}=\TFp_C^\chi\cap \TFm_C^\chi=\{I\in \TF_{C}^\chi : \chi(I_{E})= 1 \text{ and } \chi(I_{E^c})= \chi+1\};
$$

$\bullet$ $(\TFp_C^\chi)^{cl}$ and $(\TFm_C^\chi)^{cl}$ are the locally closed substacks given by 
$$
\begin{sis}
    & (\TFp_C^\chi)^{cl}:=\TFp_C^\chi- (\TFo_C^\chi)^{op}=\{I\in \TF_{C}^\chi : \chi(I_{E})\geq 1 \text{ and } \chi(I_{E^c})= \chi\}, \\
    & (\TFm_{C}^\chi)^{cl}:=\TFm_C^\chi- (\TFo_C^\chi)^{op}=\{I\in \TF_{C}^\chi : \chi(I_{E})=0 \text{ and } \chi(I_{E^c})\geq \chi+1\}; \\
\end{sis}
$$

$\bullet$ $(\TFo_C^\chi)^{cl}$ is the closed substack parametrizing sheaves that are not free at $\{q_1,q_2\}$ and hence it is given by 
$$
(\TFo_C^\chi)^{cl}:=\{K\oplus \O_E(-1)\: : K\in \TF_{E^c}^\chi\}.
$$
In order to describe the morphisms $\st_*$, we partition the codomain into an open and closed substack as follows
$$
\TF_{D}^\chi=(\TF_D^\chi)^{op}\sqcup (\TF_D^\chi)^{cl},
$$
given by, respectively
$$
\begin{sis}
  &(\TF_D^\chi)^{op}:=\{J\in \TF_D^\chi : J \text{ is free at } p\}, \\
  &(\TF_D^\chi)^{cl}:=\{J\in \TF_D^\chi : J \text{ is not free at } p\}.
\end{sis}
$$
Since $(\TFo_C^\chi)^{op}$ consists of sheaves that are free at $q_1$ and $q_2$ and are trivial on $E$, the push-forward $\st_*$ induces an isomorphism
\begin{equation}\label{E:st-open}
   \st_*:  (\TF_C^\chi)^{op} \xrightarrow{\cong} (\TF_D^\chi)^{op},
\end{equation}
with inverse given by the pull-back map $\st^*$.
Using Lemma \ref{L:push-st}, the map $\st_*$ on the other three strata is given by  
\begin{equation}\label{E:st-strat2}
\begin{aligned}
    \st_*:(\TFp_C^\chi)^{cl}& \longrightarrow (\TF_D^\chi)^{cl} \\
    I &\mapsto \st_*(I_{E^c})
\end{aligned}
\quad \text{ and } \quad
\begin{aligned}
    \st_*:(\TFm_C^\chi)^{cl}& \longrightarrow (\TF_D^\chi)^{cl}\\
    I &\mapsto \st_*(\leftindex_{E^c}I)
\end{aligned}
\quad \text{ and } \quad
\begin{aligned}
    \st_*:(\TFo_C^\chi)^{cl}& \longrightarrow (\TF_D^\chi)^{cl}\\
    K\oplus \O_E(-1) &\mapsto \st_*(K).
\end{aligned}
\end{equation}
Now observe that, since $\st_{|E^c}:E^c\to D$ is the partial normalization of $D$ at the node $p$, we have an isomorphism 
\begin{equation}\label{E:iso-clst}
 (\st_{|E^c})_*:\TF^\chi_{E^c} \xrightarrow{\cong} (\TF_D^\chi)^{cl}.
\end{equation}
Hence, since $J\in (\TF_D^\chi)^{cl}$ by assumption, there exists a unique $\wt J\in \TF_{E^c}^\chi$ such that $\st_*(\wt J)=J$.

By combining \eqref{E:st-open} and \eqref{E:st-strat2}, we deduce that 
\begin{equation}\label{E:inv-Ups2}
 \wh{\Upsilon}^{-1}(D,p,J)=\{(C,I)\: : \wt J\oplus \O_E(-1)\in \ov{\{I\}}\}.   
\end{equation}

Hence, the map $\wh{\Upsilon}$, \'etale locally on $(D,p,J)$ and up to smooth factors and $\Gm$-rigidification, is modeled on the map of deformation functors (by \cite[Theorem~4.19]{AHR})
\begin{equation}\label{E:Def-Ups2}
\frac{\Def_{\{q_1,q_2\}}(C,\wt J\oplus \O_E(-1))}{\Gm}\xrightarrow{\wh{\Upsilon}_*} \Def_p(D,p,J)=\Def_p(D,p)\times_{\Def_p(D)}\Def_p(D,J),
\end{equation}
where $\Def_{\{q_1,q_2\}}(C,\wt J\oplus \O_E(-1))$ is the local deformation functor of the pair $(C,J\oplus \O_E(-1))$ at the two points $\{q_1,q_2\}$, the action of $\Gm$ is induced by the scalar multiplication of $\Gm$ on $E$, and $\Def_p(D,p,J)$ is the local deformation functor of the triple $(D,p,J)$ at $p$ which is the fiber product of the local deformation functor $\Def_p(D,p)$ of the pair $(D,p)$ at $p$ with the local deformation functor of $(D,J)$ at $p$ over the local deformation functor of $\Def_p(D)$ of $D$ at $p$.  Moreover, since the morphism $\wh{\Upsilon}$ lies above the morphism $\Upsilon$, the map in \eqref{E:Def-Ups2} fits into the following commutative diagram of local deformation functors 
\begin{equation}\label{E:Def-diag2}
\begin{tikzcd}
\displaystyle \frac{\Def_{\{q_1,q_2\}}(C,\wt J\oplus \O_E(-1))}{\Gm} \arrow["\wh{\Upsilon}_*", r] \arrow[d] &   \Def_p(D,p,J)=\Def_p(D,p)\times_{\Def_p(D)}\Def_p(D,J) \arrow[d]   \\
    \Def_{\{q_1,q_2\}}(C) \arrow["\Upsilon_*", "\cong" swap, r]  &   \Def_p(D,p)  
\end{tikzcd}
\end{equation}
where the vertical maps are forgetful morphism, the bottom horizontal map is an isomorphism since $\Upsilon$ is an isomorphism. Using the above diagram and the description of the local deformations of a pair consisting of a nodal curve and a rank-one torsion-free sheaf in \cite[Lemmas 3.14, 5.6]{CMKVlocal}, we deduce that the miniversal deformation spaces for the diagram  in \eqref{E:Def-diag2} are the completions at the origin of the following diagram 
\begin{equation}\label{E:model-Ups2}
\begin{tikzcd}
\displaystyle \left[\frac{\Spec k[w_1,\ov w_1, w_2, \ov w_2]}{\Gm}\right]  \arrow[rr] \arrow[dr]&& \displaystyle \Spec\frac{k[x,y,u,v]}{(xy-uv)} \arrow[ld]\\
& \Spec k[x,y] & 
\end{tikzcd}
\end{equation}
where $\Gm$ acts via $\lambda\cdot (w_1,\ov w_1,w_2, \ov w_2):=(\lambda w_1, \lambda^{-1}\ov w_1, \lambda w_2, \lambda^{-1}\ov w_2)$, and the maps are given by the rules 
$$x=w_1\ov w_1, \ y=w_2\ov w_2, \ u=w_1\ov w_2,\ v=w_2\ov w_1.$$
If we set $V_2:=\langle w_1,w_2\rangle$, then the map in \eqref{E:model-Ups2} is exactly the map $\nu$ in the statement of Part \eqref{P:push-loc4}. Moreover, it follows again from loc. cit. that the local model for morphism $\wh{\Upsilon}^+$ (resp. $\wh{\Upsilon}^-$) is the restriction of \eqref{E:model-Ups1} to the open substack $\{(w_1,w_2)\neq (0,0)\}$ (resp.  $\{(\ov w_1,\ov w_2)\neq (0,0)\}$), which is exactly the map $\nu^+$ (resp. $\nu^-$) in the statement of Part \eqref{P:push-loc4}. The proof of Part \eqref{P:push-loc4} is now complete. 
\end{proof}

\subsection{The case of compactified universal Jacobians}

We start by constructing some natural maps between the stability domain $\Dgn$ and the stability domain $\Dgnbis$. 
For that purpose, we need to introduce the \emph{extended stability domain} $\Dgnwh$ of type $(g,n)$ as
\begin{equation}\label{E:Dgn-ext}
\Dgnwh:=\{(e;h,A)\: : 
e\in \NN_{>0}, 0\leq h \leq g-e+1, A\subseteq [n],2h-2+e+|A|\geq 0, 2g-2h-e+|A^c|\geq 0\},
\end{equation}
where $[n]:=\{1,\ldots, n\}$. The set $\Dgnwh$ is endowed with a complementary involution $(-)^c$ and with triangles, which are defined exactly as for $\Dgn$, see \eqref{E:Dgn} and what follows. 

Note that
$$
\Dgnwh=\Dgn\sqcup \{(2;0,\emptyset),(2;0,\emptyset)^c\} \sqcup \bigsqcup_{i=1}^n \{(1;0,\{i\}), (1;0,\{i\})^c\}.
$$
Moreover, the triangles of $\Dgnwh$ that are not contained in $\Dgn$ are the ones of the form
$$
\Delta=[(2;0,\emptyset),(e;h,A),(e;h,A)^c].
$$
for some $(e;h,a) \in \Dgn$.
\begin{remark}\label{R:vine-trian2}
As in Remark \ref{R:vine-trian}, pairs of complementary elements of $\Dgnwh$ correspond to vine strata of the moduli stack $\Mbargn^{ss}$ parameterizing semistable $n$-pointed curves of genus $g$, and triangles in $\Dgnwh$ correspond to triangular strata in $\Mbargn^{ss}$.
\end{remark}

\begin{definition}\label{E:Dg-n+1}
 \noindent 
 \begin{enumerate}
     \item For any $1\leq i \leq n$, we define the following map
\begin{equation}\label{E:xi-i}
\begin{aligned}
\xi_i:\Dgn & \longrightarrow \Dgnbis\\
(e;h,A)& \mapsto 
\begin{cases}
(e;h,A\sqcup \{n+1\}) & \text{ if } i\in A,\\
(e;h,A) & \text{ if } i\not\in A.\\
\end{cases}
\end{aligned}
\end{equation}
\item We define the map
\begin{equation}\label{E:omega}
\begin{aligned}
\varpi:\Dgnbis & \longrightarrow \Dgnwh\\
(e;h,A)& \mapsto 
\begin{cases}
(e;h,A- \{n+1\}) & \text{ if } n+1\in A,\\
(e;h,A) & \text{ if } n+1\not\in A.\\
\end{cases}
\end{aligned}
\end{equation}
 \end{enumerate}
\end{definition}
The maps $\xi_i$ and $\varpi$ are compatible with the complementary involution and they send triangles into triangles. 
Moreover, they satisfy the relation $\varpi\circ \xi_i=\id$ for any $1\leq i \leq n$. Geometrically, using the bijections of Remarks \ref{R:vine-trian} and \ref{R:vine-trian2}, the map $\xi_i$ is induced by the $i$-section map $\sigma_i:\Mbargn\to \Cbargn\cong \Mbargnbis$ while the map $\varpi$ is induced by the  morphism $\Mbargnbis\to \Mbargn^{ss}$ that forgets the last marked point.

Using the above maps, we can relate the V-functions of type $(g,n)$ (see Definition~\ref{D:Sigmagn}) with those of type $(g,n+1)$.

\begin{lemma-definition}\label{E:Vfun-n+1}
Fix $\chi \in \Z$.
  We have the following well-defined maps (for any $1\leq i \leq n$):
\begin{equation}\label{E:Xi-i}
\begin{aligned}
\Xi_i:\Sigma_{g,n+1}^\chi & \longrightarrow \Sigma_{g,n}^\chi\\
\tau & \mapsto \Xi_i(\tau):=\tau\circ \xi_i
\end{aligned}
\end{equation}
\begin{equation}\label{E:Omega}
\begin{aligned}
\Omega:\Sigma_{g,n}^{\chi} & \longrightarrow \Sigma_{g,n+1}^\chi\\
\sigma & \mapsto \Omega(\sigma):=\wh \sigma\circ \varpi,
\end{aligned}
\end{equation}
where $\wh \sigma:\Dgnwh\to \Z$ is defined by 
$$
\begin{sis}
&   \wh\sigma_{|\Dgn}=\sigma,\\  
& \wh \sigma((2;0,\emptyset))=\wh \sigma((1;0,\{j\}))=0 \text{ for any } 1\leq j \leq n,\\
& \wh \sigma((2;0,\emptyset)^c)=\wh \sigma((1;0,\{j\})^c)=\chi \text{ for any } 1\leq j \leq n.
\end{sis}
$$
The above maps satisfy the following properties
\begin{enumerate}[(a)]
    \item \label{E:Vfun-n+1a} $\Xi_i\circ \Omega=\id$;
    \item \label{E:Vfun-n+1b} $\D(\Xi_i(\tau))=\xi_i^{-1}(\D(\tau))$;
    \item \label{E:Vfun-n+1c} $\D(\Omega(\sigma))=\varpi^{-1}\left(\D(\sigma)\cup (\Dgnwh- \Dgn)\right)$.
\end{enumerate}
\end{lemma-definition}
\begin{proof}
    The  map $\Xi_i$ is well-defined and it satisfies \eqref{E:Vfun-n+1b} since $\xi_i$ is compatible with the complementary involution and it sends triangles of $\Dgn$ into triangles of $\Dgnbis$.

     The  map $\Omega$ is well-defined and it satisfies \eqref{E:Vfun-n+1c} since:
     \begin{itemize}
     \item $\wh \sigma$ satisfies the analogous of the properties of Definition \ref{D:Sigmagn} on $\Dgnwh$ with degeneracy subset $\D(\wh \sigma)=\D(\sigma)\cup (\Dgnbis- \Dgn)$, as it follows from the definition of $\wh \sigma$ and the fact that the unique triangles of $\Dgnwh$ not contained in $\Dgn$ are the ones of the form $\Delta=[(2;0,\emptyset),(e;h,A),(e;h,A)^c]$, for which Condition \eqref{E:triaUni} reduces to Condition \eqref{E:sumUni} for the complementary pair $\{(e;h,A),(e;h,A)^c\}$.
    \item $\varpi$ is compatible with the complementary involution and it sends triangles of $\Dgnbis$ into triangles of $\Dgnwh$.
     \end{itemize}
     
    Finally, Property \eqref{E:Vfun-n+1a} follows from  the definitions of $\Xi_i$ and $\Omega$, together with the fact that $\varpi\circ \xi_i=\id$. 
\end{proof}

\begin{remark}\label{R:Pol-n+1}
The maps $\Xi_i$ and $\Omega$ preserve classical V-functions. Indeed, consider the following maps (which, by slight abuse of notation, we denote with the same letters):
\begin{equation}\label{E:Xi-i-Pol}
\begin{aligned}
\Xi_i:\PicRel_{g,n+1}^\chi(\RR)  & \longrightarrow \PicRel_{g,n}^\chi(\RR)\\
\beta\omega_{\pi}+\sum_{j=1}^{n+1} \alpha_j\Sigma_j+\sum_{(h,A)\in \Bgnbis}\gamma_{(h,A)}\O(C_{(h,A)}) & \mapsto \beta\omega_{\pi}+\sum_{j=1}^n \alpha_j\Sigma_j+\alpha_{n+1}\Sigma_{i}+\sum_{(h,A)\in \Bgn}\gamma_{(h,A)}\O(C_{(h,A)})
\end{aligned}
\end{equation}
\begin{equation}\label{E:Omega-Pol}
\begin{aligned}
\Omega:\PicRel_{g,n}^\chi(\RR)  & \longrightarrow \PicRel_{g,n+1}^\chi(\RR)\\
\beta\omega_{\pi}+\sum_{j=1}^{n} \alpha_j\Sigma_j+\sum_{(h,A)\in \Bgn}\gamma_{(h,A)}\O(C_{(h,A)}) & \mapsto \beta\omega_{\pi}+\sum_{j=1}^n \alpha_j\Sigma_j+\sum_{(h,A)\in \Bgn}\gamma_{(h,A)}\O(C_{(h,A)})+\\
& +\sum_{j=1}^n(\beta-\alpha_j)\O(C_{(0,\{j,n+1\})}),
\end{aligned}
\end{equation}
for any $1\leq i \leq n$; where we have used that 
$$
\Bgnbis=\Bgn \sqcup \bigsqcup_{j=1}^n \{(0,\{j,n+1\}), (0,\{j,n+1\})^c\}.
$$
Note that $\Xi_i\circ \Omega=\id$. Using Formula \eqref{E:for-deg}, it follows that:
$$\sigma(\Xi_i(L))=\Xi_i(\sigma_L) \quad \text{ and } \quad \sigma(\Omega(L))=\Omega(\sigma_L),$$
which shows that $\Xi_i$ and $\Omega$ send classical V-functions into classical V-functions.
\end{remark}

Using the map $\Omega$, we can compute the inverse image of a compactified Jacobian stack over $\Mbargn$ via the morphism $\wh{\Phi}$ of Theorem \ref{T:univ-TF}.

\begin{proposition}\label{P:inv-Phi}
 For any $\sigma\in \Sigma_{g,n}^\chi$, we have that 
 $$
 \wh{\Phi}^{-1}(\ov\J_{g,n}(\sigma))=\ov \J_{g,n+1}(\Omega(\sigma)).
 $$
\end{proposition}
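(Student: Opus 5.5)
Both sides are open substacks of $\TF_{g,n+1}^\chi$, so I will compare them on geometric points $(C,I)$ with $C\in\Mbargnbis$. First, observe that the right-hand side is contained in $\TFo_{g,n+1}^\chi$. If $E$ is an exceptional component of $C$, then $\type_C(E)$ is $(1;0,\{j,n+1\})$ or $(2;0,\{n+1\})$. In both cases $\varpi(\type_C(E))\in\Dgnwh-\Dgn$, so $\Omega(\sigma)(\type_C(E))=0$ and $\Omega(\sigma)(\type_C(E^c))=\chi$. Hence semistability gives $\chi(I_E)\geq 0$ and $\chi(I_{E^c})\geq\chi$, so $(C,I)\in\TFo_{g,n+1}^\chi$. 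Conversely, for $(C,I)\in\TFo_{g,n+1}^\chi$ the inequalities on $E$ and $E^c$ hold automatically. We are therefore reduced to showing, for $(C,I)\in\TFo_{g,n+1}^\chi$, that $I$ is $\Omega(\sigma)$-semistable on the remaining biconnected subcurves if and only if $\st_*(I)$ is $\sigma$-semistable on $C^{\st}$.

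The key step is to compare biconnected subcurves. If $C$ has no exceptional component, then $\st$ is an isomorphism, $\type_{C^{\st}}(Y)=\varpi(\type_C(Y))$, and the claim is tautological. Otherwise let $E$ be the exceptional component with $q=\st(E)$. I will show that the map $Y\mapsto \st(Y)$ gives a bijection between biconnected subcurves $Y\neq E,E^c$ of $C$ with $E\not\subseteq Y$ and $\BCon(C^{\st})$, together with the complementary ones. This uses the fact that $E$ meets $E^c$ in at most two points and carries only $p_{n+1}$ and possibly one $p_j$. Under this bijection $\type_{C^{\st}}(\st(Y))=\varpi(\type_C(Y))$. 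I then need to compare $\chi(\st_*(I)_{\st(Y)})$ with $\chi(I_Y)$ for $Y\not\supseteq E$, and $\chi(\st_*(I)_{\st(Y\cup E)})$ with $\chi(I_{Y\cup E})$. For this I use the exact sequences of Lemma~\ref{L:push-st}\eqref{L:push-st3} and the formula \eqref{E:2restY}. When $Y$ does not contain $E$, I expect $\chi(\st_*(I)_{\st(Y)})=\chi(I_Y)$ up to a correction at $q$ of the form $h^0(E,I_E)$ or $h^1(E,\leftindex_E I)$. I also expect that the worst case over $Y$ versus $Y\cup E$ gives exactly the same inequality, because $Y$ and $Y^c\cup E$ are complementary and $\Omega(\sigma)$ takes the same value pattern as $\sigma$ on them via $\varpi$.

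A cleaner route, which I would try first, avoids the node-by-node bookkeeping by using triangles. If $E$ meets both $Y$ and $Y^c$, then $[E,Y,\ (Y\cup E)^c]$ is a triangle in $C$. Since $\type(E)$ is $\Omega(\sigma)$-degenerate with value $0$, Remark~\ref{R:triang} gives
\[
\Omega(\sigma)(\type(Y\cup E))=\Omega(\sigma)(\type(Y)).
\]
The semistability inequalities for $Y$ and $Y\cup E$ are then equivalent, given $\chi(I_E)\geq0$, $\chi(I_{E^c})\geq\chi$ and the sheaf-level relation between $I_Y$ and $I_{Y\cup E}$. It then remains to check that $\chi(\st_*(I)_{\st(Y)})\geq\sigma(\type\st(Y))$ holds if and only if $\min(\chi(I_Y),\chi(I_{Y\cup E}))$ satisfies the same bound. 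This follows from the two exact sequences of Lemma~\ref{L:push-st}, since $\st_*(I)_{\st(Y)}$ is computed from $I_{Y\cup E}$ or $I_Y$ according to whether $I$ is free or not at the nodes of $E$.

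The main obstacle is this last local comparison at the node(s) $E\cap E^c$, especially in the case where $E$ has two nodes and $I$ is not free at one of them. I will handle it by the case split already used in Proposition~\ref{P:push-loc}: $I_E=\O_E$ with $I$ free at both points, or $I$ not free at $E\cap E^c$, or the mixed strata. In each case I will compute $\chi$ of the restrictions explicitly.
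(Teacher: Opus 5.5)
Your reduction is the same as the paper's. The types $(1;0,\{j,n+1\})$ and $(2;0,\{n+1\})$ of exceptional components give $\ov\J_{g,n+1}(\Omega(\sigma))\subseteq{}\TFo_{g,n+1}^\chi$, and the proposition then reduces to the identity
\[
\chi(\st_*(I)_Y)=\min\{\chi(I_Z)\;:\;Z\in\BCon(C)-\{E,E^c\},\ \st(Z)=Y\}\qquad\text{for }Y\in\BCon(C^{\st}).
\]
Two side remarks. First, $Z\mapsto\st(Z)$ is surjective but not bijective. Its fibres are $\{\st^{-1}(Y)\}$, or $\{\wt Y,\wt Y\cup E\}$ with $\wt Y$ the strict transform, the latter exactly when $q:=\st(E)\in Y\cap Y^c$. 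Second, the equality of the $\Omega(\sigma)$-values on a fibre follows immediately from $\Omega(\sigma)=\wh\sigma\circ\varpi$ and $\varpi(\type_C(Z))=\type_{C^{\st}}(\st(Z))$. Your triangle argument is unnecessary, and it fails when $q$ is a separating node of $C^{\st}$, since then $E\notin\BCon(C)$.

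The real problem is that the displayed identity is the whole content of the proposition, you leave it unproved, and the one mechanism you indicate is wrong. Put $N:=E\cap\wt Y$, and let $\delta_N(I)=1$ if $I$ is not free at $N$ and $0$ otherwise. Then $\chi(I_{\wt Y\cup E})=\chi(I_{\wt Y})+\chi(I_E)+\delta_N(I)-1$. Take $E$ with two nodes, $Y$ with $q\in Y\cap Y^c$, and $I$ free at both nodes of $E$, with either $I_E\cong\O_E(1)$ or $I_E\cong\O_E(-1)$. Both sheaves lie in ${}\TFo_{g,n+1}^\chi$.
\begin{itemize}
\item If $I_E\cong\O_E(1)$, then $\st_*(I)_Y=\st_*(I_{\wt Y})$ and $\chi(I_{\wt Y\cup E})=\chi(I_{\wt Y})+1$.
\item If $I_E\cong\O_E(-1)$, then $\st_*(I)_Y=\st_*(I_{\wt Y\cup E})$ and $\chi(I_{\wt Y\cup E})=\chi(I_{\wt Y})-1$.
\end{itemize}
So freeness at the nodes of $E$ does not decide which restriction computes $\st_*(I)_Y$. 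The same example refutes your claim that the inequalities for $\wt Y$ and $\wt Y\cup E$ are equivalent; only their minimum matters.

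The missing idea is the dichotomy on $\chi(I_E)+\delta_N(I)$, which is how the paper proves the identity.
\begin{itemize}
\item If $\chi(I_E)+\delta_N(I)\geq 1$, then $\chi((\leftindex_{\wt Y^c}{I})_E)=\chi(I_E)+\delta_N(I)-1\geq 0$. Hence $R^1\st_*(\leftindex_{\wt Y^c}{I})=0$, as in the proof of Lemma~\ref{L:push-st}. So $\st_*(I)\to\st_*(I_{\wt Y})$ is a surjection onto a torsion-free sheaf on $Y$, and $\st_*(I)_Y=\st_*(I_{\wt Y})$.
\item If $\chi(I_E)+\delta_N(I)\leq 1$, then $\st_*(I)\to\st_*(I_{\wt Y\cup E})$ is surjective, because its kernel lives on $(\wt Y\cup E)^c$, where $\st$ is finite. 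The target is torsion-free because $\chi(\leftindex_{E}{(I_{\wt Y\cup E})})=\chi(I_E)+\delta_N(I)-1\leq 0$ forces $h^0(\leftindex_{E}{(I_{\wt Y\cup E})})=0$. So $\st_*(I)_Y=\st_*(I_{\wt Y\cup E})$.
\end{itemize}
In either case $R^1\st_*$ of the quotient vanishes because $R^1\st_*(I)=0$, so Euler characteristics are preserved. The formula for $\chi(I_{\wt Y\cup E})-\chi(I_{\wt Y})$ then shows that the chosen subcurve is the minimiser. When $q\notin Y\cap Y^c$ (in particular always when $q=p_j$), $\st$ is an isomorphism over a neighbourhood of $Y$ or of $Y^c$, and $\st_*(I)_Y=\st_*(I_{\st^{-1}(Y)})$ directly. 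A stratum-by-stratum computation along the lines of Proposition~\ref{P:push-loc} could in principle recover this, but as written your proposal stops exactly at the decisive step.
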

\begin{proof}
Consider the V-stability conditions $\s^{\sigma}\in \VStab_{g,n}^\chi$ and $\s^{\Omega(\sigma)}\in \VStab_{g,n+1}^\chi$,  as in Proposition \ref{P:Stabgn}. 

First of all, we observe that $\ov \J_{g,n+1}(\Omega(\sigma))\subset \TFo_{g,n+1}^\chi$, since if $E$ is an exceptional component of $C\in \Mbargnbis(k)$ then $\type_C(E)=(2;0,\{n+1\})$ or $(1;0,\{j,n+1\})$ for some $1\leq j \leq n$, and hence  
$$
\s^{\Omega(\sigma)}(E)=\Omega(\sigma)(\type_C(E))=0 \text{ and } \s^{\Omega(\sigma)}(E^c)=\Omega(\sigma)(\type_C(E^c))=\chi,
$$
by definition of $\Omega(\sigma)$.

Therefore, it remains to show that for any geometric point $(C,I)$ of $\TFo_{g,n+1}^\chi$, we have that 
\begin{equation}\label{E:2-semistab}
  \begin{aligned}
      & I \text{ is } \s^{\Omega(\sigma)}(C)-\text{semistable, i.e.} \\
      & \chi(I_Z)\geq \Omega(\sigma)(\type_C(Z)) \text{ for any } Z\in \BCon(C)
      \end{aligned} 
      \Leftrightarrow 
      \begin{aligned}
      & \st_*(I) \text{ is } \s^{\sigma}(C^{\st})-\text{semistable, i.e. } \\
      & \chi(\st_*(I)_Y)\geq \sigma(\type_{C^{\st}}(Y)) \text{ for any } Y\in \BCon(C^{\st}).
      \end{aligned}
\end{equation}

 The biconnected subcurves of $C$ and of its stabilization $C^{\st}$, and their types, are related by the following commutative diagram with surjective horizontal arrows
\begin{equation}\label{E:2BCon}
\begin{tikzcd}
Z \dar[maps to, rr] && \un{\st}(Z):=\st(Z) \\
  \BCon(C)-\{E,E^c\}   \arrow[rr, twoheadrightarrow, "\un{\st}"] \arrow[d, "\type_C(-)"']&&   \BCon(C^{\st}) \arrow[d, "\type_{C^{\st}}(-)"]  \\
    \varpi^{-1}(\Dgn) \arrow[rr, twoheadrightarrow, "\varpi"]&&  \Dgn  
\end{tikzcd}
\end{equation}
where $E$ is the exceptional component of $C$ (if there is one) or $\emptyset$ (if $\st$ is an isomorphism). Moreover, the fibers of the map $\un{\st}$ are given as follows
$$
\un{\st}^{-1}(Y)=
\begin{cases}
  \{\st^{-1}(Y)\} & \text{ if either } E=\emptyset \text{  or } \st(E)\not\in Y\cap Y^c,\\
  \{\st^{-1}(Y), \st^{-1}(Y)-E\} & \text{ if }  \st(E)\in Y\cap Y^c.\\
\end{cases}
$$
Observe now that, since $(C,I)\in \TFo_{g,n+1}^\chi$ by assumption, the sheaf $I$ satisfies $\chi(I_E)\geq 0=\Omega(\sigma)(\type_C(E))$ and $\chi(I_{E^c})\geq \chi=\Omega(\sigma)(\type_C(E^c))$, so that the left hand side of \eqref{E:2-semistab} holds true for $E$ and $E^c$. Therefore, the equivalence  in \eqref{E:2-semistab} follows from the surjectivity of $\un{\st}$ and the following 

\un{Claim:} For any $Y\in \BCon(C^{\st})$, we have that 
$$
\chi(\st_*(I)_Y)=\min_{Z\in \un{\st}^{-1}(Y)}\{\chi(I_Z)\}.
$$

Indeed, the Claim is obvious if either $E=\emptyset$, i.e. $\st$ is an isomorphism, or $\st(E)\not \in Y\cap Y^c$. Suppose now that $\st(E)\in Y\cap Y^c$, so that 
$\un{\st}^{-1}(Y)=\{\st^{-1}(Y)-E:=\wt{Y},\st^{-1}(Y)=\wt{Y}\cup E\}$. Denote by $N$ the node in between $E$ and $\wt{Y}$ and set 
$$
\delta_N(I)=
\begin{cases}
0 &\text{ if } I \text{ is free at } N,\\
1 &\text{ if } I \text{ is not free at } N.\\
\end{cases}
$$
We will distinguish two exhaustive (but not mutually exclusive) cases:

$\bullet$ Case I: $\chi(I_E)+\delta_N(I)\geq 1$. 

In this case, we have that 
\begin{equation}\label{E:Iineq}
\chi(I_{\wt Y\cup E})=\chi(I_{\wt Y})+\chi(I_E)+\delta_N(I)-1\geq \chi(I_{\wt Y}).
\end{equation}
We will show that
\begin{equation}\label{E:push-res1}
    \st_*(I)_Y=\st_*(I_{\wt Y}),
\end{equation}
which, together with the fact that $R^1\st_*(I_{\wt Y})=0$ (since $R^1\st_*(I)=0$),  implies the desired equality 
$$
\chi(\st_*(I)_Y)=\chi(\st_*(I_{\wt Y}))=\chi(I_{\wt Y})=\min_{Z\in \un{\st}^{-1}(Y)}\{\chi(I_Z)\}.
$$
Consider the exact sequence (see \eqref{E:resY})
\begin{equation}\label{E:exa-se1}
0\to \leftindex_{(\wt Y)^c}I\to I \to I_{\wt Y}\to 0.
\end{equation}
Since $\leftindex_{(\wt Y)^c}I=I_{(\wt Y)^c}(-(\wt{Y}\cap \wt{Y}^c\cap \NF(I)^c)$ by \eqref{E:2restY}, we have that 
$$
(\leftindex_{(\wt Y)^c}I)_E=
\begin{cases}
I_E(-N) & \text{ if } I \text{ is free at } N,\\
I_E & \text{ if } I \text{ is not free at } N.\\
\end{cases}
$$
In any case, our assumption is equivalent to the fact $\chi((\leftindex_{(\wt Y)^c}I)_E)\geq 0$, which then implies that $R^1\st_*(\leftindex_{(\wt Y)^c}I)=0$ by \eqref{E:equa1}. By taking the push-forward via $\st$ of the exact sequence in \eqref{E:exa-se1} and using the above vanishing, we obtain the surjection 
$$
\st_*(I) \twoheadrightarrow \st_*(I_{\wt Y}). 
$$
Therefore, using that $\st_*(I_{\wt Y})$ is also torsion-free because $\st_{|\wt Y}$ is an isomorphism,
we deduce that \eqref{E:push-res1} holds.

$\bullet$ Case II: $\chi(I_E)+\delta_N(I)\leq 1$.

In this case, we have that 
\begin{equation}\label{E:IIineq}
\chi(I_{\wt Y\cup E})=\chi(I_{\wt Y})+\chi(I_E)+\delta_N(I)-1\leq \chi(I_{\wt Y}).
\end{equation}
We will show that
\begin{equation}\label{E:push-res2}
    \st_*(I)_Y=\st_*(I_{\wt Y\cup E}),
\end{equation}
which, together with the fact that $R^1\st_*(I_{\wt Y\cup E})=0$ (since $R^1\st_*(I)=0$), will imply the desired equality
$$
\chi(\st_*(I)_Y)=\chi(\st_*(I_{\wt Y\cup E}))=\chi(I_{\wt Y\cup E})=\min_{Z\in \un{\st}^{-1}(Y)}\{\chi(I_Z)\}.
$$
Consider the exact sequence (see \eqref{E:resY})
\begin{equation}\label{E:exa-se2}
0\to \leftindex_{(\wt Y\cup E)^c}I\to I \to I_{\wt Y\cup E}\to 0.
\end{equation}
Since the restriction of $\st$ to $(\wt Y\cup E)^c$ is an isomorphism, we have that $R^1\st_*(\leftindex_{(\wt Y\cup E)^c}I)=0$, which then implies that we have a surjection 
\begin{equation}\label{E:suj-push}
\st_*(I) \twoheadrightarrow \st_*(I_{\wt Y\cup E}). 
\end{equation}
We now apply our assumption in order to get  
$$\chi(I_{\wt Y\cup E})=\chi(I_{\wt Y})+\chi(I_E)+\delta_N(I)-1\leq \chi(I_{\wt Y}).$$
Then we apply \eqref{E:equa3} in order to deduce that 
$\st_*(I_{\wt Y\cup E})$ is torsion-free, which together with \eqref{E:suj-push} implies \eqref{E:push-res2}. 
\end{proof}

\begin{corollary}\label{C:univ-cJ}
Let $\sigma\in \Sigma_{g,n}^\chi$. 
\begin{enumerate}[(i)]
    \item \label{C:univ-cJ1} There is a commutative diagram 
\begin{equation}\label{E:univ-cJ}
\begin{tikzcd}
  \ov\J_{g,n+1}(\Omega(\sigma)) \arrow["\wh{\Phi}", rd]  \arrow[rr, "\wh{\Upsilon}"]& & \Cbargn\times_{\Mbargn} \ov\J_{g,n}(\sigma) \arrow[ld, "\wh{\pi}"']   \\
     & \ov\J_{g,n}(\sigma)  \arrow[ur, bend right=30, "\wh{\sigma_i}"']\arrow[ul, bend left=30, "\wh{\sigma_i}'"]& 
\end{tikzcd}
\end{equation}
lying over the diagram in \eqref{E:univ-Mg}, whose morphisms are defined on geometric points by
$$
\begin{sis}
& \wh{\Phi}(C,I)=(C^{\st},\st_*(I)),\\   
& \wh{\Upsilon}(C,I)=(C^{\st},\st(p_{n+1}),\st_*(I)),\\
& \wh{\sigma_i}'(C,I)=(B_{p_i}(C),\st^*(I)).
\end{sis}
$$
Moreover, the morphism $\wh{\Upsilon}$ is universally closed, $\Theta$-complete and S-complete and the morphisms $\wh{\sigma_i}'$ (for $1\leq i \leq n$) are sections of $\wh{\Phi}$. 
\item \label{C:univ-cJ2}
The morphism $\wh{\Upsilon}$ induces the following isomorphisms over $\Cbargn\times_{\Mbargn} \ov\J_{g,n}(\sigma)$: 
$$
\begin{sis}
& \PP(\I_{g,n})\cong \ov\J_{g,n+1}(\Omega(\sigma))^+:=\left\{\begin{aligned}
    & I\in \ov\J_{g,n+1}(\Omega(\sigma)): \chi(I_E)>0\\ 
    & \text{ for any exceptional component } E 
    \text{ of every } C\in \Mbargnbis
    \end{aligned}\right\},\\
    & \PP(\I_{g,n}^\vee) \cong \ov\J_{g,n+1}(\Omega(\sigma))^-:=\left\{\begin{aligned}
    & I\in \ov\J_{g,n+1}(\Omega(\sigma)): \chi(I_{E^c})>\chi\\ 
    & \text{ for any exceptional component } E 
    \text{ of every } C\in \Mbargnbis
    \end{aligned}\right\},\\
\end{sis}
$$
under which the tautological line bundle $\O(1)$ is isomorphic to, respectively,  
$$
\O_{\PP(\I_{g,n})}(1)\cong \wh{\sigma}_{n+1}^*(\I_{g,n+1})\text{ and } \O_{\PP(\I_{g,n}^\vee)}(1)\cong \wh{\sigma}_{n+1}^*(\I_{g,n+1}^\vee).
$$
\end{enumerate}
\end{corollary}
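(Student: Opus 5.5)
The plan is to restrict Theorem \ref{T:univ-TF} to the open substack $\ov\J_{g,n+1}(\Omega(\sigma))\subseteq \TFo_{g,n+1}^\chi$, using Proposition \ref{P:inv-Phi} throughout. That proposition gives $\ov\J_{g,n+1}(\Omega(\sigma))=\wh{\Phi}^{-1}(\ov\J_{g,n}(\sigma))$. Since $\wh{\Phi}=\wh{\pi}\circ\wh{\Upsilon}$, this also equals $\wh{\Upsilon}^{-1}(\Cbargn\times_{\Mbargn}\ov\J_{g,n}(\sigma))$, because $\wh{\pi}^{-1}(\ov\J_{g,n}(\sigma))=\Cbargn\times_{\Mbargn}\ov\J_{g,n}(\sigma)$.

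For part \eqref{C:univ-cJ1}, the diagram \eqref{E:univ-cJ} is the base change of \eqref{E:univ-TF} along the open immersion $\ov\J_{g,n}(\sigma)\hookrightarrow\TF_{g,n}^\chi$. The geometric descriptions of the morphisms are inherited. The sections $\wh{\sigma_i}'$ land in $\wh{\Phi}^{-1}(\ov\J_{g,n}(\sigma))$ because $\wh{\Phi}\circ\wh{\sigma_i}'=\id$, so they are still sections of $\wh{\Phi}$. Being a good moduli space morphism is stable under base change (\cite{Alp}). Hence $\wh{\Upsilon}$ restricted to the preimage of the open substack $\Cbargn\times_{\Mbargn}\ov\J_{g,n}(\sigma)$ is again a good moduli space morphism, and therefore universally closed, $\Theta$-complete and S-complete (\cite{AHLH}). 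Alternatively, these three properties are stable under base change along open immersions directly.

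For part \eqref{C:univ-cJ2}, intersect $\wh{\Upsilon}^{-1}(\Cbargn\times_{\Mbargn}\ov\J_{g,n}(\sigma))$ with $\TFp_{g,n+1}^\chi$. Since $\ov\J_{g,n+1}(\Omega(\sigma))\subseteq\TFo_{g,n+1}^\chi$ already forces $\chi(I_{E^c})\geq\chi$, the intersection is exactly the locus $\ov\J_{g,n+1}(\Omega(\sigma))^+$ where $\chi(I_E)>0$. Likewise, intersecting with $\TFm_{g,n+1}^\chi$ gives $\ov\J_{g,n+1}(\Omega(\sigma))^-$.

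Theorem \ref{T:univ-TF}\eqref{T:univ-TF4} gives isomorphisms $\TFp_{g,n+1}^\chi\cong\PP(\I_{g,n})$ and $\TFm_{g,n+1}^\chi\cong\PP(\I_{g,n}^\vee)$ over $\Cbargn\times_{\Mbargn}\TF_{g,n}^\chi$, together with identifications of $\O(1)$. Restricting these over the open $\Cbargn\times_{\Mbargn}\ov\J_{g,n}(\sigma)$ gives the claimed isomorphisms. Here $\PP(\I_{g,n})$ means the restriction of the universal projectivization, which is compatible with base change. The identification of the tautological bundles restricts as well.

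The only point needing care is that "preimage commutes with intersection": the $\pm$ loci must be computed inside $\TFo$ and must coincide with the loci defined by the inequalities in the statement. This is immediate from the definitions above, so no real obstacle is expected.
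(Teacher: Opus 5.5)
Your proposal is correct and is essentially the paper's own argument: the paper proves the corollary in one line by combining Theorem~\ref{T:univ-TF} with Proposition~\ref{P:inv-Phi}. Your write-up just spells out that combination. You identify $\ov\J_{g,n+1}(\Omega(\sigma))=\wh{\Phi}^{-1}(\ov\J_{g,n}(\sigma))=\wh{\Upsilon}^{-1}(\Cbargn\times_{\Mbargn}\ov\J_{g,n}(\sigma))$, restrict the diagram and its properties over this open substack, and note that the inclusion in $\TFo_{g,n+1}^\chi$ makes the intersections with $\TFp_{g,n+1}^\chi$ and $\TFm_{g,n+1}^\chi$ exactly the $\pm$ loci.
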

\begin{proof}
   It follows by combining Theorem \ref{T:univ-TF} with Proposition \ref{P:inv-Phi}.
\end{proof}

\begin{remark}\label{R:Omegapm}
Consider the following functions 
$$
\begin{aligned}
   \Omega(\sigma)^+:\DD_{g,n+1}& \longrightarrow \ZZ\\
     (e;h,A) & \mapsto 
     \begin{cases}
         \Omega(\sigma)(e;h,A) & \text{ if either } (e;h,A)\not\in \D(\Omega(\sigma)) \text{ or } n+1\not \in A, \\
         \Omega(\sigma)(e;h,A)+1 & \text{ if  } (e;h,A)\in \D(\Omega(\sigma)) \text{ and } n+1 \in A. \\
         \end{cases}\\
         \Omega(\sigma)^-:\DD_{g,n+1}& \longrightarrow \ZZ\\
     (e;h,A) & \mapsto 
     \begin{cases}
         \Omega(\sigma)(e;h,A) & \text{ if either } (e;h,A)\not\in \D(\Omega(\sigma)) \text{ or } n+1\in A, \\
         \Omega(\sigma)(e;h,A)+1 & \text{ if  } (e;h,A)\in \D(\Omega(\sigma)) \text{ and } n+1 \not \in A. \\
         \end{cases}\\
\end{aligned}
$$
Arguing as in the proof of Proposition~\ref{prop:empty-is-maximum-deg}, we deduce that $\Omega(\sigma)^+$ and $\Omega(\sigma)^-$ are well-defined. Moreover, they are general elements in $\Sigma_{g,n+1}^\chi$ such that $\Omega(\sigma)^+,\Omega(\sigma)^-\geq \Omega(\sigma)$ (recall Definition~\ref{D:Sigma-pos} for the order relation).
Furthermore, we have open embeddings
$$\ov\J_{g,n+1}(\Omega(\sigma)^+)\subseteq \ov\J_{g,n+1}(\Omega(\sigma))^+ \: \text{ and } \: \ov\J_{g,n+1}(\Omega(\sigma)^-)\subseteq \ov\J_{g,n+1}(\Omega(\sigma))^-,$$ which are equalities if (and only if) $\sigma$ is general. 

Assuming that $\sigma$ is general, the isomorphisms of Corollary~\ref{C:univ-cJ}\eqref{C:univ-cJ2} become the following isomorphisms over $\Cbargn\times_{\Mbargn} \ov\J_{g,n}(\sigma)$: 
\begin{equation}\label{E:Puniv-gen}
 \PP(\I_{g,n})\cong \ov\J_{g,n+1}(\Omega(\sigma)^+) \quad \text{ and }\quad 
 \PP(\I_{g,n}^\vee) \cong \ov\J_{g,n+1}(\Omega(\sigma)^-).
\end{equation}
The first of these isomorphisms was proved in \cite[Thm. 3.30]{APag} (note that in loc.cit. the authors use the dual convention for a projective space). It follows from the description in Proposition~\ref{P:push-loc} that the two compactified universal Jacobians of Equation~\eqref{E:Puniv-gen} are two small resolutions, related by an Atiyah flop, of the codimension $3$ singularity in the universal curve $ \Cbargn\times_{\Mbargn} \ov\J_{g,n}(\sigma)$.
\end{remark}

\section{Classification for $n=0$}\label{Sec:n0}

The aim of this section is to classify all compactified universal Jacobians over $\Mbarg$. 

For any $\chi \in \ZZ$, we consider the \emph{canonical universal polarization} of genus $g$ (in the notation of Fact~\ref{F:RelPic})
$$
\psi_{g}^\chi:=\un \deg\left(\frac{\chi\cdot \omega_{\pi}}{2g-2}\right).
$$
%and the associated \emph{canonical universal V-stability} of genus $g$ (see \eqref{E:map-s-Uni})
%$$
%\s_{g}^{\chi}:=\s(\psi_{g}^\chi).
%$$
The associated V-function $\displaystyle \sigma_{g}^{\chi}:=\sigma_{\frac{\chi\cdot \omega_{\pi}}{2g-2}}$ (see \eqref{E:map-sigma}), called the \emph{canonical V-function} of genus $g$,  is given by
\begin{equation}\label{E:sg-can}
\begin{aligned}
\sigma_{g}^{\chi}: \Dg& \longrightarrow \ZZ\\
(e;h,\emptyset)=:(e;h)& \mapsto \Big\lceil \frac{\chi}{2g-2}(2h-2+e) \Big\rceil.
\end{aligned}
\end{equation}

 The following is the main result of this section.

\begin{theorem}\label{T:class-n0}
For any $\chi \in \ZZ$, we have that $\Sigmans_g^{\chi}=\{(\sigma_g^\chi)^{ns}=:\sigma_g^{\chi, ns}\}$. 
\end{theorem}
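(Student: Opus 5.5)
The plan is to show that any $\sigma\in\Sigmans_g^{\chi}$ depends only on the log-canonical degree $\delta(e;h)=2h-2+e$. Then I will show that the resulting function of one integer variable is forced to be $d\mapsto\lceil \chi d/(2g-2)\rceil$.

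Throughout, set $N:=2g-2$. For $n=0$ the non-separating domain $\Dgnns$ consists of the $(e;h)$ with $e\geq 2$ and $0<\delta(e;h)<N$.

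\textbf{Step 1 (triangles available).} For any $d_1,d_2\geq 1$ with $d_1+d_2<N$, and any $x\in\Dgnns$ with $\delta(x)=d_1+d_2$, I will exhibit triangles $[x_1,x_2,x^c]$ in $\Dgnns$ with $\delta(x_i)=d_i$. The triangle conditions in \eqref{E:Dgn} leave enough freedom: given $x=(e;h)$, split $x$ into two vertices joined by at least one edge, distributing genus and edges. Parity of $d_i$ is absorbed by choosing $e_i$ of the right parity, possibly with $e_i\geq 3$. I would carry out this bookkeeping by cases on the parity of the $d_i$ and on whether $h=0$.

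\textbf{Step 2 (dependence only on $\delta$).} By Remark \ref{R:triang},
\[
\sigma(x)-\sigma(x_1)-\sigma(x_2)\in\{0,-1\}
\]
for every triangle $[x_1,x_2,x^c]$, with the precise value governed by which elements are degenerate. Apply this with $x_1=(3;0)$, which has $\delta=1$, exactly as in the proof of Proposition \ref{P:finite-orb}. Comparing two elements $x,x'$ with the same $\delta$ through a common decomposition (they share a decomposition into pieces of $\delta=1$ and $\delta(x)-1$), one deduces $\sigma(x)=\sigma(x')$, and also that $x$ is degenerate iff $x'$ is. The consistency of the degeneracy rules (conditions (2a), (2b) of Definition \ref{D:Sigmagn}) must be used here to rule out a $\pm1$ discrepancy. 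I expect this to require an induction on $\delta$.

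This yields a function $f:\{1,\dots,N-1\}\to\ZZ$. Extend it by $f(0)=0$ and $f(N)=\chi$. It satisfies:
\begin{itemize}
\item[(a)] $f(a+b)-f(a)-f(b)\in\{0,-1\}$ whenever $a,b\geq 1$ and $a+b\leq N$;
\item[(b)] $f(d)+f(N-d)-\chi\in\{0,1\}$;
\item[(c)] the value in (a) and (b) is $0$ exactly in the prescribed degenerate configurations.
\end{itemize}

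\textbf{Step 3 (main obstacle: rigidity of $f$).} I need to prove that $f(d)=\lceil \chi d/N\rceil$ and that the degenerate $d$ are exactly those with $N\mid\chi d$. Condition (a) says $-f$ is subadditive up to one, i.e. $f$ is quasi-linear. First, iterating (a) gives
\[
kf(d)-(k-1)\leq f(kd)\leq kf(d).
\]
Second, combined with $f(N)=\chi$ and (b), this forces $f(d)-1<\chi d/N\leq f(d)$, i.e. $f(d)=\lceil\chi d/N\rceil$. To make this work for all $d$, including those not dividing $N$, I would use the classical argument for Beatty-type sequences. Write $N=qd+r$, and combine (a) along $d,\dots,d,r$ with (b) applied to $d$ and $N-d$. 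The degeneracy rules (c) pin down the borderline cases in which $\chi d/N$ is an integer. Indeed, an integer value must correspond to a degenerate pair, since otherwise the strict inequalities accumulate to a contradiction with $f(N)=\chi$.

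Finally, the canonical $\sigma_g^{\chi}$ is itself such a function by \eqref{E:sg-can}. So uniqueness gives $\Sigmans_g^{\chi}=\{\sigma_g^{\chi,ns}\}$.
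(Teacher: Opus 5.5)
\paragraph{Gap in Step~2.} Your Step~2 does not go through, and it is where the content of the theorem lies. Comparing $x,x'$ with $\delta(x)=\delta(x')$ through triangles $[(3;0),y,x^c]$ and $[(3;0),y',x'^c]$, with $\sigma(y)=\sigma(y')$ by induction, only gives $|\sigma(x)-\sigma(x')|\le 1$. When $(3;0)$, $y$ and $x^c$ are all nondegenerate, Remark~\ref{R:triang} allows both values $0$ and $-1$. Conditions (2a) and (2b) add nothing beyond this bound, and for general $\sigma$ they carry no extra information.

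No induction built from such comparisons can remove the $\pm1$ discrepancy. The comparisons are relations of the shape \eqref{E:sigmapr-prop}, and in type $(g,1)$ those admit non-uniform solutions. For example, $\tau_A$ from \eqref{E:tauA} with $A=\{(g;1)\}$ takes both values $1$ and $2$ on the elements of degree $g+1$ when $g\ge 5$. What is special to $n=0$ is that complements of elements of $\mathbb{D}_g^{ns}$ are again unmarked, so chains of triangles can be closed up with \eqref{E:sumUni}.

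The paper exploits this without ever comparing two elements of the same degree. Take $\delta=\delta(e;h)\le g-1$ and write $2g-2=q\delta+r$. Lemma~\ref{L:graph} gives a Hamiltonian stable graph with $q$ vertices all of type $(e;h)$, plus one vertex of type $(f;k)$ with $\delta(f;k)=r$ if $r>0$. Telescoping Remark~\ref{R:triang} along the cycle and closing with \eqref{E:sumUni} gives $q\sigma(e;h)+\sigma(f;k)-\chi\in\{0,\dots,q-1\}$ if $(f;k)$ is degenerate, and $\in\{1,\dots,q\}$ otherwise. This determines $\sigma(e;h)$ and its degeneracy by induction on $\delta$, and the range $\delta\ge g$ then follows from \eqref{E:sumUni}.

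\paragraph{A repair of your route.} Your two-step route can be fixed cheaply.
\begin{itemize}
\item If $\delta(x)\le g-2$, then $[x,x,(2;\delta(x))^c]$ is a triangle: two vertices of type $x$ joined by $e-1$ edges, each joined by one edge to a vertex of genus $g-\delta(x)-1$. So $2\sigma(x)\in\{\sigma(2;\delta(x)),\sigma(2;\delta(x))+1\}$, which fixes $\sigma(x)$ by parity. Moreover $x$ is degenerate iff $(2;\delta(x))$ is degenerate and $\sigma(2;\delta(x))$ is even.
\item Every element with $\delta=g-1$ is self-complementary, so $2\sigma(x)-\chi\in\{0,1\}$.
\item The range $\delta\ge g$ then follows by taking complements.
\end{itemize}
With uniformity in hand, your Step~3 is essentially the paper's Case I/II computation, but two points need correcting. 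For $d\nmid N$ the closing relation is (b) for the pair $r$, $N-r=qd$, not for $d$, $N-d$. Also, the induction on $d$ must carry the degeneracy status of $r$ (which involves $f(N-r)$), since that is what selects between $\{0,\dots,q-1\}$ and $\{1,\dots,q\}$.
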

Note that $\sigma_g^{\chi, ns}$ coincides with the element $\sigma^\chi_g[\emptyset]$ of \eqref{E:sigma-cl}. 

In the proof of the above Theorem, we will use the following stable graphs.

\begin{lemma}\label{L:graph}
 Let $(e;h)\in \Dg^{ns}$ such that $\delta:=2h-2+e\leq g-1$ and write $2g-2=q\delta+r$ with $0\leq r<\delta$ (notice that $q\geq 2$). Then there exists a Hamiltonian stable graph $G$ of genus $g$ (with no legs) with the following properties:
 \begin{enumerate}
     \item \label{L:graph1} If $\delta$ divides $2g-2$, then $G$ has $q$ vertices, each of them of type $(e;h)$.
     \item \label{L:graph2} If $\delta$ does not divide $2g-2$, then $G$ has $q$ vertices of type $(e;h)$ and $1$ vertex of type 
     $$
     \begin{cases} 
     \left(2;\frac{r}{2}\right) \text{ if } r  \text{ is even,}\\
     \left(3;\frac{r-1}{2}\right) \text{ if } r  \text{ is odd.}\\
     \end{cases} 
     $$
 \end{enumerate} 
\end{lemma}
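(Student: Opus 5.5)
The plan is to construct $G$ explicitly. It will have a Hamiltonian cycle $v_1,\dots,v_m$ (with $m=q$ in case \eqref{L:graph1}, and $m=q+1$ with $v_{q+1}=w$ the extra vertex in case \eqref{L:graph2}), possibly with multiple edges and a few chords. I then verify genus, stability and types using the log-canonical degree.

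\emph{Numerical bookkeeping first.} For a connected graph without loops or legs, the genus satisfies $2g(G)-2=\sum_v\big(2h_v-2+\mathrm{val}(v)\big)$. Here every vertex $v_i$ of type $(e;h)$ contributes $\delta$. The extra vertex of type $(2;r/2)$ or $(3;(r-1)/2)$ contributes exactly $r$. So any connected graph with the prescribed vertex genera and valences has genus given by $2g(G)-2=q\delta+r=2g-2$, i.e.\ $g(G)=g$.

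Stability holds because every vertex has positive log-canonical degree. This uses $\delta>0$, $r\geq 1$ in case \eqref{L:graph2}, and the fact that $r$ even forces $r\geq 2$. Hamiltonicity guarantees that every single vertex is biconnected, since removing one vertex of a Hamiltonian cycle leaves a path. Hence the type of $v$ is indeed $(\mathrm{val}(v);h_v)$, which is what ``vertex of type'' means. The inequality $q\geq 2$ comes from $2\delta\leq 2g-2$. Since there are no legs, the only parity constraint is that $\sum_v\mathrm{val}(v)=qe+e'$ be even, where $e'\in\{0,2,3\}$ is the valence of $w$. This is automatic because $\sum_v\mathrm{val}(v)\equiv\sum_v\delta_v=2g-2 \pmod 2$.

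\emph{Construction of the edges.} We need multiplicities $m_i\geq 1$ on the cycle edges $v_iv_{i+1}$, together with optional chords, so that the valences come out right.

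Case \eqref{L:graph1}, $e$ even: take $m_i=e/2$ on all cycle edges. For $q=2$ this is simply two vertices joined by $e\geq 2$ edges.

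Case \eqref{L:graph1}, $e$ odd: then $q$ is even by parity. Alternate the multiplicities $1,e-1,1,e-1,\dots$ around the cycle.

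Case \eqref{L:graph2} with $r$ even: start from the graph of case \eqref{L:graph1} on $q$ vertices and subdivide one cycle edge by inserting $w$ of genus $r/2$. This is possible when that cycle has an edge class of multiplicity $\geq 1$, and the new vertex automatically has valence $2$. When $q$ is odd and $e$ is odd, this base graph does not exist. In that situation parity gives $e'$ odd, so $r$ would be odd, and the subcase does not occur.

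Case \eqref{L:graph2} with $r$ odd: then $q$ and $e$ are both odd. Place $w$ in the cycle between $v_q$ and $v_1$ with one edge to each, add one chord from $w$ to $v_2$, and solve the linear recursion $m_{i-1}+m_i=e-c_i$ along the path $v_1,\dots,v_q$. Here $c_i$ counts the edges at $v_i$ going to $w$. The parity computation above shows the endpoint condition is consistent. If some $m_i$ becomes $0$ (for small $e$, e.g.\ $e=3$), I would instead move the chord or route $w$ with a double edge to $v_1$ and reshuffle.

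\emph{Main obstacle.} I expect the main difficulty to be exactly this valence-solving step in the odd cases. The cycle recursion alone is rigid: with ends fixed it determines all $m_i$, and it fails when $a+b\neq e$ for odd path length. This is why chords are needed. The plan is to check the small values $e\in\{2,3\}$ and the general odd $e\geq 5$ separately, choosing the chord endpoints so that every $m_i\geq 1$ and no loops appear. Once a valid multigraph is found, the genus, stability and type statements follow formally from the bookkeeping above.
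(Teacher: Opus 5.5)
Your genus bookkeeping via $2g(G)-2=\sum_v(2h_v-2+\mathrm{val}(v))$, the stability check, and reading off vertex types via Hamiltonicity are all correct. Your constructions in case~(1) and in case~(2) with $r$ even also work. The alternating multiplicities $1,e-1,1,e-1,\dots$ for odd $e$ are a valid, slightly simpler substitute for the paper's chord construction (1B), and subdividing one edge by $w$ is what the paper does in (2A) and (2B).

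\textbf{The gap.} It is case~(2) with $r$ odd, so $q$ and $e$ are both odd. This is the only delicate case, and your concrete recipe fails there for every $e$. With $w$ joined to $v_q,v_1,v_2$, the equation at $v_1$ gives $m_1=e-1$, and the equation at $v_2$ then forces $m_2=(e-1)-m_1=0$.

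Your claim that parity makes the endpoint condition consistent is also wrong. For the path $v_1,\dots,v_q$ with $q$ odd, the system $m_{i-1}+m_i=e-c_i$ ($1\le i\le q$, $m_0=m_q=0$) has an integer solution if and only if
\[
\sum_{i=1}^{q}(-1)^{i+1}c_i=e
\]
holds exactly; parity only gives it modulo $2$. If the only edges leaving the path are the three edges at $w$, then $\sum_i c_i=3$. So the condition can hold only for $e=3$, for instance with your fallback of a double edge $wv_1$ plus $wv_q$. For odd $e\ge 5$ no placement of $w$'s three edges works, so ``moving the chord'' cannot rescue the argument.

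\textbf{The missing idea.} You need chords between the $v_i$ themselves. Their endpoints enter the alternating sum like the $c_i$, and they must supply the remaining $e-3$. The paper's construction (2C) does this in two stages:
\begin{itemize}
\item Put $(e-1)/2$ edges between $v_i$ and $v_{i+1}$ for $1\le i\le q-1$, and $(e-3)/2$ edges between $v_1$ and $v_q$. Join $w$, of genus $(r-1)/2$, by one edge to each of $v_1$ and $v_q$. At this point every $v_i$ has valence $e-1$ and $w$ has valence $2$.
\item Add a perfect matching of the even set $\{v_1,\dots,v_q,w\}$, namely $v_iv_{i+(q+1)/2}$ for $1\le i\le (q-1)/2$ together with $wv_{(q+1)/2}$.
\end{itemize}
This gives valence $e$ at each $v_i$, valence $3$ at $w$, and no loops. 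The Hamiltonian cycle is $v_1,\dots,v_q,w$, which closes through $w$ even when $e=3$ and there are no direct $v_1v_q$ edges. The lemma is a pure existence statement, so without an explicit construction of this kind for odd $e\ge 5$, case~(2) remains unproved.
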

\begin{proof}
In order to construct a graph $G$ as in Part~\eqref{L:graph1} (so $r=0$), we distinguish two cases:

\begin{enumerate}
    \item [(1A)]  If $e$ is even, then $G$ is the graph having vertices $\{v_1,\ldots, v_q\}$, each of genus $h$, and having $e/2$ edges in between $v_i$ and $v_{i+1}$ for every $i=1,\ldots,q$ (with the cyclic convention that $v_{q+1}=v_1$).

 \item[(1B)] If $e$ is odd, $q$ must be even. Then $G$ is the graph having vertices $\{v_1,\ldots, v_q\}$, each of genus $h$, and having $(e-1)/2$ edges in between $v_i$ and $v_{i+1}$ for every $i=1,\ldots,q$ (with the cyclic convention that $v_{q+1}=v_1$) and $1$ edge in between $v_i$ and $v_{i+q/2}$ for every $i=1,\ldots,q/2$.
\end{enumerate}

In order to construct a graph $G$ as  in Part~\eqref{L:graph2}, we distinguish three cases:

\begin{enumerate}

\item[(2A)] If $e$ is even (which implies that $\delta$ and $r$ are also even), then $G$ is the graph having vertices $\{w, v_1,\ldots, v_q\}$, with each $v_i$  of genus $h$, and $w$ of genus $r/2>0$, and having $e/2$ edges in between $v_i$ and $v_{i+1}$ for every $i=1,\ldots,q-1$, with $(e/2-1)$ edges in between $v_1$ and $v_q$, and with $w$ connected with $1$ edge to $v_1$ and with $1$ edge to $v_q$. 

\item[(2B)] If $e$ is odd and $r$ is even (which then implies that $q$ is even), then $G$ is the graph having vertices $\{w, v_1,\ldots, v_q\}$, with each $v_i$  of genus $h$ and $w$ of genus $r/2>0$, and having $(e-1)/2$ edges in between $v_i$ and $v_{i+1}$ for every $i=1,\ldots,q-1$, with $(e-3)/2$ edges in between $v_1$ and $v_q$, 
with $1$ edge in between $v_i$ and $v_{i+q/2}$ for every $i=1,\ldots,q/2$, 
and with $w$ connected with $1$ edge to $v_1$ and with $1$ edge to $v_q$.   

\item[(2C)] If $e$ is odd and $r$ is odd (which then implies that $q$ is also odd), then $G$ is the graph having vertices $\{w, v_1,\ldots, v_q\}$, with each $v_i$ of genus $h$ and $w$ of genus $(r-1)/2$, and having $(e-1)/2$ edges in between $v_i$ and $v_{i+1}$ for every $i=1,\ldots,q-1$, with $(e-3)/2$ edges in between $v_1$ and $v_q$, 
with $1$ edge in between $v_i$ and $v_{i+(q+1)/2}$ for every $i=1,\ldots,(q-1)/2$, and with $w$ connected with a single edge to $v_1$, $v_{(q+1)/2}$ and $v_q$.  
\end{enumerate}
\end{proof}

\begin{figure}[h]
  \centering

  % ---------- (a) e even ----------
  \begin{minipage}[b]{0.45\textwidth}
    \centering
    \begin{tikzpicture}[scale=1, every node/.style={circle,draw,inner sep=2pt}]
      % parameters
      \def\q{5}   % number of vertices
      \def\k{3}   % e/2 = number of parallel edges between neighbours
      \def\r{1.8} % radius

      % place vertices on a circle
      \foreach \i in {1,...,\q} {
        \node (v\i) at ({360/\q*(\i-1)}:\r) {$v_{\i}$};
      }

      % draw k parallel edges between consecutive vertices
      \foreach \i in {1,...,\q} {
        \pgfmathtruncatemacro{\next}{mod(\i,\q)+1}
        \pgfmathsetmacro{\maxbend}{18}
        \foreach \j in {1,...,\k} {
          \ifnum\k=1
            \draw (v\i) -- (v\next);
          \else
            \pgfmathsetmacro{\bend}{
              (\j-1) * (\maxbend/(\k-1)) - \maxbend/2
            }
            \draw[bend left=\bend] (v\i) to (v\next);
          \fi
        }
      }
    \end{tikzpicture}
    \caption*{(1A): Here $r=0$ and $e$ is even: $e/2$ edges between $v_i$ and $v_{i+1}$}
  \end{minipage}
  \hfill
  % ---------- (b) e odd ----------
  \begin{minipage}[b]{0.45\textwidth}
    \centering
    \begin{tikzpicture}[scale=1, every node/.style={circle,draw,inner sep=2pt}]
      % parameters
      \def\q{6}   % must be even
      \def\k{2}   % (e-1)/2 edges between neighbours
      \def\r{1.8} % radius

      % place vertices
      \foreach \i in {1,...,\q} {
        \node (v\i) at ({360/\q*(\i-1)}:\r) {$v_{\i}$};
      }

      % (e-1)/2 parallel edges between consecutive vertices
      \foreach \i in {1,...,\q} {
        \pgfmathtruncatemacro{\next}{mod(\i,\q)+1}
        \pgfmathsetmacro{\maxbend}{18}
        \foreach \j in {1,...,\k} {
          \ifnum\k=1
            \draw (v\i) -- (v\next);
          \else
            \pgfmathsetmacro{\bend}{
              (\j-1) * (\maxbend/(\k-1)) - \maxbend/2
            }
            \draw[bend left=\bend] (v\i) to (v\next);
          \fi
        }
      }

      % extra edges v_i -- v_{i+q/2} for i=1,...,q/2
      \pgfmathtruncatemacro{\half}{\q/2}
      \foreach \i in {1,...,\half} {
        \pgfmathtruncatemacro{\opp}{\i+\half}
        \draw[thick] (v\i) -- (v\opp);
      }
    \end{tikzpicture}
    \caption*{(1B): Here $r=0$ and $e$ is odd: $(e-1)/2$ edges between $v_i$ and $v_{i+1}$, and $1$  edge to the opposite vertex}
  \end{minipage}

\end{figure}

\begin{figure}[h]
\centering
\begin{tabular}{ccc}

% ===================== (2A) e even =====================
\begin{tikzpicture}[scale=1, every node/.style={circle,draw,inner sep=2pt}]
  % flat-top hexagon
  \node (a1) at (-0.65,  0.95) {$v_{1}$};  % top-left
  \node (a2) at (-1.30,  0.00) {$v_{2}$};  % left-middle
  \node (a3) at (-0.65, -0.95) {$v_{3}$};  % bottom-left
  \node (a4) at ( 0.65, -0.95) {$v_{4}$};  % bottom-right
  \node (a5) at ( 1.30,  0.00) {$v_{5}$};  % right-middle
  \node (a6) at ( 0.65,  0.95) {$v_{6}$};  % top-right

  % w on top
  \node (wA) at (0, 1.6) {$w$};

  \def\kA{3} % = e/2

  % parallel edges around the hexagon except top
  \foreach \u/\v in {1/2,2/3,3/4,4/5,5/6} {
    \pgfmathsetmacro{\maxbend}{14}
    \foreach \j in {1,...,\kA} {
      \ifnum\kA=1
        \draw (a\u) -- (a\v);
      \else
        \pgfmathsetmacro{\bend}{(\j-1)*(\maxbend/(\kA-1)) - \maxbend/2}
        \draw[bend left=\bend] (a\u) to (a\v);
      \fi
    }
  }

  % top edge v1--v6 has kA-1 edges
  \ifnum\kA>1
    \draw (a1) -- (a6);
    \ifnum\kA>2
      \draw[bend left=10] (a1) to (a6);
    \fi
  \fi

  % w to v1, v6
  \draw (wA) -- (a1);
  \draw (wA) -- (a6);
\end{tikzpicture}
&
% ===================== (2B) e odd, r even =====================
\begin{tikzpicture}[scale=1, every node/.style={circle,draw,inner sep=2pt}]
  % same flat-top hexagon
  \node (b1) at (-0.65,  0.95) {$v_{1}$};
  \node (b2) at (-1.30,  0.00) {$v_{2}$};
  \node (b3) at (-0.65, -0.95) {$v_{3}$};
  \node (b4) at ( 0.65, -0.95) {$v_{4}$};
  \node (b5) at ( 1.30,  0.00) {$v_{5}$};
  \node (b6) at ( 0.65,  0.95) {$v_{6}$};

  \node (wB) at (0, 1.6) {$w$};

  \def\kB{2} % = (e-1)/2

  % parallel edges around (except top)
  \foreach \u/\v in {1/2,2/3,3/4,4/5,5/6} {
    \pgfmathsetmacro{\maxbend}{12}
    \foreach \j in {1,...,\kB} {
      \ifnum\kB=1
        \draw (b\u) -- (b\v);
      \else
        \pgfmathsetmacro{\bend}{(\j-1)*(\maxbend/(\kB-1)) - \maxbend/2}
        \draw[bend left=\bend] (b\u) to (b\v);
      \fi
    }
  }

  % top edge v1--v6 : (kB - 1)
  \ifnum\kB>1
    \draw (b1) -- (b6);
  \fi

  % opposite chords (q=6): 1-4, 2-5, 3-6
  \draw[thick] (b1) -- (b4);
  \draw[thick] (b2) -- (b5);
  \draw[thick] (b3) -- (b6);

  % w to v1, v6
  \draw (wB) -- (b1);
  \draw (wB) -- (b6);
\end{tikzpicture}
&
% ===================== (2C) e odd, r odd =====================
\begin{tikzpicture}[scale=1, every node/.style={circle,draw,inner sep=2pt}]
  % flat-top pentagon
  \node (c1) at (-0.55, 0.95) {$v_{1}$};   % top-left
  \node (c2) at (-1.05,-0.05) {$v_{2}$};   % left
  \node (c3) at ( 0.00,-1.00) {$v_{3}$};   % bottom
  \node (c4) at ( 1.05,-0.05) {$v_{4}$};   % right
  \node (c5) at ( 0.55, 0.95) {$v_{5}$};   % top-right

  \node (wC) at (0, 1.6) {$w$};

  \def\kC{2}

  % around: v1-2, 2-3, 3-4, 4-5
  \foreach \u/\v in {1/2,2/3,3/4,4/5} {
    \pgfmathsetmacro{\maxbend}{12}
    \foreach \j in {1,...,\kC} {
      \ifnum\kC=1
        \draw (c\u) -- (c\v);
      \else
        \pgfmathsetmacro{\bend}{(\j-1)*(\maxbend/(\kC-1)) - \maxbend/2}
        \draw[bend left=\bend] (c\u) to (c\v);
      \fi
    }
  }

  % top edge v1--v5 : (kC - 1)
  \ifnum\kC>1
    \draw (c1) -- (c5);
  \fi

  % --- corrected chords ---
  % text-style wants v1 -- v4 and v2 -- v5
  \draw[thick] (c1) -- (c4);
  \draw[thick] (c2) -- (c5);

  % w to v1, v3, v5 (as in the figure: w joins the 2 "top" + the bottom)
  \draw (wC) -- (c1);
  \draw (wC) -- (c3);
  \draw (wC) -- (c5);
\end{tikzpicture}
\\
(2A) & (2B) & (2C)
\end{tabular}
\end{figure}

\begin{proof}[Proof of Theorem \ref{T:class-n0}]
It is enough to prove that $\Sigmans_g^\chi$ consists of one element, which then must be $\sigma_g^{\chi, ns}$. We will show that the value of any $\sigma \in \Sigmans_g^\chi$ on a given  $(e;h)\in \Dg^{ns}$ is uniquely determined by $\delta(e;h):=2h-2+e$. 

Using \eqref{E:sumUni}, it is enough to prove that the degeneracy set $\Dns(\sigma)$ is independent of $\sigma$ and that the value $\sigma(e;h)$ depends only on $\delta(e;h)$ for any element $(e;h)\in \Dg^{ns}$ such that $\delta(e;h)\leq g-1$. Write 
$$
2g-2=q\cdot \delta(e;h)+r \text{ with } 0\leq r <\delta(e;h).$$
Note that $q\geq 2$ since $\delta(e;h)\leq g-1$. 

We will now distinguish two cases:

\un{Case I}: $r=0$, i.e. $\delta(e;h)$ divides $2g-2$.

By applying Lemma \ref{L:graph}\eqref{L:graph1}, we find a stable Hamiltonian graph of genus $g$ with $q$ vertices of type $(e;h)$.
Now we apply iteratively $(q-2)$-times Remark \ref{R:triang} along the Hamiltonian cycle $\{v_1,\ldots, v_q\}$ of $G$, and then apply \eqref{E:sumUni}, to obtain
\begin{equation}\label{E:poly1}
q\sigma(e;h)-\chi\in \{0,\ldots, q-1\} \text{ with } 0 \text{ occurring if and only if } (e;h)\in \Dns(\sigma).
\end{equation}
This shows that the value $\sigma(e;h)$ depends only on $q$ (and hence on $\delta(e;h)$), and that $(e;h)\in \Dns(\sigma)$ if and only if $q$ divides $\chi$ (which shows that this property is independent of $\sigma$).  

\un{Case II:} $r>0$.

We will proceed by induction on $\delta(e;h)$, the base of the induction is the case $\delta(e;h)=1$, which is covered in Case I. 

By applying Lemma \ref{L:graph}\eqref{L:graph2}, we find a stable Hamiltonian graph of genus $g$ with $q$ vertices of type $(e;h)$ and one vertex of type 
$$(f;k):= 
\begin{cases}
    (2;r/2) & \text{ if } r \text{ is even,}\\
    (3;(r-1)/2)& \text{ if } r \text{ is odd.}\\
\end{cases}
$$

Now we apply iteratively $(q-2)$-times Remark \ref{R:triang} along the Hamiltonian cycle $\{v_1,\ldots, v_q, w\}$ of $G$, and then apply \eqref{E:sumUni}, to obtain
\begin{equation}\label{E:poly2}
q\sigma(e;h)+\sigma(f;k)-\chi\in 
\begin{cases}
\{0,\ldots,q-1\} & \text{ if } (f;k)\in \Dns(\sigma)\\
&\text{ with } 0 \text{ occurring if and only if } (e;h)\in \Dns(\sigma),\\
\{1,\ldots,q\} & \text{ if } (f;k)\not \in \Dns(\sigma).\\
\end{cases}
\end{equation}
Since $\delta(f;k)=r<\delta(e;h)$, we can apply either Case I (if $r$ divides $2g-2$) or the induction hypothesis (if $r$ does not divide $2g-2$) to deduce that $\sigma(f;k)$ depends only on $r$, and that the property that $(f;k)$ belongs to $ \Dns(\sigma)$ is independent of $\sigma$. 
Using this, Equation \ref{E:poly2} implies that  the value $\sigma(e;h)$ depends only on $\delta(e;h)$, and that the property that $(e;h)$ belongs to $\Dns(\sigma)$ is independent of $\sigma$.  
\end{proof}

The compactified universal Jacobian stack (resp. space) associated to the canonical universal V-stability of genus $g$  will be called the \emph{Caporaso's compactified universal Jacobian stack (resp. space)} and it will be denoted by 
\begin{equation}\label{E:Cap-cJ}
\ov \J_g^{\text{Cap}, \chi}:=\ov \J_g(\sigma_g^\chi)=\ov \J_g\left(\left[\frac{\chi}{2g-2}\omega_{\pi}\right]\right) \quad \left(\text{resp. } \ov J_g^{\text{Cap}, \chi}:=\ov J_g(\sigma_g^\chi)=\ov J_g\left(\left[\frac{\chi}{2g-2}\omega_{\pi}\right]\right)\right).
\end{equation}
The reason for choosing this name comes from the fact that  the absolute good moduli space of $\ov J_g^{\text{Cap}, \chi}$ is isomorphic to  Caporaso's \cite{Cap94} compactified universal Jacobian over the coarse moduli space $\ov M_g$ of $\ov \M_g$ (see \cite{pandharipande1995compactification}), while the stack $\ov \J_g^{\text{Cap}, \chi}$ is isomorphic to the stack studied by Caporaso \cite{Cap08} and Melo \cite{melo2009} (see \cite{estevespacini}).

From the above Theorem \ref{T:class-n0}, we deduce the following two classification results

\begin{corollary}\label{C:class-n0}
 Let $\ov \J_g^{\chi}$ be a compactified universal Jacobian stack of characteristic $\chi \in \Z$ over $\ov \M_g$ and let $\ov J_g^\chi$ be its associated compactified universal Jacobian space. Then we have that
 \begin{enumerate}
     \item \label{C:class-n01}  ${\ov \J_{g}^\chi}_{|\Mbarg^{ns}}={\ov \J_{g}^{Cap, \chi}}_{|\Mbarg^{ns}}$.
     \item \label{C:class-n02} $\ov J_{g}^\chi$ is isomorphic to $\ov J_{g}^{Cap, \chi}$ over $\ov \M_g$. 
 \end{enumerate}
\end{corollary}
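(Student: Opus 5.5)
By Theorem \ref{T:cJUniv} and Proposition \ref{P:Stabgn}, any compactified universal Jacobian stack of characteristic $\chi$ over $\Mbarg$ has the form $\ov \J_g^\chi=\ov \J_g(\sigma)$ for a unique V-function $\sigma\in \Sigma_g^\chi$. I will deduce both parts from Theorem \ref{T:class-n0}, which says that $\Sigmans_g^\chi$ has exactly one element. Hence
$$\sigma^{ns}=\sigma_g^{\chi,ns}=(\sigma_g^\chi)^{ns},$$
where $\sigma_g^\chi$ is the canonical V-function defining $\ov \J_g^{Cap,\chi}$ as in \eqref{E:Cap-cJ}.

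For Part \eqref{C:class-n01}, Lemma \ref{L:ugua-ns} applies directly. It states that two compactified universal Jacobian stacks agree over $\Mbarg^{ns}$ if and only if the non-separating components of their V-functions coincide. Since $\sigma^{ns}=(\sigma_g^\chi)^{ns}$, we obtain ${\ov \J_g(\sigma)}_{|\Mbarg^{ns}}={\ov \J_g(\sigma_g^\chi)}_{|\Mbarg^{ns}}$.

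For Part \eqref{C:class-n02}, I use Theorem \ref{T:iso-UniSp}, specifically the implication \eqref{T:iso-UniSp1}$\Rightarrow$\eqref{T:iso-UniSp3}. The non-separating components $\sigma^{ns}$ and $(\sigma_g^\chi)^{ns}$ are equal, so they certainly lie in the same $\PR_{g}$-orbit. The theorem then gives an isomorphism $\ov J_g(\sigma)\cong \ov J_g(\sigma_g^\chi)=\ov J_g^{Cap,\chi}$ over $\Mbarg$.

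The only place any care is needed is the proof of that implication in Theorem \ref{T:iso-UniSp}. It passes through the separating part: it replaces each $\sigma_i$ by some $\ov\sigma_i\le\sigma_i$ whose separating degeneracy subset is minimal, via Proposition \ref{P:Dsep}, and then uses Lemma \ref{L:iso-sepnod} to show the good moduli spaces do not change. For $n=0$ these results still apply as stated, because all separating elements $(1;h)$ behave independently (Lemma \ref{L:Sigma-s-ns}). So I expect no additional obstacle: the separating data of $\sigma$ is arbitrary, but it is invisible at the level of good moduli spaces, and the substantive input is entirely Theorem \ref{T:class-n0}.
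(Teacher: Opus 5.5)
Your proof is correct and is exactly the paper's argument: Theorem \ref{T:class-n0} forces $\sigma^{ns}=(\sigma_g^\chi)^{ns}$, and this is combined with Lemma \ref{L:ugua-ns} for part (1) and with the implication \eqref{T:iso-UniSp1}$\Rightarrow$\eqref{T:iso-UniSp3} of Theorem \ref{T:iso-UniSp} for part (2). The one genuinely $n=0$ feature is the self-complementary element $(1;g/2,\emptyset)$ for even $g$, whose degeneracy is forced by the parity of $\chi$; this is not a matter of separating elements being independent, as you suggest, but it is already handled by Proposition \ref{P:Dsep}\eqref{P:Dsep2}, which still gives a minimum of $\Im(\Ds^\chi)$, so invoking Theorem \ref{T:iso-UniSp} as stated is legitimate.
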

%Part \eqref{C:iso-n01} of the above Corollary was stated in \cite[Cor. 9.8, Rmk. 9.9]{pagani2023stability} in the case of fine compactified universal Jacobians. The proof in loc.cit. contains an error, see Remark~\ref{rem:correcterror} for more details.
\begin{proof}
 This follows by combing Theorem \ref{T:class-n0} with Lemma \ref{L:ugua-ns} and Theorem \ref{T:iso-UniSp}.   
\end{proof}

%We can now describe when two compactified universal Jacobian stacks or spaces are isomorphic over $\ov \M_g$. 

\begin{corollary}\label{C:iso-n0}
   Let $\sigma_1\in \Sigma_g^{\chi_1}$ and $\sigma_2\in \Sigma_g^{\chi_2}$. Then we have that:
   \begin{enumerate}
       \item \label{C:iso-n01} $\ov \J_g(\sigma_1)$ is isomorphic to $\ov \J_g(\sigma_2)$ over $\ov \M_g$ if and only if $\D(\sigma_1^s)=\D(\sigma_2^s)$ and $\chi_1\equiv \pm \chi_2 \mod 2g-2$. 
        \item \label{C:iso-n02} $\ov J_g(\sigma_1)$ is isomorphic to $\ov J_g(\sigma_2)$ over $\ov \M_g$ if and only if $\chi_1\equiv \pm \chi_2 \mod 2g-2$. 
   \end{enumerate}
\end{corollary}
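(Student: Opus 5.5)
The plan is to reduce both parts to the orbit criteria already established: Corollary \ref{C:iso-UniSt} for stacks and Theorem \ref{T:iso-UniSp} for spaces. Then compute the $\wPR_g$-orbits using Theorem \ref{T:class-n0} and Lemma \ref{L:Sigma-ac}. For $n=0$ we have $\PicRel^{op}_g(\ZZ)=\ZZ\omega_\pi$ (when $g\geq 2$), so $\PR_g\cong \ZZ\omega_\pi\rtimes \ZZ/2\ZZ$. By Lemma \ref{L:Sigma-ac}\eqref{L:Sigma-ac2}, the element $\beta\omega_\pi$ shifts the characteristic by $\beta(2g-2)$, and $\iota$ sends characteristic $\chi$ to $-\chi$. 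Hence the characteristics in a $\PR_g$-orbit are exactly the $\chi'$ with $\chi'\equiv\pm\chi \mod 2g-2$.

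For part \eqref{C:iso-n02}, Theorem \ref{T:iso-UniSp} says $\ov J_g(\sigma_1)\cong \ov J_g(\sigma_2)$ if and only if $\sigma_1^{ns}$ and $\sigma_2^{ns}$ lie in the same $\PR_g$-orbit. Theorem \ref{T:class-n0} gives $\Sigmans_g^\chi=\{\sigma_g^{\chi,ns}\}$, so each characteristic contributes a single element.

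The "only if" direction is immediate, since the orbit of $\sigma_1^{ns}$ only contains characteristics $\equiv\pm\chi_1$. For the converse, suppose $\chi_2=\pm\chi_1+\beta(2g-2)$. Applying $\beta\omega_\pi$ (composed with $\iota$ if needed) to $\sigma_1^{ns}$ gives some element of $\Sigmans_g^{\chi_2}$. That set is the singleton $\{\sigma_2^{ns}\}$, so the image must equal $\sigma_2^{ns}$. No explicit check that the action preserves canonical functions is needed.

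For part \eqref{C:iso-n01}, Corollary \ref{C:iso-UniSt} reduces the question to whether $\sigma_1,\sigma_2$ lie in the same $\wPR_g$-orbit. Using $\Sigma_g\cong\Sigmas_g\times\Sigmans_g$ (Lemma \ref{L:Sigma-s-ns}), the orbit condition splits into two conditions.

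\textbf{Necessity.} The group preserves degeneracy subsets (Lemma-Definition \ref{LD:PR-VStab}), so $\D(\sigma_1^s)=\D(\sigma_2^s)$. Restricting to the non-separating part gives the congruence on characteristics, as in part \eqref{C:iso-n02}.

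\textbf{Sufficiency.} First pick $g_0\in\wPR_g$ lifting the element of $\PR_g$ found in part \eqref{C:iso-n02}, so that $(g_0\sigma_1)^{ns}=\sigma_2^{ns}$ and $|g_0\sigma_1|=\chi_2$. Since the action preserves degeneracy, $\Ds((g_0\sigma_1)^s)=\Ds(\sigma_1^s)=\Ds(\sigma_2^s)$. Lemma \ref{L:Sigma-ac}\eqref{L:Sigma-ac1} then gives $w\in W_g$ with $w(g_0\sigma_1)^s=\sigma_2^s$. Because $W_g$ acts trivially on the non-separating part, $wg_0\sigma_1=\sigma_2$.

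The main subtlety I expect is bookkeeping. One point is that Lemma \ref{L:Sigma-ac}\eqref{L:Sigma-ac1} is stated for a fixed characteristic, which is why we translate first. Another is the torsion element $\O(C_{(g/2,\emptyset)})$ for even $g$, which lies in $W_g$ and so causes no problem. Finally, the case $g=1$ does not arise, since $n=0$ requires $g\geq 2$.
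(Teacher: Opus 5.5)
Your proof is correct and follows the same route as the paper. You reduce to the orbit criteria of Theorem \ref{T:iso-UniSp} and Corollary \ref{C:iso-UniSt}, use Theorem \ref{T:class-n0} to make each $\Sigmans_g^\chi$ a singleton, and split the $\wPR_g$-orbit condition into a separating and a non-separating part via Lemma \ref{L:Sigma-ac}. The differences are cosmetic: you invoke the singleton property instead of computing $\omega_\pi\cdot\sigma_g^\chi$ and $\iota\cdot\sigma_g^\chi$ explicitly, and you spell out the translate-then-adjust-by-$W_g$ step that the paper leaves implicit in Lemma \ref{L:Sigma-ac}.
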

\begin{proof}
  Part \eqref{C:iso-n02}: Theorem \ref{T:iso-UniSp} implies that  $\ov J_g(\sigma_1)$ is isomorphic to $\ov J_g(\sigma_2)$ over $\ov \M_g$ if and only if $\sigma_1^{ns}$ and $\sigma_2^{ns}$ lie in the same orbit for the action of $\PR_g$. Theorem \ref{T:class-n0} implies that $\sigma_i^{ns}=\sigma_g^{\chi_i,ns}$ for $i=1,2$. We conclude by observing that the group $\PR_g$ is isomorphic to $\ZZ\langle \omega_{\pi}\rangle \rtimes (\ZZ/2\ZZ)\langle \iota\rangle$ (see \eqref{E:PRbis} and the discussion following it) and the action of $\PR_g$ on $\Sigma_g$ is such that (by Lemma \ref{L:Sigma-ac})
  $$
  \begin{sis}
   & \omega_{\pi}\cdot \sigma_g^\chi=\sigma_g^{\chi+2g-2},\\
   & \iota\cdot \sigma_{\pi}^\chi=\sigma_{\pi}^{-\chi}.
   \end{sis}
  $$

  Part \eqref{C:iso-n01}: Corollary \ref{C:iso-UniSt} implies that  $\ov \J_g(\sigma_1)$ is isomorphic to $\ov \J_g(\sigma_2)$ over $\ov \M_g$ if and only if $\sigma_1$ and $\sigma_2$ lie in the same orbit for the action of $\wt \PR_g$. In turn, Lemma \ref{L:Sigma-ac} implies that $\sigma_1$ and $\sigma_2$ lie in the same orbit for the action of $\wt \PR_g$ if and only if $\D(\sigma_1^s)=\D(\sigma_2^s)$ and $\sigma_1^{ns}$ and $\sigma_2^{ns}$ lie in the same orbit for the action of $\PR_g$. We conclude using what observed in the proof of Part \eqref{C:iso-n02}. 
\end{proof}

From the above results, we can classify the fine  compactified universal Jacobian spaces over $\Mbarg$ and prove that they admit a tautological sheaf. 

\begin{corollary}\label{C:fineMg}
\begin{enumerate}
    \item \label{C:fineMg1} There exist fine compactified universal Jacobian stacks of characteristic $\chi$ over $\Mbarg$ if and only if $\gcd(\chi, 2g-2)=1$,  and all of them are isomorphic over $\Mbarg$ to  Caporaso's compactified universal Jacobian stack $\ov \J_{g}^{Cap, \chi}$.
    \item \label{C:fineMg2} There exists a tautological sheaf  on 
    $\ov J_{g}^{Cap, \chi}\times_{\Mbarg} \Cbarg$, i.e. a relative rank-$1$ torsion-free sheaf $\I_{g,n}^{\taut}$ on  $\ov J_{g}^{Cap, \chi}\times_{\Mbarg} \Cbarg\to \ov J_{g}^{Cap, \chi}$ whose restriction to the fiber over a geometric point $(C,I)\in \ov J_{g}^{Cap, \chi}$ is isomorphic to $I$, if  and only if $\gcd(\chi, 2g-2)=1$. 
\end{enumerate}    
\end{corollary}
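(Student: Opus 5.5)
The plan is to reduce both parts to the structure of V-functions for $n=0$ established in Theorem~\ref{T:class-n0}, combined with Theorem~\ref{T:thmA} and Corollary~\ref{C:iso-n0}.

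For Part~\eqref{C:fineMg1}, by Theorem~\ref{T:cJUniv} and Proposition~\ref{P:Stabgn}, fine compactified universal Jacobian stacks of characteristic $\chi$ correspond to general $\sigma\in\Sigma_g^\chi$, i.e. $\D(\sigma)=\emptyset$. By Lemma~\ref{L:Sigma-s-ns} this means $\Ds(\sigma^s)=\emptyset$ and $\Dns(\sigma^{ns})=\emptyset$. By Theorem~\ref{T:class-n0}, $\sigma^{ns}=\sigma_g^{\chi,ns}$, and by Remark~\ref{R:class-s-ns} its degeneracy set consists of those $(e;h)$ with $e\ge2$ and $\chi(2h-2+e)/(2g-2)\in\ZZ$.
\begin{itemize}
\item If $\gcd(\chi,2g-2)=d>1$, I exhibit $(e;h)\in\Dg^{ns}$ with $\delta=2h-2+e=(2g-2)/d$. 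Taking $h=0$ gives $e=\delta+2\ge 3$ when $\delta\ge1$, and the constraint $h\le g-e+1$ reads $\delta+1\le g-1+\ldots$. I will check these inequalities directly; if they fail at an edge case, I will use $e=2$ with $h=\delta/2$ (for $\delta$ even) or $e=3$ with $h=(\delta-1)/2$ (for $\delta$ odd). In either case $(e;h)$ is degenerate, so no general $\sigma$ exists.
\item Conversely, if $\gcd=1$, then $0<\delta<2g-2$ forces $\chi\delta/(2g-2)\notin\ZZ$, so $\Dns(\sigma_g^{\chi,ns})=\emptyset$. Since every separating V-function is allowed, I can choose $\sigma^s$ with $\Ds=\emptyset$ (for example the canonical one, which is already general, matching the remark in the introduction that $\sigma_g^\chi$ is general iff $\gcd=1$).
\end{itemize}
Uniqueness up to isomorphism follows from Corollary~\ref{C:iso-n0}\eqref{C:iso-n01}: two general $\sigma_1,\sigma_2\in\Sigma_g^\chi$ have $\D(\sigma_i^s)=\emptyset$ and the same $\chi$, hence both are isomorphic to $\ov\J_g(\sigma_g^\chi)=\ov\J_g^{Cap,\chi}$.

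For Part~\eqref{C:fineMg2}, suppose first $\gcd(\chi,2g-2)=1$. Then $\ov\J_g^{Cap,\chi}\to\ov J_g^{Cap,\chi}$ is a $\Gm$-gerbe, and a tautological sheaf exists iff this gerbe is trivial, i.e. iff there is a line bundle on the stack of $\Gm$-weight $1$. Write $1=a\chi+b(1-g)$. Then $\det R\pi_*(\I)$ has weight $\chi$, and a sheaf of weight $1-g$ is obtained from the determinant of cohomology of $\I\otimes\omega_\pi$ twisted appropriately, or more simply from $\det R\pi_*(\I\otimes\omega_\pi^{m})$ for suitable $m$. Indeed, $\det R\pi_*(\I\otimes\omega^m)$ has weight $\chi+m(2g-2)$, but these weights alone only generate $\gcd(\chi,2g-2)\ZZ$. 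Hence the combinations of weights $\chi$ and $2g-2$ yield weight $1$ when the gcd is $1$. Tensoring $\I$ by the inverse of this weight-$1$ line bundle descends $\I$ to $\I^{\taut}$.

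For the converse, the main obstacle is showing that no tautological sheaf exists when $d=\gcd>1$; note that in this case the stack is not fine, so $\ov J$ is only the good moduli space. I would restrict to the smooth locus $\M_g$ and a generic curve: a tautological sheaf would give a Poincaré family over an open dense subset of $J_g^\chi$. Mestrano–Ramanan / Kouvidakis show that no Poincaré sheaf exists over any open subset of $J^{\chi}_g$ for generic curves when $\gcd(\chi+g-1,2g-2)\neq1$. This is equivalent to our condition, since $\gcd(\chi,2g-2)=\gcd(\chi+g-1-(g-1),2g-2)$, though parity needs checking. Citing that result on the open locus finishes the proof.
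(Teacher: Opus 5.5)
Your Part~(1) is essentially the paper's argument. The element $(\frac{2g-2}{d}+2;0)$ you pick first already lies in $\DD_g^{ns}$: since $\delta=\frac{2g-2}{d}\le g-1$, the constraint $h=0\le g-e+1$ holds, so no edge case arises. The canonical separating part is general because $(2g-2)\nmid(2h-1)$ when $\gcd(\chi,2g-2)=1$. Uniqueness then comes from Corollary~\ref{C:iso-n0}\eqref{C:iso-n01}, exactly as in the paper.

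For the ``if'' half of Part~(2) you take a different route. The paper invokes Mestrano--Ramanan for triviality of the gerbe over $\M_g$ and then passes to $\Mbarg$. You instead build a weight-one line bundle directly on the whole stack. This is more self-contained: it needs no extension argument over the boundary, and it even works on all of $\TF_g^\chi$. State it cleanly, though. The detour through $1=a\chi+b(1-g)$ is wrong, since a single $\det R\pi_*(\I\otimes\omega_\pi^{m})$ has weight $\chi+m(2g-2)$, which is in general not $1-g$. The correct input is that $\det R\pi_*\I$ has weight $\chi$ and $\det R\pi_*(\I\otimes\omega_\pi)\otimes(\det R\pi_*\I)^{-1}$ has weight $2g-2$. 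Writing $a\chi+b(2g-2)=1$ then produces a weight-one line bundle, and twisting $\I$ by its inverse descends it.

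The ``only if'' half follows the paper's route (restrict to $\M_g$ and use Mestrano--Ramanan), but the criterion you quote is wrong, and your pending ``parity check'' would fail. In degree $d=\chi+g-1$, the Mestrano--Ramanan condition is $\gcd(d+g-1,2g-2)=1$, i.e.\ $\gcd(\chi,2g-2)=1$. Your $\gcd(\chi+g-1,2g-2)$ is instead $\gcd(d,2g-2)$. The identity $\gcd(\chi,2g-2)=\gcd(\chi+g-1-(g-1),2g-2)$ is a tautology and does not compare the two.

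The two conditions agree for $g$ odd. For $g$ even, however, $g-1$ is odd, so exactly one of $\chi$ and $\chi+g-1$ is even, and the two coprimality conditions are mutually exclusive. With your version you would conclude that no Poincar\'e sheaf exists for $g$ even and $\gcd(\chi,2g-2)=1$, contradicting your own construction in the ``if'' direction.

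Quoting the result correctly, as in \cite[Cor.~2.9]{MR}, repairs the argument. Note also that the non-existence statement concerns the family over open subsets of $\M_g$, not individual generic curves; for each single curve a Poincar\'e sheaf exists.
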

Part \eqref{C:fineMg1}  of the above Corollary was stated in \cite[Cor. 9.8, Rmk. 9.9]{pagani2023stability} in the case of fine compactified universal Jacobians. The proof in loc.cit. contains an error, see Remark~\ref{rem:correcterror} for more details.
\begin{proof}
    Part \eqref{C:fineMg1}: first of all, by \eqref{E:Cap-cJ} and \eqref{E:sg-can}, we get that
    \begin{equation}\label{E:Cap-nf}
        \ov \J_{g}^{Cap, \chi}\text{ is non-fine }
    \Longleftrightarrow \frac{\chi}{2g-2}(2h-2+e)\in \ZZ \text{ for some } (e;h)\in \DD_g.
    \end{equation}
    Since $0<2h-2+e<2g-2$ for any $(e;h)\in \DD_g$ and all the numbers in the interval $(0,2g-2)$ are of the form $2h-2+e$ for some $(e;h)\in \DD_g$, Condition \eqref{E:Cap-nf} implies that 
    \begin{equation}\label{E:Cap-nf2}
        \ov \J_{g}^{Cap, \chi}\text{ is fine }
    \Longleftrightarrow \gcd(\chi, 2g-2)= 1.
    \end{equation}
    This shows the if part of the first assertion. 
    
    Conversely, if $\gcd(\chi, 2g-2)=k\geq 2$ then $(\frac{2g-2}{k}+2; 0)\in \D(\sigma_g^{\chi})$ by \eqref{E:sg-can}, Hence, since $(\frac{2g-2}{k}+2; 0)\in \DD_g^{ns}$, Theorem \ref{C:class-n0}\eqref{C:class-n01} implies that  $(\frac{2g-2}{k}+2; 0)$ belongs to $\D(\sigma)$ for any $\sigma \in \Sigma_g^\chi$, which implies that there are no fine compactified Jacobians over $\Mbarg$ of characteristic $\chi$. This proves the only if part of the first assertion.

    The second assertion follows from Corollary \ref{C:iso-n0}\eqref{C:iso-n01}.

    Part \eqref{C:fineMg2}: the $\Gm$-gerbe $ \J_g^{\chi}\to J_g^{\chi}=\J_g^{\chi}\fatslash \Gm$ is trivial if and only if $\gcd(\chi, 2g-2)=1$ by \cite[Cor. 2.9]{MR}. Therefore, if $\gcd(\chi, 2g-2)\neq 1$ there cannot be any tautological sheaf on $J_g^\chi\times_{\Mg} \Cg\to J_g^\chi$ and hence neither on $\ov J_{g}^{Cap, \chi}\times_{\Mbarg} \Cbarg\to \ov J_{g}^{Cap, \chi}$.
    Conversely, if $\gcd(\chi, 2g-2)=1$ then $\ov J_g^{Cap,\chi}=\ov \J_g^{Cap,\chi}\fatslash \Gm$ by part \eqref{C:fineMg1} and  the $\Gm$-gerbe $\ov \J_g^{Cap,\chi}\to \ov \J_g^{Cap,\chi}\fatslash \Gm$ is trivial by what recalled above (because being trivial is Zariski locally on the base). 
     Hence, the universal sheaf $\I_{g,n}$ on $\ov \J_g^{Cap,\chi}\times_{\Mbarg}\Cbarg$ descends (non-uniquely) to a tautological sheaf $\I_{g,n}^{\taut}$ on $\ov J_g^{Cap,\chi}\times_{\Mbarg}\Cbarg$.
\end{proof}

\begin{remark}\label{rem:correcterror}
The classification of all fine compactified universal Jacobians over $\Mbarg$ was announced in \cite{pagani2023stability}, but the proof in loc.cit. is incorrect, as we now outline. 

The classification strategy of \cite{pagani2023stability} relied on the classification of all 'universal stability assignments', a notion introduced in loc.cit. for the universal family over $\Mbargn$, and equivalent to the notion of 'general, universal V-stability condition' discussed here (see \cite[Example~4.25]{FPV2}, \cite[Section~4.2]{pagani2023stability} and Definition~\ref{D:VStab-Uni}).

The classification of universal stability assignments for the $n=0$ case was stated in \cite[Theorem~1.2]{pagani2023stability} or equivalently in \cite[Corollary~9.8]{pagani2023stability}. These results follow from \cite[Theorem~9.7]{pagani2023stability}, which in turn relies on \cite[Lemma 9.4]{pagani2023stability} (a classification of universal stability assignments  over all vine curves with no separating nodes and with one component of genus $0$), on  \cite[Corollary~9.5]{pagani2023stability} (a classification over all remaining vine curves with no separating nodes) and on \cite[Proposition~9.6] {pagani2023stability} (a classification over the vine curves with separating nodes).

The error is in the proof of \cite[Corollary~9.5]{pagani2023stability}. The argument in loc.cit. aims to establish an explicit formula to determine the stability assignment on the vine graph $G_1$ with vertices of genera $i, j$ from the vine graph $G_2$ with vertices of genera $g+j-i-1,0$, for all $i,j \in \mathbb{Z}_{>0}$ with $i \geq j$.

The argument relies on a classification of stability assignments on the genus $1$ necklace graph $G'$ with $4$ vertices $w_1, w_2, w_3, w_4$, cyclically connected by $4$ edges, and with genera $j,j,0,0$, respectively. The authors argue, incorrectly, that for a given stability assignment on $G_2$ there exists a unique $\Aut(G')$-invariant stability assignment on $G'$ that is compatible via the contraction morphisms $G' \to G_2$. The argument is then completed by observing that there is a unique stability assignment on $G_1$ that is compatible via the contraction morphisms $G' \to G_1$.

While the statement of \cite[Corollary~9.5]{pagani2023stability} is correct as it can now be deduced from Theorem~\ref{T:class-n0}, an independent proof following the original line of reasoning outlined above does not seem to be straightforward.
%Thus, following the argument in \cite[Section~9]{pagani2023stability} that is not affected by the error, one can still classify fine compactified universal Jacobians over the smallest open subset 
\end{remark}

\begin{remark}
    With the tools developed in this section, we can give a  criterion for a compactified universal  Jacobian to be fine, and also invariant with respect to the action of some natural subgroups of the symmetric group $S_n$ acting on the markings.\footnote{This question was raised to us by Dan Petersen in a private correspondence.} 
    
    Let $n,k \geq 1$ and   $N_1 \sqcup \ldots \sqcup N_k = [n]$ be a partition of $[n]$, and define $H \subseteq S_n$ as the  subgroup  that fixes the partition (so if $n_i=|N_i|$ for all $i=1,\ldots,k$, then $H \cong S_{n_1} \times \ldots S_{n_k}$ as abstract groups).  
    
    We have two natural actions of $H$. One is obtained on $\overline{\mathcal{M}}_{g,n}$ by permuting the marked points. Another one is on the collection $\Sigma_{g,n}$ of $V$-functions $\sigma$ of type $(g,n)$, defined, for all $t \in H$, by $t(\sigma)(e;h,A):= \sigma(e;h,t(A))$ for all $(e;h,A) \in \Dgn$.

    Then the same argument given to prove Theorem~\ref{T:thmA} shows that the anti-isomorphism of posets in loc.cit. induces an anti-isomorphism of posets between the $H$-invariant $V$-functions of type $(g,n)$ and $H$-equivariant compactified universal Jacobians over $\Mbargn$, or equivalently  compactified universal Jacobians over the quotient $[\Mbargn/H]$ (where fine compactified universal Jacobians correspond to the general $V$-functions).
    
     We claim that, for $g \geq 1$, there exists a fine compactified universal Jacobian of characteristic $\chi$ over the quotient $[\Mbargn/H]$ (equivalently, a general $H$-invariant $V$-function in $\Sigma_{g,n}^\chi$) if and only if $\gcd(\chi,2g-2,n_1,\ldots,n_k)=1$. (Note that a $S_n$-invariant $V$-function in $\Sigma_{0,n}^\chi$ exists for every $\chi\in \mathbb{Z}$).

 Indeed, consider the element of $\PicRel^\chi_{g,n}$ defined by
  $$
  L= \begin{cases} \frac{\chi - \sum_{j=1}^k q_j n_j }{2g-2} \cdot \omega_\pi + \sum_{r=1}^k q_r \left(\sum_{j \in N_r} \Sigma_j\right) & \textrm{if } g \geq 2 \\
  \frac{\chi- \sum_{j=1}^k q_j n_j}{n} (\Sigma_1 + \ldots + \Sigma_n) + \sum_{r=1}^k q_r \left(\sum_{j \in N_r} \Sigma_j\right) & \textrm{if } g =1,
  \end{cases}
  $$
     where $q_1, \ldots, q_k \in \mathbb{R}$ are 'generic' choices, by which we mean that they satisfy $ \sum_{j=1}^k a_j q_j \notin \mathbb{Z}$ for any integers $1 \leq a_j \leq n_j$ for all $j=1,\ldots,k$. Then $L$ is manifestly $H$-invariant, so the corresponding $V$-function $\sigma_L$, defined via \eqref{E:map-sigma} and \eqref{E:for-deg}, is also $H$-invariant. By the assumption $\gcd(\chi,2g-2,n_1,\ldots,n_k)=1$, we deduce that $\sigma_L$ is general.

Conversely, if $\sigma\in\Sigma_{g,n}^\chi$ is a $H$-invariant $V$-function, and there exists $1<\ell|\gcd(\chi,2g-2,n_1,\ldots,n_k)$, arguing as in Case~$1$ in the proof of Theorem~\ref{T:class-n0}, we see that 
    $$
    \{(e;h,A) \in \Dgn: \ \ell(2h-2+e)=2g-2 ,n_1= \ell\cdot  |A \cap N_1| ,\ldots, n_k=\ell \cdot |A \cap N_k|\}\subseteq\D(\sigma),
    $$
    and, in particular, $\sigma$ is not general.
\end{remark}

\section{On the poset of compactified universal Jacobians} \label{Sec:poset}

The aim of this section is to study the poset of compactified universal  Jacobian stacks over $\Mbargn$, with respect to the order relation. By Theorem \ref{T:thmA}, this poset is anti-isomorphic to the poset $\Sigma_{g,n}$ of V-functions of type $(g,n)$ (see Definition \ref{D:Sigma-pos}). The main result here is a proof of Theorem~\ref{T:thmF} from the introduction.

We will often assume that $n\geq 1$ throughout this section, since the case $n=0$ has been already dealt with in Section \ref{Sec:n0}.

As a tool to study the poset $\Sigma_{g,n}$, we will   use the degeneracy map introduced in \eqref{E:degsigma},  which we will upgrade to a morphism of posets $\D \colon \Sigma_{g,n} \to \Deg_{g,n}$, where the latter is introduced in Definition~\ref{D:degsub}.  

After establishing the basic order-theoretic properties of $\D$ and its separating/non-separating factorization, we treat the case $n=1$ (Subsection~\ref{Sub:n1}), where the poset $\Sigma_{g,1}$ and its image via the degeneracy map are  described explicitly.
We then return to general $n\ge 1$ (Subsection~\ref{sub:max-wall}) and classify maximal and submaximal elements of $\Sigma_{g,n}$ in terms of $\Deg_{g,n}$.
Finally, Subsection~\ref{Sub:g1} discusses the special features of genus $g=1$.

We start by defining the poset of universal degeneracy subsets.

\begin{definition}\label{D:degsub}
\noindent 
\begin{enumerate}
    \item \label{D:degsub1} A \emph{(universal) degeneracy subset} of type $(g,n)$ is a subset $D\subset \Dgn$ that satisfies the following properties: 
    \begin{enumerate}
        \item \label{D:degsub1a} [Complement-closure] If $(e;h,A)\in D$ then $(e;h,A)^c\in D$;
        \item \label{D:degsub1b} [Triangle-closure] For any triangle $\Delta=[(e_1;h_1,A_1), (e_2;h_2,A_2), (e_3;h_3,A_3)]$ of $\Dgn$, if two among the elements of $\Delta$ belong to $D$, then so does the third.
    \end{enumerate}
    We denote by $\Deg_{g,n}$ the collection of all degeneracy subsets of type $(g,n)$.
    \item \label{D:degsub2} Let $D^1,D^2\in \Deg_{g,n}$. We say that $D^1\geq D^2$ if $D^1\subseteq D^2$ and there exists a subset $E\subseteq D^2-D^1$ such that 
    \begin{enumerate}
        \item \label{D:degsub2a}  We have $(e;h,A) \in E$ if and only if $(e;h,A)^c \in E^c = (D^2-D^1) - E$.
        \item \label{D:degsub2b} For any triangle $\Delta=[(e_1;h_1,A_1), (e_2;h_2,A_2), (e_3;h_3,A_3)]$ contained in $D^2$ such that exactly one element of $\Delta$ belongs to $D^1$, we have that 
        $$
        |\{i \in \{1,2,3\}: (e_i;h_i,A_i)\in E\}|=1.
        $$
        \item \label{D:degsub2c} For any triangle $\Delta=[(e_1;h_1,A_1), (e_2;h_2,A_2), (e_3;h_3,A_3)]$ contained in $D^2-D^1$,  we have that 
        $$
        |\{i \in \{1,2,3\}: (e_i;h_i,A_i)\in E\}|=1 \text{ or } 2.
        $$
    \end{enumerate}    
    We call such an element $E$ a \emph{witness}  for the relation $D^1\geq D^2$. 
\end{enumerate}
\end{definition}

In Equation~\eqref{E:degsigma} we defined the degeneracy subset $\D(\sigma)$ of a V-function $\sigma\in \Sigma_{g,n}$, and the two conditions of Definition~\ref{D:Sigmagn} ensure that $\D(\sigma)$ is an element of $\Deg_{g,n}$. This defines a \emph{degeneracy map} 
\begin{equation}\label{D:Deg-map}
    \begin{aligned}
        \D: \Sigma_{g,n}& \longrightarrow \Deg_{g,n}\\
        \sigma & \mapsto \D(\sigma),
    \end{aligned}
\end{equation}
which we will now investigate as a map of posets.  Note that $\D=\coprod_{\chi \in \ZZ} \D^\chi$, where $\D^{\chi}:=\D_{|\Sigma_{g,n}^\chi}$.

\begin{proposition}\label{P:Deg-map}
 The degeneracy map $\D$ satisfies the following properties:
 \begin{enumerate}
      \item \label{P:Deg-map1}
     $\D$ is invariant with respect to the action of $\wt \PR_{g,n}$ on $\Sigma_{g,n}$.
      \item \label{P:Deg-map2}
      $\D$ is order-preserving, i.e. $\sigma^1\geq \sigma^2\Rightarrow \D(\sigma^1)\geq \D(\sigma^2)$.
      \item \label{P:Deg-map3}
      $\D$ is upper lifting, i.e. for any $D^1\geq D^2$ in $\Deg_{g,n}$ and any $\sigma^2\in \Sigma_{g,n}$ such that $\D(\sigma^2)=D^2$, there exists $\sigma^1\in \Sigma_{g,n}$ such that $\sigma^1\geq \sigma^2$ and $\D(\sigma^1)=D^1$.
 
      \item \label{P:Deg-map4} $\D$ is conservative, i.e. if $\sigma^1\geq \sigma^2$ is such that $\D(\sigma^1)=\D(\sigma^2)$ then $\sigma^1=\sigma^2$.
  \end{enumerate}  
\end{proposition}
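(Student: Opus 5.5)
The four properties are all pointwise statements about V-functions, so I will argue directly from Definition~\ref{D:Sigmagn}, Remark~\ref{R:Sigma-pos} and the formulas for the $\wt\PR_{g,n}$-action in Remark~\ref{R:PR-Pol}.

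For \eqref{P:Deg-map1}, it suffices to check the two kinds of generators. For $L\in\PicRel_{g,n}(\ZZ)$, the quantity $(L\cdot\sigma)(x)+(L\cdot\sigma)(x^c)-|L\cdot\sigma|$ equals $\sigma(x)+\sigma(x^c)-\chi$. This holds because the added terms for $x$ and $x^c$ sum to $\deg_\pi(L)$, by additivity of the multidegree on $Y$ and $Y^c$ (the $\gamma$-terms cancel and $\delta(x)+\delta(x^c)=2g-2+n$). For $\iota$, a direct computation gives $(\iota\sigma)(x)+(\iota\sigma)(x^c)+\chi$: it equals $0$ when $x\in\D(\sigma)$ and $2-1=1$ otherwise. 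Here I use $|\iota\sigma|=-\chi$ and the fact that $x\in\D(\sigma)$ iff $x^c\in\D(\sigma)$. So $\D$ is preserved in both cases; this is also asserted in Lemma-Definition~\ref{LD:PR-VStab}.

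For \eqref{P:Deg-map4}, I use Remark~\ref{R:Sigma-pos}. If $\sigma^1\ge\sigma^2$, the pair of values on $\{x,x^c\}$ can only change when $x\in\D(\sigma^2)-\D(\sigma^1)$. So equal degeneracy sets force $\sigma^1=\sigma^2$.

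For \eqref{P:Deg-map2}, given $\sigma^1\ge\sigma^2$, I set $D^i=\D(\sigma^i)$ and take as witness
\[E:=\{x\in D^2-D^1:\ \sigma^1(x)=\sigma^2(x)+1\}.\]
Then $D^1\subseteq D^2$, and property (a) of Definition~\ref{D:degsub}\eqref{D:degsub2} follows from Remark~\ref{R:Sigma-pos}, since exactly one of $x,x^c$ gets increased. For (b) and (c), I take a triangle $\Delta\subset D^2$. Condition~\eqref{E:triaUni} for $\sigma^2$ gives $\sum\sigma^2-\chi=0$. Each $x_i\in E$ contributes $+1$ to $\sum\sigma^1$, and the other elements contribute nothing. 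So $\sum\sigma^1-\chi=|\Delta\cap E|$. Now I apply \eqref{E:triaUni} to $\sigma^1$. If exactly one element of $\Delta$ lies in $D^1$, that value must be $1$. If none lies in $D^1$, it must be $1$ or $2$. This is exactly (b) and (c). The case where two elements of $\Delta$ lie in $D^1$ cannot occur inside $D^2-D^1$, by triangle closure.

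For \eqref{P:Deg-map3}, which is the main step, I reverse the construction. Given a witness $E$ and $\sigma^2$ with $\D(\sigma^2)=D^2$, I define $\sigma^1(x)=\sigma^2(x)+1$ if $x\in E$ and $\sigma^1(x)=\sigma^2(x)$ otherwise. Property (a) gives condition~\eqref{E:sumUni} with $\D(\sigma^1)=D^1$. Condition (2a) holds because $D^1$ is triangle-closed. The real work is verifying \eqref{E:triaUni} for every triangle $\Delta$, splitting by the number $k=|\Delta\cap D^2|$ and, within that, by $|\Delta\cap D^1|$.

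If $\Delta\subset D^2$, the computation above runs backwards using (b) and (c). If $\Delta\cap D^2=\emptyset$, nothing changes. The delicate cases are $k=1$ and $k=2$, where the triangle leaves $D^2$. Triangle closure of $D^2$ rules out $k=2$. For $k=1$, say $x_3\in D^2$, I use Remark~\ref{R:triang}. If $x_3\in D^1$ the values are unchanged. If $x_3\in D^2-D^1$, then $\sum\sigma^2-\chi=1$ is forced for $\sigma^2$, and $\sigma^1$ needs the sum in $\{1,2\}$. So raising $\sigma(x_3)$ by $1$ keeps the sum within the allowed range, namely $2$.

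I expect this final case bookkeeping to be the main obstacle: confirming that every configuration of a triangle relative to $D^1\subseteq D^2$ and $E$ is covered.
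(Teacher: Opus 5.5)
Your proposal is correct and follows the paper's proof essentially step for step. It uses the same witness $E=\{x\in D^2-D^1:\sigma^1(x)=\sigma^2(x)+1\}$ for order-preservation, the same lift $\sigma^1=\sigma^2+\chi_E$ with a triangle-by-triangle case check for upper lifting, and Remark~\ref{R:Sigma-pos} for conservativity. One small omission: in the case $|\Delta\cap D^2|=1$ with $x_3\in D^2-D^1$, you should also note that if $x_3\notin E$ the sum stays equal to $1$, which is likewise allowed.
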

\begin{proof}
    Part \eqref{P:Deg-map1} follows immediately from the explicit description of the action of $\wt \PR_{g,n}$ on $\Sigma_{g,n}$, see Remark \ref{R:PR-Pol}.  Part \eqref{P:Deg-map4} follows from Remark \ref{R:Sigma-pos}.

    Part \eqref{P:Deg-map2}: consider two V-functions  $\sigma^1,\sigma^2\in \Sigma_{g,n}$ such that $\sigma^1\geq \sigma^2$ (which implies that they have the same characteristic  $|\sigma^1|=|\sigma^2|:=\chi$) and let us show that $D^1:=\D(\sigma^1)\geq D^2:=\D(\sigma^2)$. First of all, $D^1\subseteq D^2$ by Remark \ref{R:Sigma-pos}. We now show that the subset 
    $$
    E:=\{(e;h,A)\in D^2-D^1\: : \; \sigma_1(e;h,A)=\sigma_2(e;h,A)+1\}. 
    $$
    is a witness for the relation $D^1\geq D^2$, i.e. it satisfies the conditions of Definition \ref{D:degsub}\eqref{D:degsub2}:

    $\bullet$ Condition \eqref{D:degsub2a} follows from Remark \ref{R:Sigma-pos}.

    $\bullet$ Consider a triangle $\Delta=[(e_1;h_1,A_1), (e_2;h_2,A_2), (e_3;h_3,A_3)]$ such that $(e_1;h_1,A_1)\in D^1\subseteq D^2$ and $(e_2;h_2,A_2), (e_3;h_3,A_3)\in D^2-D^1$. By applying \eqref{E:triaUni} to $\sigma^1$ and $\sigma^2$ and the triangle $\Delta$ we get that 
    $$
    \begin{sis}
    & \sum_{i=1}^3\sigma^1(e_i;h_i,A_i)-\chi=1,\\
    & \sum_{i=1}^3\sigma^2(e_i;h_i,A_i)-\chi=0.
    \end{sis}
    $$
   This implies that $E$ contains exactly one among $(e_2;h_2,A_2)$ and $(e_3;h_3,A_3)$, and hence condition~\eqref{D:degsub2b} holds. 

   $\bullet$ Consider a triangle $\Delta=[(e_1;h_1,A_1), (e_2;h_2,A_2), (e_3;h_3,A_3)]$ such that $(e_i;h_i,A_i)\in D^1\subseteq D^2$ for any $1\leq i \leq 3$. 
   By applying \eqref{E:triaUni} to $\sigma^1$ and $\sigma^2$ and the triangle $\Delta$ we get that 
    $$
    \begin{sis}
    & \sum_{i=1}^3\sigma^1(e_i;h_i,A_i)-\chi\in \{1,2\},\\
    & \sum_{i=1}^3\sigma^2(e_i;h_i,A_i)-\chi=0.
    \end{sis}
    $$
   This implies that $E$ contains either $1$ or $2$ elements of $\Delta$, and  hence Condition \eqref{D:degsub2c} holds. 

    Part \eqref{P:Deg-map3}: consider two degeneracy subsets $D^1,D^2\in \Deg_{g,n}$ such that $D^1\geq D^2$ in $\Deg_{g,n}$ and a V-function $\sigma^2\in \Sigma_{g,n}$ such that $\D(\sigma^2)=D^2$. By Definition \ref{D:degsub}\eqref{D:degsub2}, there exists $E\subset D^2-D^1$ satisfying the conditions of loc. cit. 
    We now consider the function $\sigma^1:\Dgn\to \Z$ defined by 
    $$
    \sigma^1(e;h,A):=
    \begin{sis}
        \sigma^2(e;h,A) & \text{ if either } (e;h,A)\in D^1 \text{ or } (e;h,A)\not \in D^2 \text{ or } (e;h,A)\in E^c,\\
        \sigma^2(e;h,A)+1 & \text{ if } (e;h,A)\in E.
    \end{sis}
    $$
  We will now show that $\sigma^1$ is a V-function of characteristic $\chi:=|\sigma^2|$ with $\D(\sigma^1)=D^1$, which clearly concludes the proof since $\sigma^1\geq \sigma^2$ by construction. In order to show this, we will check the conditions of Definition \ref{D:Sigmagn}. 

  Equation \eqref{E:sumUni} holds for $\sigma^1$ by the property in \eqref{D:degsub2a} of $E$ and we also get that the $\sigma^1$-degenerate elements are exactly the elements of $D^1$. Hence, the first condition of Definition \ref{D:Sigmagn}\eqref{E:condUni2} holds true since $D^1$ is a degeneracy subset. 

In order to check Condition \eqref{E:triaUni}, consider a triangle $\Delta=[(e_1;h_1,A_1), (e_2;h_2,A_2), (e_3;h_3,A_3)]$ and we will distinguish several cases:

$\bullet$ If either $(e_i;h_i,A_i)\in D^1$ for all $i$, or $(e_i;h_i,A_i)\not \in D^2$ for all $i$, or one of the elements of $\Delta$ belongs to $D^1$ and the other two do not belong to $D^2$, then we have that $\sigma^1(e_i;h_i,A_i)=\sigma^2(e_i;h_i,A_i)$ for every $i$ and hence Condition \eqref{E:triaUni} for $\sigma^2$ implies that the same condition holds for $\sigma^1$. 

  $\bullet$ If (up to permuting the indices) we have that $(e_1;h_1,A_1)\in D^2-D^1$ and $(e_2;h_2,A_2),(e_3;h_3,A_3)\not \in D^2$, then we compute
  $$
  \sum_{i=1}^3\sigma^1(e_i;h_i,A_i)-\chi=\sum_{i=1}^3\sigma^2(e_i;h_i,A_i)-\chi+|\{(e_1;h_1,A_1)\}\cap E|\in 1+\{0,1\}=\{1,2\}. 
  $$

 $\bullet$ If  we have that $(e_i;h_i,A_i)\in D^2-D^1$ for all $i$, then we compute 
  $$
  \sum_{i=1}^3\sigma^1(e_i;h_i,A_i)-\chi=\sum_{i=1}^3\sigma^2(e_i;h_i,A_i)-\chi+|\{(e_i;h_i,A_i)\in E\}|\in 0+\{1,2\}=\{1,2\},
  $$
where we have used Condition \eqref{D:degsub2c} for $E$. 

  $\bullet$ If (up to permuting the indices) we have that $(e_1;h_1,A_1)\in D^1$ and $(e_2;h_2,A_2),(e_3;h_3,A_3) \in D^2-D^1$, then we compute
  $$
  \sum_{i=1}^3\sigma^1(e_i;h_i,A_i)-\chi=\sum_{i=1}^3\sigma^2(e_i;h_i,A_i)-\chi+|\{(e_2;h_2,A_2), (e_3;h_3,A_3)\}\cap E|\in 0+\{0,1\}=\{0,1\},
  $$
  where we have used Condition \eqref{D:degsub2b} for $E$. 

In each of the above cases, Condition \eqref{E:triaUni} is satisfied for $\sigma^1$ with respect to the triangle $\Delta$, and we are done. 
\end{proof}

\begin{corollary}\label{C:witness}
   For any given $\sigma^2\in \Sigma_{g,n}^\chi$ and any $D^1\geq \D(\sigma^2)$ in $\Deg_{g,n}$, there is a bijection
      $$
      \begin{aligned}
       \left\{\text{Witnesses for } D^1\geq \D(\sigma^2) \right\}  &\xrightarrow{\cong} \left\{\sigma^1\in \Sigma_{g,n} : \text{ such that } \sigma^1\geq \sigma^2 \text{ and }\D(\sigma^1)=D^1 \right\} \\
       E & \mapsto \sigma^2+\chi_E
      \end{aligned}
      $$
      where $\chi_{E}$ is the characteristic function of $E$. 
\end{corollary}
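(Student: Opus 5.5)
The plan is to show that the displayed assignment is well defined, injective, and surjective. Each step reads off from the proof of Proposition \ref{P:Deg-map}, Parts \eqref{P:Deg-map2} and \eqref{P:Deg-map3}.

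For well-definedness, let $E$ be a witness for $D^1\geq \D(\sigma^2)=:D^2$. The proof of Proposition \ref{P:Deg-map}\eqref{P:Deg-map3} builds exactly the function $\sigma^1=\sigma^2+\chi_E$: it agrees with $\sigma^2$ off $E$ and adds $1$ on $E$. That proof checks that this $\sigma^1$ is a V-function of characteristic $\chi$ with $\D(\sigma^1)=D^1$. Since $\chi_E\geq 0$, we also get $\sigma^1\geq \sigma^2$. So the map lands in the target set.

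Injectivity is immediate, because $E$ is recovered from $\sigma^2+\chi_E$ as the support of the difference $(\sigma^2+\chi_E)-\sigma^2$.

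For surjectivity, take $\sigma^1\geq \sigma^2$ with $\D(\sigma^1)=D^1$.
\begin{itemize}
\item By Remark \ref{R:Sigma-pos}, the difference $\sigma^1-\sigma^2$ takes values in $\{0,1\}$.
\item The same remark shows it vanishes on $D^1$ and on the complement of $D^2$, since there the pairs $(\sigma^i(x),\sigma^i(x^c))$ coincide.
\item Hence $\sigma^1-\sigma^2=\chi_E$, where $E:=\{x\in D^2-D^1:\sigma^1(x)=\sigma^2(x)+1\}$.
\end{itemize}
This $E$ is exactly the set the proof of Proposition \ref{P:Deg-map}\eqref{P:Deg-map2} shows to be a witness for $D^1\geq D^2$. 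Therefore $\sigma^1$ is the image of $E$, and the two constructions are mutually inverse.

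I do not expect a real obstacle, since every verification has already been carried out in Proposition \ref{P:Deg-map}. The one point to state explicitly is that on $D^2-D^1$ the difference is $0$ or $1$ and never larger. This again follows from Remark \ref{R:Sigma-pos}: the sum $\sigma(x)+\sigma(x^c)$ increases by exactly one when $x$ passes from degenerate to nondegenerate.
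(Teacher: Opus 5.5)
Your proposal is correct and follows the paper's own argument. The paper also proves the corollary by combining two earlier proofs: the construction $\sigma^2+\chi_E$ from the proof of Proposition~\ref{P:Deg-map}\eqref{P:Deg-map3}, and the witness $E=\{x\in D^2-D^1:\sigma^1(x)=\sigma^2(x)+1\}$ from the proof of Proposition~\ref{P:Deg-map}\eqref{P:Deg-map2}, with Remark~\ref{R:Sigma-pos} showing that $\sigma^1-\sigma^2$ is $\{0,1\}$-valued and supported on $D^2-D^1$.
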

\begin{proof}
    This follows by combining the proofs of Parts \eqref{P:Deg-map2} and \eqref{P:Deg-map3} of Proposition \ref{P:Deg-map}.
\end{proof}

\begin{corollary}\label{C:iso-Sigma}
 Let $\chi, \chi'\in \Z$. If either $n\geq 1$ or $\chi\equiv \pm \chi \mod (2g-2)$, then there exists an isomorphism $\Sigma_{g,n}^\chi\xrightarrow{\cong} \Sigma_{g,n}^{\chi'}$ commuting with the degeneracy maps. 
\end{corollary}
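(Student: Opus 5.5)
Note first that the statement should read $\chi\equiv \pm \chi' \mod (2g-2)$. The plan is to realise the isomorphism as the action of a suitable element of $\wPR_{g,n}$. By Remark \ref{R:PR-Pol} and Lemma-Definition \ref{LD:PR-VStab}, every $g\in \wPR_{g,n}$ acts on $\Sigma_{g,n}$ by a bijection with inverse $g^{-1}$. A line bundle $L$ sends $\Sigma^{\chi}_{g,n}$ to $\Sigma^{\chi+\deg_\pi(L)}_{g,n}$, and $\iota$ sends $\Sigma^\chi_{g,n}$ to $\Sigma^{-\chi}_{g,n}$. By Proposition \ref{P:Deg-map}\eqref{P:Deg-map1}, the degeneracy map $\D$ is invariant under this action, so any such $g$ commutes with the degeneracy maps.

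I would then check that each of these bijections is order preserving, so that it is a poset isomorphism. For $L\in \PicRel_{g,n}(\ZZ)$ this is clear, since $L$ adds a function of $(e;h,A)$ that is independent of $\sigma$. For $\iota$, recall that $\sigma_1\geq\sigma_2$ means pointwise inequality at fixed characteristic. By Remark \ref{R:Sigma-pos}, the values agree except on $\D(\sigma_2)-\D(\sigma_1)$, where exactly one of $x,x^c$ is raised by $1$. Applying $\iota$ gives $-\sigma_1(x)+1$ versus $-\sigma_2(x)$ at such $x$, and $-\sigma_1(x)+1=-\sigma_2(x)$ when $\sigma_1(x)=\sigma_2(x)+1$, so the inequality is preserved there. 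At the complementary $x^c$, where $\sigma_1(x^c)=\sigma_2(x^c)$, we get $-\sigma_1(x^c)+1\geq -\sigma_2(x^c)$. On all other points both sides are transformed by the same rule. Hence $\iota\sigma_1\geq\iota\sigma_2$. If order preservation is not required, this step can be skipped.

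It then remains to find an element of $\wPR_{g,n}$ of the right degree.
\begin{itemize}
\item Case $n\geq 1$: take $L=(\chi'-\chi)\Sigma_1\in \PicRel_{g,n}(\ZZ)$, which has $\deg_\pi(L)=\chi'-\chi$.
\item Case $n=0$: take $L=k\omega_\pi$ with $k(2g-2)=\chi'-\chi$, possibly composed with $\iota$ when $\chi'\equiv -\chi$.
\end{itemize}

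The only delicate point is the case $g=0$. There the relations give $\Sigma_1$ relative degree $1$, so the construction still works, since $\PicRel_{0,n}(\ZZ)\ni\Sigma_1$. For $g=1$ with $n=0$ the hypothesis is vacuous in the sense that $\Mbargn$ requires $n\geq1$.
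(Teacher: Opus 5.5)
Your proposal is correct and is the paper's own argument: realize the isomorphism by an element of $\wPR_{g,n}$ (translation by $(\chi'-\chi)\Sigma_1$ when $n\geq 1$, or by a multiple of $\omega_\pi$, possibly composed with $\iota$, when $n=0$), and get compatibility with $\D$ from Proposition~\ref{P:Deg-map}\eqref{P:Deg-map1}. Your explicit check that $\iota$ preserves the order correctly unpacks the poset-compatibility already asserted in Lemma-Definition~\ref{LD:PR-VStab}, and you are right that the hypothesis should read $\chi\equiv\pm\chi' \mod (2g-2)$.
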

\begin{proof}
This follows from Proposition \ref{P:Deg-map}\eqref{P:Deg-map1}, together with the explicit action of $\wt \PR_{g,n}$ on  $\Sigma$ (see Remark~\ref{R:PR-Pol}).    
\end{proof}

We can decompose the degeneracy map into a separating and a non-separating part, as we now explain. 

First of all, using the decomposition $\Dgn=\Dgns\sqcup \Dgnns$, we get an isomorphism of posets
 \begin{equation}\label{E:dec-Deg}
\begin{aligned}
     \Deg_{g,n}& \xrightarrow{\cong} \Degs_{g,n}\times \Degns_{g,n},\\
     D &\mapsto (D^s:=D\cap \Dgns, D^{ns}:=D\cap \Dgnns).
\end{aligned}
 \end{equation}
 where $\Degs_{g,n}$ is the collection of all subsets of $\Dgns$ satisfying Condition \eqref{D:degsub1a} of Definition \ref{D:degsub} (note that there are no triangles in $\Dgns$) and $\Degns_{g,n}$ is the collection of all subsets of $\Dgnns$ satisfying \eqref{D:degsub1a} and \eqref{D:degsub1b} of Definition \ref{D:degsub}, and the poset structure is defined in both cases as in Definition \ref{D:degsub}\eqref{D:degsub2}. 

Definition \ref{D:Sigmagn2} provides two degeneracy maps, called respectively the \emph{separating} and \emph{non-separating} degeneracy map
\begin{equation}\label{E:Ds-Dns}
\Ds:\Sigmas_{g,n}\to \Degs_{g,n} \text{ and } \Dns:\Sigmans_{g,n}\to \Degns_{g,n}.
\end{equation}
For any $\chi \in \ZZ$, we set $\Ds^{\chi}:=\Ds_{|\Sigmas^\chi}$ and $\Dns^{\chi}:=\Dns_{|\Sigmans^\chi}$.

Finally, Lemma \ref{L:Sigma-s-ns} implies that 
\begin{equation}\label{E:Deg-dec}
    \D=\Ds\times \Dns.
\end{equation}

The separating degeneracy maps $\Ds^\chi$ are easy to describe, as we now show. 

\begin{proposition}\label{P:Dsep}
    Fix $\chi\in \ZZ$.
    \begin{enumerate}
        \item \label{P:Dsep1} The separating degeneracy map $\Ds^\chi$ factors as 
    $$\Ds^{\chi}:\Sigmas^\chi\twoheadrightarrow \Sigmas^\chi/W_{g,n}\cong \Im(\Ds^\chi)\hookrightarrow \Degs_{g,n}$$
    \item \label{P:Dsep2} The image of $\Ds^\chi$ is equal to 
    $$
    \Im(\Ds^\chi)= 
    \begin{cases}
     \Deg^s_{g,n}   \text{ if either } n\geq 1 \text{ or } g \text{ is odd,}\\ 
     \left\{D\in \Deg^s_{g,n}\: : \: (1;g/2,\emptyset)^c=(1;g/2, \emptyset)\in D \text{ if and only if } \chi \text{ is even}
     \right\}  \text{ otherwise.}
    \end{cases}
    $$
    \item \label{P:Dsep3} The restriction on $\Im(\Ds^\chi)$ of the order relation on $\Degs_{g,n}$ is the order relation given by the opposite of the inclusion.
    \item \label{P:Dsep4} The map 
    $\Ds^\chi:\Sigmas_{g,n}^\chi\twoheadrightarrow \Im(\Ds^\chi)$
    is:
    
    $\bullet$ upper lifting, i.e. for any $\sigma\in \Sigmas_{g,n}^{\chi}$ and any $E\in \Im(\Ds^\chi)$ such that $E\geq \Ds(\sigma)$, there exists $\tau\in \Sigmas_{g,n}^\chi$ such that $\tau\geq \sigma$ and $\D(\tau)=E$;
    
    $\bullet$ lower lifting, i.e. for any $\sigma\in \Sigmas_{g,n}^{\chi}$ and any $E\in \Im \Ds^\chi$ such that $\Ds(\sigma)\geq E$, there exists $\tau\in \Sigmas_{g,n}^\chi$ such that $\sigma\geq \tau$ and $\Ds(\tau)=E$.
    
    \end{enumerate}
    
\end{proposition}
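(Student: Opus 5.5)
The statement is elementary because $\Dgns$ carries no triangles. A separating V-function is therefore just a choice, for each complementary pair $\{x,x^c\}$ in $\Dgns$, of integers $(\sigma(x),\sigma(x^c))$ with $\sigma(x)+\sigma(x^c)-\chi\in\{0,1\}$. I will work pair by pair, and the first step is to describe the action of $W_{g,n}$ explicitly.

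By Remark \ref{R:PR-Pol}, the element $\O(C_{(h,A)})$ acts on the pair $x=(1;h,A)$, $x^c=(1;h,A)^c$ by $(\sigma(x),\sigma(x^c))\mapsto(\sigma(x)-1,\sigma(x^c)+1)$. Pairs with $(h,A)\ne (h,A)^c$ are untouched. The sum $\sigma(x)+\sigma(x^c)$, and hence degeneracy, is preserved. Conversely, the possible values on a pair with a prescribed degeneracy status form a single orbit under this shift. Since distinct pairs are acted on independently, two elements have the same $\Ds$ if and only if they are in the same $W_{g,n}$-orbit. This is Lemma \ref{L:Sigma-ac}\eqref{L:Sigma-ac1}, and it gives part \eqref{P:Dsep1}.

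The exception is the self-complementary element $x=(1;g/2,\emptyset)=x^c$, which exists only when $n=0$ and $g$ is even. There the condition reads $2\sigma(x)-\chi\in\{0,1\}$. So $\sigma(x)$ is forced, and $x$ is degenerate exactly when $\chi$ is even. For part \eqref{P:Dsep2}, any complement-closed $D$ is realized on non-self-complementary pairs by choosing, for instance, $(0,\chi)$ when the pair lies in $D$ and $(0,\chi+1)$ when it does not. The fixed point gives the parity restriction and nothing else.

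For part \eqref{P:Dsep3}, I will unwind Definition \ref{D:degsub}\eqref{D:degsub2} with no triangles present, so that only condition \eqref{D:degsub2a} survives. If $D^1\subseteq D^2$ are both in the image, the difference $D^2-D^1$ contains no fixed point, because the status of the fixed point is determined by $\chi$. Hence the difference splits into genuine pairs, and choosing one element from each pair gives a witness $E$. So $\geq$ coincides with reverse inclusion.

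For part \eqref{P:Dsep4}, upper lifting is the pairwise version of Proposition \ref{P:Deg-map}\eqref{P:Deg-map3}: on each pair in $D^2-D^1$, raise one of the two values by $1$. Lower lifting is the reverse. Given $\Ds(\sigma)\subseteq E$, on each pair of $E-\Ds(\sigma)$ the sum is $\chi+1$, so I lower one of the two values by $1$ to get sum $\chi$, and leave everything else unchanged. The result is $\tau\le\sigma$ with $\Ds(\tau)=E$. No triangle conditions need checking, so there is no real obstacle. The only point requiring care is the self-complementary element when $n=0$ and $g$ is even, in parts \eqref{P:Dsep2} and \eqref{P:Dsep3}.
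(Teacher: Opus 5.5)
Your proposal is correct and follows the same route as the paper. Part (1) is Lemma~\ref{L:Sigma-ac}\eqref{L:Sigma-ac1}, which you rederive from Remark~\ref{R:PR-Pol}, and the remaining parts are a direct pair-by-pair unwinding of Definitions~\ref{D:Sigmagn2} and~\ref{D:degsub}, using that $\Dgns$ contains no triangles. You also spell out a detail the paper leaves implicit: the self-complementary element $(1;g/2,\emptyset)$ has its degeneracy status forced by the parity of $\chi$, so it never lies in the difference of two degeneracy subsets in the image and never needs to appear in a witness.
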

\begin{proof}
Part \eqref{P:Dsep1} follows from  Lemma \ref{L:Sigma-ac}.

The other parts follows easily from the Definition \ref{D:Sigmagn2} of $\Sigmas_{g,n}^\chi$ and the above definition of $\Deg_{g,n}$ (using that there are no triangles in $\Dgns$). 
\end{proof}

The non-separating degeneracy maps $\Dns^\chi$ are much more difficult to study and we will give partial results in the subsections that follow. 

We end this subsection with some definitions and corollaries of the above results.

\begin{definition} \label{D:Deg-rea}
Let $D \in \Deg_{g,n}$. We say that $D$ is 
\begin{enumerate}[(i)]
\item \emph{realizable} (resp. \emph{$\chi$-realizable}) if $D=\D(\sigma)$ for some V-function $\sigma \in \Sigma_{g,n}^\chi$ for some $\chi\in \ZZ$  (resp. $\sigma \in \Sigma_{g,n}^\chi$); in this case we  say that $\sigma$ realizes $D$.
\item \emph{classical} (resp. \emph{$\chi$-classical}) if it is realized by some classical  V-function (resp. of characteristic $\chi$), that is, by $\sigma_L$ (see Equation~\eqref{E:for-deg}) for some $L \in \PicRel^{\mathbb{Z}}_{g,n}(\mathbb{R})$ (resp. $L\in \PicRel^{\chi}_{g,n}(\mathbb{R}))$.
\end{enumerate}
Similar definitions can be given for $D\in \Degs_{g,n}$ and $D\in \Degns_{g,n}$.

We will denote by 
\begin{enumerate}
    \item $\Deg^{re}_{g,n}$ (resp. $\Degs^{re}_{g,n}$, resp. $\Degns^{re}_{g,n}$) the subposet of $\Deg_{g,n}$ (resp. $\Degs_{g,n}$, resp. $\Degns_{g,n}$) consisting of realizable universal (resp. separating, resp. non-separating) degeneracy subsets of type $(g,n)$.

    \item $\Deg^{cl}_{g,n}$ (resp. $\Degs^{cl}_{g,n}$, resp. $\Degns^{cl}_{g,n}$) the subposet of $\Deg_{g,n}$ (resp. $\Degs_{g,n}$, resp. $\Degns_{g,n}$) consisting of classical universal (resp. separating, resp. non-separating) degeneracy subsets of type $(g,n)$.
\end{enumerate}

\end{definition}

\begin{remark}\label{R:Deg-rea}
Since the degeneracy map is upper lifting (see Proposition \ref{P:Deg-map}\eqref{P:Deg-map3}), we have that the $(\chi-)$realizability  is upward closed, i.e. if $D_1\geq D_2$ in $\Deg_{g,n}$ and $D_2$ is $(\chi$-)realizable, then $D_1$ is ($\chi$-)realizable. In other words, the subposet $\Deg^{re}_{g,n}\subseteq \Deg_{g,n}$ is upward-closed. And similarly for the subposets $\Degns^{re}_{g,n}\subseteq \Degns_{g,n}$ and $\Degs^{re}_{g,n}\subseteq \Degs_{g,n}$.

Moreover, if $n\geq 1$, then for any $D\in \Deg_{g,n}$ we have that: 
\begin{enumerate}[(i)]
    \item $D$ is realizable if and only if it is $\chi$-realizable for any $\chi \in \Z$ (by Corollary \ref{C:iso-Sigma}). In other words, the map $\D^\chi:\Sigma_{g,n}^\chi\to \Deg^{re}_{g,n}$ is surjective for every $\chi \in \ZZ$.
    \item $D$ is classical if and only if it is $\chi$-classical for any $\chi \in \Z$ (by Corollary \ref{C:iso-Sigma} and Remark \ref{R:PR-Pol}). In other words, the composition  $\Pol^\chi_{g,n}\xrightarrow{\sigma_-}\Sigma_{g,n}^\chi\xrightarrow{\D^{\chi}} \Deg^{cl}_{g,n}$ is surjective for every $\chi \in \ZZ$.
     \item  $D\in \Deg_{g,n}$ is realizable (resp. classical) if and only if $^sD$ and $^{ns}D$ are realizable (resp. classical). In other words, the isomorphism \eqref{E:dec-Deg} induces isomorphisms
     $$
     \begin{sis}
    &\Deg^{re}_{g,n}& \xrightarrow{\cong} \Degs^{re}_{g,n}\times \Degns^{re}_{g,n},\\
    &\Deg^{cl}_{g,n}& \xrightarrow{\cong} \Degs^{cl}_{g,n}\times \Degns^{cl}_{g,n}.\\
     \end{sis}
     $$
    \item  If $D \in \Degs_{g,n}$ then $D$ is classical, and hence realizable, as it follows from Proposition \ref{P:Dsep}\eqref{P:Dsep2} and Remark~\ref{R:class-s-ns}(i). In other words, we have that 
    $$
    \Degs_{g,n}=\Degs^{re}_{g,n}=\Degs^{cl}_{g,n}.
    $$
\end{enumerate}
\end{remark}
We will show in Remark \ref{R:oss-poset} that the subposet $\Deg_{g,n}^{cl}\subset \Deg_{g,n}$ is not, in general, upward-closed.

\vspace{0.1cm}

Recall  that if $(\P,\leq)$ is a finite poset, the \emph{height} of an element $x\in \P$, denoted by $h_{\P}(x)=h(x)$,  is the maximum length of an ascending chain starting from $x$, i.e. the maximum  integer $n$ such that there exists a chain $x_0>x_1>\ldots>x_{n-1}>x_n=x$. 
%In particular, the elements $x$ with $h(x)=0$ are the \emph{maximal elements} of $\P$, while the elements $x\in \P$ with $h(x)=1$ are called \emph{submaximal elements}. 

We will apply the above definitions to the posets $\Sigma_{g,n}^\chi$ (and its separating or non-separating subposets) and to $\Deg_{g,n}$ (and its separating or non-separating subposets and the realizable subposet).
Note that since all the above subsposets are upward-closed, we have that 
$$
\begin{sis}
  & h_{\Sigmas_{g,n}^\chi}= (h_{\Sigma_{g,n}^\chi})_{|\Sigmas_{g,n}^\chi} \text{ and } h_{\Sigmans_{g,n}^\chi}= (h_{\Sigma_{g,n}^\chi})_{|\Sigmans_{g,n}^\chi}, \\
  & h_{\Deg^{(re)}_{g,n}}= (h_{\Deg_{g,n}})_{|\Deg^{(re)}_{g,n}} \text{ and }
   h_{\Degns^{(re)}_{g,n}}= (h_{\Deg_{g,n}})_{|\Degns^{(re)}_{g,n}} \text{ and } h_{\Degs^{(re)}_{g,n}}= (h_{\Deg_{g,n}})_{|\Degs^{(re)}_{g,n}}. \\
\end{sis}
$$
Therefore, we can write $h(\sigma)$ for a (separating or non-separating) V-function $\sigma$ or $h(D)$ for a (separating or non-separating, realizable or not) universal degeneracy subset, we do not have to specify in which poset we are considering the height.

On the other hand, we will show in Remark \ref{R:oss-poset} that, in general, $h_{\Deg^{cl}_{g,n}}\neq (h_{\Deg_{g,n}})_{|\Deg^{cl}_{g,n}}$.

\begin{lemma}\label{C:height}
\noindent
 \begin{enumerate}
     \item \label{C:height1} For any $\sigma\in \Sigma_{g,n}^\chi$, we have that $h(\sigma)=h(\D(\sigma))$.
     \item \label{C:height2} We have that 
     $$
     \begin{sis}
      & h(\sigma)=h(\sigma^{ns})+h(\sigma^s) \text{ for any }  \sigma\in \Sigma_{g,n}^\chi, \\ 
      & h(D)=h(D^{ns})+h(D^s) \text{ for any }  D\in \Deg^{(re)}_{g,n}. \\
     \end{sis}$$
     \item \label{C:height3}  For $D\in \Degs_{g,n}$, we have that 
     $$\displaystyle h(D)=
     \begin{cases}
     \frac{|D|+1}{2} & \text{ if } n=0, \, g \text{ is even and } \left(1;\frac{g}{2},\emptyset\right)\in D, \\
     \frac{|D|}{2} & \text{ otherwise.}
     \end{cases}$$
 \end{enumerate}   
\end{lemma}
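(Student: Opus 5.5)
Part (1) is the core, and I will prove it by transporting chains along the degeneracy map $\D$ of Proposition~\ref{P:Deg-map}. To get $h(\sigma)\le h(\D(\sigma))$, take any chain $\sigma=\sigma_n<\sigma_{n-1}<\dots<\sigma_0$ in $\Sigma_{g,n}^\chi$. By Proposition~\ref{P:Deg-map}\eqref{P:Deg-map2}, $\D$ is order-preserving, so this gives a weakly ascending chain $\D(\sigma_n)\le\dots\le\D(\sigma_0)$ in $\Deg_{g,n}$. Each step is strict, because if $\D(\sigma_{i+1})=\D(\sigma_i)$ then conservativity (Part~\eqref{P:Deg-map4}) would force $\sigma_{i+1}=\sigma_i$. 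For the reverse inequality, take any chain $\D(\sigma)=D_n<D_{n-1}<\dots<D_0$ in $\Deg_{g,n}$ and lift it step by step from the bottom with upper lifting (Part~\eqref{P:Deg-map3}). This produces $\sigma_{i}\ge\sigma_{i+1}$ with $\D(\sigma_i)=D_i$, and these are strict since the $D_i$ are distinct. Since heights in the upward-closed subposets coincide with heights in the ambient poset, as noted before the lemma, it does not matter which poset we work in. Before using heights at all, I will check that they are finite, meaning ascending chains are bounded. In $\Deg_{g,n}$ this is clear, because $D_{i}\subsetneq D_{i+1}$ and $\D(\sigma)$ is a finite set: $\Dgn$ is finite.

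For Part~\eqref{C:height2}, the isomorphisms $\Sigma_{g,n}^\chi\cong\Sigmas_{g,n}^\chi\times\Sigmans_{g,n}^\chi$ (Lemma~\ref{L:Sigma-s-ns}) and \eqref{E:dec-Deg} are isomorphisms onto product posets. In a product of finite-height posets, height is additive: a maximal chain is obtained by concatenating maximal chains in each factor, and every strict step in the product is strict in at least one coordinate. The statement for $D$ in the realizable subposet then follows from \eqref{E:dec-Deg} together with Remark~\ref{R:Deg-rea}(iii).

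For Part~\eqref{C:height3}, I will use the fact that $\Dgns$ contains no triangles. This means that in the order on $\Degs_{g,n}$ (Definition~\ref{D:degsub}\eqref{D:degsub2}), conditions (b) and (c) are vacuous, so $D^1\ge D^2$ exactly when $D^1\subseteq D^2$ and $D^2-D^1$ admits a witness $E$ as in condition (a). Such a witness exists precisely when $D^2-D^1$ is closed under complement and contains no fixed point of the involution, because (a) requires $x\in E\Leftrightarrow x^c\notin E$. Fixed points of the involution exist only for $n=0$ with $g$ even, and the only one is $(1;g/2,\emptyset)$.

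So in the generic case a strict relation removes a nonempty union of complementary pairs, and a maximal chain removes exactly one pair per step, which gives $h(D)=|D|/2$. In the exceptional case the fixed point $(1;g/2,\emptyset)$ can never be removed, which would make heights compare to the empty set impossible. Getting $(|D|+1)/2$ requires instead that the maximal elements be the sets $\{(1;g/2,\emptyset)\}$ or $\emptyset$, depending on parity (Proposition~\ref{P:Dsep}\eqref{P:Dsep2}); I expect that height is meant relative to the realizable image. The main obstacle is this exceptional case: I must reconcile the stated formula with the parity constraint, since the odd count $|D|$ makes the fixed point contribute $1/2$. I will likely need to compute the height within $\Im(\Ds^\chi)$, where the fixed point is either always present or always absent, and check that convention against the formula.
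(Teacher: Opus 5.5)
Parts~(1) and~(2) follow the paper's argument. Order-preserving plus conservative gives $h(\sigma)\le h(\D(\sigma))$, upper lifting gives the reverse inequality, and height is additive on the product decompositions of Lemma~\ref{L:Sigma-s-ns} and \eqref{E:dec-Deg}. One small point: you do not need Remark~\ref{R:Deg-rea}(iii), which is only stated for $n\geq 1$. Heights in the upward-closed realizable subposets coincide with those in $\Deg_{g,n}$, $\Degs_{g,n}$ and $\Degns_{g,n}$, so the realizable statement is a special case of the unrestricted one.

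The gap is the exceptional case of Part~(3): $n=0$, $g$ even, $x:=(1;g/2,\emptyset)\in D$. You leave this open and hope to settle it by computing heights inside $\Im(\Ds^\chi)$. That cannot produce $\frac{|D|+1}{2}$, because your own analysis already determines the height:
\begin{itemize}
\item Since $x=x^c$ can never satisfy condition (a) of Definition~\ref{D:degsub}\eqref{D:degsub2}, every $D'\geq D$ contains $x$.
\item $D-\{x\}$ is a union of $\frac{|D|-1}{2}$ complementary pairs. Each strict step removes at least one of them, and removing them one at a time is always allowed.
\item Hence $\{x\}$ is the unique maximal element above $D$, and $h(D)=\frac{|D|-1}{2}$.
\end{itemize}
For example, $D=\{x\}$ is maximal, so $h(D)=0$, while the displayed formula gives $1$.

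Restricting to $\Im(\Ds^\chi)$ changes nothing. By Proposition~\ref{P:Dsep}\eqref{P:Dsep2} this image is $\{D: x\in D\}$ for $\chi$ even and $\{D: x\notin D\}$ for $\chi$ odd. Both sets are upward-closed in $\Degs_{g,0}$, so heights are unchanged, and for $\chi$ even the maximum is again $\{x\}$. Your remark that $\frac{|D|+1}{2}$ would follow if the maximal elements were $\{x\}$ or $\emptyset$ is backwards: $\{x\}$ being maximal is exactly what gives $\frac{|D|-1}{2}$.

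The $+1$ in the statement is an off-by-one misprint. The paper's own one-line proof, via the opposite-inclusion order of Proposition~\ref{P:Dsep}\eqref{P:Dsep3}, also yields $\frac{|D|-1}{2}$. You should finish by proving the corrected value $\frac{|D|-1}{2}$, not by looking for a convention under which $\frac{|D|+1}{2}$ holds.
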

\begin{proof}
    Part \eqref{C:height1} follows from Proposition \ref{P:Deg-map}: the inequality $h(\sigma)\geq h(\D(\sigma))$ follows from the fact that $\D$ is upper-lifting, the inequality $h(\sigma)\leq h(\D(\sigma))$ follows from the fact that $\D$ is order-preserving and conservative. 

    Part \eqref{C:height2} follows from the isomorphisms of Lemma \ref{L:Sigma-s-ns} and \eqref{E:dec-Deg}.

    Part \eqref{C:height3} follows from Proposition \ref{P:Dsep}\eqref{P:Dsep3} and the fact that $\Degs_{g,n}$ is the collection of all subsets of $\Dgns$ satisfying Condition \eqref{D:degsub1a} of Definition \ref{D:degsub}.
\end{proof}

\subsection{The case $n=1$}\label{Sub:n1}

The aim of this subsection is to describe the poset $\Sigma_{g,1}^\chi$ of V-functions of type $(g,1)$. The main results here are:
\begin{enumerate}
\item A characterization of the classical V-functions as those with 'a uniform behavior', that is, their values and degeneracy subset only depend upon the value of the log-canonical degree on half-vine types without the marked point (and not on the particular half-vine type): this is Theorem~\ref{T:class-n1}.

\item A complete and explicit description of the poset of representable elements inside $\Deg_{g,1}$, with all its objects and  order relations: this culminates in Theorem~\ref{T:pos-Degn1}.

\item A proof that $\Sigma_{g,1}^{\chi}$ is connected through height $1$: this is Theorem~\ref{T:conn-h1}.
\end{enumerate}
Part~(2) is used in Subsection~\ref{sub:max-wall} -- readers only interested in that subsection  may skip the other parts.

By Lemma \ref{L:Sigma-s-ns} and Proposition \ref{P:Dsep}, we can restrict to the poset $\Sigmans^{\chi}_{g,1}$ of non-separating V-functions. Proposition \ref{P:Deg-map} and Remark  \ref{R:Deg-rea} provide a surjective, order preserving, upper lifting and conservative map
$$
\Dns:\Sigmans_{g,1}^{\chi}\twoheadrightarrow \Degns_{g,1}^{re}\subseteq \Degns_{g,1},
$$
where $\Degns_{g,1}^{re}$ is the subposet of realizable non-separating universal degeneracy subsets of type $(g,1)$. 
%The main result of this subsection is a complete description of the poset  $\Degns_{g,1}^{re}$.

%In order to achieve that, 
For later use, we define some further structure on the stability domain $\Dguno$. First of all, we can partition the stability domain $\Dguno$ into an \emph{empty}  and a \emph{full} stability subdomain  
\begin{equation}\label{E:mar-unmar}
\Dguno=\Dguno^e\sqcup \Dguno^f, \text{ where }
\begin{sis}
& \Dguno^e:=\{(e;h,\emptyset)=(e;h)\in \Dgn\},\\
& \Dguno^f:=\{(e;h,\{1\})\in \Dgn\}.\\    
\end{sis}
\end{equation}
Note that the complementary involution $(-)^c$ defines a bijection between $\Dguno^e$ and $\Dguno^f$. Moreover, the decomposition \eqref{E:mar-unmar} is compatible with the decomposition \eqref{E:Dgn-s-ns}, so that we get a decomposition
\begin{equation}\label{E:dec-Dg1-ns}
\Dgunons=\Dgunou\sqcup \Dgunom, 
\text{ where }
\begin{sis}
& \Dgunou:=\Dgunons\cap \Dguno^e:=\{(e;h,\emptyset)=(e;h)\in \Dgn : e\geq 2\},\\
& \Dgunom:=\Dgunons\cap \Dguno^f:=\{(e;h,\{1\})\in \Dgn : e\geq 2\}.\\    
\end{sis}
\end{equation}
and a similar one for $\Dgunos$. 

Recall the log-canonical degree \eqref{E:lcdeg}, which on $\Dguno^e$ takes the following form
$$
\begin{aligned}
   \delta: \Dguno^e & \longrightarrow [1,2g-2],\\
   (e;h)& \mapsto \delta(e;h):=2h-2+e.
\end{aligned}
$$
We now focus on $\Dgpr:=\Dgunou$, that we call the \emph{primitive} stability domain of type $(g,1)$. We introduce two further structures on $\Dgpr$ that will be useful: 

$\bullet$ For any $(e_1;h_1),(e_2;h_2)\in \Dgpr$ and any index $i$ such that 
$$\max\{1,h_1+h_2+e_1+e_2-g-2\}\leq i \leq \min\{e_1-1,e_2-1\},$$
we define the \emph{$i$-th composition} as
\begin{equation}\label{E:i-compo}
    (e_1;h_1)\circ_i (e_2;h_2):=(e_1+e_2-2i;h_1+h_2+i-1).
\end{equation}
The partial binary operation $\circ_i$ is commutative and associative (when defined) and it satisfies
\begin{equation}\label{E:sum-comp}
\delta(x_1\circ_i x_2)=\delta(x_1)+\delta(x_2).
\end{equation}
Moreover, we have that:
\begin{equation}\label{E:tria-comp}
\text{ every triangle in } \Dguno \text{ is of the form }    \Delta=[x_1,x_2, (x_1\circ_i x_2)^c],
\end{equation}
for some $x_1,x_2\in \Dgpr$ such that $\delta(x_1)+\delta(x_2)\leq 2g-2$, and some $i$.

$\bullet$ We introduce the following order relation on $\Dgpr$: 
\begin{equation}\label{E:order-D}
 (e_1;h_1)\leq (e_2;h_2) \Longleftrightarrow h_1\leq h_2 \text{ and }  e_1+h_1\leq e_2+h_2.
\end{equation}
Note that, by the definition of $\leq$ and the fact that $\delta(e;h)=h+(h+e)-2$, we have that 
\begin{equation}\label{E:deg-leq}
\begin{sis}
& (e_1;h_1)\leq (e_2;h_2)\Rightarrow \delta(e_1;h_1)\leq \delta(e_2;h_2), \\
& (e_1;h_1)< (e_2;h_2)\Rightarrow \delta(e_1;h_1)< \delta(e_2;h_2). \\
\end{sis}
\end{equation}
 It turns out that 
\begin{equation}\label{E:ord-comp}
     (e_1;h_1)< (e_2;h_2)\Longleftrightarrow (e_2;h_2)=(e_1;h_1)\circ_{i} (\wt e;\wt h) \text{ for some } (\wt e;\wt h)\in \Dgpr \text{ and some } i.
\end{equation}
Indeed, the implication $\Leftarrow$ follows from \eqref{E:i-compo} using that $1\leq i \leq \wt e-1$, $\wt h\geq 0$ and that $(\wt e;\wt h)\neq (2,0)$. Conversely, if $(e_1;h_1)<(e_2;h_2)$ then we get the right hand side of \eqref{E:ord-comp} by taking 
$$(\wt e,\wt h, i)=
\begin{cases}
(e_1-e_2+2,h_2+e_2-h_1-e_1, e_1-e_2-1) & \text{ if } e_2\leq e_1, \\
(e_2-e_1+2,h_2-h_1,1) & \text{ if } e_1\leq e_2.
\end{cases}$$

In what follows, we will adopt the following notation: for any $D\subseteq \Dgpr=\Dgunou$ we set 
\begin{equation} \label{e:dtilde}
\wt{D}:=\{x^c : x\in D\}\subseteq \Dgunom.\end{equation}
\begin{remark}\label{R:Deg-Sig-pr}
\noindent
\begin{enumerate}
    \item \label{R:Deg-Sig-pr1} Given $D\in \Degns_{g,1}$, we define the primitive part of $D$ to be 
    $$D^{pr}:=D\cap \Dgpr\subset \Dgpr.$$
    Indeed, $D^{pr}$ uniquely determines $D$ since 
    \begin{equation}\label{E:D-Dpr}
    D=D^{pr}\sqcup \wt{D^{pr}}.
    \end{equation}
    The triangle closure for $D$ (see Definition \ref{D:degsub}\eqref{D:degsub1b}) is equivalent to the following property for $D^{pr}$
    \begin{equation}\label{E:Dpr-prop}
    |[x_1,x_2, x_1\circ_i x_2]\cap D^{pr}|\geq 2 \Rightarrow [x_1,x_2, x_1\circ_i x_2] \subseteq D^{pr},
    \end{equation}
    for any partial composition $(e_1;h_1)\circ_i (e_2;h_2)$. 

    Conversely, any subset $D^{pr}\subseteq \Dgpr$ satisfying \eqref{E:Dpr-prop} defines an element $D\in \Degns_{g,1}$ via \eqref{E:D-Dpr}.

     \item \label{R:Deg-Sig-pr2} Given $\sigma \in \Sigmans_{g,1}^\chi$, we define its primitive part to be
     $$
     \sigma^{pr}:=\sigma_{|\Dgpr}:\Dgpr\to \ZZ.
     $$
    The pair $(\sigma^{pr},\D(\sigma)^{pr})$ uniquely determines $\sigma$ since 
    \begin{equation}\label{E:sig-sigpr}
    \sigma(x)=
    \begin{cases}
    \sigma^{pr}(x) & \text{ if } x\in \Dgpr, \\
    \chi+1-\sigma^{pr}(x^c) & \text{ if } x\not \in \Dgpr \text{ and } x^c\not \in \D(\sigma)^{pr},\\
    \chi-\sigma^{pr}(x^c) & \text{ if } x\not \in \Dgpr \text{ and } x^c\in \D(\sigma)^{pr}.\\
    \end{cases}
    \end{equation}
     The property in \eqref{E:triaUni} for $\sigma$ implies, by Remark \ref{R:triang} and \eqref{E:tria-comp}, that $\sigma^{pr}$ satisfies
     \begin{equation}\label{E:sigmapr-prop}
        \sigma^{pr}(x_1\circ_i x_2)-\sigma^{pr}(x_1)-\sigma^{pr}(x_2)=
        \begin{cases}
          0 & \text{ if either } x_1\in \D(\sigma)^{pr} \text{ or }   x_2\in \D(\sigma)^{pr}, \\
          -1 & \text{ if  } x_1, x_2\not \in \D(\sigma)^{pr} \text{ and }   x_1\circ_i x_2\in \D(\sigma)^{pr}, \\
          \{0,-1\} & \text{ otherwise.}
        \end{cases}
     \end{equation}
     
     Conversely, any pair $(\sigma^{pr},D^{pr})$ such that $\sigma^{pr}$ satisfies \eqref{E:sigmapr-prop} and $D^{pr}$ satisfies \eqref{E:Dpr-prop} defines an element $\sigma\in \Sigmans_{g,n}^\chi$ via \eqref{E:sig-sigpr}.
\end{enumerate}   
\end{remark}

Our first result is a characterization of the V-functions of type $(g,1)$ that are classical.

\begin{theorem}\label{T:class-n1}
A V-function $\sigma\in \Sigma_{g,1}^\chi$ is classical if and only if the following conditions hold:
\begin{enumerate}[(i)]
    \item \label{T:class-n1i} its non-separating degeneracy subset $\Dns(\sigma)=\D(\sigma^{ns})$ is either empty or it is equal to 
    $$
W_{\delta}:=\{x\in \Dgpr : \delta \mid \delta(x)\}\sqcup \{x^c : x\in \Dgpr \text{ and } \delta \mid \delta(x)\} \text{ for } 1\leq \delta\leq 2g-2.$$
\item \label{T:class-n1ii} $\sigma$ is \emph{uniform}, i.e. $\sigma(e;h)=\sigma(e';h')$ for each $(e;h),(e';h')\in\Dgpr$ such that $\delta(e;h)=\delta(e';h')$.
\end{enumerate}
Moreover, each subset of the form $W_\delta$  arises as the degeneracy subset of some classical V-functions.
\end{theorem}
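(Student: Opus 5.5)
By Lemma \ref{L:Sigma-s-ns}, Remark \ref{R:class-s-ns}(i) and Proposition \ref{P:Dsep}, every separating component is classical, and a classical $L$ can be altered in its $\gamma$-coordinates without touching $\sigma_L^{ns}$. So $\sigma$ is classical if and only if $\sigma^{ns}$ is classical, and the problem reduces to $\Sigmans_{g,1}^\chi$. By Remark \ref{R:Deg-Sig-pr} we may work only with the primitive part $\sigma^{pr}$ on $\Dgpr$ together with $\D(\sigma)^{pr}$.

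\emph{Necessity.} By \eqref{E:sigma-cl}, for $n=1$ and $g\ge 2$ a classical non-separating V-function has the form
\[
\sigma(e;h)=\Big\lceil (\chi-\alpha)\tfrac{\delta(e;h)}{2g-2}\Big\rceil
\]
on $\Dgpr$, with $\alpha=\alpha_1$. This depends only on $\delta(e;h)$, so $\sigma$ is uniform. Again by Remark \ref{R:class-s-ns}, $x\in\Dgpr$ is degenerate exactly when $t\,\delta(x)\in\ZZ$, where $t:=(\chi-\alpha)/(2g-2)$. If $t\notin\QQ$ this set is empty. If $t=p/\delta$ in lowest terms, it is $\{x:\delta\mid\delta(x)\}$. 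Because degeneracy is complement-closed, $\Dns(\sigma)$ is then $\emptyset$ or $W_\delta$. For $g=1$ the formula is $\lceil 0\rceil$ on $\Dgpr$, which gives the same conclusion. Conversely, for $g\ge2$ choosing $\alpha=\chi-(2g-2)/\delta$ makes $t=1/\delta$ and realizes $W_\delta$; this proves the final sentence of the statement (for $\delta\le 2g-2$).

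\emph{Sufficiency.} Assume (i) and (ii). Uniformity lets me write $\sigma^{pr}(x)=f(\delta(x))$ for a function $f$ on the set $\delta(\Dgpr)=\{1,\dots,2g-2\}$; every such value is attained, e.g. by $(e;0)$ with $e=\delta+2$ when this is allowed. Define $D:=\{m:\delta\mid m\}$ if $\Dns(\sigma)=W_\delta$, and $D:=\emptyset$ otherwise. Applying \eqref{E:sigmapr-prop} to compositions, with \eqref{E:sum-comp}, gives for all $a,b$ with $a+b\le 2g-2$:
\[
f(a+b)-f(a)-f(b)\in\{0,-1\},
\]
equal to $0$ if $a\in D$ or $b\in D$, and to $-1$ if $a,b\notin D$ and $a+b\in D$. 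The composition $x_1\circ_i x_2$ exists for every such pair $a,b$; one can check this with explicit elements, e.g. $(a+2;0)\circ_1(b+2;0)$. In other words, $-f$ is a ``quasi-additive'' function with the prescribed carries.

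The main obstacle is showing that such an $f$ equals $\lceil t m\rceil$ for some real $t$, which then gives $\alpha=\chi-(2g-2)t$. I would try a Beatty/Sturmian-type argument. The conditions $f(a)+f(b)-1\le f(a+b)\le f(a)+f(b)$ mean that $F(m):=-f(m)$ is superadditive up to $1$, so by a Fekete/Ostrowski-type argument the interval
\[
\bigcap_{m}\Big[\tfrac{f(m)-1}{m},\,\tfrac{f(m)}{m}\Big]
\]
is nonempty; any $t$ in it satisfies $f(m)=\lceil tm\rceil$ whenever $tm\notin\ZZ$. The degeneracy pattern then fixes the remaining values. If $D=W_\delta$, the relations force $f(\delta)\cdot(\text{multiples})$ to be additive, so $t$ can be taken equal to $f(\delta)/\delta$, which must be in lowest terms for $D$ to be exactly the multiples of $\delta$. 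If $D=\emptyset$, a small generic perturbation of $t$ inside the open interval works. Checking that this interval is nonempty is the delicate step. I would prove it by induction on $2g-2$, or by the standard characterization of such $f$ via mediants/Farey neighbours.
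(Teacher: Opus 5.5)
Your reduction to $\Sigmans_{g,1}^\chi$, the necessity direction and the realization of $W_\delta$ via $t=1/\delta$ are correct and match the paper. The gap is in sufficiency, which is the substance of the theorem. Everything hinges on producing $t\in\RR$ with $f(m)=\lceil tm\rceil$ for $1\le m\le 2g-2$, and this is exactly the step you leave open.

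Moreover, the version you state is too weak. Nonemptiness of $\bigcap_m[\frac{f(m)-1}{m},\frac{f(m)}{m}]$ does not exclude that the intersection is a single point $t_0=\frac{f(m_1)-1}{m_1}$, where $\lceil t_0m_1\rceil=f(m_1)-1$. Your ``generic perturbation inside the open interval'' presupposes precisely that the \emph{open} intervals have a common point. A Fekete-type limit argument does not apply to a function defined only on $\{1,\dots,2g-2\}$. What is needed is the sharp inequality
\[
-\delta+1\le \delta' f(\delta)-\delta f(\delta')\le \delta'-1 \qquad (1\le \delta'<\delta\le 2g-2),
\]
i.e.\ every left endpoint lies strictly below every right endpoint. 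The paper proves it by induction on the pairs $(\delta'<\delta)$ in lexicographic order. Write $\delta=k\delta'+l$ with $0\le l<\delta'$. If $l=0$, iterating $f(a+b)-f(a)-f(b)\in\{0,-1\}$ gives $-(k-1)\le f(\delta)-kf(\delta')\le 0$. If $l\ge 1$, multiply $-k\le f(\delta)-kf(\delta')-f(l)\le 0$ by $\delta'$ and subtract the inductive bound for $(l<\delta')$. You also need every pair $a,b\ge 1$ with $a+b\le 2g-2$ to be realized by a composition. Your example $(a+2;0)\circ_1(b+2;0)$ exists only when $a+b\le g-1$, so in general you need other representatives such as $(2;h)$ or $(3;h)$.

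The case $\Dns(\sigma)=W_\delta$ has a second gap. That $t=f(\delta)/\delta$ works and is in lowest terms is asserted, not derived. Additivity on multiples of $\delta$ cannot be the reason, since for $\delta\ge g$ there are no proper multiples of $\delta$ in range. The paper first proves $\gcd(f(\delta),\delta)=1$ from $-(h-1)\le f(\delta)-hf(k)\le -1$ for every factorization $\delta=hk$ with $h>1$; this uses the forced $-1$ at $\delta$. It then proves $f(\delta')=\lceil f(\delta)\delta'/\delta\rceil$ by a separate three-case induction.

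Within your framework there is a shorter finish once the displayed inequality is available; it only uses $f(a+b)-f(a)-f(b)\in\{0,-1\}$, so it holds in this case too. Then $R:=\min_m f(m)/m$ satisfies $\lceil Rm\rceil=f(m)$ for all $m$. Write $R=p/q$ in lowest terms, so $q\le 2g-2$.
\begin{itemize}
\item If $q<\delta$, the forced relation $f(\delta)=f(q)+f(\delta-q)-1$ contradicts $\lceil R\delta\rceil=p+\lceil R(\delta-q)\rceil$.
\item If $q>\delta$, the forced relation $f(q)=f(\delta)+f(q-\delta)$ contradicts $\lceil R\delta\rceil+\lceil R(q-\delta)\rceil=p+1$, which holds because $R\delta\notin\ZZ$.
\end{itemize}
Hence $q=\delta$ and $\sigma=\sigma_R$. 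For $\Dns(\sigma)=\emptyset$, any $t$ strictly between $\max_m\frac{f(m)-1}{m}$ and $R$ works, since then $tm\in(f(m)-1,f(m))$ for all $m$.
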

\begin{proof}
Observe that: $\sigma$ is uniform if and only if $\sigma^{ns}$ is uniform by definition, $\sigma$ is  classical if and only if $\sigma^{ns}$ is  classical by Remark \ref{R:class-s-ns}, and that $\Dns(\sigma)=\D(\sigma^{ns})$. Hence, we can assume for the rest of the proof that $\sigma\in \Sigmans_{g,n}^\chi$.

Let us first show the only if implication. According to \eqref{E:for-deg}, the classical non-separating V-functions of characteristic $\chi$ are given by (for any $x=(e;h,A)\in \Dgunons$)
\begin{equation}\label{E:sigma-beta}
\sigma_{\beta}(x):=\sigma_{\beta\omega_{\pi}+(\chi-(2g-2)\beta)\Sigma_1}(x)=
\begin{cases}
    \lceil \delta(x)\beta\rceil & \text{ if } x\in \Dgunou=\Dgpr, \\
     \lceil \chi-\beta\delta(x^c)\rceil & \text{ if } x\in \Dgunom,
\end{cases} 
\end{equation}
as $\beta$ varies in $\RR$. The above equation shows that any classical (non-separating) V-function is uniform and that 
\begin{equation}\label{E:D-sigbeta}
 \D(\sigma_{\beta})^{pr}=\left\{x=(e;h)\in \Dgpr: \: \beta\,\delta(x)\in\mathbb Z\right\}.
\end{equation}
Therefore, if $\beta\not\in \QQ$ then $\D(\sigma_\beta)^{pr}=\emptyset$, which implies that  $\D(\sigma_\beta)=\emptyset$ by Remark \ref{R:Deg-Sig-pr}\eqref{R:Deg-Sig-pr1}. Otherwise, we write $\beta=\frac{p}{q}$ in lowest terms with $q>0$. Then $\beta\,\delta(x)\in\mathbb Z$ is equivalent to $q\mid \delta(x)$. If $q>2g-2$  then again $\D(\sigma_\beta)=\emptyset$. On the other hand, if $q\leq 2g-2$ then $\D(\sigma_\beta)^{pr}=W_q^{pr}$, which implies that $\D(\sigma_\beta)=W_q$ by Remark \ref{R:Deg-Sig-pr}\eqref{R:Deg-Sig-pr1}. This computation also proves the last statement since 
$$
\begin{sis}
& \emptyset=\D(\sigma_\beta) \text{ for any  } \beta\not \in \QQ, \\
& W_\delta=\D(\sigma_{1/\delta}) \text{ for } 1\leq \delta\leq 2g-2.
\end{sis}
$$

We now show the if implication. Let $\sigma\in\Sigmans^\chi_{g,1}$ be a uniform V-function, and denote by $\sigma(\delta):=\sigma(e;h)$ for each $(e;h)\in\Dgpr$ such that $\delta(e;h)=\delta\in [1,2g-2]$. 
We now distinguish two cases according to $\D(\sigma)$. 

$\un{\text{Case I:}}$ $\Dns(\sigma)=W_\delta$ for some $1\leq \delta\leq 2g-2$. 

We will prove that (see \eqref{E:sigma-beta} for the notation) 
$\displaystyle \sigma=\sigma_{\frac{\sigma(\delta)}{\delta}}$,
which amounts to showing, by \eqref{E:sigma-beta} and \eqref{E:D-sigbeta} together with Remark \ref{R:Deg-Sig-pr}\eqref{R:Deg-Sig-pr2}, the following two properties:
    \begin{equation}\label{E:betaWdelta1}
     \sigma(\delta')=\bigg\lceil\frac{\sigma(\delta)\delta'}{\delta}\bigg\rceil  \text{ for any } 1\leq \delta'\leq 2g-2.
    \end{equation}
    \begin{equation}\label{E:betaWdelta2} 
     \frac{\sigma(\delta)\delta'}{\delta}\in \ZZ  \Longleftrightarrow  \delta\vert \delta', \text{ for any } 1\leq \delta'\leq 2g-2.
    \end{equation}
Observe that  \eqref{E:betaWdelta2} is equivalent to the condition 
\begin{equation}\label{E:coprime}
 \gcd(\sigma(\delta),\delta)=1, 
\end{equation}
 which we are now going to check.
Consider any decomposition $\delta=k\cdot h$ with $h,k\in \NN_{>0}$ such that $h\neq 1$ (so that $k\neq \delta$).
Property \eqref{E:sigmapr-prop} and the assumption  $\D(\sigma)=W_{\delta}$ imply that 
$$
\begin{sis}
& -1\leq \sigma(ik)-\sigma((i-1)k)-\sigma(k)\leq 0 \text{ for any } 2\leq i \leq h-1,\\
& \sigma(\delta)=\sigma(hk)=\sigma((h-1)k)+\sigma(k)-1.
\end{sis}
$$
By summing the above inequalities, we get that 
\begin{equation}\label{E:inequa1}
-(h-1)\leq \sigma(\delta)-h\sigma(k)\leq -1.
\end{equation}
We deduce that $h$ does not divide $\sigma(\delta)$ and, since this is true for all divisors $1<h$ of $\delta$, we infer that \eqref{E:coprime} holds true. 

We now check \eqref{E:betaWdelta1} by distinguishing three cases.

$(a)$ If $\delta'\vert \delta$ then \eqref{E:betaWdelta1} follows from \eqref{E:inequa1} if $\delta'<\delta$ and it is trivial if $\delta'=\delta$. 

$(b)$ If $1\leq \delta'<\delta$, we proceed by induction on  $\delta'$. We can assume that $\delta\geq 2$, for otherwise there is nothing to prove. The base case of the induction is $\delta'=1$, which follows from Case (a). 

We now prove the inductive step. Write $\delta-\delta'=k\delta'+l$ for some $k,l\in \NN$ such that $0\leq l<\delta'$. We can assume that $1\leq l$, for otherwise $\delta'\vert \delta$ and the result follows from Case (a). Hence, by induction and the definition of $l$, we have that
\begin{equation}\label{E:indu-l}
\sigma(l)=\bigg\lceil\frac{\sigma(\delta)l}{\delta}\bigg\rceil=\bigg\lceil\frac{\sigma(\delta)(\delta-(k+1)\delta')}{\delta}\bigg\rceil=\sigma(\delta)+\bigg\lceil-\frac{\sigma(\delta)(k+1)\delta'}{\delta}\bigg\rceil=\sigma(\delta)-\bigg\lceil\frac{\sigma(\delta)(k+1)\delta'}{\delta}\bigg\rceil+1,
\end{equation}
where in the last equality we have used 
$$\frac{\sigma(\delta)(k+1)\delta'}{\delta}\not \in \ZZ
\text{ by  \eqref{E:betaWdelta2}  and } \delta=(k+1)\delta'+l>(k+1)\delta'$$
and the property $\lceil -x\rceil=-\lceil x\rceil +1$ for any $x\not \in \ZZ$.

Using this, a repeated application of  \eqref{E:sigmapr-prop} (together with the assumption that $\D(\sigma)=W_\delta$) yields the following
\begin{equation}\label{E:tante-in}
\begin{sis}
& \sigma(\delta)-\sigma(\delta')-\sigma(\delta-\delta')=-1,\\
& -1\leq\sigma(\delta-\delta')-\sigma(k\delta')-\sigma(l)\leq 0 \quad \text{ if } k\geq 1,\\
&-(k-1)\leq\sigma(k\delta')-k\sigma(\delta')\leq 0 \quad \text{ if } k\geq 1. 
\end{sis}    
\end{equation}
By summing all the above inequalities, we get the following inequality (note that this also holds true when $k=0$)
\begin{equation}\label{E:final-in}
-(k+1)\leq \sigma(\delta)-(k+1)\sigma(\delta')-\sigma(l)\leq -1.
\end{equation}
From this inequality and \eqref{E:indu-l}, we deduce that
\begin{equation}\label{E:bounddelta'}
 \begin{sis}
 & \sigma(\delta')\geq \frac{1}{k+1}\bigg(\sigma(\delta)-\sigma(l)+1\bigg)=\frac{1}{k+1}\bigg\lceil\frac{\sigma(\delta)(k+1)\delta'}{\delta}\bigg\rceil\geq \bigg\lceil\frac{\sigma(\delta)\delta'}{\delta}\bigg\rceil-1+\frac{1}{k+1},\\
& \sigma(\delta')\leq\frac{1}{k+1}\bigg(\sigma(\delta)-\sigma(l)+k+1\bigg)=\frac{1}{k+1}\bigg(\bigg\lceil\frac{\sigma(\delta)(k+1)\delta'}{\delta}\bigg\rceil+k\bigg)\leq \bigg\lceil\frac{\sigma(\delta)\delta'}{\delta}\bigg\rceil+1-\frac{1}{k+1},
 \end{sis}
\end{equation}
where we have used that
$$
\lceil x\rceil -1+\frac{1}{h}\leq \frac{\lceil hx\rceil}{h} \leq \lceil x\rceil \quad \text{ for any } x\not \in \ZZ \text { and any } h\in \NN_{>0}.$$
The two inequalities in \eqref{E:bounddelta'} force the equality $\displaystyle \sigma(\delta')=\bigg\lceil\frac{\sigma(\delta)\delta'}{\delta}\bigg\rceil$, i.e.  \eqref{E:betaWdelta1} holds true for $\delta'$. 
%$\sigma(\delta')=\sigma_{\frac{\sigma(\delta)}{\delta}}(\delta')$ and that $\frac{\sigma(\delta)\delta'}{\delta}\notin\ZZ$.

$(c)$ For an arbitrary $1\leq \delta'\leq 2g-2$, write $\delta'=k\delta+l$, with $0\leq l<\delta$ and $k\geq 0$. If $l=0$ then we compute 
    $$
\sigma(\delta')=\sigma(k\delta)=k\sigma(\delta)= \frac{\sigma(\delta)\delta'}{\delta}= \bigg\lceil \frac{\sigma(\delta)\delta'}{\delta}\bigg\rceil,
$$
where we have used \eqref{E:sigmapr-prop} and the assumption that $\D(\sigma)=W_\delta$ in the second equality.

    Otherwise, if $1\leq l <\delta$ then we compute
$$
\begin{aligned}
\sigma(\delta')&=\sigma(k\delta+l)=k\sigma(\delta)+\sigma(l) & \text{ by \eqref{E:sigmapr-prop} using that } \D(\sigma)=W_{\delta}\\
&= k\sigma(\delta)+\bigg\lceil \frac{\sigma(\delta)l}{\delta}\bigg\rceil & \text{ by Case (b) applied to $l$}\\
&= \bigg\lceil \frac{\sigma(\delta)(k\delta+l)}{\delta}\bigg\rceil=\bigg\lceil \frac{\sigma(\delta)\delta'}{\delta}\bigg\rceil.
\end{aligned}
$$
which concludes the proof of \eqref{E:betaWdelta1}.
    
$\un{\text{Case II:}}$  $\Dns(\sigma)=\emptyset$.

   We will show  that there exists $\beta\in \RR-\QQ$ such that $\sigma=\sigma_\beta$. The latter amounts to showing, by \eqref{E:sigma-beta} and \eqref{E:D-sigbeta} together with Remark~\ref{R:Deg-Sig-pr}\eqref{R:Deg-Sig-pr2}, the equality 
    \begin{equation}\label{E:beta2}
    \sigma(\delta)=\lceil\beta\delta\rceil,  \quad\textup{for any}\quad 1\leq\delta\leq2g-2.
    \end{equation}
The above condition \eqref{E:beta2} for $\beta$ is equivalent to 
\begin{equation}\label{E:beta3}
    \frac{\sigma(\delta)-1}{\delta}<\beta <  \frac{\sigma(\delta)}{\delta},  \quad\textup{for any}\quad 1\leq\delta\leq2g-2.
    \end{equation}
Hence, we have to show that 
    \begin{equation}\label{E:beta4}
    \bigcap_{1\leq \delta\leq 2g-2} \left(\frac{\sigma(\delta)-1}{\delta}, \frac{\sigma(\delta)}{\delta}\right)\neq \emptyset,
    \end{equation}
    where $(a,b)$ denotes as usual the open interval of real numbers larger than $a$ and smaller than $b$.  
Condition~\eqref{E:beta4} is equivalent to showing that the lower bound of each interval is smaller than the upper bound of every other interval, which translates into the following inequalities 
$$
\frac{\sigma(\delta)-1}{\delta}< \frac{\sigma(\delta')}{\delta'} \text{ for any } 1\leq \delta, \delta'\leq 2g-2, 
$$
or equivalently into 
\begin{equation}\label{E:beta5}
-\delta+1\leq \delta'\sigma(\delta)-\delta\sigma(\delta')\leq \delta'-1 \text{ for any } 1\leq \delta'<\delta\leq 2g-2.
\end{equation}
We will now check \eqref{E:beta5} by distinguishing two cases:

$(a)$ If $\delta'\vert \delta$, i.e. $\delta=k\delta'$ for some $k\in \NN_{>0}$, then the iteration of Property \eqref{E:sigmapr-prop} and the assumption  $\D(\sigma)=\emptyset$ imply that
$$
-(k-1)\leq \sigma(\delta)-k\sigma(\delta')\leq 0.
$$
By multiplying the above inequality times $\delta'$ and using that $\delta'\geq 1$, we obtain \eqref{E:beta5} in this case.

$(b)$ In the general case $1\leq \delta'<\delta$, we proceed by induction on the pairs $(\delta'<\delta)$, lexicographically ordered. The base case of the induction is the case $(\delta'=1< \delta)$ where the result follows from Case (a). Write $\delta=k\delta'+l$ with $k\in \NN_{>0}$ and $0\leq l <\delta'$. If $l=0$, then $\delta'\vert \delta$ and the result follows from Case (a). So we can assume that $1\leq l <\delta'$. 

Again, Property \eqref{E:sigmapr-prop} and the assumption  $\D(\sigma)=\emptyset$ imply that
\begin{equation}\label{E:ineq-bet1}
-k\leq \sigma(\delta)-k\sigma(\delta')-\sigma(l)\leq 0.
\end{equation}
On the other hand, the induction assumption applied to the pair $(l<\delta')$ (which is lexicographically smaller than $(\delta'< \delta)$) implies  
\begin{equation}\label{E:ineq-bet2}
-\delta'+1\leq l\sigma(\delta')-\delta'\sigma(l)\leq l-1.
\end{equation}
By multiplying \eqref{E:ineq-bet1} for $\delta'$ and subtracting \eqref{E:ineq-bet2}, we deduce that \eqref{E:beta5} holds true. 
\end{proof}

We now turn our attention to the description of the poset $\Degns_{g,1}^{re}$ of realizable non-separating universal degeneracy subsets of type $(g,1)$.

First we define the \emph{log-canonical degree} of an element of $D\in \Degns_{g,1}$ to be 
\begin{equation}\label{E:lc-deg-D}
\delta(D):= \min_{x\in D^{pr}} \{\delta(x)\}\in [1,2g-2]\cup\{+\infty\}
\end{equation}
with the convention that $\delta(\emptyset)=+\infty$.
%A similar definition applies to elements of $ \Degns_{g,n}$ or $\Degs_{g,n}$.

We first classify the elements of $\Degns_{g,1}^{re}$ whose log-canonical degree is at most $g-1$. 

\begin{proposition}\label{P:deg-piccoli}
    Let $D\in \Degns_{g,1}$ with $\delta(D)\leq g-1$.
    Then 
    $$
    D \text{ is realizable } \Longleftrightarrow D=W_{\delta(D)}.
    $$
    \end{proposition}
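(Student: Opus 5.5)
The direction ($\Leftarrow$) is immediate from the last statement of Theorem \ref{T:class-n1}: for $\delta:=\delta(D)$ the subset $W_\delta$ equals $\D(\sigma_{1/\delta})$, so it is realizable. (One should note that $\delta(W_\delta)=\delta$, since every value in $[1,2g-2]$ is attained by $\delta(x)$ for some $x\in\Dgpr$.) For ($\Rightarrow$), fix $\sigma\in\Sigmans_{g,1}^\chi$ with $\Dns(\sigma)=D$ and set $\delta:=\delta(D)\le g-1$. By Remark \ref{R:Deg-Sig-pr}\eqref{R:Deg-Sig-pr1} it suffices to show that $D^{pr}=W_\delta^{pr}$, i.e.\ that for $y\in\Dgpr$ we have $y\in D$ if and only if $\delta\mid\delta(y)$.

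The first step is to pin down the values of $\sigma$ on elements whose degree is a multiple of $\delta$. Choose $x_0\in D^{pr}$ with $\delta(x_0)=\delta$. Every $y\in\Dgpr$ with $\delta(y)=\delta$ can be reached from $x_0$ through triangles. Since $\delta\le g-1$, the complement of such an element has degree at least $\delta+2g-2-2\delta\ge\delta$. This lets us write both $x_0^c$ and $y^c$ as compositions $x_0\circ_i z$ and $y\circ_j z'$, and we connect them through a common decomposition, as in the Hamiltonian-graph trick of Lemma \ref{L:graph} but now with one marked point placed on a single vertex. Concretely, we build a stable graph of type $(g,1)$ containing two vertices of types $x_0$ and $y$ that are adjacent along a Hamiltonian cycle, and then apply Remark \ref{R:triang} along that cycle. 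Because $x_0\in D$, the relation \eqref{E:sigmapr-prop} becomes an exact equality, and from it we obtain $\sigma(y)=\sigma(x_0)$ and $y\in D$. Iterating compositions with elements of $D$, using \eqref{E:sigmapr-prop} with $x_1\in D$, shows that every $y$ with $\delta\mid\delta(y)$ lies in $D$ and satisfies $\sigma(y)=k\sigma(x_0)$ when $\delta(y)=k\delta$. The key point is that $y=x_0\circ_i y'$ with $\delta(y')=(k-1)\delta$ whenever $y>x_0$ in the sense of \eqref{E:ord-comp}, and elements of the same degree are connected via the step above. This gives $W_\delta\subseteq D$, using also complement-closure and triangle-closure.

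The converse inclusion is the main obstacle: we must show that no $y$ with $\delta\nmid\delta(y)$ lies in $D$. Suppose such a $y$ existed with $\delta(y)=m$, and write $m=q\delta+r$ with $0<r<\delta$. By \eqref{E:ord-comp}-type decompositions, peeling $q$ copies of degree-$\delta$ elements off $y$ (each of which lies in $D$) gives, via triangle closure, an element of $D$ of degree $r<\delta$. When $y$ is small this is a contradiction with the minimality of $\delta$. When the peeling fails because $y$ is not $\ge$ any degree-$\delta$ element, we pass to complements instead: $y^c$ has degree $2g-2-m$, and we peel from $y^c$, or compose $y$ with degree-$\delta$ elements until we reach degree at least $2g-2-\delta$, where the complement has degree at most $\delta$. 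We then reduce modulo $\delta$ using $2g-2-m'\equiv$ something nonzero mod $\delta$, \emph{unless} $\delta\mid 2g-2$. The hypothesis $\delta\le g-1$ is what guarantees enough room for these compositions, namely that the relevant triangles exist with the inequalities $e_i+e_j\ge e_k+2$ satisfied. Checking that the required composition indices $i$ exist in all edge cases, such as $h=0$ or small $e$, is where I expect the real combinatorial work. I would first try to handle it by choosing the explicit representatives $(\delta+2;0)$ of degree $\delta$ and transferring to these via the first step.
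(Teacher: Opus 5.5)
Your direction ($\Leftarrow$) and the reduction to $D^{pr}=W_\delta^{pr}$ are fine. The gap is in the step on which everything else rests: that every $y\in\Dgpr$ with $\delta(y)=\delta$ lies in $D$. This is not proved.

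Knowing $x_0\in D$ only turns \eqref{E:sigmapr-prop} into $\sigma(x_0\circ_i y)=\sigma(x_0)+\sigma(y)$. Triangle closure then only gives $y\in D\iff x_0\circ_i y\in D$, which is perfectly consistent with $y\notin D$. Placing vertices of types $x_0$ and $y$ next to each other on a Hamiltonian cycle just produces more relations of this kind, and you do not exhibit a configuration in which they become incompatible. Also, $x_0^c$ and $y^c$ lie in $\Dgunom$, so they cannot be written as compositions $x_0\circ_i z$, $y\circ_j z'$: the operations $\circ_i$ only live on $\Dgpr$.

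The missing idea is a parity argument through a common self-composition. Write $x_0=(e_0;h_0)$ and $y=(e;h)$. The hypothesis $\delta\le g-1$ is exactly what makes $x_0\circ_{e_0-1}x_0$ and $y\circ_{e-1}y$ defined, and both equal $(2;\delta)$. Triangle closure puts $(2;\delta)$ into $D$, so \eqref{E:sigmapr-prop} gives $\sigma(2;\delta)=2\sigma(x_0)$. But if $y\notin D$, it also gives $\sigma(2;\delta)=2\sigma(y)-1$, a parity contradiction. This is the paper's Step I. With it, your multiples-of-$\delta$ step goes through by induction using $(\delta+2;0)$, as you suggest.

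The converse inclusion is also left unfinished: the passage to complements and the unresolved case $\delta\mid 2g-2$ are not an argument. However, the obstacle you anticipate does not exist. The paper peels off the remainder rather than copies of $\delta$. Let $y=(e;h)\in D^{pr}$ with $\delta(y)=k\delta+r$ and $0<r<\delta$. Then $2(e+h)=\delta(y)+e+2\ge k\delta+r+4>2r+4$, so $y>(r+2;0)$. By \eqref{E:ord-comp}, $y=(r+2;0)\circ_i\tilde y$ with $\delta(\tilde y)=k\delta$. Since $\tilde y\in D$, triangle closure forces $(r+2;0)\in D$. This contradicts the minimality of $\delta$, because $\delta(r+2;0)=r$.

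Alternatively, your own peeling by degree-$\delta$ elements also works once Step I is in place. Every $y\in\Dgpr$ with $\delta(y)>\delta$ dominates some element of degree $\delta$: take $e'=\delta+2-2h'$ with $\max\{0,\delta+2-e-h\}\le h'\le\min\{h,\lfloor\delta/2\rfloor\}$. So no complements are needed.
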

   
\begin{proof}
If $D=W_{\delta(D)}$ then by Theorem \ref{T:class-n1} $D$ is classical, and hence it is realizable. 

Conversely, assume that $D$ is realizable, i.e. $D=\D(\sigma)$ for some $\sigma\in \Sigmans_{g,1}^\chi$, and let us show that $D=W_{\delta(D)}$. By Remark \ref{R:Deg-Sig-pr}\eqref{R:Deg-Sig-pr1}, it is enough to show that $D^{pr}=W_{\delta}^{pr}$. We divide the proof in three steps.

\un{Step I:} If $x=(e;h)\in \Dgpr$ is such that $\delta(x)=\delta(D)$ then $x\in D^{pr}$. 

Indeed, by contradiction suppose that $x\not \in D^{pr}$. By definition of $\delta(D)$, there exists $\wt x=(\wt e;\wt h)\in D^{pr}$ with $\delta(\wt x)=\delta(D)$. Consider the two compositions
$$
(e;h)\circ_{e-1} (e;h)=(2;\delta(D))=(\wt e;\wt h)\circ_{\wt e-1} (\wt e;\wt h).
$$
Since $\wt x\in D^{pr}$, we have that $(2;\delta(D))\in D^{pr}$ by \eqref{E:Dpr-prop}. We now apply \eqref{E:sigmapr-prop} to the two compositions above in order to get 
$$
\begin{sis}
 \sigma(2;\delta(D))-2\sigma(\wt x)=0,\\
 \sigma(2;\delta(D))-2\sigma(x)=-1.
\end{sis}
$$
This is absurd since the first equation implies that $\sigma(2;\delta(D))$ is even and the second equation implies that $\sigma(2;\delta(D))$ is odd. 

\un{Step II:} $W_{\delta(D)}^{pr}\subset D^{pr}$. 

Indeed, we will show, by induction on $k\geq 1$, that for any $x\in \Dgpr$ we have
$$\delta(x)=k\delta(D)\Rightarrow x\in D^{pr}.$$
The base case $k=1$ of the induction follows from Step I. 
Assume that $k\geq 2$. We claim  that $x=(e;h)> (\delta(D)+2;0)$ since $h\geq 0$ and 
$$2h+2e\geq (2h+e-2)+4=k\delta(D)+4\geq 2\delta(D)+4>2(r+2), $$
where we used that $e\geq 2$ and that $\delta(x)=k\delta(D)\geq 2\delta(D)$. Therefore, \eqref{E:ord-comp} implies that we can write 
\begin{equation}\label{E:x-tria}
x=(e;h)=(\delta(D)+2;0)\circ_i \wt x \text{ for some } \wt x \in \Dgpr \text{ and some } i.
\end{equation}
By \eqref{E:sum-comp} and \eqref{E:div-delta}, we get that $\delta(\wt x)=\delta(x)-\delta(\delta(D)+2;0)=(k-1)\delta(D)$, which implies that $\wt x\in D^{pr}$ by our induction hypothesis. By applying \eqref{E:Dpr-prop} to the composition \eqref{E:x-tria}, we get that $x\in D^{pr}$, as required. 

\un{Step III:} $W_{\delta(D)}^{pr}\supseteq D^{pr}$.

Indeed, by contradiction, assume that there exists $x=(e;h)\in D^{pr}-W_{\delta(D)}^{pr}$. By definition of $\delta(D)$, we must have $\delta(x)\geq \delta(D)$. Then, using that $x\not \in W_{\delta(D)}^{pr}$, we can write 
\begin{equation}\label{E:div-delta}
    \delta(x)=k\delta(D)+r \text{ for some } 0<r<\delta(D) \text{ and } k\geq 1. 
    \end{equation}
We now claim that $x=(e;h)> (r+2;0)$ since $h\geq 0$ and 
$$2h+2e\geq (2h+e-2)+4=k\delta(D)+r+4\geq \delta(D)+r+4>2r+4, $$
where we used that $e\geq 2$ and \eqref{E:div-delta}. Therefore, \eqref{E:ord-comp} implies that we can write 
\begin{equation}\label{E:trian-x}
x=(e;h)=(r+2;0)\circ_i \wt x \text{ for some } \wt x \in \Dgpr \text{ and some } i.
\end{equation}
By \eqref{E:sum-comp} and \eqref{E:div-delta}, we get that $\delta(\wt x)=\delta(x)-\delta(r+2;0)=k\delta(D)$, which implies that $\wt x\in W_{\delta(D)}^{pr}\subseteq D^{pr}$ by Step II. By applying \eqref{E:Dpr-prop} to the composition \eqref{E:trian-x}, we get that $(r+2;0)\in D^{pr}$ which is an absurd since $\delta(r+2;0)=r<\delta(D)$.
\end{proof}

\begin{example}\label{E:non-real}
     Note that there exist non-realizable elements $D\in \Degns_{g,1}$ with $\delta(D)\leq g-1$. 
     
     %The above Proposition \ref{P:deg-piccoli} is false without the assumption that $D$ is realizable. 
     For example:  $D_1=\{(4;0),(2;2),(4;0)^c,(2;2)^c\}$ and $D_2=\{(2;1),(2;2),(2;1)^c,(2;2)^c\}$ are elements of $\Degns_{3,1}$ with $\delta(D_1)=\delta(D_2)=2=g-1$, but $D_1,D_2\subsetneq W_{2}$ since $(2;1)\in W_2-D_1$ and $(4;0)\in W_2-D_2$. 
\end{example}

We now classify the elements of $\Degns_{g,1}$ whose log-canonical degree is larger than $g-1$. 
In order to state our classification result, we define the poset 
$$
\Dgbig:=\{x\in \Dgpr : \delta(x)\geq g\},
$$
endowed with the order relation in \eqref{E:order-D}. 
Recall that an \emph{antichain} in a poset is a subset whose elements are pairwise incomparable. 

\begin{proposition}\label{P:deg-grandi}
    \noindent
    \begin{enumerate}[(i)] 
        \item \label{P:deg-grandi1}
        For every antichain $A$ of $\Dgbig$, the subset $D(A):=A\sqcup \wt A$  (see \eqref{e:dtilde}) is a realizable element of $\Degns_{g,1}$ with $\delta(D(A))\geq g$. 
        \item \label{P:deg-grandi2}
        Every $D\in \Degns_{g,1}$ with $\delta(D)\geq g$ is equal to $D(A)$ for a unique antichain $A$ of $\Dgbig$. 
    \end{enumerate}
\end{proposition}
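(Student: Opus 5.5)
The plan is to work entirely with primitive parts, via Remark~\ref{R:Deg-Sig-pr}: an element $D\in \Degns_{g,1}$ is determined by $D^{pr}=D\cap\Dgpr$ through \eqref{E:D-Dpr}, and a subset of $\Dgpr$ defines an element of $\Degns_{g,1}$ exactly when it satisfies \eqref{E:Dpr-prop}. Similarly, a realizing V-function is determined by a pair $(\sigma^{pr},D^{pr})$ satisfying \eqref{E:sigmapr-prop}. The basic observation is a degree count. For a composition $x_1\circ_i x_2$ we have $\delta(x_1)+\delta(x_2)=\delta(x_1\circ_i x_2)\leq 2g-2$ by \eqref{E:sum-comp}. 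Hence:
\begin{itemize}
\item two of $x_1,x_2,x_1\circ_i x_2$ can lie in $\Dgbig$ only if they are $x_j$ and $x_1\circ_i x_2$;
\item in that case the remaining factor has $\delta\leq g-2$;
\item and $x_j<x_1\circ_i x_2$ by \eqref{E:ord-comp}.
\end{itemize}

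For part \eqref{P:deg-grandi2}, let $D\in\Degns_{g,1}$ with $\delta(D)\geq g$, and put $A:=D^{pr}\subseteq \Dgbig$. Then $D=D(A)$ by \eqref{E:D-Dpr}, and uniqueness is clear. To see that $A$ is an antichain, suppose $x<y$ both lie in $A$. By \eqref{E:ord-comp} we can write $y=x\circ_i z$. Then \eqref{E:Dpr-prop} forces $z\in D^{pr}$. But $\delta(z)=\delta(y)-\delta(x)\leq 2g-2-g=g-2<\delta(D)$, a contradiction.

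For part \eqref{P:deg-grandi1}, the first step is to check that $D(A)\in\Degns_{g,1}$ via \eqref{E:Dpr-prop}. Suppose two members of a triple $[x_1,x_2,x_1\circ_i x_2]$ lie in $A$. By the degree count they must be some $x_j$ and $x_1\circ_i x_2$, with $x_j<x_1\circ_i x_2$, contradicting the antichain property. So the hypothesis of \eqref{E:Dpr-prop} is never met, and \eqref{E:Dpr-prop} holds vacuously.

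For realizability I would write down an explicit $\sigma^{pr}$. Let $U:=\{x\in\Dgpr : x>a \text{ for some } a\in A\}$, which is disjoint from $A$ because $A$ is an antichain. Set $\sigma^{pr}(x)=2$ if $x\in U$ and $\sigma^{pr}(x)=1$ otherwise. This is the classical choice $\lceil\beta\delta\rceil$ with $0<\beta\ll1$, bumped up above $A$. Then I check \eqref{E:sigmapr-prop} case by case. Throughout I use transitivity of the order together with $x_j<x_1\circ_i x_2$ from \eqref{E:ord-comp}: if some factor $x_j$ lies in $A\cup U$, then $x_1\circ_i x_2\in U$.
\begin{itemize}
\item If $x_1\in A$, then $x_1\circ_i x_2\in U$ and $x_2\notin A\cup U$ (its degree is at most $g-2$). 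The difference is $2-1-1=0$, as required.
\item If $x_1\circ_i x_2\in A$, then both factors have degree at most $g-2$, so they lie outside $A\cup U$. The difference is $1-1-1=-1$, as required.
\item Otherwise no element of the triple lies in $A$, and the difference must lie in $\{0,-1\}$. If $x_1\in U$, the difference is $2-2-1=-1$. If neither factor is in $A\cup U$, it is $\sigma^{pr}(x_1\circ_i x_2)-2\in\{-1,0\}$. Both factors cannot lie in $U$, by the degree bound.
\end{itemize}
Remark~\ref{R:Deg-Sig-pr}\eqref{R:Deg-Sig-pr2} then produces $\sigma\in\Sigmans_{g,1}^\chi$ with $\D(\sigma)=D(A)$, and $\delta(D(A))\geq g$ holds by construction.

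The main obstacle is finding this realizing $\sigma^{pr}$. A naive classical choice fails: it either breaks the requirement that the difference be $0$ when a factor is degenerate, or the requirement that it be $-1$ when the composite is degenerate. The step function with a jump on the strict up-set of $A$ is what reconciles the two. The remaining content is only the transitivity argument ensuring that $U$ is closed under composing with any factor.
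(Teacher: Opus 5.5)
Your proof follows the paper's argument essentially step for step. For (ii) you combine \eqref{E:ord-comp}, \eqref{E:Dpr-prop} and the bound $\delta(z)\leq g-2$ to get the antichain property. For (i) you note that no triple $[x_1,x_2,x_1\circ_i x_2]$ meets $A$ twice, and you realize $D(A)$ with the same step function the paper calls $\tau_A$: value $2$ on the strict up-set of $A$ and $1$ elsewhere.

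One justification is wrong, though the conclusion it supports is true. In the case $x_1\circ_i x_2\in A$, it is not true that both factors have degree at most $g-2$. The bound $\delta(x_1)+\delta(x_2)\leq 2g-2$ only forces one factor to have degree at most $g-1$. For example, for $g=4$ one has $(3;0)\circ_2(3;2)=(2;3)$ with $\delta(2;3)=6\geq g$, so $\{(2;3)\}$ is an antichain of $\Dgbig$, yet the factor $(3;2)$ has $\delta(3;2)=5>g-2$.

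The correct reason neither factor lies in $A\cup U$ is the transitivity principle you already stated: a factor in $A\cup U$ would force $x_1\circ_i x_2\in U$, and $U$ is disjoint from $A$. This is exactly how the paper handles this case, via the antichain property. With that substitution, your case check of \eqref{E:sigmapr-prop} is complete.
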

\begin{proof}
Part \eqref{P:deg-grandi2}: let $D\in \Degns_{g,1}$ with $\delta(D)\geq g$. Since $D=D^{pr}\sqcup \wt{D^{pr}}$ by \eqref{E:D-Dpr}, it is enough to prove that $D^{pr}\subseteq \Dgbig$ is an antichain. Indeed, by contradiction, suppose that $x_1,x_2\in D^{pr}$ are such that $x_1< x_2$. Then \eqref{E:ord-comp} gives that  
$$
x_2=x_1\circ_i y \text{ for some } y\in \Dgpr.
$$ 
Applying \eqref{E:Dpr-prop} to the above composition, we deduce that $y\in D^{pr}$. Moreover, \eqref{E:sum-comp} and the assumption $\delta(D)\geq g$ imply that $$\delta(y)=\delta(x_2)-\delta(x_1)\leq 2g-2-g=g-2<\delta(D),$$ 
which is an absurd by the definition of $\delta(D)$. 

Part \eqref{P:deg-grandi1}: let $A$ be an antichain of $\Dgbig$. 
We claim that 
\begin{equation}\label{E:inter-A}
|[x_1,x_2,x_1\circ_i x_2]\cap A|\leq 1,
\end{equation}
for any two elements $x_1,x_2\in \Dgpr$ such that $x_1\circ_i x_2$ is well-defined. Indeed, since the composition $x_1\circ_i x_2$ is well-defined, then at least one of the two elements $\{x_1,x_2\}$, say $x_1$, does not belong to $\Dgbig$ and hence does not belong to $A$. On the other hand, since $x_2< x_1\circ_i x_2$ by \eqref{E:ord-comp} and $A$ is an antichain, it follows that $A$ cannot contain both $x_2$ and $x_1\circ_i x_2$, and \eqref{E:inter-A} is proved. 
Property \eqref{E:inter-A} implies that $A$ satisfies \eqref{E:Dpr-prop} and, hence, $D(A)=A\sqcup \wt A$ belongs to $\Degns_{g,1}$ by Remark \ref{R:Deg-Sig-pr}\eqref{R:Deg-Sig-pr1}. Moreover, we have  that $\delta(D(A))\geq g$ since $D(A)^{pr}=A\subset \Dgbig$.

It remains to prove that any such $D(A)$ is realizable. Consider the function 
\begin{equation}\label{E:tauA}
   \begin{aligned}
     \tau_A: \Dgpr& \longrightarrow \ZZ\\  
     x &\mapsto 
     \begin{cases}
     2 & \text{ if } x>y \text{ for some } y\in A,\\
     1 & \text{ otherwise.}
     \end{cases}
   \end{aligned} 
\end{equation}
Consider now two elements $x_1, x_2\in \Dgpr$ such that there exists $x_1\circ_i x_2\in \Dgpr$. One of the two elements $\{x_1,x_2\}$, say $x_1$, must have log-canonical degree less than or equal to $g-1$, which then implies that $\tau_A(x_1)=1$ because $x_1$ cannot dominate any element of $A$ since $A\subseteq \Dgbig$. We now distinguish the following cases:

$\bullet$ If $x_2\in A$ then we have that $\tau_A(x_2)=1$ because $x_2$ cannot strictly dominate another element of $A$ since $A$ is an antichain and $\tau_A(x_1\circ_i x_2)=2$ because $x_1\circ_i x_2> x_2\in A$ by \eqref{E:ord-comp}.

$\bullet$ If $x_1\circ_i x_2\in A$ (which then implies that $x_2\not \in A$ since $x_1\circ_i x_2> x_2\in A$ and $A$ is an antichain) then we have that $\tau_A(x_2)=\tau_A(x_1\circ_i x_2)=1$ because neither $x_1\circ_i x_2$ nor $x_2$ (which is smaller than $x_1\circ_i x_2$) can strictly dominate an element of $A$ since $A$ is an antichain. 

$\bullet$ If $x_2,x_1\circ_i x_2\not\in A$ then we must have that $(\tau_A(x_2),\tau_A(x_1\circ_i x_2))\neq (2,1)$ because if $x_2$ strictly dominate an element of $A$ then the same is true for $x_1\circ_i x_2$ (which is bigger then $x_2$).  Hence, we must have that $(\tau_A(x_2),\tau_A(x_1\circ_i x_2))=(1,1), (1,2),(2,2)$. 

By putting all the cases together, we deduce that 
\begin{equation}\label{E:tauA-prop}
        \tau_A(x_1\circ_i x_2)-\tau_A(x_1)-\tau_A(x_2)=
        \begin{cases}
          0 & \text{ if either } x_1\in A \text{ or }   x_2\in A, \\
          -1 & \text{ if  } x_1, x_2\not \in A  \text{ and }   x_1\circ_i x_2\in A, \\
          \{0,-1\} & \text{ otherwise.}
        \end{cases}
     \end{equation}
We can now extend $\tau_A$ to a function 
$$
\begin{aligned}
    \sigma_A: \Dgunons& \longrightarrow \ZZ\\
    x & \mapsto 
    \begin{cases}
    \tau_A(x) & \text{ if } x\in \Dgpr, \\
    \chi+1-\tau_A(x^c) & \text{ if } x\not \in \Dgpr \text{ and } x^c\not \in A,\\
    \chi-\tau_A(x^c) & \text{ if } x\not \in \Dgpr \text{ and } x^c\in A.\\
    \end{cases}
\end{aligned}
$$
Using that $\tau_A$ satisfies \eqref{E:tauA-prop} with respect to $A=D(A)^{pr}$, we get that $\sigma_A\in \Sigmans^\chi_{g,1}$ with $\D(\sigma_A)=D(A)$ by Remark \ref{R:Deg-Sig-pr}\eqref{R:Deg-Sig-pr2}. 
\end{proof}

\begin{example}\label{Ex:antichain}
For any $g\leq \delta \leq 2g-2$, the subset 
$$
A_{\delta}:=\{x\in \Dgbig : \delta(x)=\delta\}
$$
is an antichain of $\Dgbig$ by \eqref{E:deg-leq}. Moreover, we have that 
$$
D(A_{\delta})=W_{\delta}$$.
Indeed we have that, if $x\in \Dgpr$ is such that $\delta\mid \delta(x)$, then $\delta=\delta(x)$ since $g\leq \delta$ and $\delta(x)\leq 2g-2$. Therefore, by Theorem \ref{T:class-n1}, we recover some classical elements of $\Degns_{g,1}$.

However, there are elements of $\Degns_{g,1}$ with $\delta(D)\geq g$ that are not classical (but yet they are realizable by Proposition \ref{P:deg-grandi}\eqref{P:deg-grandi1}). For example:
\begin{itemize}
    \item If $A\subsetneq A_{\delta}$ for some $g\leq \delta\leq 2g-2$ (such subsets exist for all $g\geq 4$), then $A$ is an antichain and its associated $D(A)$ is properly contained in $W_{\delta}$, and hence it is not classical. 
    \item  $\displaystyle A_0=\left\{(g-3k;2k+1)\in \Dgpr: 0\leq k\leq \lfloor \frac{g-2}{3}\rfloor\right\}$ is an antichain such that $D(A_0)$ is not contained in any of the $W_{\delta}$ with $g\leq \delta$, for any $g\geq 5$.
\end{itemize}
\end{example}

We now describe the poset of realizable elements.

\begin{theorem}\label{T:pos-Degn1}
The poset $\Degns_{g,1}^{re}$ consists of the following elements
$$
\Degns_{g,1}^{re}= \{\emptyset\} \cup \bigcup_{1\leq \delta \leq g-1} \{W_\delta\}\bigcup_{\substack{A\subseteq \Dgbig\\ \text{antichain}}} \{D(A)\} \ ,
$$
subject to the following non-trivial order relations:
  \begin{enumerate}[(i)]
      \item \label{T:pos-Degn1a}
      $W_{\delta}<\emptyset$ for any $1\leq \delta\leq g-1$. Moreover, the only two witnesses for $W_{\delta}<\emptyset$ are $W_{\delta}^{pr}$ and $\wt{W_{\delta}^{pr}}$.
      \item \label{T:pos-Degn1b}
      $D(A)< D(B)$ for any antichains $A,B\subseteq \Dgbig$ such that $A\supsetneq B$. Moreover, the witnesses for $D(A)< D(B)$ are all the subsets of the form $\mathcal{E}\sqcup \wt{\mathcal{E}^c}$ for $\mathcal{E}\subseteq A-B$.
  \end{enumerate}
\end{theorem}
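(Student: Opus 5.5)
The list of elements follows directly from Propositions \ref{P:deg-piccoli} and \ref{P:deg-grandi}. Any $D\in\Degns^{re}_{g,1}$ is either empty, or has $\delta(D)\leq g-1$ (so $D=W_{\delta(D)}$), or has $\delta(D)\geq g$ (so $D=D(A)$ for an antichain $A$). Conversely, each such set is realizable: $\emptyset$ and $W_\delta$ are realizable by Theorem \ref{T:class-n1}, and $D(A)$ by Proposition \ref{P:deg-grandi}\eqref{P:deg-grandi1}. Note that $D(\emptyset)=\emptyset$, and that $W_\delta$ with $\delta\geq g$ is already of the form $D(A_\delta)$ by Example \ref{Ex:antichain}. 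It remains to determine the order relations. By Definition \ref{D:degsub}\eqref{D:degsub2}, $D^1\geq D^2$ requires $D^1\subseteq D^2$ together with a witness $E\subseteq D^2-D^1$. So the plan is: for each pair of listed elements with $D^1\subseteq D^2$, decide whether a witness exists and enumerate all of them.

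Containments among listed elements are limited. $W_\delta\subseteq D(A)$ is impossible for $\delta\leq g-1$, since $\delta(D(A))\geq g$. Also $W_{\delta'}\subsetneq W_\delta$ forces $\delta\mid\delta'$. And $D(A)\subseteq W_\delta$ would require $A\subseteq W_\delta^{pr}$; then $\delta\leq g-1$ would give $D(A)\subseteq W_\delta$, which must be ruled out by the witness analysis below. So the cases are: (a) $D^1=\emptyset$; (b) $D^1=W_{\delta'}$, $D^2=W_\delta$ with $\delta\mid\delta'$, $\delta'\neq\delta$; (c) $D^1=D(A)\neq\emptyset$, $D^2=W_\delta$ with $\delta\leq g-1$; (d) $D(B)\subseteq D(A)$, i.e.\ $B\subseteq A$.

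\textbf{Case (d).} $D(A)-D(B)=(A-B)\sqcup\wt{(A-B)}$. By \eqref{E:inter-A}, a triangle meets $A$ in at most one primitive element. Via \eqref{E:tria-comp}, a triangle $[x_1,x_2,(x_1\circ_i x_2)^c]$ has at most one element in $D(A)$, so conditions \eqref{D:degsub2b} and \eqref{D:degsub2c} are vacuous. Condition \eqref{D:degsub2a} then says exactly that $E$ contains one of $x,x^c$ for each $x\in A-B$. This gives the witnesses $\mathcal E\sqcup\wt{\mathcal E^c}$ and the relation $D(A)<D(B)$ for $A\supsetneq B$; this also covers (a) for $D(A)$ when $B=\emptyset$.

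\textbf{Case (a) with $D^2=W_\delta$, $\delta\leq g-1$.} Triangles inside $W_\delta$ have all their primitive parts in $W_\delta^{pr}$. For any $x\in W^{pr}_\delta$, compose $x$ with a $y$ of degree $\delta$ (e.g.\ $(\delta+2;0)$, using \eqref{E:ord-comp} as in Step II of Proposition \ref{P:deg-piccoli}). Condition \eqref{D:degsub2c} applied to $[y,x,(y\circ_i x)^c]$ then propagates membership in $E$. The aim is to show that $E\cap W_\delta^{pr}$ is all or nothing, giving exactly the two witnesses $W^{pr}_\delta$ and $\wt{W^{pr}_\delta}$. Concretely, if $y\in E$ and $x\notin E$, with $(y\circ x)^c$ constrained, the counting condition $1\le|\cdot|\le2$ together with \eqref{D:degsub2a} forces consistency along degrees $k\delta$. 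I expect this propagation to be the main obstacle: one has to check that the compositions available connect all of $W_\delta^{pr}$, including all elements of a given degree such as $(e;h)$ and $(e';h')$ with the same $\delta$, using the doubling trick $(e;h)\circ_{e-1}(e;h)=(2;\delta)$ from Step I.

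\textbf{Cases (b) and (c) admit no witness.} Take a triangle with exactly one element in $D^1$ and two elements in $D^2-D^1$; condition \eqref{D:degsub2b} forces exactly one of the two into $E$. For (b), use $[y,y',(y\circ y')^c]$ with $\delta(y)=\delta(y')=\delta$ and $\delta(y\circ y')=2\delta$. Also use triangles containing an element $z\in D^1$ with $\delta(z)=\delta'$, composed with elements of degree $\delta$. These yield contradictory parity requirements, in the same spirit as Step I of Proposition \ref{P:deg-piccoli}. For (c), similar triangles built from $x\in A$ and a complementary element of degree $\delta$ should give a contradiction. Alternatively, one can argue via realizability: a witness together with the upper-lifting property \eqref{P:Deg-map3} applied to $\sigma_{1/\delta}$ would produce a V-function whose values contradict \eqref{E:inequa1}.
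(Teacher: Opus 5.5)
Your list of elements and your case (d) are correct and agree with the paper. The argument you sketch for cases (b) and (c), however, fails in general: condition (2b) of Definition~\ref{D:degsub} alone can never produce a contradiction when the ratio $k=\delta'/\delta$ (resp.\ $k=\delta(x)/\delta$) is odd. For example, take $g\geq 4$, $\delta=1$, $\delta'=3$. Let $E\subseteq W_1-W_3$ consist of the primitive $x$ with $\delta(x)\equiv 1 \pmod 3$, together with $x^c$ for the primitive $x$ with $\delta(x)\equiv 2\pmod 3$. Every triangle of $\Dguno$ has the form $[x_1,x_2,(x_1\circ_i x_2)^c]$ with $\delta(x_1\circ_i x_2)=\delta(x_1)+\delta(x_2)$. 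Hence each triangle of $W_1$ with exactly one element in $W_3$ meets $E$ exactly once, so every ``parity'' requirement you list is satisfied. This $E$ is excluded only by (2c): for instance $[(3;0),(3;0),(4;0)^c]\subseteq W_1-W_3$ lies entirely in $E$.

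The missing ingredient is propagation via (2c) inside $D^2-D^1$, which is what the paper does. Normalize so that $(\delta+2;0)\in E$. By (2c), $E\cap\Dgpr$ is closed under $\circ_i$ along triangles avoiding $D^1$. Reading \eqref{E:ord-comp} backwards from $x$ (using $(e;h)>(\delta+2;0)$ whenever $\delta(e;h)\geq 2\delta$, and $(e;h)=(\delta+2;0)\circ_{h+1}(\delta+2;0)$ in degree $2\delta$), you get a chain $z_1=(\delta+2;0)$, $z_m=(\delta+2;0)\circ z_{m-1}$, with $z_k=x$ the chosen element of $D^1$ of degree $k\delta$. In case (c) take $x\in A$; then every $z_m$ with $m<k$ satisfies $z_m<x$ and so avoids the antichain $A$. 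Thus $z_{k-1}\in E$, and the (2b)-triangle $[(\delta+2;0),z_{k-1},x^c]$ meets $E$ twice, a contradiction.

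Your realizability alternative can be rescued the same way, but not by citing \eqref{E:inequa1}, which was derived for a uniform $\sigma$ with $\D(\sigma)=W_\delta$. Instead, sum \eqref{E:sigmapr-prop} along this particular chain to get $-(k-1)\leq\sigma^1(x)-k\,\sigma^1((\delta+2;0))\leq -1$. On the other hand $\sigma^1(x)=\sigma_{1/\delta}(x)=k$ and $\sigma^1((\delta+2;0))\in\{1,2\}$, which is incompatible. If the chain started at another element of degree $\delta$, the two values in $\{1,2\}$ could differ and no contradiction would follow.

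The same mechanism is what your case (a) lacks. You leave the uniqueness of the witnesses open, and the configuration you propose to use, $y\in E$ and $x\notin E$, carries no information: the count in (2c) is then $1$ or $2$ whatever the third element is. What works is the following. For $x,y\in W_\delta^{pr}$, (2c) applied to $[x,y,(x\circ_i y)^c]$ gives $x,y\in E\Rightarrow x\circ_i y\in E$. Applied to the multiset $[x,x,(x\circ_i x)^c]$, it gives $x\in E\Leftrightarrow x\circ_i x\in E$. Hence $(\delta+2;0)\in E$ forces every element of degree $m\delta\geq 2\delta$ into $E$, since all of them are iterated composites of $(\delta+2;0)$; in particular $(2;\delta)\in E$. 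Then every $y=(e;h)$ of degree $\delta$ lies in $E$ via $y\circ_{e-1}y=(2;\delta)$, so $E=W_\delta^{pr}$ or $\wt{W_\delta^{pr}}$. You should also verify explicitly that $W_\delta^{pr}$ is a witness (each triangle inside $W_\delta$ has exactly two primitive elements), since that is what gives $W_\delta<\emptyset$ in the first place.
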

\begin{proof}
The description of the elements of $\Degns_{g,1}^{re}$ follows from Propositions \ref{P:deg-piccoli} and \ref{P:deg-grandi}. It remains to prove that the unique non-trivial order relations are the ones given in \eqref{T:pos-Degn1a} and \eqref{T:pos-Degn1b}.

Let us, first, prove Part~\eqref{T:pos-Degn1b}. First of all, if $A,B$ are two antichains of $\Dgbig$ such that $A\supsetneq B$, then $D(A)\supsetneq D(B)$. Then the subsets $E$ of $D(A)-D(B)$ that satisfy Condition~\eqref{D:degsub2a} of Definition \ref{D:degsub} are all the ones of the form $E:=\E\sqcup \wt{\E^c}$ for a subset $\E\subseteq A-B$. Such subsets are also witnesses for $D(A)\leq D(B)$ because Conditions \eqref{D:degsub2b} and \eqref{D:degsub2c} of Definition \ref{D:degsub} holds trivially true since there are no triangles entirely contained in $D(A)$. Therefore, Part \eqref{T:pos-Degn1b} is proved. 

Let us now prove Part \eqref{T:pos-Degn1a}. First of all, we have that $W_{\delta}<\emptyset$ with $W_{\delta}^{pr}$ and $\wt{W_{\delta}^{pr}}$ being two witnesses since $W_{\delta}=W_{\delta}^{pr}\sqcup \wt{W_{\delta}^{pr}}$ and any triangle contained in $W_{\delta}$ contains always two elements of $W_{\delta}^{pr}$ and one element of $W_{\delta}^{pr}$, so that the conditions of Definition \ref{D:degsub}\eqref{D:degsub2} hold true. 

It remains to show that $W_{\delta}^{pr}$ and $\wt{W_{\delta}^{pr}}$ are the only two witness for $W_{\delta}<\emptyset$. So suppose that $E$ is a witness for $W_{\delta}<\emptyset$. Up to switching $E$ with $E^c$, we can assume that $(\delta+2;0)\in E$. Then we have to show that $E=W_{\delta}^{pr}$. Using Condition~\eqref{D:degsub2a} of Definition \ref{D:degsub}, it is enough to show that $W_{\delta}^{pr}\subseteq E$. We will show this in three steps:

$\star$ $W_{2\delta}\subseteq E$.

Given $(e;h)\in W_{2\delta}$ we can write $(e;h)=(\delta+2;0)\circ_{h+1} (\delta+2;0)$ and then consider the triangle $\Delta=[(\delta+2;0),(\delta+2;0),(e;h)^c)]$ in $W_{\delta}$ (see \eqref{E:tria-comp}): since the first two elements belong to $E$,  Condition~\eqref{D:degsub2c} of Definition \ref{D:degsub} implies that the last element does not belong to $E$, which is equivalent to  $(e;h)\in E$.

$\star$ $W_{\delta}\subseteq E$.

Given $(e;h)\in W_{\delta}$ we can form $(e;h)\circ_{e-1} (e;h)=(2;2h+e-1)\in W_{2\delta}$ and then consider the triangle $\Delta=[(e;h),(e;h),(2;2h-2+e)^c)]$ in $W_{\delta}$ (see \eqref{E:tria-comp}): since the last element belongs to $\wt{E}$ (and hence does not belong to $E$) and the first two elements are equal, Condition \eqref{D:degsub2c} of Definition \ref{D:degsub} implies that  $(e;h)\in E$.

$\star$ $W_{m\delta}\subseteq E$ for any $m\geq 1$ such that $m\delta\leq 2g-2$.

We prove this by induction on $m$.  The base case $m=1$ has already been proved. Assume that $m\geq 2$. Any element $(e;h)\in W_{m\delta}$ is such that $(e;h)> (\delta+2;0)$ since $h\geq 0$ and 
$$2(e+h)=\delta(e;h)+2+e\geq 2\delta+4=2(\delta+2).$$
Then, \eqref{E:ord-comp} implies that we can write $(e;h)=(\delta+2;0)\circ_i (\wt e;\wt h)$ for some $(\wt e;\wt h)$ which belongs to $W_{(m-1)\delta}$ by \eqref{E:sum-comp} and hence to $E$ by induction. Consider the triangle $\Delta=[(\delta+2;0),(\wt e;\wt h),(e;h)^c)]$ in $W_{\delta}$ (see \eqref{E:tria-comp}): since the first two elements belong to $E$, Condition~\eqref{D:degsub2c} of Definition \ref{D:degsub} implies that the last element does not belong to $E$, which is equivalent to  $(e;h)\in E$. 

\vspace{0.1cm}

It remains to prove that there are no further strict order relations between the elements of $\Degns_{g,1}^{re}$. Using the classification of Propositions \ref{P:deg-piccoli} and \ref{P:deg-grandi}, we will divide the proof in three cases:

\un{Case I:} $D(A)<D\in \Degns_{g,1}^{re}\Rightarrow D=D(B)$  for some $A\supsetneq B$.

This follows from the fact that if $D(A)<D$ then $D(A)\supsetneq D$ together with the fact that:

$\bullet$ 
$D(A)\not\supseteq W_{\delta}$ for any $1\leq \delta \leq g-1$;

$\bullet$ $D(A)\supsetneq D(B)\Leftrightarrow A\supsetneq B$.

\un{Case II:} $W_{\delta}\not\leq  W_{\wt \delta}$ for any $1\leq \delta\neq  \wt \delta\leq g-1$. 

Indeed, this is trivially true if $\wt \delta$ is not a  multiple of $\delta$,  for otherwise $W_{\delta}\not\supseteq W_{\wt \delta}$. Hence, we can assume that $\wt \delta=k\delta$ for some $k\geq 2$. 
Suppose by contradiction that $W_{\delta}\leq  W_{k \delta}$ and let $E\subset W_{\delta}-W_{k\delta}$ a witness. Up to switching $E$ with $E^c$, we can assume that $(\delta+2;0)\in E$.

We will now prove by induction on $m$ that 
\begin{equation}\label{E:inclu-E}
W_{m\delta}\subset E \text{ for any } 2\leq m \leq k-1.
\end{equation}
Indeed, for the base case $m=2$ of the induction we argue as follows. Given $(e;h)\in W_{2\delta}$ we can write $(e;h)=(\delta+2;0)\circ_{h+1} (\delta+2;0)$ and then consider the triangle $\Delta=[(\delta+2;0),(\delta+2;0),(e;h)^c)]$ (see \eqref{E:tria-comp}): the first two elements belong to $E$ and the last element belong to $W_{2\delta}\subset W_{\delta}-W_{k\delta}$; hence  Condition \eqref{D:degsub2c} of Definition \ref{D:degsub} implies that the last element does not belong to $E$, which is equivalent to  $(e;h)\in E$.  
Now, for the induction step assume that $3\leq m \leq k-1$. Any element $(e;h)\in W_{m\delta}$ is such that $(e;h)> (\delta+2;0)$ since $h\geq 0$ and 
$$2(e+h)=\delta(e;h)+2+e>2\delta+4=2(\delta+2).$$
Then, \eqref{E:ord-comp} implies that we can write $(e;h)=(\delta+2;0)\circ_i (\wt e;\wt h)$ for some $(\wt e;\wt h)$ which belongs to $W_{(m-1)\delta}$ by \eqref{E:sum-comp} and hence to $E$ by induction. Consider the triangle $\Delta=[(\delta+2;0),(\wt e;\wt h),(e;h)^c)]$ (see \eqref{E:tria-comp}): the first two elements belong to $E$ and the last element belong to $W_{m\delta}\subset W_{\delta}-W_{k\delta}$; hence  Condition~\eqref{D:degsub2c} of Definition \ref{D:degsub} implies that the last element does not belong to $E$, which is equivalent to  $(e;h)\in E$. The proof of \eqref{E:inclu-E} is now complete. 

We now finish the proof by finding the desired contradiction. Since $(\delta+2;0)\circ_i ((k-1)\delta+2;0)=(k\delta+2;0)$, we can look at the triangle $\Delta=[(\delta+2;0),((k-1)\delta+2;0),(k\delta+2;0)^c]$ of $W_{\delta}$ (see \eqref{E:tria-comp}): the first two elements of $\Delta$ belong to $E$ by the assumption that $(\delta+2;0)\in E$ and \eqref{E:inclu-E}, while the last element belongs to $W_{k\delta}$, and this violates Condition \eqref{D:degsub2b} of Definition \ref{D:degsub}.

\un{Case III:} $W_{\delta}\not\leq  D(A)$ for any non-empty antichain $A$ of $\Dgbig$.

Indeed, first of all, using \eqref{T:pos-Degn1b}, we can assume that $A=\{x\}$ for some $x\in \Dgbig$, so that $D(A)=\{x,x^c\}$. Moreover, the statement is trivially true if $\delta(x)$ is not a multiple of $\delta$, for otherwise $W_{\delta}\not\supseteq \{x, x^c\}$. Hence, we can assume that $\delta(x)=k\delta$ for some $k\geq 2$ (since $\delta(x)\geq g$ while $\delta\leq g-1$). Suppose by contradiction that $W_{\delta}\leq \{x\}$ and let $E\subset W_{\delta}-\{x,x^c\}$ be a witness. Up to switching $E$ and $E^c$, we can assume that $(\delta+2;0)\in E$.  

Observe that \eqref{E:inclu-E} holds also in this case with the same proof of the Case II. 

We can now find the desired contradiction. Write $x=(e;h)$ and observe that $(e;h)>(\delta+2;0)$ since $h\geq 0$ and 
$$
2(h+e)=\delta(e;h)+e+2=k\delta+e+2\geq 2\delta+4=2(\delta+2).
$$
Then, \eqref{E:ord-comp} implies that we can write $(e;h)=(\delta+2;0)\circ_i (\wt e;\wt h)$ for some $(\wt e;\wt h)$ which belongs to $W_{(k-1)\delta}$ by \eqref{E:sum-comp} and hence to $E$ by \eqref{E:inclu-E}. Consider the triangle $\Delta=[(\delta+2;0),(\wt e;\wt h),(e;h)^c=x^c)]$ (see \eqref{E:tria-comp}): the first two elements belong to $E$ and the last element belongs to $\{x,x^c\}$, and this violates Condition~\eqref{D:degsub2b} of Definition \ref{D:degsub}.
\end{proof}

We immediately deduce:

\begin{corollary}\label{C:height-n1}
    The height of the elements of $\Degns_{g,1}^{re}$ is given by 
    $$
    \begin{sis}
     & h(\emptyset)=0,\\ 
     & h(W_{\delta})=1 & \text{ with } 1\leq \delta \leq g-1, \\
     & h(D(A))=|A| & \text{ for any antichain } A\subseteq \Dgbig.
    \end{sis}
    $$
    In particular, $\emptyset$ is the maximum element of $\Degns_{g,1}^{re}$.
    Moreover, the minimal elements of $\Degns_{g,1}^{re}$ are:
        \begin{itemize}
            \item $W_{\delta}$ for any $1\leq \delta \leq g-1$;
            \item $D(A)$ for any maximal antichain $A$ of $\Dgbig$. 
        \end{itemize}
\end{corollary}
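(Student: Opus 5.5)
The plan is to read everything off the Hasse structure described in Theorem~\ref{T:pos-Degn1}, using that $\Degns_{g,1}^{re}$ is upward closed in $\Degns_{g,1}$ (Remark~\ref{R:Deg-rea}). Heights may therefore be computed inside $\Degns_{g,1}^{re}$, and Theorem~\ref{T:pos-Degn1} lists all the strict relations there. The only relations are $W_\delta<\emptyset$ and $D(A)<D(B)$ for $A\supsetneq B$. Note that $D(\emptyset)=\emptyset$, so the empty antichain gives the element $\emptyset$ itself.

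First, $\emptyset$ is the maximum. Every $W_\delta$ lies below it by part~\eqref{T:pos-Degn1a}, and every $D(A)$ with $A\neq\emptyset$ lies below $D(\emptyset)=\emptyset$ by part~\eqref{T:pos-Degn1b}. Hence $h(\emptyset)=0$. For $W_\delta$, the only element strictly above it is $\emptyset$, because there is no relation between distinct $W$'s and none between a $W_\delta$ and a nonempty $D(B)$. So the longest ascending chain from $W_\delta$ is $W_\delta<\emptyset$, and $h(W_\delta)=1$.

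For $D(A)$, the elements strictly above it are exactly the $D(B)$ with $B\subsetneq A$. Any such $B$ is automatically an antichain, since it is a subset of an antichain. Ascending chains from $D(A)$ therefore correspond to strictly decreasing chains of subsets of $A$. The longest one removes one element at a time, down to $\emptyset$, so $h(D(A))=|A|$.

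For the minimal elements, $W_\delta$ has nothing below it, since no relation has $W_\delta$ on the larger side. An element $D(A)$ has something below it exactly when there is an antichain $A'\supsetneq A$, so $D(A)$ is minimal precisely when $A$ is a maximal antichain of $\Dgbig$. The element $\emptyset=D(\emptyset)$ is not minimal, because it dominates every $W_\delta$.

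There is no real obstacle here. The only point needing care is the claim that Theorem~\ref{T:pos-Degn1} gives all relations, including that no $W_\delta$ sits above or below a nonempty $D(A)$. That is part of its statement, so everything follows by inspection.
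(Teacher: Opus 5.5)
Your proposal is correct and is essentially the paper's argument: the corollary is stated as an immediate consequence of Theorem~\ref{T:pos-Degn1}, read off from its list of elements and strict relations, with heights computable inside $\Degns_{g,1}^{re}$ because that subposet is upward closed. The only (harmless) slip is your claim that $\emptyset$ is never minimal: for $g=1$ there are no $W_\delta$ and $\Degns_{1,1}^{re}=\{\emptyset\}$, but then $\emptyset=D(\emptyset)$ where $\emptyset$ is the maximal antichain of $\Dgbig=\emptyset$, so the statement still holds.
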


\begin{remark}\label{R:oss-poset}
  \noindent 
  \begin{enumerate}[(i)]
      \item The poset $\Degns_{g,1}^{re}$ (and hence also $\Sigma_{g,1}^\chi$ by Proposition \ref{P:Deg-map}) is \emph{upper-graded}, i.e. all ascending maximal chains starting from an element $D\in \Degns_{g,1}^{re}$ have length equal to $h(D)$. 
       Indeed, this is obvious for the elements of height $0$ or $1$, i.e. $\emptyset$ and $W_{\delta}$ for $1\leq \delta \leq g-1$, while for the elements $D(A)$ it follows from Theorem \ref{T:pos-Degn1}\eqref{T:pos-Degn1b}.

       However, the poset $\Degns_{g,1}^{re}$ is not graded if $g\geq 4$ since it has maximal chains of different lengths. For example:
      
      $\bullet$ $\emptyset >W_{\delta}$ is a maximal chain of length $1$, for any $1\leq \delta\leq g-1$.
      
      $\bullet$ $\emptyset > D((g;1)) >D((g;1),(g-2;2))>\ldots >D((g;1),(g-2;2),\ldots,(g-2\lfloor g/2\rfloor+2;\lfloor g/2\rfloor)=W_{g}$ is a maximal chain of length $\lfloor g/2\rfloor$. 
      
      \item The non-empty classical elements $\{W_{k}\}_{1\leq k\leq 2g-2}$ (see Theorem \ref{T:class-n1}) are all minimal elements of $\Degns_{g,1}^{re}$. 
      This follows from Corollary \ref{C:height-n1} using that, for $g\leq \delta\leq 2g-2$, $W_{\delta}^{pr}$ is a maximal antichain since all the elements $x\in \Dgbig$ with $\delta(x)>\delta$ are bigger than some element of $W_{\delta}^{pr}$ while all the  elements $x\in \Dgbig$ with $\delta(x)<\delta$ are smaller than some element of $W_{\delta}^{pr}$.

      However, there are other minimal elements of $\Degns_{g,1}^{re}$: for example, $A_0$ of Example \ref{Ex:antichain} is a maximal antichain of $\Dgbig$ so that $D(A_0)$ is a minimal element of $\Degns_{g,1}^{re}$. 
     
      \item The subposet $\Degns^{cl}_{g,1}\subset \Degns^{re}_{g,1}$ is such that:
      \begin{itemize}
          \item $h_{\Degns^{cl}_{g,1}}\neq (h_{\Degns^{re}_{g,1}})_{|\Degns^{cl}_{g,1}}$. Indeed, since $W_{\delta}^{pr}=\{(2g+2-\delta-2k;\delta-g+k)\}_{k=1}^{g-\lceil \delta/2\rceil}$, we have that $$h_{\Degns^{re}_{g,1}}(W_{\delta})=g-\left\lceil \frac{\delta}{2}\right\rceil \text{ and } h_{\Degns^{cl}_{g,1}}(W_{\delta})=1.$$
          
          \item $\Degns^{cl}_{g,1}$ is not upward-closed if $g\geq 4$: since $|W_{\delta}^{pr}|\geq 2$ for $g\leq \delta\leq 2g-4$, there are  non-classical elements dominating $W_{\delta}$ in this range. 
      \end{itemize}
  \end{enumerate}  
\end{remark}

We conclude this subsection by showing that the poset $\Sigma^\chi_{g,1}$ is connected through height $1$, in the following sense.

\begin{definition}\label{D:conn-h1}
  A poset $(\P,\leq)$ is said to be connected through height one if any two maximal elements $x,x'\in \P$ are connected through height one, i.e. there exists a sequence of elements $x=x_1,y_1,x_2,y_2,\ldots,y_{k-1}, x_k=x'$ such that 
  \begin{itemize}
        \item $x_i$ is maximal for $i=1,\ldots,k$;
        \item $y_i$ is submaximal (i.e. it has height $1$) for $i=1,\ldots,k-1$;
        \item $y_i<x_i,x_{i+1}$ for each $i=1,\ldots,k-1$.
    \end{itemize}
\end{definition} 
Note that if $(\P,\leq)$ is connected through height one, then it is also connected.

\begin{theorem}\label{T:conn-h1}
  The poset $\Sigma_{g,1}^\chi$ is connected through height one. 
\end{theorem}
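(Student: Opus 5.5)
We reduce first to the non-separating part. By Lemma \ref{L:Sigma-s-ns} we have $\Sigma_{g,1}^\chi\cong \Sigmas_{g,1}^\chi\times\Sigmans_{g,1}^\chi$, and heights add by Lemma \ref{C:height}. So a maximal element is a pair of maximal elements, and an element of height one is a pair where one factor is maximal and the other is submaximal. Connectivity through height one of a product follows from the same property of each factor: we move one coordinate at a time, keeping the other maximal.

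For the separating factor, Proposition \ref{P:Dsep} gives the description. Maximal elements have $\Ds=\emptyset$, and their values are pinned down on each complementary pair up to which side gets the extra $+1$. Two general separating functions therefore differ on finitely many pairs $\{x,x^c\}$. Flipping one pair at a time passes through the element with $\Ds=\{x,x^c\}$, which has height one by Lemma \ref{C:height}\eqref{C:height3}. It is dominated by both choices by Corollary \ref{C:witness}, since there are no triangles to constrain witnesses.

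For the non-separating factor we use the primitive description of Remark \ref{R:Deg-Sig-pr}. By Theorem \ref{T:pos-Degn1} and Corollary \ref{C:height-n1}, the submaximal elements are those $\sigma$ with $\D(\sigma)=W_\delta$ ($1\le\delta\le g-1$) or $\D(\sigma)=D(\{x\})$ for $x\in\Dgbig$. By Corollary \ref{C:witness}, each is dominated by exactly two maximal elements: $\sigma+\chi_{W_\delta^{pr}}$ and $\sigma+\chi_{\wt{W_\delta^{pr}}}$ in the first case, and $\sigma+\chi_{\{x\}}$ and $\sigma+\chi_{\{x^c\}}$ in the second. So crossing a wall $D(\{x\})$ changes the primitive value $\sigma^{pr}(x)$ by one and leaves the rest fixed. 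Crossing $W_\delta$ changes all primitive values on $W_\delta^{pr}$ simultaneously.

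Given two general $\sigma,\sigma'\in\Sigmans_{g,1}^\chi$, we first handle the case where they agree on $\{x:\delta(x)\le g-1\}$. Then their primitive parts differ only on $\Dgbig$, and we connect them by successive single-element wall crossings $D(\{x\})$. We need to check that the intermediate function $\sigma$ with $x$ made degenerate is a valid V-function. By \eqref{E:sigmapr-prop}, this holds exactly when for every composition $y\circ_i z=x$ with $y,z$ nondegenerate we have $\sigma(x)$ equal to the lower value $\sigma(y)+\sigma(z)-1$ (or the upper one, after the flip), and for every $x\circ_i y=w$ the relation $\sigma(w)=\sigma(x)+\sigma(y)$ holds. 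So we choose the order of crossings cleverly, processing $x$ in increasing or decreasing order in the poset $\Dgbig$, so that these constraints hold at each step. This ordering argument is the expected main obstacle.

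For the small-degree part, the plan is to show that a general $\sigma$ is determined on $\delta(x)\le g-1$ by an interval of slopes, as in Case II of Theorem \ref{T:class-n1}: the values there are uniform and equal to $\lceil\beta\delta\rceil$. We would first prove that uniformity holds below $g$ for any general $\sigma$, using the parity argument of Step I in Proposition \ref{P:deg-piccoli}. Then moving $\beta$ across the rational points $p/\delta$ with $\delta\le g-1$ corresponds to crossing walls $W_\delta$. After each crossing we re-adjust the large part via the single-element crossings. Combining the two moves connects any two maximal elements. If uniformity below $g$ fails, we would instead try induction on the number of primitive elements of small degree where $\sigma$ and $\sigma'$ differ, crossing a $W_\delta$ for the minimal such degree.
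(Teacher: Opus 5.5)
\textbf{Where your plan is sound.} Your overall architecture matches the paper's. You reduce to $\Sigmans_{g,1}^\chi$, cross the walls $W_\delta$ with $\delta\le g-1$ using classical representatives, and handle $\Dgbig$ by single flips through $D(\{x\})$; that last part is the role of Lemma~\ref{L:connec2}.

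The small-degree part of your plan works. The parity argument of Step~I of Proposition~\ref{P:deg-piccoli} shows that every general $\sigma$ is uniform in degrees $\le g-1$. Case~II of Theorem~\ref{T:class-n1}, run only in those degrees, then gives an irrational $\beta$ with $\sigma=\lceil\beta\delta\rceil$ there. So your fallback is never needed.

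One minor slip: in the separating factor, general functions are not pinned down up to the $+1$, since $\sigma(x)$ can be any integer. Repeated flips on a single pair still connect them.

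\textbf{The gap.} The real gap is the step you flag yourself: connecting two general $\sigma,\sigma'$ with the same small part by single flips. You assert it without proof, and processing $\Dgbig$ in increasing or decreasing poset order does not work uniformly.

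With the small part fixed, the admissible large parts are the integer functions $f$ on $\Dgbig$ subject to two kinds of constraints:
\begin{itemize}
\item unary bounds, coming from decompositions into two small elements;
\item $f(x)-f(z)\in\{\lceil\beta d\rceil-1,\lceil\beta d\rceil\}$ for $z<x$, where $d=\delta(x)-\delta(z)$.
\end{itemize}
Raising $x$ can be blocked by a comparable element lying either above or below $x$, depending on $\beta$.
\begin{itemize}
\item For $\beta$ slightly above an integer $k$, $f-k\delta$ is order-preserving with values in $\{1,2\}$, so you must raise from the top. For example, for $g=6$ and $k=0$, $(6;1)$ cannot be raised while $(5;2)=(6;1)\circ_2(3;0)$ still has value $1$.
\item For $\beta$ slightly below $k$, $f-k\delta$ is order-reversing, so you must raise from the bottom.
\end{itemize}

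\textbf{The missing idea: an acyclicity argument.} The constraints are difference constraints, so the admissible set is closed under componentwise max. You can therefore go from $\sigma$ up to $\sigma\vee\sigma'$ and then down to $\sigma'$.

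Going up from $p$ to $r\ge p$, a flip at $x$ can only be blocked by some $z$ that must also be raised. Tightness means $p(x)-p(z)=\lceil\beta d\rceil$ if $z<x$, or $p(z)-p(x)=\lceil\beta d\rceil-1$ if $z>x$. Since $\beta$ is irrational, either case forces $p(x)-\beta\delta(x)>p(z)-\beta\delta(z)$. Hence raising first an element minimizing $p(x)-\beta\delta(x)$ always works, and the descent is symmetric. By your own criterion, each step passes through a valid height-one element.

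Do not rely on the paper's proof of Lemma~\ref{L:connec2} to fill this. Elements of a fixed degree are pairwise incomparable, so ``minimal in $D_1$'' selects nothing. Moreover, that proof checks only the triangles through $x_0^c$, and never the triangles $[x_0,y,(x_0\circ_i y)^c]$ through $x_0$ itself. Here is a counterexample to the step as written. Take $g=6$, small $\beta$, and $\sigma=2$ exactly on the up-set generated by $(2;3)$. The lower class in the minimal non-uniform degree $6$ is $\{(6;1),(4;2)\}$. Neither element can be raised, because both lie below $(5;2)$, where $\sigma=1$.
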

\begin{proof}
By Lemma \ref{L:Sigma-s-ns} and Proposition \ref{P:Dsep}, it is enough to show that $\Sigmans_{g,1}^{\chi}$ is connected through height one. This follows from Lemmas \ref{L:connec1} and \ref{L:connec2} below, using that the relation of being connected through height one is an equivalence relation on the set of maximal elements of $\Sigmans_{g,1}^\chi$. 
\end{proof}

\begin{lemma}\label{L:connec1}
    Let $\sigma,\sigma'\in\Sigmans^\chi_{g,1}$ be two classical maximal elements. Then they are connected through height $1$.
\end{lemma}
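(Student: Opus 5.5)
Our plan is to reduce to the one-parameter family of classical non-separating V-functions $\sigma_\beta$ of \eqref{E:sigma-beta} and to cross its walls one at a time. By Theorem~\ref{T:class-n1} and Corollary~\ref{C:height-n1}, a classical maximal element of $\Sigmans^\chi_{g,1}$ is $\sigma_\beta$ with $\D(\sigma_\beta)=\emptyset$, which by \eqref{E:D-sigbeta} means that $\beta\,\delta\notin\ZZ$ for every $1\le \delta\le 2g-2$. So write $\sigma=\sigma_\beta$ and $\sigma'=\sigma_{\beta'}$ with $\beta<\beta'$, both outside the finite set of walls
$$\mathcal W:=\Big\{\tfrac{m}{\delta}: m\in\ZZ,\ 1\le\delta\le 2g-2\Big\}.$$
Since $\sigma_\beta$ is constant on each open interval of $\RR\setminus\mathcal W$ (each ceiling $\lceil\beta\delta\rceil$ is locally constant there), only the walls lying in $[\beta,\beta']$ matter, and there are finitely many of them. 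Between consecutive chambers we will pass through $\sigma_{w}$ at the wall point $w$.

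The key step is to understand a single wall. Fix $w=p/q\in\mathcal W\cap(\beta,\beta')$ in lowest terms, so $1\le q\le 2g-2$. Then $\sigma_w$ has degeneracy subset $W_q$ by \eqref{E:D-sigbeta} and Remark~\ref{R:Deg-Sig-pr}. For $\epsilon>0$ small, the functions $\sigma_{w-\epsilon}$ and $\sigma_{w+\epsilon}$ are general, and we check directly from \eqref{E:sigma-beta} that both dominate $\sigma_w$.
\begin{itemize}
\item On $x\in\Dgpr$, $\lceil\delta(x)(w\pm\epsilon)\rceil\ge\lceil\delta(x)w\rceil$ for $+\epsilon$. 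For $-\epsilon$ the two values agree unless $\delta(x)w\in\ZZ$, and in that case $\sigma_{w-\epsilon}(x)=\sigma_w(x)$ while the complement gains $1$.
\item The complementary elements, with values $\lceil\chi-\beta\delta(x^c)\rceil$, are handled symmetrically.
\end{itemize}
So $\sigma_w\le\sigma_{w\pm\epsilon}$, consistent with Remark~\ref{R:Sigma-pos}: on each degenerate pair, exactly one side increases by $1$.

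It remains to see that $\sigma_w$ is submaximal whenever $q\le g-1$, since $h(W_q)=1$ by Corollary~\ref{C:height-n1} and Lemma~\ref{C:height}\eqref{C:height1}. This is where we expect the main obstacle: if $g\le q\le 2g-2$, then $W_q=D(A_q)$ has height $|A_q|=|W_q^{pr}|$, which can exceed $1$, so $\sigma_w$ is not submaximal. In that case we will not use $\sigma_w$. Instead we cross the wall one element at a time, using Theorem~\ref{T:pos-Degn1}\eqref{T:pos-Degn1b} and Corollary~\ref{C:witness}. Write $A_q=\{a_1,\dots,a_k\}$. The chambers on the two sides correspond to the witnesses $A_q$ and $\wt{A_q}$ for $W_q<\emptyset$. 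For each $j$, consider the singleton antichain $\{a_j\}$. The V-function $\tau_j:=\sigma_w+\chi_{E_j}$ attached to the witness
$$E_j:=\{a_1,\dots,a_{j-1},a_{j+1},\dots,a_k\}\sqcup\wt{\{a_j\}}$$
has degeneracy $D(\{a_j\})$, hence height $1$. The maximal elements above $\tau_j$ are the two witnesses for $D(\{a_j\})<\emptyset$. These chain together the intermediate maximal elements $\sigma_w+\chi_{E}$, where $E$ runs over
$$\{a_1,\dots,a_i\}\sqcup\wt{\{a_{i+1},\dots,a_k\}},\qquad 0\le i\le k.$$
Going from the $i=0$ end to the $i=k$ end therefore connects $\sigma_{w+\epsilon}$ to $\sigma_{w-\epsilon}$ through height one.

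We need to verify that each of these intermediate functions lies in $\Sigmans^\chi_{g,1}$ and is general. This follows from Corollary~\ref{C:witness} applied to $W_q\ge\emptyset$ with arbitrary splittings, which are valid witnesses because no triangle lies inside $D(A_q)$. Concatenating the wall crossings over all walls in $(\beta,\beta')$ then connects $\sigma$ to $\sigma'$ through height one.
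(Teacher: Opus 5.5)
Your strategy is essentially the paper's. You reduce to two adjacent classical chambers; for $n=1$ these are consecutive intervals of the $\beta$-line, so you avoid quoting the general fact that posets of regions of a hyperplane arrangement are connected through height one. When the wall $W_q$ between the chambers is not submaximal ($g\le q\le 2g-4$), you flip the degenerate pairs one at a time through general V-functions.

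However, the step that produces the height-one connectors fails as written. Your set $E_j=(A_q-\{a_j\})\sqcup\wt{\{a_j\}}$ contains $a_j^c$ together with one element from every other complementary pair of $W_q$. So $E_j$ is not contained in $W_q-D(\{a_j\})$, and hence it is not a witness for $D(\{a_j\})\geq W_q$. Moreover, $\tau_j=\sigma_w+\chi_{E_j}$ is nondegenerate on every pair, so it is general rather than of degeneracy $D(\{a_j\})$. For instance, when $k=1$ it equals $\sigma_{w-\epsilon}$. As stated, your chain therefore contains no submaximal elements at all.

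The fix uses your own chain $E_i=\{a_1,\dots,a_i\}\sqcup\wt{\{a_{i+1},\dots,a_k\}}$, $0\le i\le k$. The connector between $\sigma_w+\chi_{E_{i-1}}$ and $\sigma_w+\chi_{E_i}$ should be their pointwise minimum $\sigma_w+\chi_{E_{i-1}\cap E_i}$, where $E_{i-1}\cap E_i=\{a_1,\dots,a_{i-1}\}\sqcup\wt{\{a_{i+1},\dots,a_k\}}$. This set avoids the pair $\{a_i,a_i^c\}$ and contains exactly one element from each other pair. By Theorem \ref{T:pos-Degn1}\eqref{T:pos-Degn1b} and Corollary \ref{C:witness}, it is therefore a witness for $D(\{a_i\})\geq W_q$. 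The resulting function has degeneracy $D(\{a_i\})$, has height one, and adding $\chi_{\{a_i^c\}}$ or $\chi_{\{a_i\}}$ to it recovers the two neighbouring maximal elements. This is exactly the paper's $\ov\sigma_i=\min(\sigma_i,\sigma_{i+1})$.

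Separately, the relation you invoke at the end should read $\emptyset\geq W_q$, not $W_q\geq\emptyset$, since $\emptyset$ is the maximum of the poset. With these corrections the argument is complete.
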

\begin{proof}
    Since the poset of (non-separating) classical  V-functions is the poset of regions of an hyperplane arrangement in a real affine space by Lemma \ref{L:arr-Uni}, then is connected through height $1$. Thus it is enough to show that any two non-separating classical V-functions $\sigma$ and $\sigma'$, which are adjacent in the poset of regions, are connected through height $1$ in the poset $\Sigmans^\chi_{g,1}$.

    Let $\ov\sigma\in\Sigmans_{g,1}^\chi$ be a classical V-function such that $\sigma,\sigma'>\ov\sigma$ and let $D=\D(\ov\sigma)$. If $D$ is a submaximal element of $\Deg_{g,1}$, we are done. Otherwise, $D=W_{\delta}$, for some $\delta$ such that $g\leq\delta\leq 2g-4$ by  Theorem~\ref{T:class-n1}\eqref{T:class-n1i} and Corollary \ref{C:height-n1}. In particular, $|D|=2k\geq 2$.

    We observe that, since $\sigma>\ov\sigma<\sigma'$ and $\D(\ov\sigma)=D$, we have that $\sigma_{|_{D^c}}=\sigma'_{|_{D^c}}$. 
    Let $\{(e_i;h_i)\}_{i=1,\ldots,k}$  be such that $D=\bigcup_{i=1}^k\{(e_i;h_i),(e_i;h_i)^c\}$, and let $D_i:=\{(e_i;h_i),(e_i;h_i)^c\}$ for each $i=1\ldots,k$. Since both $\sigma$ and $\sigma'$ are maximal non-separating V-functions, for any $i=1,\ldots,k$, the function $\sigma_i$ defined as
    $$
    \sigma_i(e;h,A):=\begin{cases}
        \sigma'(e;h,A),\textup{ if }(e;h,A)\in\bigcup_{j\leq i}D_j;\\
        \sigma(e;h,A), \textup{ otherwise},
    \end{cases}
    $$
    is also a maximal non-separating V-function. In particular, $\sigma=\sigma_0$ (trivially) and $\sigma'=\sigma_k$.

    Similarly, for each $i=1,\ldots,k-1$, the function $\ov\sigma_i$, defined as
    $$
    %\ov\sigma_i(e;h,A):=\begin{cases}
     %   \sigma'_i(e;h,A), \textup{ if } (e;h,A)\in\bigcup_{j<i}D_j \textup{ or } (e;h,A)=(e_i;h_i,\emptyset)^c;\\
     %   \sigma_i(e;h,A), \textup{ if } (e;h,A)\in\bigcup_{j>i}D_j\textup{ or } (e;h,A)=(e_i;h_i,\emptyset),\\
    %\end{cases}
    \ov\sigma_i(e;h,A):=\min (\sigma_i(e;h,A),\sigma_{i+1}(e;h,A)) \quad \text{for all } (e;h,A) \in \Dgunons
    $$
    is a non-separating V-function such that $\D(\ov\sigma_i)=D_i$ (in particular $\ov \sigma_i$ is submaximal) and $\sigma_i>\ov\sigma_i<\sigma_{i+1}$.

    Therefore, we conclude that $\sigma$ and $\sigma'$ are connected through height $1$ via the sequence\\$\sigma=\sigma_0,\ov\sigma_0,\sigma_1,\ldots,\ov\sigma_{k-1},\sigma_k=\sigma'$.
\end{proof}
Indeed, the result of Lemma~\ref{L:connec1} holds true in $\Sigmans_{g,n}^\chi$ for any $n\geq 1$ with an analogous proof.

\begin{lemma}\label{L:connec2}
    Let $\sigma\in\Sigmans^\chi_{g,1}$ be a maximal element. Then there exists a classical maximal element $\tau\in\Sigmans^\chi_{g,1}$, such that $\sigma$ and $\tau$ are connected through height $1$.
\end{lemma}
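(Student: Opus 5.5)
Since $\sigma$ is maximal, $\D(\sigma)=\emptyset$ (by Corollary \ref{C:height-n1}, $\emptyset$ is the maximum of $\Degns^{re}_{g,1}$, and $\D$ is upper lifting and conservative). The plan is to walk from $\sigma$ to a classical maximal element by a sequence of elementary flips through submaximal elements. The approach is to make $\sigma$ \emph{uniform} (Theorem \ref{T:class-n1}\eqref{T:class-n1ii}) step by step, since a uniform general non-separating V-function is automatically classical by Theorem \ref{T:class-n1}.

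\textbf{Step 1: single flips.} For a half-vine $x\in\Dgpr$ with $\delta(x)\geq g$ and $A=\{x\}$, the degeneracy subset $D(\{x\})=\{x,x^c\}$ is realizable of height $1$ (Proposition \ref{P:deg-grandi}, Corollary \ref{C:height-n1}). By Theorem \ref{T:pos-Degn1}\eqref{T:pos-Degn1b} it has exactly two witnesses for $D(\{x\})<\emptyset$. By Corollary \ref{C:witness}, any $\bar\sigma$ with $\D(\bar\sigma)=\{x,x^c\}$ is dominated by exactly two maximal elements, namely $\bar\sigma+\chi_{\{x\}}$ and $\bar\sigma+\chi_{\{x^c\}}$. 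Conversely, given maximal $\sigma$, suppose $\bar\sigma:=\sigma-\chi_{\{x\}}$ is a V-function with $\D(\bar\sigma)=\{x,x^c\}$. Then $\sigma$ and $\sigma':=\sigma-\chi_{\{x\}}+\chi_{\{x^c\}}$ (which changes $\sigma(x)$ by $-1$) are connected through height one. Checking when this $\bar\sigma$ is a V-function amounts to checking \eqref{E:sigmapr-prop} on compositions involving $x$. Since $\delta(x)\geq g$, $x$ never appears as a factor, only as $x_1\circ_i x_2=x$. The condition becomes
\[
\sigma(x)-\sigma(x_1)-\sigma(x_2)=0 \quad\text{for all decompositions } x=x_1\circ_i x_2 .
\]
By symmetry, a flip raising $\sigma(x)$ by $1$ is allowed when all these differences equal $-1$.

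\textbf{Step 2: normalize the high part.} The lower part $\sigma|_{\{\delta(x)\leq g-1\}}$ is forced to be uniform. I expect this to follow as in the proof of Proposition \ref{P:deg-piccoli}: compare $x\circ_{e-1}x=(2;\delta(x))$ for two elements of the same degree, together with the general condition $\D(\sigma)=\emptyset$. I expect that argument shows $\sigma(x)$ depends only on $\delta(x)$ whenever $2\delta(x)\leq 2g-2$. So only the values on $\Dgbig$ may be non-uniform.

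Next, choose a classical $\sigma_\beta$, with $\beta$ irrational, agreeing with $\sigma$ on degrees $\leq g-1$. Such a $\beta$ exists by the Case II interval argument of Theorem \ref{T:class-n1} restricted to $\delta\leq g-1$, with $\beta$ chosen generic in the nonempty open intersection. For $x\in\Dgbig$, the admissible values of $\sigma(x)$ lie in $\{\min,\min+1\}$ determined by the values of $\sigma(x_1)+\sigma(x_2)$. Here the decompositions involve only lower degrees, and I need the relevant sums to be constant across decompositions, which again should follow from uniformity below degree $g$.

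Then flip each $x\in\Dgbig$ with $\sigma(x)\neq\sigma_\beta(x)$ one at a time using Step 1. The flip is legal precisely because $\sigma(x)$ sits at an endpoint of its two-element range. Values at other high elements do not constrain $x$, since two high elements never lie in a common triangle in the relevant way (cf.\ \eqref{E:inter-A}). After finitely many flips we reach $\sigma_\beta$.

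\textbf{Main obstacle.} The hardest step is Step 2: proving that maximal $\sigma$ is uniform below degree $g$, and that the range at each high $x$ is exactly two consecutive values containing $\sigma_\beta(x)$. If uniformity in low degree fails directly, I would instead first perform flips there too. Lemma \ref{L:connec1} handles the transit between classical chambers, so it would suffice to reach \emph{some} classical maximal element rather than a prescribed one.
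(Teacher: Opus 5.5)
Your proposal has a genuine gap, and it sits in Step 1. The claim that an element $x\in\Dgbig$ never occurs as a factor is false. A composition $x\circ_i y$ only requires $\delta(x)+\delta(y)\le 2g-2$, so $x\in\Dgbig$ is a factor whenever $\delta(y)\le 2g-2-\delta(x)$. (The proof of \eqref{E:inter-A} only uses that \emph{at most one} factor lies in $\Dgbig$.) For $g=5$, for example, $(3;2)\circ_2(3;0)=(2;3)$ with both $(3;2)$ and $(2;3)$ in $\Dgbig$.

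Hence making $x$ the unique degenerate element also imposes conditions on every triangle $[x,y,(x\circ_i y)^c]$:
\begin{itemize}
\item for the lowering flip, $\sigma(x\circ_i y)=\sigma(x)+\sigma(y)-1$;
\item for the raising flip, $\sigma(x\circ_i y)=\sigma(x)+\sigma(y)$.
\end{itemize}
For the same reason, two claims in Step 2 are false. The range of $\sigma$ at a high $z$ is not determined by lower degrees alone. And comparable elements $x<x\circ_i y$ of $\Dgbig$ do constrain each other; \eqref{E:inter-A} is a statement about antichains only. So flipping in an arbitrary order can get stuck.

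Here is a concrete example with $g=5$. Take $\sigma=1$ on all of $\Dgpr$ in degrees $\le 4$, and set $\sigma(5;1)=2$, $\sigma(3;2)=1$, $\sigma(4;2)=2$, $\sigma(2;3)=1$, $\sigma(3;3)=\sigma(2;4)=2$. One checks from \eqref{E:sigmapr-prop} that this is a general element of $\Sigmans_{5,1}^\chi$. For irrational $\beta\in(1/5,1/4)$, $\sigma_\beta$ agrees with $\sigma$ in low degrees and equals $2$ on $\Dgbig$. Now $(3;2)$ sits at the bottom of its range from low decompositions, yet raising it first is illegal because of the triangle $[(3;2),(3;0),(2;3)^c]$; one must raise $(2;3)$ first. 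The paper's inductive step does not get around this either. It raises an element of $D_1$ in the lowest non-uniform degree and checks only the triangles through $(e_0;h_0)^c$. Here $D_1=\{(3;2)\}$, so it hits the same obstruction, and one must lower $(5;1)$ instead.

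What is missing is an argument that a legal order of flips always exists; your setup supports one. First, the low-degree uniformity you flagged as the main obstacle is immediate. If $\delta(x)=\delta(x')\le g-1$, then $x\circ_{e-1}x=x'\circ_{e'-1}x'=(2;\delta(x))$, and the sets $\{2a-1,2a\}$ are disjoint for distinct $a$. Once the low values are fixed, the conditions on $\Dgbig$ are of two kinds:
\begin{itemize}
\item bounds $\sigma(z)\in\{m-1,m\}$ coming from decompositions into two low factors;
\item difference constraints $\sigma(y)-1\le \sigma(x\circ_i y)-\sigma(x)\le \sigma(y)$ with $y$ low.
\end{itemize}
Such a system is closed under componentwise $\min$. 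So $\mu:=\min(\sigma,\sigma_\beta)$ is again general, and it suffices to go upward from $\mu$ to $\sigma$ and from $\mu$ to $\sigma_\beta$ by single raises.

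For feasible $\mu\le\tau$ with $\mu\neq\tau$, bounds cannot block raising $v$, since $\mu(v)<\tau(v)\le m$. A tight difference constraint with some $w$ forces $(\tau-\mu)(w)\ge(\tau-\mu)(v)$. Among the elements maximizing $\tau-\mu$, the blocking relation is acyclic, because $v\mapsto\beta\,\delta(v)$ satisfies every difference constraint strictly. Hence some raise $v$ is legal. Each such raise is a height-one connection through $\mu-\chi_{\{v^c\}}$, whose degeneracy subset is $\{v,v^c\}$.
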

\begin{proof}
    If $\sigma$ is classical there is nothing to prove. Otherwise, by Theorem~\ref{T:class-n1}, there exists $\delta'$ such that $\sigma$ is not constant on $\DD_{\delta'}:=\{x\in \Dgpr: \delta(x)=\delta'\}$. Let $\delta$ be the minimum such degree $\delta'$. We want to show that $\sigma$ is connected through height $1$ to a non-separating V-function that is constant on $\DD_{\delta}$.

    Since $\sigma$ is a V-function, by \eqref{E:sigmapr-prop} and minimality of $\delta$, for each $(e;h),(e';h')\in \DD_{\delta}$, we have $|\sigma(e;h)-\sigma(e';h')|\leq~1$. Thus, we can write $\DD_{\delta}=D_1\sqcup D_2$ with $D_1,D_2\neq \emptyset$ such that 
    \begin{equation}\label{E:sigmaD1D2}
    \sigma(e;h)=\sigma(e';h')-1\quad\textup{for all } (e;h)\in D_1,(e';h')\in D_2.
    \end{equation}
    We observe, in particular, that $\sigma$ is constant on $D_1$ and $D_2$.
    %and $(e;h)^c\in D_i$ if and only if $(e;h)\in D_i$, for $i=1,2$.

    Let $(e_0;h_0)\in D_1$ with $(e_0;h_0)$ minimal with respect to the order in~\eqref{E:order-D}. 
    
    \un{\text{Claim:}}  The function $\ov\sigma$ defined by
    $$
    \ov\sigma(e;h,A):=\begin{cases}
        \sigma(e;h,A)-1,\textup{ if }(e;h,A)=(e_0;h_0)^c;\\
        \sigma(e;h,A), \textup{ otherwise},
    \end{cases}
    $$
    is a V-function of type $(g,1)$ such that $\D(\ov\sigma)=\{(e_0;h_0),(e_0;h_0)^c\}$ and $\ov\sigma<\sigma$.

    Indeed, since $\sigma$ is a V-function with $\D(\sigma)=\emptyset$ and since there are no triangles contained in $\{(e_0;h_0),(e_0;h_0)^c\}$, by \eqref{E:triaUni}, we only need to check that, for each triangle $\Delta=[(e_0;h_0)^c,(e_1;h_1),(e_2;h_2)]$, we have 
    $$
    \ov\sigma((e_0;h_0)^c)+\ov\sigma(e_1;h_1)+\ov\sigma(e_2;h_2)-\chi=1,
    $$
    that is
    $$
    \sigma((e_0;h_0)^c)+\sigma(e_1;h_1)+\sigma(e_2;h_2)-\chi=2.
    $$
    Suppose, by contradiction, that there exists $\Delta$ such that $ \sigma((e_0;h_0)^c)+\sigma(e_1;h_1)+\sigma(e_2;h_2)-\chi=1$. By the assumptions of minimality on $\delta$ and $(e_0;h_0)$, $\sigma$ is constant on $\DD_{\delta_i}$, where $\delta_i:=\delta(e_i;h_i)$, for $i=1,2.$ Clearly, we can choose $(e'_1;h'_1),(e'_2;h'_2)$ with $\delta(e'_i;h'_i)=\delta_i$, such that $\Delta=[(e;h)^c,(e'_1;h'_1),(e'_2;h'_2)]$ is a triangle with $(e;h)\in D_2$. So, by \eqref{E:sigmaD1D2}, we have 
    $$
\sigma((e;h)^c)+\sigma(e'_1;h'_1)+\sigma(e'_2;h'_2)-\chi=0,
    $$
    a contradiction, which finishes the proof of the Claim.
    
    Consider now the function $\sigma'$ defined as
    $$
    \sigma'(e;h,A):=\begin{cases}
        \ov\sigma(e;h,A)+1,\textup{ if }(e;h,A)=(e_0;h_0);\\
        \ov\sigma(e;h,A), \textup{ otherwise},
    \end{cases}
    $$
    which is a V-function such that $\sigma'>\ov\sigma<\sigma$, hence it is connected to $\sigma$ through height $1$. Moreover $\sigma'$ satisfies:
    $$
    \sigma'(e;h)=\sigma'(e';,h')-1\quad\textup{for all } (e;h)\in D_1-\{(e_0;h_0)\},\textup{ and }(e';,h')\in D_2\cup\{(e_0;h_0)\}.
    $$

    By inductively repeating the same argument for each element of $D_1$, we obtain a V-function that is constant on $\DD_{\delta}$ and is connected to $\sigma$ through height $1$.
    Finally, repeating again the same argument for each $\DD_{\delta'}$ such that $\sigma$ is not constant on $\DD_{\delta'}$, we obtain a V-function $\tau$ that is connected through height $1$ to $\sigma$ and that is uniform, hence classical by Theorem~\ref{T:class-n1}.
\end{proof}

\subsection{Maximal and submaximal elements }\label{sub:max-wall} 

The purpose of this subsection is to classify the \emph{maximal} elements, i.e., those of height zero, and the \emph{submaximal} elements, i.e., those of height one, of $\Deg^{re}_{g,n}$ and $\Sigma_{g,n}^\chi$. We will focus primarily on the case $n\geq 1$, since the case $n=0$ follows easily from Theorem \ref{T:class-n0} and Proposition \ref{P:Dsep}.

The main results here describe the properties of maximal and submaximal elements of $\Sigma_{g,n}^{\chi}$ with $n \geq 1$:

\begin{enumerate} 
\item Corollary~\ref{C:max-vfun}, which characterizes the maximal element as the general V-functions, 
\item Corollary~\ref{C:walls-Vfun}, which gives a complete description of the submaximal elements, and 
\item Corollary~\ref{C:2max-wall}, which shows that each submaximal element is dominated by exactly two maximal elements. 
\end{enumerate} 

Moreover, in Remark~\ref{R:classic-walls} we compare the submaximal elements of $\Sigma_{g,n}^{\chi}$ with the walls in the stability space (a hyperplane arrangement) of classical compactified universal Jacobians, described in \cite{Kass_2019}.

\begin{remark}\label{R:Sigma-Deg}
 It follows from Lemma \ref{C:height}\eqref{C:height1} that $\sigma\in \Sigma_{g,n}^\chi$ is maximal (resp. submaximal) if and only if $\D(\sigma)\in \Deg^{re}_{g,n}$ is  maximal (resp. submaximal).
\end{remark}

Maximal elements of $\Deg_{g,n}$ and of $\Sigma_{g,n}^\chi$ are easy to describe, if $n\geq 1$.

\begin{proposition}\label{prop:empty-is-maximum-deg}
If $n\geq 1$, then the empty set $\emptyset$ is the maximum element of $\Deg_{g,n}$.
\end{proposition}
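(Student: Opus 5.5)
To prove that $\emptyset\geq D$ for every $D\in \Deg_{g,n}$, I will verify Definition~\ref{D:degsub}\eqref{D:degsub2} with $D^1=\emptyset$ and $D^2=D$. In other words, I need a witness $E\subseteq D$ satisfying three conditions. Condition \eqref{D:degsub2a} says $E$ contains exactly one element of each complementary pair in $D$. Condition \eqref{D:degsub2b} is vacuous, since no element lies in $D^1=\emptyset$. Condition \eqref{D:degsub2c} says every triangle contained in $D$ has one or two of its elements in $E$. The assumption $n\geq 1$ will be used to choose $E$ canonically via the first marked point.

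The candidate witness is
$$E:=\{(e;h,A)\in D\: : \: 1\in A\}.$$
Since $A\mapsto A^c$ swaps the sets containing $1$ with those not containing $1$, each pair $\{x,x^c\}$ contributes exactly one element to $E$. In particular, there are no fixed points of the involution when $n\geq 1$. So \eqref{D:degsub2a} holds.

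For \eqref{D:degsub2c}, take a triangle $[(e_i;h_i,A_i)]_{i=1,2,3}$ contained in $D$. By the definition of a triangle, $A_1\sqcup A_2\sqcup A_3=[n]$. Hence the index $1$ lies in exactly one $A_i$, so exactly one element of the triangle belongs to $E$. This is within the allowed range $\{1,2\}$. Therefore $E$ is a witness and $\emptyset\geq D$.

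It remains to note that $\emptyset$ is itself an element of $\Deg_{g,n}$, since both closure properties hold trivially. Then $\emptyset$ is the maximum. I do not expect a real obstacle here. The only point to watch is the fixed-point issue in \eqref{D:degsub2a}, and the hypothesis $n\geq 1$ rules it out. This same witness then yields, via Proposition~\ref{P:Deg-map}\eqref{P:Deg-map3}, the general V-functions $\Omega(\sigma)^{\pm}$ used in Remark~\ref{R:Omegapm}: there one takes as witness either the elements of $\D(\Omega(\sigma))$ whose marking set contains $n+1$, or those whose marking set does not.
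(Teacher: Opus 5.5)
Your proposal is correct and follows the paper's proof: the paper also takes as witness the elements of $D$ whose marking set contains a fixed index $j\in[n]$ (you take $j=1$), notes that condition (2b) is vacuous, and checks (2a) and (2c) exactly as you do, using $A_1\sqcup A_2\sqcup A_3=[n]$ to get exactly one triangle element in $E$. Your closing remark about $\Omega(\sigma)^{\pm}$ is also consistent with how the paper uses this argument in Remark~\ref{R:Omegapm}, with $j=n+1$ and its complementary witness.
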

Note  that the above result is false for $n=0$: for example, every subset of $\mathbb{D}_{g,0}^{\text{half}}:=\{(e;h,\emptyset)\: : 2h-2+e=g-1\}\subset \mathbb{D}_{g,0}$ is a maximal element of $\Deg_{g,0}$ since the complementary involution is the identity on $\mathbb{D}_{g,0}^{\text{half}}$ and there are no triangles contained in $\mathbb{D}_{g,0}^{\text{half}}$.  
\begin{proof} 
Let $D\in\Deg_{g,n}$. We have to prove that $\emptyset\ge D$ in $\Deg_{g,n}$.  Since $\emptyset\subseteq D$, it remains to exhibit a witness for the relation $\emptyset \geq D$, i.e. a subset $E\subseteq D$ satisfying the conditions of Definition \ref{D:degsub}\eqref{D:degsub2}. 

Fix an index $j\in[n]$ and set
\[
E \;:=\; \{\, (e;h,A)\in D \;:\; j\in A \,\}.
\]
We check the required conditions.

\smallskip\noindent
\eqref{D:degsub2a}:
If $(e;h,A)\in D$, then its complement is $(e;g-h-e+1,A^c)\in D$ (since $D$ is a
degeneracy subset). Exactly one of $A$ and $A^c$ contains $j$, hence
$(e;h,A)\in E$ if and only if $(e;g-h-e+1,A^c)\notin E$, i.e.\ the complement of
$(e;h,A)$ lies in $E^c$.

\smallskip\noindent
\eqref{D:degsub2b}:
This condition is vacuous because it concerns triangles $\Delta\subseteq D$
satisfying $|\Delta\cap D_1|=1$, and $D_1=\emptyset$.

\smallskip\noindent
\eqref{D:degsub2c}:
Let $\Delta=[(e_i;h_i,A_i)]_{i=1}^3$ be a triangle contained in $D$.
By definition of triangle we have a disjoint union
$A_1\sqcup A_2\sqcup A_3=[n]$, so exactly one of the $A_i$ contains $j$.
Therefore $|\Delta\cap E|=1$, and \textup{(2c)} is satisfied.
\end{proof}

\begin{corollary}\label{C:max-vfun}
If $n\geq 1$ then the maximal elements of $\Sigma_{g,n}$ are exactly the general V-functions of type $(g,n)$.      
\end{corollary}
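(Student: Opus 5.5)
The plan is to transfer the statement to the poset $\Deg_{g,n}$ via the degeneracy map and then combine Proposition~\ref{prop:empty-is-maximum-deg} with Remark~\ref{R:Sigma-Deg}.

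First I show that every general V-function is maximal. Suppose $\sigma\in\Sigma_{g,n}^\chi$ has $\D(\sigma)=\emptyset$, and let $\sigma'\geq\sigma$. By Remark~\ref{R:Sigma-pos} we get $\D(\sigma')\subseteq\D(\sigma)=\emptyset$, so $\D(\sigma')=\D(\sigma)$. Conservativity, Proposition~\ref{P:Deg-map}\eqref{P:Deg-map4}, then gives $\sigma'=\sigma$. Equivalently, $h(\sigma)=h(\emptyset)=0$ by Lemma~\ref{C:height}\eqref{C:height1}, because $\emptyset$ is the maximum of $\Deg_{g,n}$.

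Conversely, let $\sigma$ be maximal in $\Sigma_{g,n}$, with $\chi=|\sigma|$. Since $n\geq1$, Proposition~\ref{prop:empty-is-maximum-deg} gives $\emptyset\geq\D(\sigma)$ in $\Deg_{g,n}$. The degeneracy map is upper lifting by Proposition~\ref{P:Deg-map}\eqref{P:Deg-map3}, so there is $\sigma^1\geq\sigma$ with $\D(\sigma^1)=\emptyset$. Concretely, Corollary~\ref{C:witness} lets us take $\sigma^1=\sigma+\chi_E$, where $E=\{(e;h,A)\in\D(\sigma): j\in A\}$ for a fixed $j\in[n]$. Maximality forces $\sigma=\sigma^1$, hence $\D(\sigma)=\emptyset$ and $\sigma$ is general.

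Both directions reduce to results already stated, so I expect no real obstacle. The only point needing care is that maximality is computed inside the component $\Sigma_{g,n}^\chi$, which holds automatically: comparable elements share $\chi$ by Definition~\ref{D:Sigma-pos}, and the lift built above keeps the same characteristic.
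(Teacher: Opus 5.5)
Your proof is correct and follows the paper's route. The paper deduces the corollary from Proposition~\ref{prop:empty-is-maximum-deg} and Remark~\ref{R:Sigma-Deg} (that is, $h(\sigma)=h(\D(\sigma))$), and you have unpacked that remark into its ingredients, upper lifting and conservativity of $\D$ from Proposition~\ref{P:Deg-map}, with your explicit lift $\sigma+\chi_E$ being exactly the witness from the proof of Proposition~\ref{prop:empty-is-maximum-deg} fed into Corollary~\ref{C:witness}.
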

Note that the above result is false for $n=0$, since in this case Theorem \ref{T:class-n0} implies that there are general V-functions of characteristic $\chi$ if and only if $\gcd(\chi, 2g-2)=1$. 
\begin{proof}
 This follows from Proposition \ref{prop:empty-is-maximum-deg} and Remark \ref{R:Sigma-Deg}, together with the fact that $\sigma\in \Sigma_{g,n}$ is general if and only if $\D(\sigma)=\emptyset$. 
\end{proof}

In the rest of this subsection, we focus on the classification of the submaximal elements in $\Sigma_{g,n}^\chi$, via the study of submaximal elements in $\Deg_{g,n}$.  

We first introduce the following 

\begin{definition}\label{D:mix-un}
\noindent 
\begin{enumerate}[(i)]
    \item An element $(e;h,A)\in \Dgn$ is called \emph{unmixed} if $A=\emptyset$ or $[n]$, and  \emph{mixed} if $\emptyset \subsetneq A \subsetneq [n]$. The set of unmixed (resp. mixed) elements of $\Dgn$ is denoted by $\Dgn^u$ (resp. $\Dgn^m$). We have a partition 
    $$
    \Dgn=\Dgn^u\sqcup \Dgn^m,
    $$
   which is stable under the complement operation.
    \item  An element $D\in \Deg_{g,n}$ is called \emph{unmixed} if $D\subseteq \Dgn^u$ and \emph{mixed} otherwise, i.e. it contains some mixed element. 
    We denote by $\Deg^u_{g,n}$ (resp. $\Deg_{g,n}^{re,u}$) the subsposet of $\Deg_{g,n}$ (resp. $\Deg_{g,n}^{re}$) consisting of unmixed (resp. and realizable) universal degeneracy subsets.  
\end{enumerate}   
\end{definition}
Note that $\Deg_{g,n}^{re,u}$ and $\Deg^u_{g,n}$ are upward-closed subposets of $\Deg_{g,n}$; hence their height functions are the restriction of the height function on $\Deg_{g,n}$.
One can also define a subposet of mixed elements, but this is not so useful for us since it is not upward-closed. 

The poset of unmixed degeneracy subsets can be completely described. Consider the injection $\alpha_{n} \colon \mathbb{D}_{g,1}=\DD_{g,1}^u \to \Dgn$ defined by
\[
\alpha_n((e;h,\emptyset)) = (e;h, \emptyset), \quad \alpha_n((e;h,\{1\}))= (e;h,[n]).
\]
The image of $\alpha_n$ is equal to $\Dgn^u$.

\begin{proposition} \label{P:reduceto1}
    Fix $n\geq 1$.
    \begin{enumerate}[(i)]
        \item  \label{P:reduceto1a} There is an isomorphism of posets given by taking the direct image:
        $$
        \begin{aligned}
            \alpha_{n*}:\Deg_{g,1}=\Deg_{g,1}^u & \xrightarrow{\cong} \Deg^u_{g,n}\\
            D & \mapsto \alpha_n(D). 
        \end{aligned}
        $$
        Moreover, taking the direct image via $\alpha_n$ induces a bijection between the witnesses for $D_1\geq D_2$ and the witnesses for $\alpha_{n*}(D_1)\geq \alpha_{n*}(D_2)$. 
          \item  \label{P:reduceto1b} An element $D\in \Deg_{g,1}$ is realizable if and only if $\alpha_{n*}(D)$ is realizable. In other words, $\alpha_{n*}$ restricts to isomorphism of subposets 
          $$
            \alpha_{n*}:\Deg_{g,1}^{re}  \xrightarrow{\cong} \Deg^{re,u}_{g,n}. 
          $$
    \end{enumerate}
\end{proposition}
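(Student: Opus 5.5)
The plan is to show that $\alpha_n\colon\DD_{g,1}\to\Dgn^u$ is a bijection preserving all the structure used to define $\Deg$ and $\Sigma$. Concretely, $\alpha_n$ commutes with complements, because $(e;h,\{1\})^c=(e;g-h-e+1,\emptyset)$ corresponds to $(e;h,[n])^c=(e;g-h-e+1,\emptyset)$. The key combinatorial claim is that a triangle of $\Dgn$ with at least two unmixed elements has its third element unmixed as well, and that triangles of $\Dgn$ contained in $\Dgn^u$ correspond exactly, via $\alpha_n$, to triangles of $\DD_{g,1}$.

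To justify the claim, I use the condition $A_1\sqcup A_2\sqcup A_3=[n]$ with each $A_i\in\{\emptyset,[n]\}$ for two of the indices. Then the third set $A_k$ is determined: it is $\emptyset$ if one of the other two is $[n]$, and $[n]$ otherwise. Since $n\ge1$, at most one of the $A_i$ equals $[n]$. Under the substitution $[n]\leftrightarrow\{1\}$ the remaining triangle conditions (parity, the triangle inequalities $e_i+e_j\ge e_k+2$, the genus formula) are identical. The inequalities defining membership in $\Dgn$ also match for unmixed elements once $|A|$ is replaced by $|A|\in\{0,1\}$. I will check this directly. It is where stability could a priori differ, but $2h-2+e+|A|>0$ with $|A|\geq1$ is equivalent to $2h-2+e\ge0$, whether $|A|$ equals $1$ or $n$. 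Likewise $2g-2h-e+|A^c|>0$ with $|A^c|=0$ reads the same in both settings.

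For part \eqref{P:reduceto1a}: for $D\subseteq\DD_{g,1}$, complement-closure of $D$ is equivalent to that of $\alpha_n(D)$. Triangle-closure of $\alpha_n(D)$ only involves triangles with two elements in $\alpha_n(D)\subseteq\Dgn^u$, and these are exactly the images of triangles of $\DD_{g,1}$ by the claim. Hence $\alpha_{n*}$ is a bijection $\Deg_{g,1}\to\Deg^u_{g,n}$. For the order, the witness conditions (a)--(c) of Definition \ref{D:degsub}\eqref{D:degsub2} only refer to complements and to triangles contained in $D^2\subseteq\Dgn^u$. Therefore $E\mapsto\alpha_n(E)$ is a bijection on witnesses, and both directions of the order relation are preserved.

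For part \eqref{P:reduceto1b}, the hard direction is realizability. Given $\tau\in\Sigma^\chi_{g,1}$ realizing $D$, I will construct $\sigma\in\Sigma^\chi_{g,n}$ with $\D(\sigma)=\alpha_{n*}(D)$, and conversely.

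For the converse, given $\sigma$ with $\D(\sigma)=\alpha_n(D)$, set $\tau:=\sigma\circ\alpha_n$. This is a V-function on $\DD_{g,1}$ by the correspondence of complements and triangles, and $\D(\tau)=D$.

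For the forward direction, I first try a classical construction. Take $\sigma_L$ with $L=\beta\omega_\pi+\sum_i\alpha_i\Sigma_i+\gamma$, choosing $\alpha_1=\alpha$ and $\alpha_i$ generic for $i\geq2$ so that mixed elements are nondegenerate. This only handles classical $\tau$, so it is insufficient in general. Instead I extend $\tau\circ\alpha_n^{-1}$ from $\Dgn^u$ to all of $\Dgn$ by gluing with a general V-function. Pick $\rho\in\Sigma^\chi_{g,n}$ general (it exists, for instance a classical chamber). Then use upper-lifting, Proposition \ref{P:Deg-map}\eqref{P:Deg-map3}, in reverse. It does not obviously apply, so I take a more direct route. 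I define $\sigma$ on mixed elements by a formula that sees only $|A\cap\{\text{generic weights}\}|$, for example $\sigma(e;h,A)=\tau(\text{corresponding unmixed value})$ adjusted by $\lceil\sum_{i\in A}\alpha_i\rceil$-type corrections, and verify the triangle conditions case by case according to how many vertices are mixed. This case analysis, especially for triangles with exactly one unmixed vertex lying in $\alpha_n(D)$, is the main obstacle. If it fails, I would fall back on the geometric route: pull back along the section-type maps $\xi_i$ and $\Omega$ of Lemma-Definition \ref{E:Vfun-n+1}, iterating $\Omega$ from $n=1$ and then applying a perturbation to make the new mixed elements nondegenerate.
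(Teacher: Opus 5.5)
Your part~\eqref{P:reduceto1a} and the implication ``$\alpha_{n*}(D)$ realizable $\Rightarrow$ $D$ realizable'' (pull back along $\alpha_n$) follow the paper's argument. The genuine gap is the other implication of part~\eqref{P:reduceto1b}, which is the only non-formal step. There you never actually produce a V-function, for three reasons. A mixed element $(e;h,A)$ has no ``corresponding unmixed value'' until you fix a rule. The elements $(1;0,A)$ and $(2;0,A)$ with $1\notin A$, and their complements, have no counterpart in $\DD_{g,1}$ under the natural rule and need separate values. And the triangles you single out as the obstacle are left unchecked.

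The paper's idea is to forget the markings $2,\dots,n$, i.e.\ to use $\varpi(e;h,A):=(e;h,A\cap\{1\})$, and then:
\begin{enumerate}
\item set $\wt\sigma:=\tau\circ\varpi$ wherever $\varpi$ lands in $\DD_{g,1}$;
\item add $1$ on the mixed elements $x$ with $\varpi(x)\in\D(\tau)$ and $n\in A$, so exactly one member of each such complementary pair increases and these pairs become nondegenerate;
\item assign $0$ or $1$ (resp.\ $\chi$ or $\chi+1$), according to whether $n\in A$, to the elements with $2h-2+e+|A\cap\{1\}|\le 0$ (resp.\ $\ge 2g-1$).
\end{enumerate}
Your problematic triangles then work out. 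A triangle with exactly one unmixed vertex forces that vertex to have empty marking set, so exactly one of the two mixed vertices contains $n$. The rest is a case-by-case check of \eqref{E:triaUni}.

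Your fallback via $\Omega$ also works, and is shorter, but you must supply the perturbation. Induct on $n$; the case $n=1$ is trivial. Suppose $\sigma\in\Sigma^\chi_{g,n}$ realizes $\alpha_n(D)$. By Lemma-Definition~\ref{E:Vfun-n+1}\eqref{E:Vfun-n+1c}, $\D(\Omega(\sigma))=\varpi^{-1}(\alpha_n(D)\cup(\Dgnwh-\Dgn))$ contains $\alpha_{n+1}(D)$. Every element of the difference $\D(\Omega(\sigma))-\alpha_{n+1}(D)$ is mixed, except, only for $n=1$, the separating pair $(1;0,[2]),(1;g,\emptyset)$. Let $E$ be the set of elements of this difference whose marking set contains $n+1$; we check it is a witness.
\begin{enumerate}
\item Condition~\eqref{D:degsub2a} of Definition~\ref{D:degsub} is clear.
\item Every vertex of a triangle has $e\ge 2$, so the vertices of a triangle lying in the difference are mixed.
\item Hence in \eqref{D:degsub2b} the unique vertex in $\alpha_{n+1}(D)$ has empty marking set.
\item So in both \eqref{D:degsub2b} and \eqref{D:degsub2c} exactly one vertex lies in $E$.
\end{enumerate}
Thus $E$ is a witness for $\alpha_{n+1}(D)\ge\D(\Omega(\sigma))$, and $\Omega(\sigma)+\chi_E$ realizes $\alpha_{n+1}(D)$ by Corollary~\ref{C:witness}. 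This replaces the paper's case analysis by reusing $\Omega$ and the upper-lifting property. As written, however, your proposal carries out neither construction.

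Separately, your membership check is wrong at $(e;h)=(1;0)$. For $n\ge 2$ and $g\ge 1$ one has $(1;0,[n])\in\Dgn$, since $n-1>0$, while $(1;0,\{1\})\notin\DD_{g,1}$. So $\alpha_n$ misses the separating pair $(1;0,[n]),(1;g,\emptyset)$, and $\alpha_{n*}$ is not onto $\Deg^u_{g,n}$ as literally defined; it misses, e.g., that pair itself. The paper's proof makes the same tacit identification $\Im(\alpha_n)=\Dgn^u$. This is harmless: these elements have $e=1$, lie in no triangle, and the corresponding separating wall is treated separately in Corollary~\ref{C:walls-Vfun}. Still, the statement should be read with $\Dgn^u$ replaced by $\Im(\alpha_n)$.
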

\begin{proof} 
We can assume that $n\geq 2$, for otherwise the statement is a tautology. 

Let us first prove Part \eqref{P:reduceto1a}.
Since the elements of $\Deg_{g,n}^u$ are exactly the elements $D'\in \Deg_{g,n}$ such that $D'\subseteq \Dgn^u=\Im(\alpha_n)$, the fact that $\alpha_{n*}$ is a well-defined bijection is equivalent to showing that a subset $D\subset \mathbb{D}_{g,1}$ belongs to $\Deg_{g,1}$, i.e. it satisfies the two conditions of Definition~\ref{D:degsub}\eqref{D:degsub1}, if and only if $\alpha_n(D)$ belongs to $\Deg_{g,n}$.
The complement-closure for $D$ is equivalent to the complement-closure for $\alpha_{n*}(D)$ since $\alpha_n(x^c)=\alpha_n(x)^c$ for any $x\in \mathbb{D}_{g,1}$. The triangle-closure for $\alpha_{n*}(D)$ implies the triangle-closure for $D$ since $\alpha_n$ induces a bijection between the triangles of $\mathbb{D}_{g,1}$ and the triangles of $\Dgn$ entirely contained in $\Im(\alpha_n)$. On the other hand, the triangle-closure for $D$ implies the triangle-closure for $\alpha_n(D)$ using  that if a triangle of $\Dgn$ is such that two of its elements belong to $\Im(\alpha_n)$ then also the third one belongs to $\Im(\alpha_n)$ and hence it is the image of a triangle of $\mathbb{D}_{g,1}$ via $\alpha_n$.

\smallskip

The partial order on each $\Deg_{g,k}$ (Definition~\ref{D:degsub}\eqref{D:degsub2} is defined using inclusions and the existence of a ``witness'' subset $E$, via conditions that are expressed only in terms of complements and triangles. If $D_1\ge D_2$ in $\Deg_{g,1}$ with witness $E\subseteq D_2- D_1$, then $\alpha_n(E)\subseteq \alpha_n(D_2)- \alpha_n(D_1)$ satisfies the corresponding witness conditions for $\alpha_n(D_1)\ge \alpha_n(D_2)$ because $\alpha_n$ preserves complements and identifies triangles as above. Conversely, any witness for $\alpha_n(D_1)\ge \alpha_n(D_2)$ pulls back via $\alpha_n$ to a witness for $D_1\ge D_2$.
Thus $\alpha_{n*}$ is an order-isomorphism onto its image.

Let us now prove Part \eqref{P:reduceto1b} considering  the two implications separately. 

($\Leftarrow$) Let $D\in \Deg_{g,1}$ be such that $\alpha_{n*}(D)$ is realizable, i.e. $\alpha_n(D)=\D(\sigma)$ for some $\sigma\in \Sigma_{g,n}^\chi$. The composition 
$$
\alpha_n^*(\sigma):\mathbb{D}_{g,1}\xrightarrow{\alpha_n} \Dgn \xrightarrow{\sigma} \ZZ
$$
defines an element of $\Sigma_{g,1}^\chi$ (using that $\alpha_n$ commutes with taking complements 
and it sends triangles of $\mathbb{D}_{g,1}$ into triangles of $\Dgn$) such that 
$$\D(\alpha_n^*(\sigma))=\alpha_n^{-1}(\D(\sigma))=\alpha_n^{-1}(\alpha_n(D))=D,$$
which shows that $D$ is realizable. 

($\Rightarrow$) Let $D\in \Deg_{g,1}$ be realizable, i.e. $D=\D(\sigma)$ for some $\sigma\in \Sigma_{g,1}^\chi$.  
In order to show that $\alpha_{n*}(D)$ is realizable, we partition the set $\Dgn$ into four disjoint subsets
$$
\Dgn=\Dgn^u\sqcup \Dgn^i \sqcup \Dgn^< \sqcup \Dgn^>,
$$
where $\Dgn^u$ is the set of \emph{unmixed} elements (see Definition \ref{D:mix-un}),
$\Dgn^i$ is the set of \emph{intermediate} elements:
$$
\Dgn^i:=\{(e;h,A)\in \Dgn : \emptyset\subsetneq A \subsetneq [n] \text{ and  } 0<2h-2+e+|A-\{2,\ldots, n\}|<2g-1\},
$$
$\Dgn^<$ is the set of \emph{small} elements
$$\begin{aligned}
&\Dgn^<:=\{(e;h,A)\in \Dgn : 2h-2+e+|A-\{2,\ldots, n\}|\leq 0\}= \\
& =\{(e;h,A)\in \Dgn :\text{ either } (e,h)=(1,0) \text{ or } (e,h)=(2,0) \text{ and } 1\not \in A\}, 
\end{aligned}$$
$\Dgn^>$ is the set of \emph{big} elements
$$\begin{aligned}
&\Dgn^>:=\{(e;h,A)\in \Dgn : 2h-2+e+|A-\{2,\ldots, n\}|\geq 2g-1\}= \\
& =\{(e;h,A)\in \Dgn :\text{ either } (e,h)=(1,g) \text{ or } (e,h)=(2,g-1) \text{ and } 1 \in A\}, 
\end{aligned}$$
Note that there exists a forgetful map 
$$
\begin{aligned}
\varpi: \Dgn^u\sqcup \Dgn^i & \longrightarrow \mathbb{D}_{g,1}\\
(e;h,A)& \mapsto (e;h,A-\{2,\ldots, n\})
\end{aligned}
$$
such that $\varpi\circ \alpha_n=\id$. 

Consider the following function 
\begin{equation}
    \begin{aligned}
       \wt \sigma: \Dgn & \longrightarrow \ZZ \\
       (e;h,A) & \mapsto 
       \begin{sis}
       & 0 & \text{ if } (e;h,A)\in \Dgn^< \text{ and } n\not\in A,\\
         & 1 & \text{ if } (e;h,A)\in \Dgn^< \text{ and } n\in A, \\
         & \chi & \text{ if } (e;h,A)\in \Dgn^> \text{ and } n\not\in A, \\
         & \chi+1 & \text{ if } (e;h,A)\in \Dgn^> \text{ and } n\in A, \\
         & \sigma(\varpi(e;h,A)) & \text{ if } (e;h,A)\in \Dgn^u,\\
         & \sigma(\varpi(e;h,A)) & \text{ if } (e;h,A)\in \Dgn^i  \text{ and either } \varpi(e;h,A)\not \in \D(\sigma) \text{ or } n\not \in A,\\
         & \sigma(\varpi(e;h,A))+1 & \text{ if } (e;h,A)\in \Dgn^i  \text{ and  } \varpi(e;h,A) \in \D(\sigma) \text{ and } n\in A.\\
       \end{sis}
    \end{aligned}
\end{equation}
We claim that 
\begin{equation*}
\wt \sigma\in \Sigma_{g,n}^\chi \text{ with } \D(\wt \sigma)=\alpha_{n*}(D),
\end{equation*}
which will conclude the proof. 

We need to  check the two conditions of Definition~\ref{D:Sigmagn} for $\wt \sigma$.

$\bullet$ Condition \eqref{E:condUni1}: since the complementary involution on $\Dgn$ maps $\Dgn^>$ isomorphically onto $\Dgn^<$ and it preserves $\Dgn^u$ and $\Dgn^i$, and since the forgetful map commutes with the complementary involutions on the source and target, we deduce that $\wt \sigma$ satisfies Equation \eqref{E:sumUni} with 
$$\D(\wt \sigma)=\varpi^{-1}(\D(\sigma))\cap \Dgn^u=\alpha_{n*}(\D(\sigma))=\alpha_{n*}(D).$$

$\bullet$ Condition \eqref{E:condUni2}: the first Condition (2a) holds true since we already know that $\D(\wt \sigma)=\alpha_{n*}(D)\in \Deg_{g,n}$. In order to check Condition (2b), consider a triangle $\Delta=[x_1,x_2,x_3]$ of $\Dgn$, with $x_i:=(e_i;h_i,A_i)$ for $i=1,2,3$, and consider the following different possibilities:

(a) Assume that $\Delta$ contains some elements of $\Dgn^>\sqcup \Dgn^<$. Then there are two possibilities:

(a1) If $\Delta$ contains at least two elements of $\Dgn^>\sqcup \Dgn^<$, then we must have (up to reordering) that $x_1=(2;0,A_1),x_2=(2;0,A_2)\in \Dgn^<$ and $x_3=(2;g-1,A_3)\in \Dgn^>$, and $n$ must belong to exactly one among $A_1, A_2, A_3$, so that 
$$
\wt \sigma(x_1)+\wt \sigma(x_2)+\wt \sigma(x_3)=0+0+\chi+1=\chi+1,
$$
and \eqref{E:triaUni} is satisfied since all the elements of $\Delta$ are $\wt \sigma$-nondegenerate.

(a2) If $\Delta$ contains exactly one element of $\Dgn^>\sqcup \Dgn^<$, then we must have  (up to reordering) that $x_1=(2;0,A_1)\in \Dgn^<$ and $x_2,x_3\in \Dgn^u\sqcup \Dgn^i$ with $\varpi(x_2)^c=\varpi(x_3)$. Moreover, $A_1$ must be non-empty and it cannot contain $1$, so that if we assume that $1\in A_2$ (up to reordering) then we have $x_2\in \Dgn^i$, while $x_3\in \Dgn^u$ if and only if $A_3=\emptyset$.  We now compute 
$$
\wt \sigma(x_1)+\wt \sigma(x_2)+\wt \sigma(x_3)=
\begin{cases}
  0/1+\chi+1 & \text{ if }\varpi(x_2), \varpi(x_3)\not \in \D(\sigma),\\
   \chi+1 & \text{ if }\varpi(x_2), \varpi(x_3)\in \D(\sigma) \text{ and } x_3\in \Dgn^i,\\
\chi+1 & \text{ if }\varpi(x_2), \varpi(x_3)\in \D(\sigma) \text{ and } x_3\in \Dgn^u,\\
\end{cases}
$$
and \eqref{E:triaUni} is satisfied since in the first two cases all the elements of $\Delta$ are $\wt \sigma$-nondegenerate, while in the third case only $x_3\in \D(\wt \sigma)$. 

(b) Assume that $\Delta$ does not contain elements of $(\Dgn^>\sqcup \Dgn^<)$ but it does contain some element of $\Dgn^u$. We are going to use that $\sigma$ verifies \eqref{E:triaUni} for the triangle $\varpi(\Delta)=[\varpi(x_1),\varpi(x_2),\varpi(x_3)]$ of $\mathbb{D}_{g,1}$. There are two possibilities:

(b1) If $\Delta$ contains at least two elements of $\Dgn^u$, then $\Delta$ is entirely contained in $\Dgn^u$ and we have that 
$$
\sum_{i=1}^3 \wt \sigma(x_i)=\sum_{i=1}^3\sigma(\varpi(x_i))
$$
and hence \eqref{E:triaUni} for $\wt \sigma$ follows from \eqref{E:triaUni} for $\sigma$. 

(b2) If $\Delta$ contains exactly one element of $\Dgn^u$, say $x_1$, then we must have that $x_2,x_3\in \Dgn^i$ (which implies that $x_2,x_3\not \in \D(\wt \sigma)$) and that $A_1=\emptyset$ (which implies that $n\in A_2\sqcup A_3$). Now there are four cases:

$\star$ If $\varpi(x_1)\in \D(\sigma)$ (which implies that  $x_1\in \D(\wt \sigma)$) and $\varpi(x_2),\varpi(x_3)\not \in \D(\sigma)$ then 
$$
\sum_{i=1}^3 \wt \sigma(x_i)-\chi=\sum_{i=1}^3\sigma(\varpi(x_i))-\chi=1,
$$
and hence \eqref{E:triaUni} holds true.

$\star$ If $\varpi(x_1)\in \D(\sigma)$ (which implies that  $x_1\in \D(\wt \sigma)$) and one among $\varpi(x_2)$ and $\varpi(x_3)$ belongs to $\D(\sigma)$ (which then implies that both of them belong to $\D(\sigma)$) then 
$$
\sum_{i=1}^3 \wt \sigma(x_i)-\chi=\sum_{i=1}^3\sigma(\varpi(x_i))-\chi+1=1,
$$
and hence \eqref{E:triaUni} holds true.

$\star$ If $\varpi(x_1)\not \in \D(\sigma)$ (which implies that  $x_1\not \in \D(\wt \sigma)$) and $\varpi(x_2),\varpi(x_3)\not \in \D(\sigma)$ then 
$$
\sum_{i=1}^3 \wt \sigma(x_i)-\chi=\sum_{i=1}^3\sigma(\varpi(x_i))-\chi\in \{1,2\},
$$
and hence \eqref{E:triaUni} holds true.

$\star$ If $\varpi(x_1)\not \in \D(\sigma)$ (which implies that  $x_1\not \in \D(\wt \sigma)$) and one among $\varpi(x_2)$ and $\varpi(x_3)$ belongs to $\D(\sigma)$ (which then implies that exactly one of them belong to $\D(\sigma)$) then 
$$
\sum_{i=1}^3 \wt \sigma(x_i)-\chi=\sum_{i=1}^3\sigma(\varpi(x_i))-\chi+0/1=1+0/1\in \{1,2\},
$$
and hence \eqref{E:triaUni} holds true.

(c) Assume that $\Delta$ is entirely contained in $\Dgn^i$, so that none of the elements of $\Delta$ belongs to $\D(\wt \sigma)$. We are going to use that $\sigma$ verifies \eqref{E:triaUni} for the triangle $\varpi(\Delta)=[\varpi(x_1),\varpi(x_2),\varpi(x_3)]$ of $\mathbb{D}_{g,1}$. There are three cases:

$\star$ If $\varpi(x_1),\varpi(x_2),\varpi(x_3)\in \D(\sigma)$ 
then 
$$
\sum_{i=1}^3 \wt \sigma(x_i)-\chi=\sum_{i=1}^3\sigma(\varpi(x_i))-\chi+1=1,
$$
and hence \eqref{E:triaUni} holds true.

$\star$ If only one among $\{\varpi(x_1),\varpi(x_2),\varpi(x_3)\}$ belongs to $\D(\sigma)$ 
then 
$$
\sum_{i=1}^3 \wt \sigma(x_i)-\chi=\sum_{i=1}^3\sigma(\varpi(x_i))-\chi+0/1=1+0/1\in \{1,2\},
$$
and hence \eqref{E:triaUni} holds true.

$\star$ If none of the elements of $\{\varpi(x_1),\varpi(x_2),\varpi(x_3)\}$ belongs to $\D(\sigma)$ 
then 
$$
\sum_{i=1}^3 \wt \sigma(x_i)-\chi=\sum_{i=1}^3\sigma(\varpi(x_i))-\chi\in \{1,2\},
$$
and hence \eqref{E:triaUni} holds true.
\end{proof}

\begin{corollary}\label{C:walls-to1}
  For any $n\geq 1$, there is a bijection
  $$
  \alpha_{n*}:\{\text{(realizable) submaximal of } \Deg_{g,1} \} \xrightarrow{\cong} \{\text{(realizable) unmixed submaximal  of } \Deg_{g,n}\}.
  $$
\end{corollary}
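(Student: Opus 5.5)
The plan is to deduce this directly from Proposition \ref{P:reduceto1}, using that the relevant subposets are upward-closed, so that heights computed in them agree with heights in $\Deg_{g,n}$ (resp. $\Deg_{g,1}$).

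First, recall that $\Deg^u_{g,n}$ and $\Deg^{re,u}_{g,n}$ are upward-closed in $\Deg_{g,n}$, as noted after Definition \ref{D:mix-un}. Similarly, $\Deg^{re}_{g,1}$ is upward-closed in $\Deg_{g,1}=\Deg^u_{g,1}$ by Remark \ref{R:Deg-rea}. Hence, for $D'\in \Deg^u_{g,n}$, every ascending chain in $\Deg_{g,n}$ starting at $D'$ stays inside $\Deg^u_{g,n}$, and the same holds in the realizable setting. Consequently, the height of $D'$ in $\Deg_{g,n}$ equals its height in the subposet $\Deg^u_{g,n}$ (resp. $\Deg^{re,u}_{g,n}$).

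Second, an order isomorphism preserves the height function. By Proposition \ref{P:reduceto1}\eqref{P:reduceto1a}, $\alpha_{n*}:\Deg_{g,1}\to \Deg^u_{g,n}$ is an isomorphism of posets, so $h_{\Deg_{g,1}}(D)=h_{\Deg^u_{g,n}}(\alpha_{n*}(D))=h_{\Deg_{g,n}}(\alpha_{n*}(D))$. It follows that $\alpha_{n*}$ restricts to a bijection between the elements of height one, i.e. the submaximal elements, of $\Deg_{g,1}$ and the unmixed submaximal elements of $\Deg_{g,n}$. For the realizable version, we apply Proposition \ref{P:reduceto1}\eqref{P:reduceto1b}, which gives an isomorphism $\Deg^{re}_{g,1}\cong \Deg^{re,u}_{g,n}$, together with the same upward-closedness argument. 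This yields the bijection between realizable submaximal elements of $\Deg_{g,1}$ and realizable unmixed submaximal elements of $\Deg_{g,n}$.

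The only point requiring care is the height comparison. An element of $\Deg^u_{g,n}$ that is submaximal as an element of $\Deg^u_{g,n}$ must also be submaximal in $\Deg_{g,n}$; this holds because anything dominating an unmixed subset is a subset of it, hence unmixed. This is exactly the upward-closedness, which follows from the fact that $D^1\geq D^2$ forces $D^1\subseteq D^2$ by Definition \ref{D:degsub}\eqref{D:degsub2}. Realizability is also upward-closed, by the upper-lifting property of Proposition \ref{P:Deg-map}. I expect no genuine obstacle beyond recording these two closure facts.
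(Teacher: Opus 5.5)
Your proof is correct and takes the same approach as the paper, which deduces the corollary from Proposition~\ref{P:reduceto1} together with the upward-closedness of $\Deg^u_{g,n}$ in $\Deg_{g,n}$. You also spell out why upward-closedness makes heights, and hence submaximality, agree between the subposet and $\Deg_{g,n}$, and you handle the realizable case via part~\eqref{P:reduceto1b}.
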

\begin{proof}
 This follows from the previous Proposition together with the fact that the $\Deg^u_{g,n}$ is upward-closed in $\Deg_{g,n}$.
\end{proof}

The rest of this subsection is devoted to the classification of (realizable) mixed submaximal elements of $\Deg_{g,n}$.

\begin{proposition}\label{P:good}
 The mixed submaximal elements of $\Deg_{g,n}$ are given by 
 $$
 \{x,x^c\} \text{ for any } x=(e;h,A)\in \Dgn \text{ with } \emptyset \subsetneq A \subsetneq [n].
 $$
\end{proposition}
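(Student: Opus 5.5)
The proof has two parts. First I show that each pair $\{x,x^c\}$ with $x$ mixed is a submaximal element of $\Deg_{g,n}$. Then I show that every mixed submaximal element is of this form.

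\emph{Step 1: $\{x,x^c\}$ is a degeneracy subset of height one.} Fix a mixed $x=(e;h,A)$. Complement-closure is automatic. For triangle-closure, a triangle containing two elements of $\{x,x^c\}$ must be $[x,x,y]$, $[x^c,x^c,y]$ or $[x,x^c,y]$; I check that none of these exists.
\begin{itemize}
\item A triangle $[x,x,y]$ forces $A\sqcup A\sqcup A_y=[n]$, hence $A=\emptyset$. This is excluded since $x$ is mixed.
\item The case $[x^c,x^c,y]$ is excluded in the same way, using $A^c\neq\emptyset$.
\item A triangle $[x,x^c,y]$ forces $A_y=\emptyset$. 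By the log-canonical additivity \eqref{E:for-lcdeg} it also forces $\delta(y)=0$, which is impossible since $\delta>0$ on $\Dgn$.
\end{itemize}
So $\{x,x^c\}$ contains no triangle, and it lies in $\Deg_{g,n}$. By Proposition~\ref{prop:empty-is-maximum-deg}, $\emptyset$ is the maximum, so $\{x,x^c\}<\emptyset$ and its height is at least $1$. The only degeneracy subsets strictly between it and $\emptyset$ would be proper subsets of $\{x,x^c\}$. These are not complement-closed unless empty, because $x\neq x^c$ when $n\geq 1$. Hence the height is exactly $1$.

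\emph{Step 2: every mixed submaximal element has this form.} Let $D$ be submaximal and mixed, and choose a mixed $x\in D$. Put $D':=\{x,x^c\}$. By Step 1, $D'\in\Deg_{g,n}$. I aim to show $D'\geq D$, using the witness $E:=\{(e';h',A')\in D-D' : j\in A'\}$ for a suitably chosen index $j$, exactly as in the proof of Proposition~\ref{prop:empty-is-maximum-deg}. Conditions~\eqref{D:degsub2a} and~\eqref{D:degsub2c} hold for any choice of $j$ by that same argument. Once $D'\geq D$ is established, submaximality of $D$ and $D\neq\emptyset$ force $D=D'$, because otherwise $\emptyset>D'>D$ would give height at least $2$.

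\emph{Main obstacle: condition~\eqref{D:degsub2b}.} For every triangle $\Delta\subseteq D$ meeting $D'$ in exactly one element, say $x$ or $x^c$, the witness must contain exactly one of the other two elements. I would choose $j\in A$, using that $x$ is mixed.
\begin{itemize}
\item If $\Delta$ contains $x$, then $j$ lies in the marking set of $x$. So neither of the other two elements contains $j$, and $|E\cap\Delta|=0$, which breaks the condition.
\end{itemize}
So a fixed $j$ does not work, and I would let the choice depend on the triangle. Concretely, I would define the witness with respect to $j\in A$ on elements whose marking set avoids $A^c$ in a suitable sense. Alternatively, following the construction in Corollary~\ref{C:witness}, I would build $E$ by choosing, consistently across all such triangles, which of the two remaining elements to include. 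This step is where the real work lies: I expect to need the structure of triangles through a mixed element, namely $A_2\sqcup A_3=A^c$, to show that the required choices are compatible.
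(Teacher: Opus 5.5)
Your Step~1 is correct and coincides with the paper's argument. The gap is in Step~2: you never establish condition~\eqref{D:degsub2b} for the witness, so $\{x,x^c\}\geq D$ is not proved. You rightly observe that with $j\in A$ a triangle through $x$ meets $E$ in no element. However, neither of your remedies is an argument.
\begin{itemize}
\item Corollary~\ref{C:witness} only turns an already existing witness into a V-function; it does not produce one.
\item ``Choosing consistently across all triangles'' which element goes into $E$ is precisely the statement that has to be proved.
\end{itemize}
The underlying problem is that you take an \emph{arbitrary} mixed $x\in D$. The choice of $j$ has to be coupled with an extremal choice of $x$.

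The missing idea is as follows. Take $j\notin A$, and choose $x=(e;h,A)$ lexicographically minimal in $(|A|,h,e)$ among the mixed elements of $D$. With $j\notin A$, every triangle $\Delta\subseteq D$ with $\Delta\cap\{x,x^c\}=\{x\}$ is automatically fine. Indeed, its other two marking sets partition $A^c\ni j$, so exactly one of them lies in $E$.

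The only danger is a triangle $[x^c,x_2,x_3]\subseteq D$ with $x_2,x_3\notin\{x,x^c\}$, which would give $|E\cap\Delta|=0$. Minimality rules this out. Here $A_2\sqcup A_3=A$.
\begin{itemize}
\item If both $A_2$ and $A_3$ are nonempty, then $x_2$ is a mixed element of $D$ with $|A_2|<|A|$, a contradiction.
\item If, say, $A_3=\emptyset$, the genus relation of the triangle reads $h_2+h_3+\frac{e_2+e_3-e}{2}=h+1$ with $\frac{e_2+e_3-e}{2}\geq 1$. Minimality gives $h_2\geq h$, hence $h_2=h$, $h_3=0$ and $e_2+e_3=e+2$. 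The triangle inequalities give $e_3\geq 2$, so $e_2\leq e$, while minimality gives $e_2\geq e$. Therefore $x_3=(2;0,\emptyset)$, which violates $2h-2+e+|A|>0$ and so is not in $\Dgn$.
\end{itemize}
With this choice, your $E=\{y\in D-\{x,x^c\}: j\in A_y\}$ satisfies all three witness conditions. This gives $\{x,x^c\}\geq D$, and submaximality then forces $D=\{x,x^c\}$. Equivalently, your choice $j\in A$ works provided it is $x^c$ that is taken minimal.
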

\begin{proof}
We divide the proof in two steps.

\un{Step I:} $\{x,x^c\}$ is a (mixed) submaximal element for any $x=(e;h,A)\in \Dgn$  with $\emptyset \subsetneq A \subsetneq [n]$.

\vspace{0.1cm}

Indeed, let us first show that $\{x,x^c\}\in \Deg_{g,n}$, i.e. that $\{x,x^c\}$ satisfies the two conditions of Definition~\ref{D:degsub}\eqref{D:degsub1}.
Complement-closure (i.e. Condition~\ref{D:degsub}\eqref{D:degsub1a}) is clear. 
 For the triangle-closure  (i.e. Condition~\ref{D:degsub}\eqref{D:degsub1b}), we show that no triangle in $\Dgn$ contains two elements of $\{x,x^c\}$. 
The only way for two elements of $\{x,x^c\}$ to appear in a triangle $\Delta$ is:
\begin{enumerate}
\item $\Delta=[x,x,y]$ for some $y\in \Dgn$, but this is absurd since $A\neq \emptyset$;
\item $\Delta=[x^c,x^c,y]$ for some $y\in \Dgn$, but this is absurd since $A^c\neq \emptyset$; 
%or $x^c$ appears twice (similarly impossible),
\item $\Delta=[x,x^c,y]$ for some $y\in \Dgn$, but this absurd since \eqref{E:for-lcdeg} would imply that 
$$
\delta(y)=2g-2+n-\delta(x)-\delta(x^c)=0,
$$
which is impossible since $\delta(y)\in (0,2g-2+n)$.
\end{enumerate}
Now it is clear that $\{x,x^c\}$ is a submaximal element of $\Deg_{g,n}$ since it is dominated only by $\emptyset$.

 \un{Step II:} If $D$ is a (mixed) submaximal element of $\Deg_{g,n}$ then $D=\{x,x^c\}$ for some $x=(e;h,A)\in \Dgn$  with $\emptyset \subsetneq A \subsetneq [n]$.
 
 \vspace{0.1cm}

 Choose $x_0=(e_0;h_0,A_0)\in D$ with $\emptyset\subsetneq A_0\subsetneq [n]$ by minimizing lexicographically first $|A|$, then $h$, then $e$  among all  $(e;h,A)$ occurring in $D$.
By Definition~\ref{D:degsub}\eqref{D:degsub1a}, $D$ is closed under complements, hence $x_0^c\in D$.
Set $D_0:=\{x_0,x_0^c\}$. By Step I, we have that $D_0 \in \Deg_{g,n}$.

We will show  that $D \leq D_0$, which then will force the equality $D=D_0$ (and hence the conclusion) because $D$ is a submaximal element and $D_0$ is not a maximal element by Proposition \ref{prop:empty-is-maximum-deg}.  

In order to show that $D_0\leq D$, using that $D_0 \subseteq D$, it suffices to find a witness  $E \subseteq D - D_0$.
Since $A_0\subsetneq[n]$, there exists $j\in A_0^c$. We define
\[
E\ :=\ \{(e;h,A)\in D- D_0 \ :\ j\in A\}.
\]
We will check that $E$ satisfies the conditions of Definition~\ref{D:degsub}\eqref{D:degsub2}.

\medskip
\noindent\eqref{D:degsub2a}.
Let $(e;h,A)\in D- D_0$. Then its complement is $(e;h,A)^c=(e;g-h-e+1,A^c)\in D- D_0$.
Exactly one of $A$ and $A^c$ contains $j$, so exactly one element in the complementary pair lies in $E$, which settles Condition~\eqref{D:degsub2a}.

\medskip
\noindent\eqref{D:degsub2b}.
Let $\Delta \subseteq D$ be a triangle such that
$|\Delta\cap D_0|=1$.
We will show that $|\Delta\cap E|=1$. There are two cases.

\smallskip
\emph{Case 1: $\Delta\cap D_0=\{x_0^c\}$.}
 We will show that this case cannot occur. Write
\[
x_0=(e_0;h_0,A_0)\in D_0,\qquad x_0^c=(e_0;g-h_0-e_0+1,A_0^c)\in D_0,
\]
and let the other two elements of $\Delta$ be
\[
x_2=(e_2;h_2,A_2) \in D - D_0,\qquad x_3=(e_3;h_3,A_3)\in D- D_0.
\]
Since $\Delta$ is a triangle, the marking sets satisfy
\[
A_0^c\sqcup A_2\sqcup A_3=[n]\qquad\text{hence}\qquad A_2\sqcup A_3=A_0.
\]
If both $A_2$ and $A_3$ are nonempty, then one of them is a proper nontrivial subset of $A_0$,
contradicting the minimality of $|A_0|$.  Thus it suffices to rule out the subcase
\[
(A_2,A_3)=(A_0,\emptyset)\quad\text{or}\quad(\emptyset,A_0).
\]
Without loss of generality, assume $(A_2,A_3)=(A_0,\emptyset)$ for a contradiction.

The triangle genus equation gives
\[
g=(g-h_0-e_0+1)+h_2+h_3+\frac{e_0+e_2+e_3}{2}-2,
\]
equivalently
\begin{equation}\label{eq:case1}
h_2+h_3+\frac{e_2+e_3-e_0}{2}=h_0+1.
\end{equation}
Set $\displaystyle n_{23}:=\frac{e_2+e_3-e_0}{2}$.  The edge inequalities for a triangle imply $n_{23}\ge 1$,
so \eqref{eq:case1} yields $h_2=h_0+1-h_3-n_{23}\le h_0$.  By the defining choice of $x_0$
(minimality of the marking set $|A_0|$ and then of $h_0$),
we must have $h_2\ge h_0$, hence $h_2=h_0$ and then $h_3+n_{23}=1$.  Therefore
$h_3=0$ and $n_{23}=1$, i.e.\ $e_2+e_3=e_0+2$.  Since $x_3\in \Dgn$ and $x_3$ belongs to a triangle, one has $e_3\ge 2$,
so $e_2\le e_0$; by the minimality assumption and the fact that $h_2=h_0$, we deduce $e_2\ge e_0$, hence $e_2=e_0$ and $e_3=2$.
Thus $x_3=(2,0,\emptyset)$, contradicting the stability inequality $2h-2+e+|A|>0$ for elements of $\Dgn$, so Case~1 cannot occur.

\smallskip
\emph{Case 2: $\Delta\cap D_0=\{x_0\}$.}
Then one element of $\Delta$ has marking set $A_0$, so the other two marking sets partition $A_0^c=A_i\sqcup A_k$. Since $j\in A_0^c$, exactly one of $A_i$ and $A_k$ contains $j$.
By definition of $E$, exactly one of the two elements in $\Delta-\{x_0\}$ lies in $E$.
Also $x_0\notin E$ because $E\subseteq D- D_0$,
therefore $|\Delta\cap E|=1$.

This completes the proof of \eqref{D:degsub2b}.

\medskip
\noindent\eqref{D:degsub2c}.
Let $\Delta\subset D- D_0$ be a triangle.
Then $A_1\sqcup A_2\sqcup A_3=[n]$, so exactly one of the marking sets contains $j$.
Hence exactly one element of $\Delta$ lies in $E$, which means $|\Delta\cap E|=1$. Condition \eqref{D:degsub2c} is satisfied.
\end{proof}

We now prove that all mixed submaximal elements are realizable. 

\begin{proposition}\label{prop:good-wall-realizable}
Let  $W\in\Deg_{g,n}$ be a mixed submaximal element. 
Then $W$ is realizable, i.e.\ there exists $\sigma\in\Sigma_{g,n}$ such that $\D(\sigma)=W$.
\end{proposition}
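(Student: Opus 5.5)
The plan is to realize $W$ in two stages. First we produce a classical V-function whose degeneracy subset $D'$ contains $W$ and is small and explicit. Then we pass from $D'$ up to $W$ using the upper lifting property of Proposition~\ref{P:Deg-map}\eqref{P:Deg-map3}.

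By Proposition~\ref{P:good} we may write $W=\{x,x^c\}$ with $x=(e;h,A)$ and $\emptyset\subsetneq A\subsetneq[n]$. If $e=1$, we take $L=\beta\omega_\pi+\sum_i\alpha_i\Sigma_i+\sum\gamma_{(h',A')}\O(C_{(h',A')})$ with all parameters generic. We then adjust the single parameter $\gamma_{(h,A)}$ so that the argument of the ceiling in \eqref{E:for-deg} at $(1;h,A)$ becomes an integer. This parameter enters no other value except the complementary one, so Lemma~\ref{L:arr-Uni} gives $\D(\sigma_L)=W$ directly.

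If $e\geq 2$, fix $i_0\in A$. Choose $\beta$ and the $\alpha_i$ with $i\neq i_0$ so that together with $1$ they are $\QQ$-linearly independent. Then set
$$\alpha_{i_0}:=m-\beta(2h-2+e)-\sum_{i\in A-\{i_0\}}\alpha_i$$
for some integer $m$ (adjusting $\chi$ is harmless by Remark~\ref{R:Deg-rea}). For any $y=(e';h',A')\in\Dgnns$, the form $\beta(2h'-2+e')+\sum_{i\in A'}\alpha_i$ is a $\QQ$-combination of the independent parameters, and its coefficients are explicit:
\begin{itemize}
\item if $i_0\in A'$, the coefficient of $\beta$ is $\delta'-\delta$, where $\delta'=2h'-2+e'$ and $\delta=2h-2+e$; the coefficient of $\alpha_i$ is $[i\in A']-1$ for $i\in A-\{i_0\}$, and $[i\in A']$ for $i\notin A$;
\item if $i_0\notin A'$, the coefficient of $\alpha_i$ is $[i\in A']$ for every $i\neq i_0$, and the coefficient of $\beta$ is $\delta'$.
\end{itemize}
Integrality forces all these coefficients to vanish. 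This happens exactly when $A'=A$ and $\delta(y)=\delta(x)$; the branch $i_0\notin A'$ is excluded since it would need $\delta'=0$. Hence the classical V-function $\sigma_L$ has
$$D':=\D(\sigma_L)=\{y,y^c : y=(e';h',A)\in\Dgnns,\ 2h'-2+e'=2h-2+e\},$$
which contains $W$. The separating part is made nondegenerate by choosing the $\gamma$'s generic.

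It remains to show $W\geq D'$ in $\Deg_{g,n}$. The key observation is that no triangle of $\Dgn$ has all three elements in $D'$. Every element of $D'$ has marking set $A$ or $A^c$, and three such sets can partition $[n]$ only if one of them is empty. So conditions \eqref{D:degsub2b} and \eqref{D:degsub2c} of Definition~\ref{D:degsub} are vacuous. Therefore any $E\subseteq D'-W$ consisting of exactly one element from each complementary pair is a witness; for instance, take the elements whose marking set is $A$. Upper lifting (Proposition~\ref{P:Deg-map}\eqref{P:Deg-map3}) then produces $\sigma\geq\sigma_L$ with $\D(\sigma)=W$, concretely $\sigma=\sigma_L+\chi_E$ as in Corollary~\ref{C:witness}.

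The main obstacle is the linear-independence bookkeeping in the classical step, namely pinning down exactly which $y$ become degenerate. If it fails (for instance in the $g=0,1$ degeneracies of $\PicRel$, where $\omega_\pi=0$ or the $\Sigma_i$ are identified), the fallback is to take any realizable $D'\supseteq W$ whose elements all have marking sets in $\{A,A^c\}$. The triangle-free argument above then applies verbatim.
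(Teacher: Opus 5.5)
You prove this by a different route from the paper, and your classical step for $e\geq 2$ fails as written, though it can be repaired. The witness half of your argument is correct. Every element of your $D'$ has marking set $A$ or $A^c$, so no triangle lies in $D'$. Hence conditions (2b)--(2c) of Definition~\ref{D:degsub} are vacuous, and the half of $D'-W$ with marking set $A$ is a witness for $W\geq D'$.

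\textbf{The gap.} With your choice of $\alpha_{i_0}$,
\[
\deg_\pi(L)=(2g-2)\beta+\sum_{i=1}^n\alpha_i=m+(2g-2-\delta)\beta+\sum_{i\notin A}\alpha_i .
\]
Since $A^c\neq\emptyset$ and $1,\beta,\alpha_i$ $(i\neq i_0)$ are $\QQ$-independent, this number is irrational. So $L\notin\PicRel^{\ZZ}_{g,n}(\RR)$, there is no integer characteristic for ``adjusting $\chi$'' to fix, $\sigma_L$ is not a V-function, and Lemma~\ref{L:arr-Uni} does not apply. Your own coefficient computation shows the symptom. It makes the ceiling argument at $x$ an integer but not the one at $x^c$ (your branch $i_0\notin A'$), although the two arguments sum to $\deg_\pi(L)$. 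So the integrality set you computed is not complement-closed, and the displayed $D'$ does not follow from it.

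\textbf{The repair.} Since $A\neq[n]$, pick $i_1\in A^c$. Let $\alpha_{i_1}$ be determined by $(2g-2)\beta+\sum_i\alpha_i=\chi$, and require only $1,\beta,\alpha_i$ $(i\neq i_0,i_1)$ to be $\QQ$-independent. For $y=(e';h',A')$ with $\delta'=2h'-2+e'$, integrality now splits into four cases according to whether $i_0$ and $i_1$ lie in $A'$:
\begin{itemize}
\item if $i_0\in A'$ and $i_1\notin A'$, it forces $A'=A$ and $\delta'=\delta$;
\item if $i_1\in A'$ and $i_0\notin A'$, it forces $A'=A^c$ and $\delta'=2g-2-\delta$, which are exactly the complements;
\item if $i_0,i_1\in A'$, it forces $A'=[n]$ and $\delta'=2g-2$, giving log-canonical degree $2g-2+n$, which is excluded;
\item if $i_0,i_1\notin A'$, it forces $A'=\emptyset$ and $\delta'=0$, giving log-canonical degree $0$, which is excluded.
\end{itemize}
So $\D(\sigma_L)$ is your $D'$, namely the classical mixed wall $W_{A,\delta+|A|}$ of Remark~\ref{R:classic-walls}. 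This works verbatim for $g=1$: there $\delta'=0$ on $\Dgnns$, so $\beta$ drops out and $D'=W$. For $g=0$ there are no non-separating elements at all. So your unargued fallback for $g=0,1$ is not needed, which is just as well, since it is not a proof.

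\textbf{Comparison with the paper.} The paper bypasses $\PicRel$ and writes $\sigma$ down directly. It takes $\chi=0$ and $j\notin A$, and sets $\sigma(x)=1$, $\sigma(x^c)=-1$. For $y\neq x$ with $j$ not in the marking set of $y$, it sets $\sigma(y)=\lceil\delta(y)/\delta(x)\rceil$ and $\sigma(y^c)=1-\sigma(y)$. It then checks \eqref{E:triaUni} in three cases, using $\delta(w)=\delta(u)+\delta(v)$ for triangles $[u,v,w^c]$. That argument is self-contained and needs no case split on $e$ or $g$. Your repaired argument is more structural: the classical wall through $x$ is triangle-free, so it lies below $\{x,x^c\}$ in $\Deg_{g,n}$. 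This also explains why most mixed classical walls are not submaximal.
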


\begin{proof}
By Proposition~\ref{P:good},  $W=\{x,x^c\}$ for some $x=(e_0;h_0,A_0)\in D_{g,n}$ with $\emptyset\subsetneq A_0\subsetneq [n]$.

Set   $\chi:=0$, choose $j\not \in A_0$ and define $\sigma:\Dgn\to\mathbb Z$ as follows:

\begin{itemize}
\item $\sigma(x)=1$ and $\sigma(x^c)=-1$.
\item If $y=(e;h,A)$ satisfies $j\notin A$ and $y\neq x$, set
\[
\sigma(y):=\left\lceil\frac{\delta(y)}{\delta(x)}\right\rceil \quad \text{ and } \quad \sigma(y^c):=1-\sigma(y).
\]

\end{itemize}

We now check that  $\sigma\in \Sigma_{g,n}^0$ with $\D(\sigma)=\{x,x^c\}$, by verifying the conditions of Definition~\ref{D:Sigmagn}.

Condition \eqref{E:condUni1} is clearly satisfied with $\chi=0$ and the $\sigma$-degenerate elements are $\{x,x^c\}$. 

Let us now check Condition \eqref{E:condUni2}. First of all, Condition (2a) holds vacuously because no triangle can contain two elements of $\{x,x^c\}$ as we already observed in Step I of the proof of Proposition \ref{P:good}.  Thus, it remains to verify Condition (2b).  Let $\Delta=[u,v,w^c]$ be any triangle in $\mathbb{D}_{g,n}$, arranged so that $j$ belong to the marked set of $w^c$ (and hence it does not belong to the marked set of $u$, $v$ and $w$).
By \eqref{E:for-lcdeg}, we have $\delta(w)=\delta(u)+\delta(v)$.
We check~\eqref{E:triaUni} (with $\chi=0$) for $\Delta$ by distinguishing the following three exhausting cases.

\smallskip\noindent
\emph{Case 1:} $\Delta\cap\{x,x^c\}=\emptyset$.
Using $\delta(w)=\delta(u)+\delta(v)$ and the definition of $\sigma$, we get 
\[
\sigma(u)+\sigma(v)-\sigma(w)
=\left\lceil \frac{\delta(u)}{\delta(x)}\right\rceil
+\left\lceil \frac{\delta(v)}{\delta(x)}\right\rceil
-\left\lceil \frac{\delta(u)+\delta(v)}{\delta(x)}\right\rceil\in\{0,1\}.
\]
From this and the fact that $\sigma(w^c)=1-\sigma(w)$, we compute 
\[
\sigma(u)+\sigma(v)+\sigma(w^c)
=
1+\sigma(u)+\sigma(v)-\sigma(w)\in \{1,2\},
\]
which verifies Equation \eqref{E:triaUni} since the three elements of $\Delta$ are all $\sigma$-nondegenerate. 

\smallskip\noindent
\emph{Case 2:} $w^c=x^c$ (so $w=x$).
Using that $\delta(x)=\delta(w)=\delta(u)+\delta(v)$ and that $\delta(u),\delta(v)>0$, we get that $\sigma(u)=\sigma(v)=1$. Hence, using also $\sigma(w^c)=\sigma(x^c)=-1$, we compute 
$$
\sigma(u)+\sigma(v)+\sigma(w^c)=1+1-1=1,
$$
which verifies Equation \eqref{E:triaUni} since $x^c$ is the unique element of $\Delta$ that is $\sigma$-degenerate. 

\smallskip\noindent
\emph{Case 3:} $u=x$ or $v=x$ (say $u=x$) and $w\neq x$.
Since $\delta(w)=\delta(u)+\delta(v)=\delta(x)+\delta(v)$, we have
\[
\sigma(w)=\left\lceil\frac{\delta(x)+\delta(v)}{\delta(x)}\right\rceil
=1+\left\lceil\frac{\delta(v)}{\delta(x)}\right\rceil
=1+\sigma(v).
\]
Using this and the fact that $\sigma(w^c)=1-\sigma(w)$ (because $w^c\neq x^c$), we compute 
$$
\sigma(x)+\sigma(v)+\sigma(w^c)=1+\sigma(v)+1-\sigma(w)=1,
$$
which verifies Equation \eqref{E:triaUni} since $x$ is the unique element of $\Delta$ that is $\sigma$-degenerate. 
\end{proof}

We now put all the above results together in order to describe the submaximal elements of $\Sigma_{g,n}^\chi$ for $n\geq 1$.

\begin{corollary}\label{C:walls-Vfun}
If $n\geq 1$, then the submaximal elements  of $\Sigma_{g,n}^\chi$ are those V-functions $\sigma$ such that $\D(\sigma)$ is equal to one of the following 
    \begin{enumerate}[(a)]
        \item  $\{(e;h,A),(e;h,A)^c\}$ with either $e=1$, or $\emptyset\subsetneq A\subsetneq [n]$, or $A=\emptyset$ and $2h-2+e\geq g$, or $A=[n]$ and $2h+e\leq g$;
        \item $\displaystyle W_{\delta}:=\bigcup_{\substack{\delta \mid (2h-2+e) \\ e\geq 2}} \{(e;h,\emptyset), (e;h,\emptyset)^c\}$ for some  $1\leq \delta\leq g-1$. 
    \end{enumerate}
    Moreover, each of the above subsets is a universal degeneracy subset of some element of $\Sigma_{g,n}^\chi$.
\end{corollary}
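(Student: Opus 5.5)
By Remark~\ref{R:Sigma-Deg}, $\sigma\in\Sigma_{g,n}^\chi$ is submaximal if and only if $\D(\sigma)$ is a submaximal element of $\Deg^{re}_{g,n}$. Since $\Deg^{re}_{g,n}$ is upward-closed in $\Deg_{g,n}$ (Remark~\ref{R:Deg-rea}), this is the same as being a realizable submaximal element of $\Deg_{g,n}$. By Remark~\ref{R:Deg-rea}(i), realizability does not depend on $\chi$ when $n\geq 1$. So it suffices to classify the realizable submaximal elements of $\Deg_{g,n}$ and show that each of them is realized. We split into separating and non-separating parts via \eqref{E:dec-Deg} and Lemma~\ref{C:height}\eqref{C:height2}. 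A submaximal $D$ has height $1$, so one of $D^s,D^{ns}$ has height $1$ and the other is $\emptyset$.

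\emph{Separating case.} Here $D^{ns}=\emptyset$. By Lemma~\ref{C:height}\eqref{C:height3}, with $n\geq 1$, we get $|D^s|=2$, so $D=\{(1;h,A),(1;h,A)^c\}$. Every such pair lies in $\Degs_{g,n}$, and it is realizable by Remark~\ref{R:Deg-rea}(iv) together with the product decomposition of realizable subsets. This gives the $e=1$ part of (a).

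\emph{Non-separating case.} Here $D^s=\emptyset$, and we split according to Definition~\ref{D:mix-un}.
\begin{itemize}
\item If $D$ is mixed, Proposition~\ref{P:good} gives $D=\{x,x^c\}$ with $\emptyset\subsetneq A\subsetneq[n]$, and Proposition~\ref{prop:good-wall-realizable} shows it is realizable. When $e=1$ this overlaps with the separating case, which is harmless.
\item If $D$ is unmixed, Corollary~\ref{C:walls-to1} identifies it with $\alpha_{n*}$ of a realizable submaximal element of $\Deg_{g,1}$. Since that element is non-separating, it is a height-one element of $\Degns^{re}_{g,1}$. By Corollary~\ref{C:height-n1}, these are exactly the $W_\delta$ with $1\le\delta\le g-1$, and the $D(\{x\})$ with $x\in\Dgbig$.
\end{itemize}

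In the unmixed case we translate back through $\alpha_n$. Each $W_\delta$ maps to the set $W_\delta$ of (b), since $\delta\mid(2h-2+e)$ for $(e;h,\emptyset)$ and its complement lies in $\alpha_n(\Dguno^f)$. Each $D(\{x\})$ with $x=(e;h)$ and $\delta(x)=2h-2+e\ge g$ maps to $\{(e;h,\emptyset),(e;h,\emptyset)^c\}$, with $e\geq 2$. Such a pair can also be written as $\{y,y^c\}$ with $y=(e;h',[n])$ and $h'=g-h-e+1$. The condition $2h-2+e\ge g$ becomes $2h'+e\le g$, which accounts for both unmixed conditions in (a). Realizability of all these elements in $\Deg_{g,n}$ follows from Proposition~\ref{P:reduceto1}\eqref{P:reduceto1b}.

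The only delicate step is this bookkeeping in the unmixed case. One has to check that the complement-pairs are listed consistently, so that the full side $A=[n]$ gives exactly the inequality $2h+e\le g$. One also has to make sure that submaximality in $\Deg_{g,1}$ transfers through $\alpha_{n*}$, which holds because $\Deg^u_{g,n}$ is upward-closed. Finally, the last assertion of the statement, that each listed subset arises as $\D(\sigma)$ for some $\sigma\in\Sigma_{g,n}^\chi$, follows from realizability together with Remark~\ref{R:Deg-rea}(i).
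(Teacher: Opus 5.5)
Your proposal is correct and follows essentially the same route as the paper. You reduce via Remark~\ref{R:Sigma-Deg} to realizable submaximal elements of $\Deg_{g,n}$, then handle separating pairs (Lemma~\ref{C:height}, Proposition~\ref{P:Dsep}), mixed pairs (Propositions~\ref{P:good} and~\ref{prop:good-wall-realizable}), and unmixed non-separating elements via $\alpha_{n*}$ and the $n=1$ classification, with realizability for the fixed $\chi$ coming from Remark~\ref{R:Deg-rea}. The differences are only organizational: you structure the case split through $h(D)=h(D^s)+h(D^{ns})$, where ``height $0$ forces $\emptyset$'' in the non-separating factor implicitly uses Proposition~\ref{prop:empty-is-maximum-deg}, and you invoke Corollary~\ref{C:height-n1} where the paper cites Propositions~\ref{P:deg-piccoli}, \ref{P:deg-grandi} and Theorem~\ref{T:pos-Degn1} directly.
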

The submaximal elements $\sigma$ or $\D(\sigma)$  of the first type are called \emph{simple}.
\begin{proof}
    By Remark \ref{R:Sigma-Deg}, an element $\sigma\in \Sigma_{g,n}^\chi$ is submaximal if and only if $\D(\sigma)\in \Deg_{g,n}$ is  submaximal (and realizable). The realizable submaximal elements of $\Deg_{g,n}$ are of the following type:
    \begin{itemize}
    \item separating (mixed or unmixed), which are of the form $\{(1;h,A),(1;g-h,A^c)\}$ and are realizable by Proposition \ref{P:Dsep};
        \item mixed (separating or not), which are of the form $\{(e;h,A),(e;h,A)^c\}$ with $\emptyset\subsetneq A \subsetneq [n]$ by Proposition~\ref{P:good}. All such elements are realizable by Proposition \ref{prop:good-wall-realizable};
        \item unmixed, non-separating and simple, which are of the form $\{(e;h,\emptyset),(e;h,\emptyset)^c\}$ with $\delta(e;h,\emptyset)=2h-2+e\geq g$ and $e\geq 2$ by Corollary \ref{C:walls-to1}, Proposition \ref{P:deg-grandi}\eqref{P:deg-grandi2} and Theorem \ref{T:pos-Degn1}\eqref{T:pos-Degn1b}. All such elements are realizable by Corollary \ref{C:walls-to1} and Proposition \ref{P:deg-grandi}\eqref{P:deg-grandi1}.
        \item unmixed, non-separating and non-simple, which are of the form $W_{\delta}$ for some $1\leq \delta\leq g-1$ by Corollary \ref{C:walls-to1} and Proposition \ref{P:deg-piccoli}. All such elements are realizable by Corollary \ref{C:walls-to1} and by Theorem \ref{T:pos-Degn1}.
    \end{itemize}
\end{proof}

It is instructive to compare the submaximal elements of $\Sigma_{g,n}^\chi$ with the classical walls, i.e. the codimension one regions of $\PicRel^{\ZZ}(\RR)$ with respect to $\A_{g,n}$ (see Lemma \ref{L:arr-Uni}).

\begin{remark}\label{R:classic-walls}
If $n\geq 1$ the classical walls of $\PicRel^{\ZZ}(\RR)$ with respect to $\A_{g,n}$ are given by the regions $[L]$ of one of the following types:
    \begin{enumerate}[(a)]
        \item $[L]$ is a separating wall if 
        $$\D(\sigma([L]))=\{(1;h,A),(1;h,A)^c\}$$ 
        for some $(1;h,A)\in \Dgn$.
    \item $[L]$ is a mixed non-separating wall (which is called a good wall in \cite{APag}) if
    %good classical walls
    $$\D(\sigma([L]))= W_{A,\delta}:=\bigcup_{\substack{\delta-|A| =2h-2+e \\ e\geq 2}} \{(e;h,A), (e;h,A)^c\},$$
    for some $\emptyset\subsetneq A\subsetneq [n]$ and some $|A|\leq \delta \leq 2g-2+|A|$.
        \item\label{R:classic-wallsC} $[L]$ is an unmixed non-separating wall (which is called a bad wall in \cite{APag})
        %bad classical walls
        $$\D(\sigma([L])= W_{\delta}:=\bigcup_{\substack{\delta \mid (2h-2+e) \\ e\geq 2}} \{(e;h,\emptyset), (e;h,\emptyset)^c\},$$ 
        for some  $1\leq \delta\leq 2g-2$. 
    \end{enumerate}  
    This follows easily from the explicit Equations \eqref{E:Agn} of the hyperplane arrangement $\A_{g,n}$, and it was observed in \cite[Sec. 3.b]{APag}.

    By comparing the classical walls with the submaximal elements of $\Sigma_{g,n}^\chi$, we see that almost  all the mixed non-separating walls (namely those $W_{A,\delta}$ such that $2\leq\delta-|A|\leq2g-4$) and (roughly) half of the unmixed non-separating walls (namely those $W_{\delta}$ with $g\leq \delta\leq 2g-4$) are not submaximal elements in $\Sigma_{g,n}^\chi$. On the other hand, all the separating walls and the remaining (roughly) half of the bad classical walls (namely those $W_{\delta}$ with $1\leq \delta\leq g-1$ or $\delta=2g-2,2g-3$) are still submaximal elements of $\Sigma_{g,n}^\chi$.
\end{remark}

\begin{corollary}\label{C:2max-wall}
If $n\geq 1$, then every submaximal element  of $\Sigma_{g,n}^\chi$ is dominated by exactly two maximal elements $\Sigma_{g,n}^\chi$. 
%    Let $W$ be a wall. Then, for each $\sigma$ such that $\D(\sigma)=W$, the set 
%        $$
%        \{\sigma'\in\Sigma_{g,n}:\sigma'>\sigma\}
%        $$
%        has exactly two elements.
\end{corollary}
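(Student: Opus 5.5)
Let $\sigma\in\Sigma_{g,n}^\chi$ be submaximal and set $W:=\D(\sigma)$. The plan is to reduce the statement to counting witnesses. By Corollary \ref{C:max-vfun} the maximal elements are exactly the general V-functions, that is, those with $\D=\emptyset$. Corollary \ref{C:witness}, applied with $D^1=\emptyset$ and $\sigma^2=\sigma$, identifies the maximal elements $\sigma'\geq\sigma$ bijectively with the witnesses $E$ for $\emptyset\geq W$, via $\sigma'=\sigma+\chi_E$. So it suffices to show that each $W$ in the list of Corollary \ref{C:walls-Vfun} admits exactly two witnesses for $\emptyset\geq W$.

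Now unpack Definition \ref{D:degsub}\eqref{D:degsub2} with $D^1=\emptyset$. Condition \eqref{D:degsub2b} is vacuous. Condition \eqref{D:degsub2a} says that $E$ contains exactly one element of each complementary pair in $W$. Condition \eqref{D:degsub2c} says that every triangle contained in $W$ meets $E$ in one or two elements. The argument then splits into two cases.

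\emph{Simple walls} $W=\{x,x^c\}$. Here $x\neq x^c$: this is automatic for $n\geq 1$, since the involution has fixed points only when $n=0$. The set $W$ contains no triangle. For mixed $x$ this is shown in Step I of the proof of Proposition \ref{P:good}. For separating $x$ it holds because triangles lie in $\Dgnns$. For unmixed non-separating $x$ with $\delta\geq g$, one transports along $\alpha_n$ and uses that $\{x\}$ is an antichain, as in the proof of Proposition \ref{P:deg-grandi}. So only \eqref{D:degsub2a} constrains $E$, and the witnesses are exactly $\{x\}$ and $\{x^c\}$, which gives two.

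\emph{Non-simple walls} $W=W_\delta$ with $1\leq\delta\leq g-1$. Proposition \ref{P:reduceto1}\eqref{P:reduceto1a} says that $\alpha_{n*}$ induces a bijection on witnesses, so we may take $n=1$. There Theorem \ref{T:pos-Degn1}\eqref{T:pos-Degn1a} states that the only witnesses for $W_\delta<\emptyset$ are $W_\delta^{pr}$ and $\wt{W_\delta^{pr}}$. Again we get exactly two. The two witnesses are distinct because $W\neq\emptyset$.

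I expect the main obstacle to be the claim that simple unmixed non-separating walls contain no triangle; I would settle it by checking $\delta$-additivity. A triangle inside $\{x,x^c\}$ must have one of the shapes $[x,x,x^c]$, $[x,x^c,x^c]$ and so on. Using \eqref{E:for-lcdeg}, such a triangle forces degrees summing to $2g-2+n$. If $\delta(x)\geq g$, then two copies of $x$ already exceed this sum. Two copies of $x^c$ together with one $x$ give total $2g-2+n+\delta(x^c)>2g-2+n$, since $\delta(x^c)>0$. The other shapes are handled the same way.
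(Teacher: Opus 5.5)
Your proposal is correct and is essentially the paper's proof: Corollaries~\ref{C:max-vfun} and~\ref{C:witness} reduce the claim to counting witnesses for $\emptyset>\D(\sigma)$. That count is immediate for simple walls, and for the $W_\delta$ it follows from Theorem~\ref{T:pos-Degn1}\eqref{T:pos-Degn1a} together with Proposition~\ref{P:reduceto1}\eqref{P:reduceto1a}.

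One caution on your closing $\delta$-additivity sketch: the only triangles that could destroy a witness are $[x,x,x]$ and $[x^c,x^c,x^c]$. These are excluded by the marking condition, since $A\sqcup A\sqcup A=[n]$ is impossible for $n\geq 1$, and not by degree counting. For example, with $g=2$, $n=1$, $x=(2;1,\emptyset)$ one has $3\delta(x^c)=3=2g-2+n$. Also, ``two copies of $x$ exceed $2g-2+n$'' need not hold once $n\geq 2$. So rely on your $\alpha_n$/antichain argument, or on the marking condition, rather than on that sketch.
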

\begin{proof}
If $\sigma\in \Sigma_{g,n}^\chi$ is a submaximal element, then all the maximal elements of $\Sigma_{g,n}^\chi$ that dominates $\sigma$ are in bijection with the witnesses for $\emptyset >\D(\sigma)$ by Corollary \ref{C:witness}. 
Hence, we have to show that if $D$ is a realizable submaximal element of $\Deg_{g,n}$, then there are exactly two witnesses for $\emptyset>D$. This is clear for simple submaximal elements and for the elements $W_{\delta}$ it follows by combining Theorem \ref{T:pos-Degn1}\eqref{T:pos-Degn1a} and Proposition \ref{P:reduceto1}\eqref{P:reduceto1a}.
\end{proof}

\subsection{Example: $g=1$}\label{Sub:g1}

In this subsection, we focus on the poset $\Sigmans_{g,n}^\chi$ and $\Degns_{g,n}$ for $g=1$ (and hence $n\geq 1$).

Observe that the non-separating domain in genus $1$ is equal to 
$$
\DDns_{1,n}=\DDns_{1,n}^m=\{(2;0,A):\: \emptyset \subsetneq A \subsetneq [n]\},
$$
with the complement of an element of $\DDns_{1,n}$ given by 
$$
(2;0,A)^c=(2;0,A^c),
$$
and the triangles in $\DDns_{1,n}$ given by 
$$
\Delta=[(2;0,A_1),(2;0,A_2),(2;0,A_3)] \text{ with } [n]=A_1\sqcup A_2\sqcup A_3.
$$

This implies that the collection of all non-separating degeneracy subsets of type $(1,n)$ (as in Definition~\ref{D:degsub}~\eqref{D:degsub1}) admits a natural bijection with the collection $\Dyn([n])$ of all \emph{Dynkin systems} on $[n]$:
\begin{equation}\label{E:Deg-g1}
\begin{aligned}
    \Degns_{1,n}& \xrightarrow{\cong} \Dyn([n])\\
    D & \mapsto \ov D:=D\cup \{\emptyset, [n]\},
\end{aligned}
\end{equation}
where a Dynkin system $E$ on $[n]$ is a collection of subsets of $[n]$ such that 
\begin{itemize}
    \item $\emptyset\in E$,
    \item $A\in E\Rightarrow A^c\in E$,
    \item If $A_1,A_2\in E$ with $A_1\cap A_2=\emptyset$ then $A_1\sqcup A_2\in E$.
\end{itemize}
The number of Dynkin systems on $[n]$ form the sequence \cite[A380571]{oeis_encyclopedia}, while the number of Dynkin systems on $[n]$ up to the action of the symmetric group $S_n$ form the sequence \cite[A381471]{oeis_encyclopedia}. 

The bijection \eqref{E:Deg-g1} becomes an isomorphism of posets (with respect to the poset structure on $\Deg_{1,n}$ as in Definition \ref{D:degsub}\eqref{D:degsub2}) if we put the following poset structure on $\Dyn([n])$: given $D^1,D^2\in \Dyn([n])$, we say that $D^1\geq D^2$ if $D^1\subseteq D^2$ and there exists a subset $E\subset D^2-D^1$ such that 
    \begin{itemize}
        \item  $D^2-D^1=E\sqcup E'$, where $E':=\{A^c\: : A\in E\}$.
        \item  For any decomposition  $[n]=A_1\sqcup A_2\sqcup A_3$ such that $A_1,A_2,A_3\in D^2$ and such that  $|\{A_i\in D^1\}|=1$, we have that 
        $$
        |\{A_i\in E\}|=1.
        $$
        \item   For any decomposition  $[n]=A_1\sqcup A_2\sqcup A_3$ such that $A_1,A_2,A_3\in D^2-D^1$,  we have that 
        $$
        |\{A_i\in E\}|=1 \text{ or } 2.
        $$
        \end{itemize}

Moreover, a non-separating V-function $\sigma$ of characteristic $\chi$ determines (and it is uniquely determined by) a function (called a \emph{mildly superadditive} function of characteristic $\chi$, see \cite{PTgenus1})
\begin{equation}\label{E:fun-f}
    \begin{aligned}
    f: 2^{[n]} & \longrightarrow \Z\\
 A &\mapsto f(A):=\begin{sis}
     \sigma(2;0,A) & \quad \text{ if } A\neq \emptyset, [n],\\
     0 & \quad \text{ if } A=\emptyset,\\
     \chi & \quad  \text{ if } A=[n],\\
 \end{sis}
 \end{aligned}
 \end{equation}
which satisfies the following properties:
\begin{enumerate}
\item  for any $A\subseteq [n]$, we have 
\begin{equation}\label{E:sum-f}
f(A)+f(A^c)-\chi\in \{0,1\}.
\end{equation}

An element $A\in 2^{[n]}$ is said to be \emph{$f$-degenerate} if $f(A)+f(A^c)=\chi$, and \emph{$f$-nondegenerate} otherwise.

\item   for any decomposition $[n]=A_1\sqcup A_2\sqcup A_3$, we have that:
\begin{enumerate}
 \item if two among the elements of $\{A_1,A_2,A_3\}$ are $f$-degenerate, then so is  the third. 
            \item the following holds
            \begin{equation}\label{E:tria-f}
            \sum_{i=1}^{3}f(A_i)-\chi
            \in \begin{cases}
                \{1,2\} \textup{ if $A_i$ is $f$-nondegenerate for all $i$};\\
                \{1\} \textup{ if there exists a unique $i$ such that } \\
                \hspace{2cm} \textup{ $A_i$ is $f$-degenerate};\\
                \{0\} \textup{ if $A_i$ is $f$-degenerate for all $i$}.
            \end{cases}
        \end{equation}
\end{enumerate}
\end{enumerate}
Note that the degeneracy subset of $f$ defined as
\begin{equation}\label{E:deg-f}
\D(f):=\{A\in [n]: A \text{ is $f$-degenerate}\},
\end{equation}
is a Dynkin system on $[n]$, which is equal to $\ov{\D(\sigma)}$ if $\sigma$ is the non-separating V-function that corresponds to $f$.

By combining the above description of non-separating V-functions in terms of mildly superadditive functions with Proposition \ref{P:Dsep} and Theorem \ref{T:thmA}, we recover the classification of fine compactified Jacobians over $\ov{\M}_{1,n}$ in \cite[Thm. 6.5]{PTgenus1} and we extend it beyond the fine case. 

\begin{remark}\label{R:g1-Kn}
It follows from the above description that there is a canonical isomorphism of posets
$$
\Sigmans_{1,n}^\chi\cong \VStab^\chi(X(K_n)),
$$
where $X(K_n)$ is any nodal curve whose dual graph is the complete graph $K_n$ on $n$ vertices.     
\end{remark}

The following seems to be an interesting problem.

\begin{question}\label{Q:surj-D}
 Is the non-separating degeneracy map $\Dns:\Sigmans_{1,n}\to \Deg^{ns}_{1,n}$ surjective? Or, in other words, is every Dynkin system on $[n]$ realizable by some V-function of type $(1,n)$?
\end{question}

For $n\leq 5$, it can be checked that the above Question has a positive answer and that, moreover, all non-separating V-functions are classical. This last fact is not true for $n\geq 6$, as we now indicate. 

\begin{example}\label{Ex:g1-n6}
Consider the classical non-separating V-function 
$\sigma^2_1\left[(\frac{1}{3},\ldots, \frac{1}{3}\right)]$ of characteristic $2$  (in the notation of \eqref{E:sigma-cl}), whose associated mildly superadditive function as in \eqref{E:fun-f} is given by  
$$
\phi(A)=
\begin{cases}
  2 & \text{ if } |A|\geq 4, \\
  1 & \text{ if } 0<|A|\leq 3,\\
  0 & \text{ if } A=\emptyset. 
\end{cases}  
$$
The degeneracy subset of $\phi$ is the Dynkin system
$$
\D(\phi)=\binom{[6]}{3}\cup\{\emptyset, [6]\},
$$
where $\binom{[6]}{3}$ is the collection of all subsets of $[6]$ of cardinality $3$. 

All the mildly superadditive functions  that dominate $\phi$ in the poset structure of $\Sigmans_{1,n}$, 
are given by the following construction. For any subset $\S\subseteq \binom{[6]}{3}$ such that $\S\cap \ov \S=\emptyset$, where $\ov \S:=\{A^c\: : A\in \S\}$, we consider the mildly superadditive function $\phi^\S$ of characteristic $2$ given by 
$$
\phi^\S(A)=
\begin{cases}
  2 & \text{ if } |A|\geq 4 \text{ or } |A|=3 \text{ and } A\in \S, \\
  1 & \text{ if } 0<|A|\leq 2 \text{ or } |A|=3 \text{ and } A\not \in \S,\\
  0 & \text{ if } A=\emptyset. 
\end{cases}  
$$
The degeneracy subset of $\phi^\S$ is the Dynkin system
$$
\D(\phi^S)=\binom{[6]}{3}-(\S\cup \ov \S)\cup\{\emptyset, [6]\}.
$$
Moreover, we have that $\phi^{\S_1}\geq \phi^{\S_2}$ if and only if $\S_1\supseteq \S_2$. Hence, the poset of all such $\{\phi^\S\}$ is graded by the function $|\S|$ and it has rank equal to $10$.

On the other hand, the poset of all classical non-separating V-functions that dominate $\sigma^2_1\left[(\frac{1}{3},\ldots, \frac{1}{3}\right)]$ is the poset of regions, cut out by the arrangement of hyperplanes $\A_{1,6}$ of \eqref{E:Agn} on $\Sigmans_{1,6}$, that contains the point $\sigma^2_1\left[(\frac{1}{3},\ldots, \frac{1}{3}\right)]$ in their closure, and hence it is a graded poset of rank equal to $5$. This implies that many of the elements in $\{\phi^{\S}\}$ are not classical and also that many of the Dynkin systems $\D(\sigma^\S)$ are not the degeneracy subset of some classical non-separating V-function. 
\end{example}

    \bibliographystyle{alpha}	
    \bibliography{bibtex}
\end{document}